\title{The $\dbar$-equation on a non-reduced analytic space}
\author{Mats Andersson \& Richard L\"ark\"ang}
\thanks{The authors were partially supported by grants from the Swedish Research Council.}
\subjclass[2000]{32A26, 32A27, 32B15, 32C30}
\address{Department of Mathematical Sciences, Division of Algebra and Geometry,
Chalmers University of Technology and the  University of Gothenburg,
SE-412 96 G\"{o}teborg, Sweden}
\email{matsa@chalmers.se, larkang@chalmers.se}
\newtheorem{thm}{Theorem}[section]
\newtheorem{lma}[thm]{Lemma}
\newtheorem{cor}[thm]{Corollary}
\newtheorem{prop}[thm]{Proposition}
\theoremstyle{definition}
\newtheorem{df}[thm]{Definition}
\theoremstyle{remark}
\newtheorem{preremark}[thm]{Remark}
\newtheorem{preex}[thm]{Example}
\newenvironment{remark}{\begin{preremark}}{\qed\end{preremark}}
\newenvironment{ex}{\begin{preex}}{\qed\end{preex}}
\newcommand{\ta}{{\tau}}
\newcommand{\re}{{\text{Re}\,}}
\newcommand{\Image}{{\text{Im}\,}}
\newcommand{\Ker}{{\text{Ker}\,}}
\newcommand{\C}{\mathbb{C}}
\newcommand{\debar}{\bar{\partial}}
\newcommand{\dbar}{\bar{\partial}}
\newcommand{\A}{\mathscr{A}}
\newcommand{\J}{\mathcal{J}}
\newcommand{\E}{\mathscr{E}}
\newcommand{\W}{\mathcal{W}}
\newcommand{\PM}{\mathcal{PM}}
\newcommand{\hol}{\mathscr{O}}
\newcommand{\Ok}{\mathscr{O}}
\newcommand{\F}{\mathcal{F}}
\newcommand{\Dom}{\text{Dom}\,}
\newcommand{\CH}{\mathcal{{ CH}}}
\newcommand{\M}{\mathcal{M}}
\newcommand{\pmm}{pseudomeromorphic }
\newcommand{\nbh}{neighborhood }
\newcommand{\1}{{\bf 1}}
\newcommand{\w}{\wedge}
\newcommand{\codim}{{\text{codim}\,}}
\newcommand{\Ba}{{\scalebox{1.3}{$\omega$}}}
\newcommand{\Homs}{{\mathcal Hom}}
\newcommand{\Ho}{{\mathcal H}}
\newcommand{\Exts}{{\mathcal Ext\,}}
\newcommand{\Kers}{{\mathcal Ker\,}}
\newcommand{\ann}{{\rm ann\,}}
\newcommand{\U}{{\mathcal U}}
\newcommand{\Cu}{{\mathcal C}}
\newcommand{\Pk}{{\mathbb P}}
\DeclareMathOperator{\depth}{depth}
\DeclareMathOperator{\pd}{pd}
\DeclareMathOperator{\supp}{supp}
\date{\today}
\numberwithin{equation}{section}
\begin{document}

\nocite{*}
\bibliographystyle{plain}



\begin{abstract}
Let $X$ be a,  possibly non-reduced,
analytic space of pure dimension. We introduce a notion of $\overline{\partial}$-equation
on $X$ and prove a Dolbeault-Grothendieck lemma. We obtain
fine sheaves $\mathcal{A}_X^q$ of $(0,q)$-currents, so that the associated
Dolbeault complex yields a resolution
of the structure sheaf $\mathscr{O}_X$. Our construction is based on intrinsic
semi-global Koppelman formulas on $X$.
\end{abstract}

\maketitle
\thispagestyle{empty}

\section{Introduction}

Let $X$ be a smooth complex manifold of dimension $n$ and let $\E_X^{0,*}$ denote the sheaf of
smooth $(0,*)$-forms. It is well-known that the Dolbeault complex
\begin{equation}\label{dolb}
0\to \Ok_X\stackrel{i}{\to} \E_X^{0,0}\stackrel{\dbar}{\to} \E_X^{0,1}\stackrel{\dbar}{\to} \cdots \stackrel{\dbar}{\to} \E_X^{0,n}\to 0
\end{equation}
is exact, and hence provides a fine resolution of the structure sheaf $\Ok_X$.  If $X$ is a
reduced analytic space of pure dimension,
then there is still a natural notion of "smooth forms". In fact, assume that $X$
is locally embedded as
$i\colon X\to \Omega$, where $\Omega$ is a pseudoconvex domain in $\C^N$. If
$\Kers i^*$ denotes the subsheaf of all smooth forms $\xi$  in ambient space such that $i^*\xi=0$
on the regular part $X_{reg}$ of $X$,
then one defines the sheaf $\E_X$ of smooth forms on $X$ simply as
$$
\E_X:=\E_\Omega/\Kers i^*.
$$
It is well-known that  this definition is independent of the choice of embedding of $X$.
Currents on $X$ are defined as the duals of smooth forms with compact support.
It is readily seen that the currents $\mu$ on $X$ so defined are in a one-to-one correspondence to the
currents $\hat\mu=i_*\mu$ in ambient space such that $\hat\mu$ vanish on $\Kers i_*$, see, e.g.,
\cite{AS}.  There is an induced $\dbar$-operator on smooth forms
and currents on $X$.  In particular, \eqref{dolb} is a complex on $X$ but in  general it is not exact.
In \cite{AS},   Samuelsson and the first author introduced, by means of intrinsic Koppelman
formulas on $X$, fine sheaves $\A_X^*$ of $(0,*)$-currents that are smooth on $X_{reg}$ and with mild
singularities at the singular part of $X$, such that
\begin{equation}\label{dolb1}
0\to \Ok_X\stackrel{i}{\to} \A_X^{0}\stackrel{\dbar}{\to} \A_X^{1}\stackrel{\dbar}{\to} \cdots \stackrel{\dbar}{\to} \A_X^{n}\to 0
\end{equation}
is exact, and thus a fine resolution of the structure sheaf $\Ok_X$.  An immediate consequence
is the representation
\begin{equation}\label{kalla}
H^{q}(X,\Ok_X)=\frac{\Ker \big(\A^{0,q}(X)\stackrel{\dbar}{\to}\A^{0,q+1}(X)\big)}
{\text{Im}\big(\A^{0,q-1}(X)\stackrel{\dbar}{\to}\A^{0,q}(X)\big)}, \quad  q\ge 1,
\end{equation}
of sheaf cohomology,
and so  \eqref{kalla} is a generalization of the classical Dolbeault isomorphism.
In special cases more qualitative information of the sheaves $\A_X^q$ are known,
see, e.g., \cite{LR2,ALRSW}.

\smallskip
Starting with the influential works \cite{PaSt1,PaSt2} by Pardon and Stern, there has been a lot
of progress recently on the $L^2$-$\dbar$ theory on non-smooth (reduced) varieties; see, e.g.,
\cite{FOV,OvVass2,Rupp2}.
The point in these works, contrary to \cite{AS}, is basically to determine the  obstructions to solve
$\dbar$ locally in $L^2$.
For a more extensive list of references regarding the $\dbar$-equation on reduced singular varieties, see, e.g., \cite{AS}.

In \cite{HePo2}, a notion of the $\dbar$-equation on non-reduced local complete intersections
was introduced, and which was further studied in \cite{HePo3}. We discuss below how their work relates to ours.

\smallskip

The aim of this paper is to extend 
the construction in \cite{AS} to a non-reduced
pure-dimensional analytic space.  The first basic problem is to find appropriate
definitions of forms and currents on $X$.
Let $X_{reg}$ be the part of $X$ where the
underlying reduced space $Z$ is smooth, and in addition $\Ok_X$
is Cohen-Macaulay.  On $X_{reg}$ the structure sheaf  $\Ok_X$ has a structure
as a free finitely generated $\Ok_Z$-module.
More precisely, assume that
 we have a local embedding $i\colon X\to \Omega\subset\C^N$ and coordinates $(z,w)$ in $\Omega$ such that
$Z=\{w=0\}$. Let  $\J$ be the defining ideal sheaf for $X$ on $\Omega$.
Then there are monomials $1, w^{\alpha_1}, \ldots, w^{\alpha_{\nu-1}}$
such that each $\phi$ in $\Ok_\Omega/\J\simeq\Ok_X$ has a unique representation
\begin{equation}\label{burk}
\phi=\hat\phi_0\otimes 1+\hat\phi_1\otimes w^{\alpha_1} +\cdots + \hat\phi_{\nu-1}\otimes w^{\alpha_{\nu-1}},
\end{equation}
where $\hat\phi_j$ are in $\Ok_Z$.  A reasonable  notion of a smooth form on $X$
should admit a similar representation on $X_{reg}$ with smooth forms
$\hat\phi_j$ on $Z$.
We first introduce the sheaves $\E_X^{0,*}$ of smooth $(0,*)$-forms
on $X$.  By duality, we then obtain the sheaf $\Cu_X^{n,*}$ of $(n,*)$-currents.
We are mainly interested in the subsheaf $\PM_X^{n,*}$ of pseudomeromorphic currents, and especially,
the even more restricted sheaf $\W_X^{n,*}$ of such currents with the so-called
standard extension property, SEP, on $X$. A current with the SEP is, roughly speaking, determined
by its restriction to any dense Zariski-open subset.

Of special interest is the sheaf $\Ba_X^n\subset\W_X^{n,0}$  of $\dbar$-closed \pmm $(n,0)$-currents. In the
reduced case this is precisely the sheaf of holomorphic $(n,0)$-forms in the sense of
Barlet-Henkin-Passare, see, e.g., \cite{Barlet,HePa}.

We have no definition of "smooth $(n,*)$-form" on $X$.
In order to define $(0,*)$-currents,
we use instead the sheaf $\Ba_X^n$ in the following way. Any holomorphic function
defines a morphism in $\Homs(\Ba^n_X,\Ba^n_X)$, and it is a reformulation
of a fundamental result of Roos, \cite{Roos}, that this morphism is indeed injective,
and generically surjective. In the reduced case, multiplication by a current in $\W_X^{0,*}$
induces a morphism in $\Homs(\Ba^n_X,\W_X^{n,*})$, and in fact
$\W_X^{0,*}\to \Homs(\Ba^n_X,\W_X^{n,*})$ is an isomorphism.
In the non-reduced case, we then take this as the
definition of $\W_X^{0,*}$. It turns out that with this definition, on $X_{reg}$,
any element of $\W_X^{0,*}$ admits a unique representation \eqref{burk}, where
$\hat\phi_j$ are in $\W_Z^{0,*}$, see Section \ref{prut} below for details.

Given $v,\phi$ in $\W_X^{0,*}$ we say that $\dbar v=\phi$ if $\dbar (v \w h)=\phi\w h$ for all $h$ in $\Ba_X^n$.
Following  \cite{AS} we introduce semi-global integral formulas and prove that if $\phi$ is a smooth
$\dbar$-closed $(0,q+1)$-form there is locally a current $v$ in $\W_X^{0,q}$ such that
$\dbar v=\phi$. A
crucial problem is to verify that the integral operators preserve smoothness on $X_{reg}$
so that the solution $v$ is indeed smooth on $X_{reg}$.  By an iteration procedure as in \cite{AS} we can
define sheaves $\A_X^k\subset\W_X^{0,k}$ and obtain our main result in this paper.

\begin{thm} \label{main}
Let $X$ be an analytic space of pure dimension $n$.
There are sheaves $\A_X^k\subset\W_X^{0,k}$ that are modules over $\E_X^{0,*}$, coinciding with $\E_X^{0,k}$ on
$X_{reg}$,  and such that \eqref{dolb1} is a resolution of the structure sheaf $\Ok_X$.
\end{thm}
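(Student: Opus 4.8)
The plan is to follow the strategy of \cite{AS}, adapted to the non-reduced setting via the sheaf $\Ba_X^n$ as the replacement for "smooth $(n,0)$-forms". Fix a local embedding $i\colon X\to\Omega\subset\C^N$. The first and most substantial task is to establish \emph{semi-global Koppelman formulas} on $X$: for $D\Subset D'$ relatively compact in $X$ and $\phi\in\W_X^{0,q}(D')$, one produces integral operators $\K$ and $\Proj$ (built from intrinsic weighted Cauchy--Fantappi\`e--Leray kernels on $X$, obtained by restricting ambient Koszul-type forms for $\J$ and pushing forward via $\Ba_X^n$) such that on $D$
\begin{equation*}
\phi=\dbar\K\phi+\K(\dbar\phi)+\Proj\phi.
\end{equation*}
The construction of the kernels mimics \cite{AS}: take a Hefer-type decomposition of the section defining the diagonal, combine it with a section generating $\J$, and incorporate a weight factor that makes the integral converge; the output is a principal-value/residue current in the $\W$-class on $X\times X$. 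One must check that $\Proj\phi$ is \emph{holomorphic} (lies in $\Ok_X$) when $q=0$, and vanishes when $q\ge1$ — this uses the $\dbar$-closedness built into the kernel together with the defining property of $\Ba_X^n$ (Roos's theorem) to kill the obstruction terms.

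Granting the formula, the Dolbeault--Grothendieck lemma on $X$ follows: if $\phi\in\W_X^{0,q+1}$ is $\dbar$-closed near a point, then shrinking to $D$ gives $\phi=\dbar(\K\phi)$ since $\K(\dbar\phi)=0$ and $\Proj\phi=0$ for $q+1\ge1$; thus the complex $\W_X^{0,\bullet}$ is exact in positive degrees, and in degree $0$ the kernel of $\dbar$ is $\Ok_X$ by the same formula with the projection term landing in $\Ok_X$. The remaining issue is \emph{regularity}: the solution $\K\phi$ a priori only lies in $\W_X^{0,q}$, not in a sheaf that restricts to $\E_X^{0,q}$ on $X_{reg}$. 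Here one argues, exactly as in \cite{AS}, that the integral operators \emph{preserve smoothness on $X_{reg}$}: when $\phi$ is smooth there (admits a representation \eqref{burk} with smooth $\hat\phi_j$ on $Z_{reg}$), the kernel is, after the residue is taken, a smooth form plus a classical Cauchy-type kernel on the smooth manifold $Z_{reg}$, so $\K\phi$ is again smooth on $X_{reg}$. This smoothing property is what I expect to be the main obstacle — one must track the residue calculus through the product $X\times X$ and verify that the monomial frame $1,w^{\alpha_1},\dots,w^{\alpha_{\nu-1}}$ behaves uniformly enough near $X_{reg}$, and that the kernel has the claimed mild singularities (pseudomeromorphic, with SEP) across the singular locus.

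Finally one defines the sheaves $\A_X^k$ by the iteration of \cite{AS}: set $\A_X^0=\Ok_X$ as a subsheaf of $\W_X^{0,0}$, and inductively let $\A_X^{k+1}$ be the sheaf generated by $\E_X^{0,*}$-combinations of elements $\dbar\xi$ and $\K\xi$ with $\xi$ ranging over $\A_X^k$ and over smooth forms, together with smooth forms themselves; by construction each $\A_X^k$ is an $\E_X^{0,*}$-module, sits inside $\W_X^{0,k}$, and the Koppelman formula restricts to show that the operators $\K,\Proj$ map $\A_X^\bullet$ to $\A_X^\bullet$. The smoothing property gives $\A_X^k|_{X_{reg}}=\E_X^{0,k}$, and the Koppelman formula on these sheaves yields exactness of \eqref{dolb1}: $\dbar$-closed sections in $\A_X^{q}$ are $\dbar$ of sections in $\A_X^{q-1}$ for $q\ge1$, and $\ker(\dbar\colon\A_X^0\to\A_X^1)=\Ok_X$. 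Since $\E_X^{0,*}$ admits partitions of unity, each $\A_X^k$ is fine, completing the resolution. The verification that each step of the iteration stays within $\W_X$ and respects the module structure is routine given the corresponding statements in \cite{AS} and the structure results for $\W_X^{0,*}$ promised in Section \ref{prut}.
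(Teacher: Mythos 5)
Your overall architecture matches the paper's: semi-global Koppelman operators $K,P$ built from a Hefer form, a weight, the Bochner--Martinelli form and the residue current of a Hermitian resolution of $\Ok_\Omega/\J$, with $P\phi$ holomorphic for $q=0$ and vanishing for $q\ge1$ once the weight is chosen holomorphic in $z$; then $\A_X^k$ defined by iterating $\xi\w K(\cdot)$ starting from smooth forms, with fineness from the $\E_X^{0,*}$-module structure. Two points, however, need attention.

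First, the regularity step is where your argument has a genuine gap, and it is precisely the step the paper singles out as requiring new ideas beyond \cite{AS}. You assert that on $X_{reg}$ the kernel, after the residue in the $w$-directions is taken, reduces to a smooth form plus a classical Cauchy-type kernel on $Z_{reg}$, so that $K\phi$ is smooth there. This is false as stated: already for $\J=(\tau^{m+1})$ in $\C^2$ the regularized integrals $K^\epsilon\phi(z,w)$ contain, beyond the terms $a_k^\epsilon(z)\otimes w^k$, remainders of the form $w^k|w|^{2j}\log|w|^2$ which do not converge to anything smooth. What is actually true, and what suffices, is that the finitely many holomorphic derivatives $\partial_w^\ell K^\epsilon\phi|_{w=0}$ --- which recover the coefficients $\hat v_j$ in the representation \eqref{burk} via a holomorphic matrix --- converge to smooth forms on $Z$. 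Proving this requires a nontrivial argument: an integration-by-parts recursion on a family of integrals indexed by the power of the quadratic form $\eta^*(I+A^*A)\eta$ in the denominator, using Lie derivatives adapted to that form, plus a resolution of singularities to kill the boundary terms coming from $\chi'(|\eta|^2/\epsilon)$. Without something of this sort the claim $\A_X^k|_{X_{reg}}=\E_X^{0,k}$ is unproven, and with it the exactness of \eqref{dolb1} at the level of the structure sheaf.

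Second, a smaller overreach: you claim that the complex $\W_X^{0,\bullet}$ is exact in positive degrees. The Koppelman identity is established for smooth $\phi$, and for currents only under extra hypotheses ($\phi$ smooth on $X_{reg}$, $\phi$ and $K\phi$ in $\Dom\dbar_X$); it is not available for arbitrary $\dbar$-closed $\phi\in\W_X^{0,q+1}$, since the weak definition of $\dbar$ via pairing with $\Ba_X^n$ does not by itself control $\dbar(\phi\w\omega)$ for structure forms $\omega$, nor the boundary terms $\dbar\chi_\delta\w\phi\w\omega$ near $X_{sing}$. This does not damage your final argument, since the exactness you need is only for $\A_X^\bullet$, where membership in $\Dom\dbar_X$ can be checked for the iterated integrals --- but that verification is part of the proof rather than routine.
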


The main contribution in this article compared to \cite{AS} is the development of a theory for smooth $(0,*)$-forms and various classes of $(n,*)$- and $(0,*)$-currents
in the non-reduced case as is described above. This is done in Sections \ref{boxer}-\ref{sect:dbarW0}. The construction of integral operators to provide solutions to $\dbar$
in Section~\ref{sect:solveDbar} and the construction of the fine resolution of $\Ok_X$ in Section \ref{fine}, which proves Theorem~\ref{main},
are done pretty much in the same way as in \cite{AS}. The proof of the smoothness of the
solutions of the regular part  in Section~\ref{pudding} however becomes significantly more involved in the non-reduced case and requires completely new ideas.
In Section~\ref{fullst} we discuss the relation to the results in \cite{HePo2,HePo3}
in case $X$ is a local complete intersection.

\subsection*{Acknowledgements}

We thank the referee for very careful reading and many valuable remarks.

\section{Pseudomeromorphic currents} \label{sect:pm}

Let $s_1,\ldots, s_m$ be coordinates in $\C^m$,  let $\alpha$ be a smooth form with compact support, and
let $a_1,\ldots, a_r$ be positive integers, $0\le\ell\le r\le m$. Then
$$
\dbar\frac{1}{s_1^{a_1}}\w \cdots \w\dbar\frac{1}{s_\ell^{a_\ell}}\w
\frac{\alpha}{s_{\ell+1}^{a_{\ell+1}}\cdots s_r^{a_r}}
$$
is a well-defined current that we call an {\it elementary (pseudomeromorphic) current}.
Let $Z$ be a reduced space of pure dimension.
A current $\tau$ is {\it pseudomeromorphic} on $Z$ if, locally, it  is the push-forward of a finite sum of
elementary \pmm currents under a sequence of modifications,
simple projections, and open inclusions.  The \pmm currents define an analytic sheaf $\PM_Z$
on $Z$.  This sheaf was introduced in \cite{AW2} and somewhat extended in \cite{AS}. If nothing else
is explicitly stated, proofs of the properties listed below can be found in, e.g.,  \cite{AS}.

If $\tau$ is \pmm and has support on an analytic subset $V$, and  $h$ is a holomorphic function that
vanishes on $V$, then $\bar h \tau=0$ and $d\bar h\w \tau=0$.

Given a \pmm current $\tau$ and a subvariety $V$ of some open subset
$\U\subset Z$, the natural restriction to the open set $\U\setminus V$
of $\tau$ has a natural extension to a \pmm current on $\U$ that we denote by $\1_{\U\setminus V}\tau$.
Throughout this paper we let $\chi$ denote a smooth function on $[0,\infty)$ that is
$0$ in a \nbh of $0$ and $1$ in a \nbh of $\infty$.
If $h$ is a holomorphic tuple whose common zero set is $V$, then
\begin{equation} \label{restrDef}
\1_{\U\setminus V}\tau=\lim_{\epsilon\to 0^+}\chi(|h|^2/\epsilon)\tau.
\end{equation}
Notice that
$
\1_V\tau:=(1-\1_{\U\setminus V})\tau
$
is also pseudomeromorphic and has support on $V$.
If $W$ is another analytic set, then
\begin{equation}\label{stare}
\1_V\1_W\tau=\1_{V\cap W}\tau.
\end{equation}
This action of $\1_V$ on the sheaf of \pmm currents is a basic tool.
In fact one can extend this calculus to all constructible sets so that \eqref{stare} holds, see \cite{AW2}.
One readily checks that if $\xi$ is a smooth form, then
\begin{equation}\label{stare2}
\1_V(\xi\w\tau)=\xi\w \1_V\tau.
\end{equation}
If $f\colon Z'\to Z$ is a modification  and $\tau$ is in $\PM_{Z'}$ then $f_*\tau$ is in $\PM_Z$.
The same holds if $f$ is a simple projection and $\tau$ has compact support in the
fiber direction. In any case we have
\begin{equation}\label{stare3}
\1_V f_*\tau=f_*(\1_{f^{-1}V}\tau).
\end{equation}
It is not hard to check that if $\tau$ is in $\PM_Z$ and $\tau'$ is in $\PM_{Z'}$, then
$\tau\otimes\tau'$ is in $\PM_{Z\times Z'}$, see, e.g., \cite[Lemma 3.3]{Apm}. If $V\subset \U\subset Z$ and $V'\subset\U'\subset Z'$,  then
\begin{equation}\label{bost}
(\1_V\tau)\otimes\1_{V'}\tau'=\1_{V\times V'}(\tau\otimes\tau').
\end{equation}


Another basic tool is the {\it dimension principle},
that states that if $\tau$ is a \pmm $(*,p)$-current with support on
an analytic set with codimension larger than $p$, then $\tau$ must vanish.

A \pmm current $\tau$ on $Z$ has the {\it standard extension property}, SEP, if
$\1_V\tau=0$ for each germ $V$ of an analytic set with positive codimension on $Z$. The set $\W_Z$ of all \pmm
currents on $Z$ with the SEP is a subsheaf of $\PM_Z$.
By \eqref{stare2}, $\W_Z$ is closed under multiplication by smooth forms.

\smallskip
Let $f$ be a holomorphic function (or a holomorphic section of a Hermitian line bundle),
not vanishing identically on any irreducible component of $Z$.
Then $1/f$, a priori defined outside of $\{ f = 0 \}$, has an extension
as a pseudomeromorphic current, the principal value current, still denoted by $1/f$,
such that $\1_{\{ f = 0 \}}( 1/f) = 0$. The current $1/f$ has the SEP and
\begin{equation*} 
    \frac{1}{f} = \lim_{\epsilon \to 0^+} \chi(|f|^{2}/\epsilon) \frac{1}{f}.
\end{equation*}
%
%
We say that a current $a$ on $Z$ is {\it almost semi-meromorphic} if there
is a modification $\pi\colon Z'\to Z$, a holomorphic section $f$ of a line bundle
$L\to Z'$ and a smooth form $\gamma$ with values in $L$ such that
$a=\pi_*(\gamma/f)$, cf.\ \cite[Section~4]{AW3}.
If $a$ is almost semi-meromorphic, then it is clearly pseudomeromorphic. Moreover,
it is smooth outside
an analytic set $V\subset Z$ of positive codimension, $a$ is in $\W_Z$, and in particular,
$a=\lim_{\epsilon\to 0^+}\chi(|h|/\epsilon)a$ if $h$ is a holomorphic tuple that cuts out
(an analytic set of positive codimension that contains) $V$.
The \emph{Zariski singular support} of $a$ is the Zariski closure of the set where $a$ is not smooth.

One can multiply \pmm currents by almost semi-meromorphic currents;
and this fact will be crucial in defining $\W_X^{0,*}$, when $X$ is non-reduced.
Notice that if $a$ is almost semi-meromorphic in $Z$ then it also is in any
open $\U\subset Z$.

\begin{prop}[{\cite[Theorem~4.8, Proposition 4.9]{AW3}}]\label{ball}
Let $Z$ be a reduced space, assume that $a$ is an almost semi-meromorphic current in $Z$, and let
$V$ be the Zariski singular support of $a$.

\smallskip
\noindent (i) If $\tau$ is a pseudomeromorphic current in $\U\subset Z$,
then there is a unique \pmm current
$a\w\tau$ in $\U$ that coincides with (the naturally defined current)
$a\w\tau$ in $\U\setminus V$ and such that $\1_V (a\w\tau)=0$.

\smallskip
\noindent (ii) If $W\subset \U$ is any analytic subset, then
\begin{equation}\label{krut}
\1_W (a\w\tau)=a\w\1_W\tau.
\end{equation}
\end{prop}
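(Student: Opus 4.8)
The plan is to reduce both parts to the structure theory of pseudomeromorphic currents, via a resolution of singularities that simultaneously resolves $a$ and makes the relevant analytic sets normal crossings. Write $a=\pi_*(\gamma/f)$ with $\pi\colon Z'\to Z$ a modification, $f$ a holomorphic section of a line bundle $L\to Z'$, and $\gamma$ smooth with values in $L$; the Zariski singular support $V$ is the Zariski closure of $\pi(\{f=0\})$ (together with the image of the exceptional locus, which we may absorb into $\{f=0\}$ by a further blow-up). For part (i), the current $a\w\tau$ is well defined on $\U\setminus V$ by ordinary multiplication, since $a$ is smooth there; the content is to extend it pseudomeromorphically across $V$ with $\1_V(a\w\tau)=0$. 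I would \emph{define} the extension by
\begin{equation*}
a\w\tau:=\1_{\U\setminus V}\big(a\w\tau\big),
\end{equation*}
where the right-hand side makes sense once we know $a\w\tau$ (defined on $\U\setminus V$) has a pseudomeromorphic extension at all; this in turn follows from the regularization $a=\lim_{\epsilon\to0^+}\chi(|h|^2/\epsilon)a$ for $h$ a holomorphic tuple cutting out $V$. Concretely, one checks that $\chi(|h|^2/\epsilon)\,a\w\tau$ converges as $\epsilon\to0^+$ to a pseudomeromorphic current: after pulling back to $Z'$ (or a common resolution of $Z'$ and the data of $\tau$) the factor $a$ becomes a principal-value-type current $\gamma/f$ times smooth data, and local products of elementary pseudomeromorphic currents with such factors converge — this is the standard computation underlying the existence of principal values and of $\1_{\U\setminus V}$. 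Uniqueness is immediate: any two pseudomeromorphic extensions agree off $V$, so their difference $\mu$ is pseudomeromorphic with support in $V$ and satisfies $\1_V\mu=\mu$; but if both candidates satisfy $\1_V(\cdot)=0$ then $\mu=\1_V\mu=0$.

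For part (ii), the identity $\1_W(a\w\tau)=a\w\1_W\tau$ should be verified by splitting $\tau=\1_W\tau+\1_{\U\setminus W}\tau$ and treating the two pieces separately, using the already established multiplication from (i) (applied also on the open set $\U\setminus W$, which is legitimate since $a$ restricts to an almost semi-meromorphic current there). On $\1_{\U\setminus W}\tau$ one shows $\1_W\big(a\w\1_{\U\setminus W}\tau\big)=0$: off $V$ this is clear because $a$ is smooth and $\1_W$ pulls through smooth forms by \eqref{stare2}, while the part supported on $V$ is handled by the dimension/support bookkeeping together with $\1_V(a\w\cdot)=0$ and \eqref{stare}. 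On $\1_W\tau$ one shows $a\w\1_W\tau$ is supported on $W$ and equals $\1_W(a\w\1_W\tau)$; again off $V$ this is transparent from smoothness of $a$, and the $V$-part vanishes by the defining property in (i). Combining, $\1_W(a\w\tau)=\1_W(a\w\1_W\tau)+\1_W(a\w\1_{\U\setminus W}\tau)=a\w\1_W\tau+0$, which is what we want. Here one repeatedly uses the commutation rules \eqref{stare}, \eqref{stare2}, \eqref{stare3} for $\1_V$, push-forwards, and smooth forms, all recalled in Section~\ref{sect:pm}.

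The main obstacle is the existence part of (i): proving that $\chi(|h|^2/\epsilon)\,a\w\tau$ actually converges to a pseudomeromorphic current, rather than merely to some current. This is where one must work on a sufficiently fine modification, pulling back both $a$ (to $\gamma/f$) and $\tau$ (to a finite sum of elementary pseudomeromorphic currents, up to admissible operations) so that locally one is looking at products of the shape $\dbar\frac{1}{s^{b}}\w\cdots\w\frac{\gamma\w\alpha}{s^{a}\cdots f}$, and then invoking the known fact that such elementary products are themselves pseudomeromorphic and that the $\chi$-regularization converges — essentially the same argument that establishes $\1_{\U\setminus V}\tau$ in \eqref{restrDef} and the principal value $1/f$. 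Once convergence to a pseudomeromorphic current is in hand, the property $\1_V(a\w\tau)=0$ follows from \eqref{stare} (since $a\w\tau=\1_{\U\setminus V}(a\w\tau)$ by construction), and everything else is formal. All of this is carried out in \cite[Section~4]{AW3}, to which we refer for the details.
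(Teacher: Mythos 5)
The paper does not prove this proposition at all: it is imported verbatim from \cite{AW3} (Theorem~4.8 and Proposition~4.9 there), so there is no internal proof to compare against. Judged on its own terms, your sketch is consistent with how the result is actually established in \cite{AW3}: the uniqueness argument in (i) (a pseudomeromorphic difference supported on $V$ with $\1_V(\cdot)=0$ must vanish) is complete and correct, and the reduction of (ii) to the splitting $\tau=\1_W\tau+\1_{\U\setminus W}\tau$ together with the calculus of $\1$ on constructible sets is the right formal skeleton.

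Two caveats. First, the existence half of (i) is, as you say, the entire content, and your treatment of it is the thinnest part: one cannot literally ``pull back $\tau$'' along a modification, since $\tau$ is only given as a locally finite sum of \emph{push-forwards} $\pi_{j*}\tau_j$ of elementary currents. The correct maneuver is the opposite one: pass to a further common modification so that $\pi_j^*a$ (or rather a representative of $a$ on the source of $\pi_j$) is semi-meromorphic with normal-crossings polar divisor, form the product with the elementary current $\tau_j$ upstairs, and push forward; compatibility with $\1_{\U\setminus V}$ then comes from \eqref{stare3}. In \cite{AW3} the convergence and pseudomeromorphicity of the building-block product (semi-meromorphic times elementary) is obtained via analytic continuation of $|f|^{2\lambda}\,(\gamma/f)\w\tau_j$ to $\lambda=0$ rather than directly from the $\chi$-regularization, and only afterwards is \eqref{proddef} deduced. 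Second, defining $a\w\tau:=\1_{\U\setminus V}(a\w\tau)$ is slightly circular as written, since $\1_{\U\setminus V}$ is only defined on currents already known to be pseudomeromorphic on all of $\U$; the definition has to be made upstairs as above and the identity $a\w\tau=\1_{\U\setminus V}(a\w\tau)$ is then a consequence. Neither issue is fatal, but both are exactly where the real work of \cite{AW3} lives.
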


Notice  that if $h$ is a tuple that cuts out $V$, then in view of \eqref{restrDef},
\begin{equation} \label{proddef}
a\w\tau=    \lim_{\epsilon \to 0^+} \chi(|h|^2/\epsilon) a \wedge \tau.
\end{equation}
It follows that  if $\xi$ is a smooth form, then
\begin{equation}\label{korp}
\xi\w (a\w \tau)=(-1)^{\deg \xi \deg a}a\w (\xi\w \tau).
\end{equation}

For future reference we will need the following result.

\begin{prop}\label{trilsk}
Let $Z$ be a reduced space. Then $\PM_Z=\W_Z+\dbar \W_Z$.
\end{prop}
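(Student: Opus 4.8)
The plan is to proceed by induction on the dimension of the support of the pseudomeromorphic current $\tau$, peeling off the part supported on successively smaller analytic sets. First I would fix a germ $\tau \in \PM_Z$ at a point $z_0$, say of top degree $(*,p)$, and consider its restriction to the regular part. On the smooth locus $Z_{reg}$, the natural restriction of $\tau$ is a smooth form times... no — more precisely, I expect that on a dense Zariski-open subset $Z \setminus V$ (with $V$ of positive codimension containing $Z_{sing}$ and the Zariski singular support of the relevant almost semi-meromorphic factors), $\tau$ restricts to something with the SEP after taking $\1_{Z\setminus V}\tau$. So I would write $\tau = \1_{Z \setminus V}\tau + \1_V \tau$. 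The current $\tau_0 := \1_{Z\setminus V}\tau$ is by construction in $\W_Z$ (it kills all $\1_{V'}$ for $V'$ of positive codimension, using the calculus \eqref{stare} and that $V'\cap(Z\setminus V)$ has positive codimension... this needs the key point that $\1_{Z\setminus V}\tau$ actually has the SEP, which is the substantive local claim). The remainder $\1_V \tau$ is pseudomeromorphic and supported on $V$, which has strictly smaller dimension, so by the inductive hypothesis applied on $V$ (or rather, viewing it as a pseudomeromorphic current and using that $\PM$ restricted to lower-dimensional strata is handled) we get $\1_V\tau = \mu + \dbar\nu$ with $\mu,\nu \in \W$ supported on $V$, hence in $\W_Z$.

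The delicate point is that the inductive step does not immediately give a current in $\W_Z$ supported on $V$; rather one gets a $\W_V$-current and must push it forward. Since $V \hookrightarrow Z$ is an open inclusion composed with... actually $V$ is a closed analytic subset, and pushforward under the inclusion of a subvariety preserves pseudomeromorphicity, but a current with the SEP on $V$ need not have the SEP on $Z$ (its restriction to components of $V$ is fine, but one must check $\1_W$ for $W \subset Z$ not contained in $V$). In fact $\1_W(i_*\sigma) = i_*\1_{W\cap V}\sigma$ by \eqref{stare3}, and if $W\cap V$ has positive codimension in $V$ this vanishes by the SEP of $\sigma$; the only issue is when some component of $V$ is contained in $W$, but then that component has positive codimension in $Z$ so we need $\1_W\tau$ restricted there to vanish — which it does precisely because we already extracted $\1_{Z\setminus V}\tau$. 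So the bookkeeping closes up, but it requires care about which analytic sets appear.

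Concretely, I would structure it as: (1) the base case where $\dim\supp\tau < n$ — but actually the cleanest induction is downward on the dimension $d$ of the smallest analytic set supporting $\tau$, with the case $d = -\infty$ (i.e. $\tau = 0$) trivial; (2) the key lemma that for any $\tau \in \PM_Z$ there is an analytic $V$ of positive codimension such that $\1_{Z\setminus V}\tau \in \W_Z$ — this should follow from resolving $\tau$ as a pushforward of elementary currents, where on the smooth model the elementary current is (a smooth form times) a product of principal value and $\dbar$ of principal value factors, each of which has the SEP or is $\dbar$ of something with the SEP, so that modulo $\dbar\W$ and after restricting away from the singular support, $\tau$ is represented by something with the SEP; (3) write $\tau - \1_{Z\setminus V}\tau = \1_V\tau$, apply induction on $V$, push forward, and verify the SEP of the pushed-forward pieces using \eqref{stare3} as above. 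The main obstacle I anticipate is step (2): making precise and proving that every pseudomeromorphic current, modulo $\dbar$ of a current with the SEP, agrees on a Zariski-dense open set with a current having the SEP. This likely uses the structure of elementary currents together with the observation that $\dbar(1/s^a)$ itself, and products thereof, can be handled by the fact that $\dbar$ of a principal value current applied to a test form picks out a residue on a hypersurface, combined with an inductive descent in the number of $\dbar(1/s_i^{a_i})$ factors; I would look to the stratification arguments in \cite{AW2} for the precise mechanism.
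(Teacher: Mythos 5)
Your inductive scheme has a structural flaw that is not a matter of bookkeeping. A nonzero current in $\W_Z$ can never be supported on a subvariety $V$ of positive codimension: if $\supp\mu\subset V$ then $\chi(|h|^2/\epsilon)\mu=0$ for every $\epsilon$ (where $h$ cuts out $V$), so $\1_{Z\setminus V}\mu=0$ and $\1_V\mu=\mu$, contradicting the SEP unless $\mu=0$. Hence your inductive conclusion ``$\1_V\tau=\mu+\dbar\nu$ with $\mu,\nu\in\W$ supported on $V$, hence in $\W_Z$'' forces $\mu=\nu=0$ and therefore $\1_V\tau=0$, which is false already for $\tau=\dbar(1/s)$ on $\C$. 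The correct decomposition of that example is $\tau=\dbar u$ with $u=1/s$, whose support is all of $\C$: the $\W_Z$-currents representing the piece concentrated on a stratum must necessarily be spread out over all of $Z$, so no argument that keeps each contribution supported on its stratum can close up. A second, independent problem is that ``applying the induction on $V$'' to $\1_V\tau$ does not typecheck: a pseudomeromorphic current on $Z$ supported on $V$ is in general \emph{not} the pushforward of a current on $V$, since it may involve transverse derivatives (cf.\ Proposition~\ref{parasol}; already $\dbar(1/s^2)$ on $\C$, which acts on a test form via its derivative at $0$, is not $i_*$ of anything on $\{0\}$). Finally, your step (2) carries all the real difficulty and is left unproved; the integration by parts on elementary currents that you sketch does not terminate, because peeling off one factor $\dbar(1/s_1^{a_1})$ produces currents of the form $(1/s_1^{a_1})\,\dbar(1/s_2^{a_2})\w\cdots$ that are again supported on positive-codimension sets.

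The paper's proof avoids all of this with a single device: after reducing to smooth $Z$ by a modification (using that $\pi_*\colon\PM_{Z'}\to\PM_Z$ is surjective), it applies the Koppelman formula $\mu=\dbar K\mu+K(\dbar\mu)$ for the Bochner--Martinelli transform $K\mu=p_*(k\w\mu\otimes 1)$. This operator automatically produces spread-out currents, and $K\mu\in\W_Z$ for \emph{every} $\mu\in\PM_Z$ by the one-line computation $\1_W p_*(k\w\mu\otimes 1)=\lim_{\epsilon\to 0^+} p_*\big(\chi_\epsilon k\w(\1_{\C^n}\mu\otimes\1_W 1)\big)=0$, since $\1_W1=0$. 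Any repair of your approach would need some substitute mechanism converting a residue factor into $\dbar$ of a current with the SEP on all of $Z$; that is precisely what the Bochner--Martinelli kernel provides.
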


\begin{proof}  First assume that $Z$ is smooth. Since $\W_Z$ is closed under multiplication by
smooth forms, so is $\W_Z + \dbar \W_Z$. The statement that $\PM_Z = \W_Z + \dbar \W_Z$ is local, and since both sides are closed
under multiplication by cutoff functions, we may consider
a \pmm current $\mu$ with compact support in $\C^n$. If $\mu$ has bidegree $(*,0)$, then
it is in $\W_Z$ in view of the dimension principle. Thus we assume that $\mu$ has bidegree
$(*,q)$ with $q\ge 1$.
Let
\begin{equation} \label{koppelman-current}
K\mu(z)=\int_\zeta k(\zeta,z) \w\mu(\zeta),
\end{equation}
where $k$ is the Bochner-Martinelli kernel. Here \eqref{koppelman-current} means
that $K\mu=p_*(k\w \mu\otimes 1)$, where $p$ is the projection
$
\C^n_\zeta\times\C^n_z\to\C^n_z,\quad (\zeta,z)\mapsto z.
$
Recall that we have the Koppelman formula
$
\mu=\dbar K\mu +K(\dbar\mu)
$.
It is thus enough to see that $K\mu$ is in  $\W_Z$ if $\mu$ is pseudomeromorphic.
Let $\chi_\epsilon=\chi(|\zeta-z|^2/\epsilon)$.  It is easy to see, by a blowup of $\C^n\times\C^n$ along the diagonal,
that $k$ is almost semi-meromorphic on $\C^n\times\C^n$.   Thus, by \eqref{proddef},
$\chi_\epsilon k\w (\mu\otimes 1)\to k\w (\mu\otimes 1)$.
In view of Proposition~\ref{ball} it follows that $k\w( \mu\otimes 1)$ is pseudomeromorphic.
Finally, if $W$ is a germ of a subvariety of $\C^n$ of positive codimension, then by \eqref{stare3} and \eqref{bost},
\begin{multline*}
\1_W p_*(k\w \mu\otimes 1)=
\lim_{\epsilon \to 0^+}p_*(\1_{\C^n\times W}(\chi_\epsilon k\w (\mu\otimes 1)))=\\
\lim_{\epsilon \to 0^+}p_*(\chi_\epsilon k\w (\1_{\C^n\times W}\mu\otimes 1))
=\lim_{\epsilon \to 0^+} p_*(\chi_\epsilon k\w (\1_{\C^n}\mu\otimes \1_W 1))=0,
\end{multline*}
since $\1_W 1=0$.
Thus $K\mu$ is in $\W_Z$.

If $Z$ is not smooth, then we take a smooth modification $\pi\colon Z'\to Z$. For any
$\mu$ in $\PM_Z$ there is some $\mu'$ in $\PM_{Z'}$ such that $\pi_*\mu'=\mu$, see \cite[Proposition~1.2]{Apm}.
Since $\mu'=\tau+\dbar u$ with $\tau,u$ in $\W_{Z'}$, we have that
$\mu=\pi_*\tau+\dbar\pi_* u$.
\end{proof}

\subsection{Pseudomeromorphic currents with support on a subvariety}\label{struts}
Let $\Omega$ be an open set in $\C^N$ and let $Z$ be a (reduced) subvariety of pure
dimension $n$.   Let $\PM^Z_\Omega$ denote the sheaf of pseudomeromorphic
currents $\tau$ on $\Omega$ with support on $Z$, and let $\W_\Omega^Z$ denote the subsheaf of
$\PM^Z_\Omega$ of currents of bidegree $(N,*)$ with the SEP with respect to $Z$,
i.e., such that  $\1_W \tau=0$ for all germs $W$ of subvarieties of $Z$ of positive codimension.
The sheaf
$\CH_\Omega^Z$ of Coleff-Herrera currents on $Z$ is the subsheaf of $\W_\Omega^Z$ of
$\dbar$-closed $(N,p)$-currents, where $p=N-n$.

\begin{remark}
In  \cite{AS, Aext}   $\CH_Z^\Omega$  denotes the sheaf of pseudomeromorphic $(0,p)$-currents
with support on $Z$ and the SEP with respect to $Z$. If this sheaf is tensored
by the canonical bundle $K_\Omega$ we get the sheaf $\CH^Z_\Omega$ in this paper.
Locally these sheaves are thus isomorphic via the mapping
$\mu\mapsto\mu\w \alpha$, where $\alpha$ is a
non-vanishing holomorphic $(N,0)$-form.
\end{remark}

We have the following direct consequence of Proposition~\ref{ball}.

\begin{prop} \label{asmW}
Let $Z \subset\Omega$ be a subvariety of pure dimension,  let $a$ be almost semi-meromorphic
in $\Omega$, and assume that it is smooth generically on $Z$.
If $\tau$ is in $\W_\Omega^Z$, then $a\w\tau$ is in $\W_\Omega^Z$ as well.
\end{prop}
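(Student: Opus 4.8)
The plan is to reduce the statement to an application of Proposition~\ref{ball}, exploiting that the operations involved are compatible with the $\1_W$-calculus. Since $\tau$ is in particular pseudomeromorphic on $\Omega$, part (i) of Proposition~\ref{ball} immediately gives that $a\w\tau$ is a well-defined pseudomeromorphic current on $\Omega$; this handles membership in $\PM_\Omega$. Next I would check that $a\w\tau$ has support on $Z$: since $\supp\tau\subset Z$, the current $a\w\tau$ agrees with the naively defined product outside the Zariski singular support $V$ of $a$, hence has support contained in $Z\cup V$ there, and the defining property $\1_V(a\w\tau)=0$ ensures there is no extra mass on $V$; more precisely, for a holomorphic tuple $h$ cutting out $Z$, we have $\bar h\tau=0$ and $\bar h\, a\w\tau = a\w(\bar h\tau)=0$ away from $V$ by \eqref{korp}-type reasoning (or directly $\1_{\Omega\setminus Z}(a\w\tau)=0$ from \eqref{krut} applied with $W$ a neighborhood complement), so $a\w\tau$ is supported on $Z$. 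Thus $a\w\tau\in\PM_\Omega^Z$, of bidegree $(N,*)$ since $a$ is generically smooth and hence of bidegree $(*,0)$ off $V$, wait — more carefully, $a$ need not be of bidegree $(*,0)$, but since $\tau$ has bidegree $(N,*)$ the wedge product $a\w\tau$ is again of the form $(N,*)$ regardless of the bidegree of $a$.

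The remaining and essential point is the SEP with respect to $Z$: I must show $\1_W(a\w\tau)=0$ for every germ $W$ of a subvariety of $Z$ of positive codimension. This is exactly where part (ii) of Proposition~\ref{ball}, the identity \eqref{krut}, does the work: $\1_W(a\w\tau)=a\w\1_W\tau$. Since $\tau\in\W_\Omega^Z$ has the SEP with respect to $Z$, and $W$ is a subvariety of $Z$ of positive codimension in $Z$, we have $\1_W\tau=0$, hence $\1_W(a\w\tau)=a\w 0=0$. Therefore $a\w\tau\in\W_\Omega^Z$.

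The only subtlety — and the step I would be most careful about — is that the SEP for $\W_\Omega^Z$ is formulated relative to $Z$ (one tests against germs $W$ of subvarieties \emph{of} $Z$ of positive codimension, i.e.\ $\codim_Z W\ge 1$), whereas \eqref{krut} is stated for an arbitrary analytic subset $W\subset\U$. This is not an obstacle: one simply applies \eqref{krut} with this particular $W\subset Z$, and the hypothesis that $a$ is smooth generically on $Z$ guarantees that $\1_W\tau=0$ is not spoiled — indeed it is the hypothesis on $\tau$ alone that kills $\1_W\tau$, so genericity of smoothness of $a$ on $Z$ is only needed to ensure that $a\w\tau$ does not pick up unwanted contributions along $V\cap Z$ when we also want $a\w\tau$ to remain genuinely a current on $Z$ (i.e.\ to conclude support is in $Z$ and the product is nontrivial on the regular locus). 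In short, the proof is a two-line citation of Proposition~\ref{ball}(i) for pseudomeromorphy and support, and Proposition~\ref{ball}(ii) for the SEP.
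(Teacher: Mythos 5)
Your proof is correct and is exactly the argument the paper has in mind: the authors state the proposition as a ``direct consequence of Proposition~\ref{ball}'', and your write-up simply fills in the details, using part (i) for pseudomeromorphy, the identity \eqref{krut} with $W=Z$ to see that the support stays in $Z$, and \eqref{krut} again with $W\subset Z$ of positive codimension together with $\1_W\tau=0$ to get the SEP. No gaps; the closing remarks about where generic smoothness of $a$ on $Z$ enters are also accurate.
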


Assume that we have local coordinates $(z,w) \in \C^n \times \C^p$ in $\Omega$ such that $Z=\{w=0\}$. We will use the
short-hand notation
$$
\dbar\frac{dw}{w^{\gamma+\1}}:=\dbar\frac{dw_1}{w_1^{\gamma_1+1}}\w\cdots\w\dbar
\frac{dw_p}{w_p^{\gamma_p+1}}
$$
for multiindices  $\gamma=(\gamma_1,\ldots,\gamma_p)$ with $\gamma_j\ge 0$,
and let $\gamma! := \gamma_1! \cdots \gamma_p!$.
Notice that that
\begin{equation}\label{spurt}
\frac{1}{(2\pi i)^p}\dbar\frac{dw}{w^{\gamma+\1}} . \xi =\frac{1}{\gamma !} \int_z \frac{\partial^\gamma\xi}{\partial w^\gamma}(z,0)
\end{equation}
for test forms $\xi$.
If $\tau$ is in $\W_Z$, then it follows by \eqref{bost} and the fact that $\supp \dbar(1/w^{\gamma+\1}) = \{ w = 0 \}$ that
$\tau\otimes\dbar(1/w^{\gamma+\1})$ is in $\W_\Omega^Z$.
We have the following local structure result, see \cite[Proposition~4.1 and (4.3)]{AW4}
and \cite[Theorem~3.5]{AW3}.

\begin{prop}\label{parasol}
Assume that we have local coordinates $(z,w)$ such that $Z=\{w=0\}$. Then $\tau$ in $\W_\Omega^Z$ has
a unique representation as a finite sum
\begin{equation}\label{paraply}
\tau=\sum_\gamma \tau_\gamma \wedge dz \otimes\dbar\frac{dw}{w^{\gamma+\1}},\quad \tau_\gamma\in\W_Z^{0,*},
\end{equation}
where $dz := dz_1 \w \cdots \w dz_n$.
If $\pi$ is the projection $(z,w)\mapsto z$, then
\begin{equation}\label{parsol1}
\tau_\gamma \wedge dz=(2\pi i)^{-p} \pi_*(w^\gamma \tau).
\end{equation}
\end{prop}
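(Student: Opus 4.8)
\medskip
\noindent\emph{Plan of proof.} There are two things to prove: that every $\tau$ in $\W_\Omega^Z$ admits a representation \eqref{paraply} with $\tau_\gamma$ in $\W_Z^{0,*}$, and that this representation is unique with the $\tau_\gamma$ given by \eqref{parsol1}; the converse inclusion, that each summand on the right of \eqref{paraply} lies in $\W_\Omega^Z$, was already noted before the proposition. Throughout I would use that $\supp\tau\subseteq\{w=0\}$ forces $\bar w_j\tau=0$ and $d\bar w_j\w\tau=0$ for every $j$. I would begin with the uniqueness statement and the formula \eqref{parsol1}, which is the short part. Assuming \eqref{paraply} and using the one-variable identities $w_j^a\dbar\tfrac{dw_j}{w_j^{k+1}}=\dbar\tfrac{dw_j}{w_j^{k+1-a}}$ for $a\le k$ and $w_j^a\dbar\tfrac{dw_j}{w_j^{k+1}}=0$ for $a\ge k+1$, multiplication by $w^\gamma$ collapses the sum to $w^\gamma\tau=\sum_{\delta\ge\gamma}\tau_\delta\w dz\otimes\dbar\tfrac{dw}{w^{\delta-\gamma+\1}}$ (componentwise order). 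Since $w^\gamma\tau$ is supported where $w=0$, hence compactly in the fibres of $\pi$, one may apply $\pi_*$; evaluating \eqref{spurt} on test forms that do not depend on $w$ shows that $\pi_*\dbar\tfrac{dw}{w^{\mu+\1}}$ equals $(2\pi i)^p$ for $\mu=0$ and vanishes otherwise, so $\pi_*(w^\gamma\tau)=(2\pi i)^p\,\tau_\gamma\w dz$. This is \eqref{parsol1}, and in particular it shows that the $\tau_\gamma$ are determined by $\tau$.

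For existence I would argue by induction on the number $p$ of the $w$-variables. For $p=0$ we have $Z=\Omega$, any $(n,*)$-current equals $\tau_0\w dz$ for a $(0,*)$-current $\tau_0$, and $\tau$ lies in $\W_\Omega^Z$ exactly when $\tau_0$ lies in $\W_Z^{0,*}$. For the step from $p-1$ to $p$, split the last variable off, writing $w=(w',w_p)$ and $\Omega'=\Omega\cap\{w_p=0\}$, regarded as an open set in $\C^n_z\times\C^{p-1}_{w'}$. Since $\tau$ is pseudomeromorphic it is locally a finite push-forward of elementary currents, each of finite order, under a fixed sequence of modifications, simple projections and open inclusions, and the order along $\{w_p=0\}$ is increased only by a controlled amount; hence $w_p^{M+1}\tau=0$ for some $M$. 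Combined with $\bar w_p\tau=0$, $d\bar w_p\w\tau=0$, and the structure of pseudomeromorphic currents supported on the smooth hypersurface $\{w_p=0\}$ --- which is the content of \cite[Theorem~3.5]{AW3} and \cite[Proposition~4.1]{AW4} --- this yields a decomposition $\tau=\sum_{k=0}^{M}\sigma_k\otimes\dbar\tfrac{dw_p}{w_p^{k+1}}$ with $\sigma_k$ pseudomeromorphic currents on $\Omega'$ of bidegree $(N-1,*)$. Comparing supports gives $\supp\sigma_k\subseteq\{w'=0\}$, and applying $\1_{V'}$ to this decomposition for a germ $V'$ of positive codimension, together with \eqref{bost} and the linear independence of the currents $\dbar\tfrac{dw_p}{w_p^{k+1}}$ over currents on $\Omega'$ (again via \eqref{spurt}), forces $\1_{V'}\sigma_k=0$; thus each $\sigma_k$ lies in $\W_{\Omega'}^{\{w'=0\}}$. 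Applying the induction hypothesis to each $\sigma_k$ and re-assembling gives \eqref{paraply} with $\gamma=(\gamma',k)$.

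The substantive input, and the step I expect to be the main obstacle, is the one-variable decomposition: both the local bound on the order (so that the sum is finite) and the fact that the coefficients $\sigma_k$ are again pseudomeromorphic are genuine facts about the structure of pseudomeromorphic currents rather than formal manipulations, which is why I would lean on the cited results. An independent treatment would pass to a Hironaka resolution that principalizes $\{w_p=0\}$ and track how the order of an elementary current transforms under the resulting modification; everything else in the argument is degree bookkeeping together with the elementary calculus of $\1_V$, tensor products and push-forwards recalled in Section~\ref{sect:pm}.
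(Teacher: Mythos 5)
Your argument is essentially correct, but it is worth pointing out that the paper does not prove Proposition~\ref{parasol} at all: it is quoted directly from \cite[Proposition~4.1 and (4.3)]{AW4} and \cite[Theorem~3.5]{AW3}, which already treat an arbitrary codimension $p$, so in particular your induction on the number of $w$-variables is dispensable if one invokes those results in full strength. What you add on top of the citation is sound: the derivation of \eqref{parsol1} and of uniqueness by multiplying with $w^\gamma$ and pushing forward, using \eqref{spurt}, is exactly the standard computation (and is what \cite[(4.3)]{AW4} records), and the bookkeeping with $\1_{V'}$, \eqref{bost} and linear independence to transfer the SEP from $\tau$ to the coefficients is correct. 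The one place where your justification is shaky is the finite-order claim $w_p^{M+1}\tau=0$: tracking "controlled increase of order" through a sequence of modifications and push-forwards is not really how one proves this, and push-forwards can a~priori misbehave; the clean argument is simply that $\tau$ has some finite order $M$ locally as a current, is supported on $\{w_p=0\}$, and is annihilated by $\bar w_p$ and $d\bar w_p$, so the classical structure theorem for distributions supported on a submanifold gives $w_p^{M+1}\tau=0$. With that repair, your proof is a legitimate (if partly outsourced) unpacking of the cited results, whereas the paper simply cites them; the genuinely nontrivial content in either route is the fact that the coefficients $\tau_\gamma$ are again \emph{pseudomeromorphic} on $Z$, which neither you nor the paper proves from scratch and which is the substance of \cite[Theorem~3.5]{AW3}.
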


If in addition $\dbar\tau$ is in $\W_\Omega^Z$  then its coefficients in the expansion \eqref{paraply} are $\dbar\tau_\gamma$, cf.\ \eqref{parsol1}.
In particular,  $\dbar\tau=0$ if and only if $\dbar\tau_\gamma=0$ for all $\gamma$.

\smallskip

Let us now consider the pairing between  $\W_\Omega^Z$ and germs $\phi$ at $Z$ of smooth
$(0,*)$-forms. We assume that $Z$ is smooth and that we have coordinates $(z,w)$ as before, that  $\tau$ is in $\W_\Omega^Z$, and that \eqref{paraply} holds.
Moreover, we assume that
 $\phi$ is a smooth $(0,*)$-form in a \nbh of $Z$ in $\Omega$. For any positive integer
$M$ we have the expansion
\begin{equation}\label{paraply3}
\phi=\sum_{|\alpha| < M} \phi_\alpha(z)\otimes w^\alpha+\Ok(|w|^M)+\Ok(\bar w,d\bar w),
\end{equation}
where
$$
\phi_\alpha(z)=\frac{1}{\alpha!}\frac{\partial\phi}{\partial w^\alpha}(z,0)
$$
and $\Ok(\bar w,d\bar w)$ denotes a sum of terms, each of which contains a factor $\bar w_j$ or $d\bar w_j$
for some $j$.
If $M$ in \eqref{paraply3} is chosen so that $\Ok(|w|^M)\tau=0$, then
$$
\phi\w \tau=\sum_{\alpha\le \gamma}\phi_\alpha\w \tau_\gamma \wedge dz \otimes\dbar\frac{dw}{w^{\gamma-\alpha+\1}},
$$
i.e.,
\begin{equation}\label{paraply2}
\phi\w \tau=\sum_{\ell\ge 0}\sum_{\gamma\ge 0} \phi_{\gamma}\w \tau_{\ell+\gamma} \wedge dz \otimes\dbar\frac{dw}{w^{\ell+\1}}.
\end{equation}
Thus
$\phi\w \tau=0$ if and only if
$
\sum_{\gamma\ge 0} \phi_{\gamma}\w\tau_{\ell+\gamma}  =0
$
for all $\ell$ (which is a finite number of conditions!).

\subsection{Intrinsic pseudomeromorphic currents on a reduced subvariety}

Currents on a reduced analytic space $Z$ are defined as the dual of the sheaf of
test forms. If $i : Z \to Y$ is an embedding of a reduced space $Z$ into a smooth
manifold $Y$, then the push-forward mapping $\tau \mapsto i_* \tau$ gives an isomorphism
between currents $\tau$ on $Z$ and currents $\mu$ on $Y$ such that $\xi \wedge \mu = 0$
for all $\xi$ in $\E_Y$ such that $i^* \xi = 0$.

When defining pseudomeromorphic currents in the non-reduced case it is
desirable that it coincides with the previous definition in case $Z$ is reduced.
From \cite[Theorem~1.1]{Apm} we have the following description of pseudomeromophicity
from the point of view of an ambient smooth space.

\begin{prop}\label{L1}
 Assume that we have an embedding
$i\colon Z\to Y$ of a reduced space $Z$ into a smooth manifold $Y$.

\smallskip
\noindent (i) \  If $\tau$ is in $\PM_Z$, then $i_*\tau$ is in $\PM_Y$.

\smallskip
\noindent (ii) \ If $\tau$ is a current on $Z$ such that $i_* \tau$ is in $\PM_Y$ and $\1_{Z_{sing}}(i_* \tau)=0$,
then $\tau$ is in $\PM_Z$.
\end{prop}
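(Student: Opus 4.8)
The plan is to treat the two implications separately; (i) is essentially formal, whereas (ii) carries the real content. For (i) I would argue locally near a point of $\supp\tau$. By the definition of pseudomeromorphicity, after absorbing the simple projections into the modifications, $\tau$ is a finite sum of currents $\rho_*\eta$, where $\rho\colon W\to Z$ is a modification from a smooth space $W$ (hence locally a domain in some $\C^m$) and $\eta$ is an elementary pseudomeromorphic current on $W$; cf.\ \cite{AW2}. It is enough to treat a single such term, and then $i_*(\rho_*\eta)=g_*\eta$ with $g:=i\circ\rho\colon W\to Y$ a proper holomorphic map between smooth spaces. I would factor $g$ through its graph, $W\xrightarrow{\Gamma}W\times Y\xrightarrow{p}Y$, $\Gamma(x)=(x,g(x))$. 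Here $p$ is locally a composition of simple projections, and since $g$ is proper the current $\Gamma_*\eta$, supported on the graph $\Gamma(W)$, has compact support in the fibres of $p$ and of all intermediate projections; so it suffices to know that $\Gamma_*\eta\in\PM_{W\times Y}$. Since $\Gamma$ embeds $W$ as a smooth submanifold, a local biholomorphic change of coordinates in $W\times Y$ — harmless for pseudomeromorphicity — brings $\Gamma(W)$ to a coordinate subspace $\{w=0\}$, and then $\Gamma_*\eta$ is, up to a constant, the elementary current obtained from $\eta$ by adjoining the factors $\dbar(1/w_j)\w dw_j$ and extending its smooth coefficient trivially in $w$; in particular it is pseudomeromorphic. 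Pushing forward along the simple projections then gives $i_*\tau\in\PM_Y$.

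For (ii) set $\mu:=i_*\tau$, which lies in $\PM_Y$ by hypothesis and is supported on $Z$; since $\1_{Z_{sing}}\mu=0$ and $\1_Z\mu=\mu$ we have $\mu=\1_{Z_{reg}}\mu$. Choose, locally, an embedded resolution $\tilde\pi\colon\tilde Y\to Y$ of $Z$: $\tilde Y$ smooth, $\tilde\pi$ a modification that is biholomorphic over $Y\setminus Z_{sing}$, and with the strict transform $\tilde Z\subset\tilde Y$ of $Z$ smooth, so that $\pi:=\tilde\pi|_{\tilde Z}\colon\tilde Z\to Z$ is a modification, biholomorphic over $Z_{reg}$. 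By \cite[Proposition~1.2]{Apm} there is $\mu_0\in\PM_{\tilde Y}$ with $\tilde\pi_*\mu_0=\mu$; put $\tilde\mu:=\1_{\tilde\pi^{-1}(Z_{reg})}\mu_0$. Then $\tilde\pi_*\tilde\mu=\1_{Z_{reg}}\mu=\mu$ by \eqref{stare3} and \eqref{stare}, while $\supp\tilde\mu\subset\tilde Z$ and $\1_{\tilde\pi^{-1}(Z_{sing})}\tilde\mu=0$.

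Now $\tilde\mu$ is pseudomeromorphic and supported on the smooth subvariety $\tilde Z$, so in local coordinates $(z,w)$ with $\tilde Z=\{w=0\}$ a structure theorem for pseudomeromorphic currents supported on a submanifold — in the spirit of Proposition~\ref{parasol} — gives a \emph{unique} representation $\tilde\mu=\sum_\alpha\nu_\alpha\w\dbar\frac{dw}{w^{\alpha+\1}}$ with $\nu_\alpha\in\PM_{\tilde Z}$. Outside $\tilde\pi^{-1}(Z_{sing})$, where $\tilde\pi$ is biholomorphic, $\tilde\mu$ is the push-forward under $\tilde Z\hookrightarrow\tilde Y$ of $\tau$ transported via $\pi$, hence $w_j\tilde\mu=0$ there; comparing with the representation, $\nu_\alpha$ vanishes outside $E:=\tilde Z\cap\tilde\pi^{-1}(Z_{sing})$ for every $\alpha\neq0$, i.e.\ $\supp\nu_\alpha\subset E$. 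On the other hand, by \eqref{stare} and \eqref{bost}, $0=\1_{\tilde\pi^{-1}(Z_{sing})}\tilde\mu=\1_E\tilde\mu=\sum_\alpha(\1_E\nu_\alpha)\w\dbar\frac{dw}{w^{\alpha+\1}}$, so uniqueness forces $\1_E\nu_\alpha=0$ for all $\alpha$; for $\alpha\neq0$ this gives $\nu_\alpha=\1_E\nu_\alpha=0$. Thus $\tilde\mu=c\,\iota_*\nu_0$ with $\iota\colon\tilde Z\hookrightarrow\tilde Y$, and pushing down $i_*\tau=\mu=\tilde\pi_*\tilde\mu=c\,i_*\pi_*\nu_0$; injectivity of $i_*$ yields $\tau=c\,\pi_*\nu_0\in\PM_Z$, since $\nu_0\in\PM_{\tilde Z}$ and $\pi$ is a modification.

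The main obstacle is the step showing that the coefficients $\nu_\alpha$ with $\alpha\neq0$ vanish: this is exactly where both hypotheses enter — that $\mu$ comes from a current on $Z$ (no normal derivatives along $\tilde Z$ over the regular part) and that $\1_{Z_{sing}}\mu=0$ (which after resolution becomes the $\1_E$-vanishing controlling the coefficients living over the exceptional and singular locus). The remaining technical points are the structure theorem for general pseudomeromorphic currents supported on a submanifold, with its uniqueness statement, and the existence of a compatible embedded resolution; direction (i), by contrast, is soft.
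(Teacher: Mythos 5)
The paper does not actually prove Proposition~\ref{L1}: it is quoted from \cite[Theorem~1.1]{Apm}, so your argument can only be judged on its own terms, not against an in-paper proof.

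For (i), your opening reduction is false as stated: you cannot ``absorb the simple projections into the modifications'' --- the push-forward of an elementary current under a simple projection $Z\times\C^k\to Z$ is in general not the push-forward of an elementary current under a modification (the source of a modification even has the wrong dimension). This is harmless, because your graph trick never uses that $\rho$ is a modification: it works for the full composite $g=i\circ\rho\colon W\to Y$ with $W$ smooth, and since elementary currents have compact support (their coefficient $\alpha$ does) no properness is needed. Note also that after straightening the graph the current you obtain is not literally elementary (its coefficient no longer has compact support in the $w$-directions); it is a tensor product of pseudomeromorphic currents, which is pseudomeromorphic by \cite[Lemma~3.3]{Apm}, cf.\ Section~\ref{sect:pm}. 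With these repairs (i) is fine.

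For (ii), the architecture is sound: lifting $\mu=i_*\tau$ to an embedded resolution by \cite[Proposition~1.2]{Apm}, cutting with $\1_{\tilde\pi^{-1}(Z_{reg})}$ (which is exactly where the hypothesis $\1_{Z_{sing}}\mu=0$ enters, so that nothing is lost under push-forward), and killing the normal-derivative terms by playing the identification with the transport of $i_*\tau$ over the biholomorphic locus against $\1_E\tilde\mu=0$, is a viable route. But the step you lean on contains a genuine gap. First, the structure theorem you invoke is false as stated: a pseudomeromorphic current supported on $\{w=0\}$ need not contain the factors $dw_j$ at all (already $\dbar(1/w)$ in $\C$ is a counterexample), so the representation must allow terms $\nu_{\alpha,I}\w dw_I\w\dbar(1/w^{\alpha+\1})$ with arbitrary $I$; correspondingly you must also use $dw_j\w\tilde\mu=0$ over the biholomorphic locus (true, since $\tau$ is a current on the reduced $Z$, so $dw_j\w i_*\tau=0$) to kill the terms with $I$ not full. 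Second, and more seriously, the assertion that the coefficients can be chosen pseudomeromorphic on $\tilde Z$, uniquely, is precisely the smooth-submanifold case of (ii) itself --- the entire content of the proposition once you have resolved --- and it is not covered by Proposition~\ref{parasol}, which concerns $(N,*)$-currents with the SEP. The claim is true: existence of some representation follows from the usual structure of currents supported on a submanifold together with $\bar w_j\tilde\mu=0$ and $d\bar w_j\w\tilde\mu=0$, and pseudomeromorphicity of the coefficients follows because they can be recovered, up to constants and nonvanishing $dz$-factors, as push-forwards of $w^{\alpha}\,\tilde\mu\w dw_{I^c}$ under the simple projection $(z,w)\mapsto z$, operations that preserve $\PM$. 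As written, however, this pivotal step is assumed rather than proved, so the proof of (ii) is incomplete at its core.
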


Since $i_*( i^* \chi(|h|^2/\epsilon) \tau) = \chi(|h|^2/\epsilon) i_* \tau$ for any current $\tau$ on $Z$,
we get by \eqref{restrDef} that for a subvariety $V \subset \U\subset  Z$,
\begin{equation} \label{stare2prim}
    \1_V (i_* \tau) = i_*( \1_V \tau),
\end{equation}
i.e., \eqref{stare3} holds also for an embedding $i : Z \to Y$.
The condition $\1_{Z_{sing}}(i_* \tau) = 0$ in (ii) is fulfilled if $i_* \tau$ has the SEP with respect to $Z$.

\begin{cor} \label{Wextrinsic}
    We have the isomorphism
    \begin{equation*}
        i_* : \W^{n,*}_Z \to \Homs(\Ok_\Omega/\J,\W^Z_\Omega),
    \end{equation*}
    where $\J$ is the ideal defining $Z$ in $\Omega$.
\end{cor}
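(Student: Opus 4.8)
The plan is to construct the map $i_*$ and its inverse directly, using the structural results already assembled. First I would define, for $\tau\in\W^{n,*}_Z$ and $\phi\in\Ok_\Omega/\J$, the current $i_*(\phi\cdot\tau)$ on $\Omega$; since multiplication by $\phi$ is a well-defined operation on $\W^{n,*}_Z$ (say, represented by a holomorphic function on $\Omega$ restricted to $Z$, independent of the representative because $\tau$ is supported on $Z$), this is $\Ok_\Omega/\J$-linear in $\phi$. By Proposition~\ref{L1}(i), $i_*(\phi\cdot\tau)$ is pseudomeromorphic on $\Omega$, it clearly has support on $Z$, and by \eqref{stare2prim} together with the fact that $\phi\cdot\tau$ has the SEP on $Z$ (multiplication by the smooth-generically-on-$Z$ almost semi-meromorphic, indeed holomorphic, $\phi$ preserves $\W_Z$ by \eqref{stare2} / Proposition~\ref{asmW}), the push-forward $i_*(\phi\cdot\tau)$ lies in $\W^Z_\Omega$. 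Thus $\tau\mapsto\big(\phi\mapsto i_*(\phi\cdot\tau)\big)$ is a well-defined sheaf morphism $\W^{n,*}_Z\to\Homs(\Ok_\Omega/\J,\W^Z_\Omega)$.

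Next I would produce the inverse. Given $\psi\in\Homs(\Ok_\Omega/\J,\W^Z_\Omega)$, set $\mu:=\psi(1)\in\W^Z_\Omega$. Since $\mu$ is in $\W^Z_\Omega$ it has the SEP with respect to $Z$, so in particular $\1_{Z_{sing}}\mu=0$; also $\xi\w\mu=0$ whenever $\xi\in\E_\Omega$ with $i^*\xi=0$ — this is where one uses that $\W^Z_\Omega$-currents are supported on $Z$ and the reasoning around \eqref{stare2} (a smooth form vanishing on $Z_{reg}$ pulls back to zero, and the SEP kills the contribution over $Z_{sing}$, so $\xi\w\mu$ vanishes on a dense Zariski-open subset of $Z$ and has the SEP, hence vanishes). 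By the isomorphism between currents on $Z$ and currents on $\Omega$ killed by $\Kers i^*$, there is a unique current $\tau$ on $Z$ with $i_*\tau=\mu$, and by Proposition~\ref{L1}(ii) (the hypothesis $\1_{Z_{sing}}(i_*\tau)=0$ being exactly what we checked) $\tau$ is in $\PM_Z$; it is of bidegree $(n,*)$ and inherits the SEP from $\mu$ via \eqref{stare2prim}, so $\tau\in\W^{n,*}_Z$. One must then check that $\psi$ is recovered, i.e.\ $\psi(\phi)=i_*(\phi\cdot\tau)$ for all $\phi$: this follows because $\psi$ is $\Ok_\Omega/\J$-linear, so $\psi(\phi)=\phi\cdot\psi(1)=\phi\cdot\mu=i_*(\phi\cdot\tau)$, using that push-forward commutes with multiplication by (pullbacks of) holomorphic functions.

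The two constructions are visibly mutually inverse: starting from $\tau$ we get $\mu=i_*\tau$ and then recover $\tau$ by the injectivity of $i_*$ on currents; starting from $\psi$ we get $\tau$ with $i_*\tau=\psi(1)$ and then the morphism sends $\phi$ to $i_*(\phi\cdot\tau)=\phi\cdot\psi(1)=\psi(\phi)$. Both maps are $\Ok_Z$-linear and compatible with restrictions, hence define an isomorphism of sheaves.

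The step I expect to be the main obstacle is verifying cleanly that an element $\mu$ of $\W^Z_\Omega$ really does descend to a current on $Z$ in the sense of the reduced-space duality, that is, that $\xi\w\mu=0$ for every smooth $\xi$ with $i^*\xi=0$ on $Z_{reg}$ — one cannot simply invoke "support on $Z$", since such $\xi$ need not vanish on all of $Z$ as a smooth form, only its pullback to $Z_{reg}$ vanishes. The resolution is to note $\xi\w\mu$ is pseudomeromorphic with the SEP with respect to $Z$ (by \eqref{stare2} it equals $\xi$ wedge an SEP current, still SEP), and it vanishes on $Z_{reg}$ because there $\mu$ "is" a current on the manifold $Z_{reg}$ and $i^*\xi=0$; since $Z_{reg}$ is Zariski-dense in $Z$ and $\xi\w\mu$ has the SEP, it vanishes identically. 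Once this compatibility is in hand, everything else is formal bookkeeping with Proposition~\ref{L1} and \eqref{stare2prim}.
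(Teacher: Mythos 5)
Your overall architecture (forward map well defined via Proposition~\ref{L1}(i) and \eqref{stare2prim}; inverse via $\mu=\psi(1)$, descent to a current on $Z$, then Proposition~\ref{L1}(ii) and the SEP) matches the paper's, and you correctly isolate the crux: showing $\xi\w\mu=0$ for every smooth $\xi$ with $i^*\xi=0$. But your resolution of that crux has a genuine gap. You justify the vanishing on $Z_{reg}$ by saying that there "$\mu$ is a current on the manifold $Z_{reg}$" --- but that is exactly the statement to be proved at that point, and it is \emph{false} for a general element of $\W^Z_\Omega$: the ingredients you invoke (support on $Z$, pseudomeromorphicity, the SEP, \eqref{stare2}) never use that $\mu$ is annihilated by $\J$, and without that hypothesis the claim fails. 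Concretely, for $Z=\{w=0\}\subset\C^2_{z,w}$ the current $\mu=\dbar(1/w^2)\w dw\w dz$ lies in $\W^Z_\Omega$ (it is an elementary pseudomeromorphic current supported on $Z$, and $\1_W\mu=0$ for every proper subvariety $W\subset Z$ by the dimension principle), yet $w\mu=\dbar(1/w)\w dw\w dz\neq 0$ even though $i^*w=0$; so $\mu$ is not $i_*$ of anything. These are precisely the "normal derivative" terms $\gamma\neq 0$ in the expansion \eqref{paraply}.

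The missing ingredient is the hypothesis you never use in the surjectivity direction: $\mu=\psi(1)$ satisfies $\J\mu=0$. The paper's local argument on $Z_{reg}$ runs as follows: in coordinates with $Z_{reg}=\{w=0\}$, any smooth $\xi$ with $i^*\xi=0$ is a sum of terms each carrying a factor $w_j$, $\bar w_j$, or $d\bar w_j$; the factors $\bar w_j$ and $d\bar w_j$ annihilate $\mu$ because $\mu$ is pseudomeromorphic with support in $\{w_j=0\}$, while the factors $w_j$ annihilate $\mu$ because $w_j\in\J$ and $\J\mu=0$. Your reduction from $Z$ to $Z_{reg}$ via the SEP of $\xi\w\mu$ is fine and is indeed needed to handle $Z_{sing}$; it is only the local statement on $Z_{reg}$ that your write-up leaves unproved. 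The remaining pieces of your proposal (the forward direction, $\Ok_\Omega/\J$-linearity giving $\psi(\phi)=\phi\,\psi(1)$, the application of Proposition~\ref{L1}(ii), and mutual inverseness) are correct.
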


Notice that $\Homs(\Ok_\Omega/\J,\W_\Omega^Z)$ is precisely the sheaf of $\mu$ in $\W_\Omega^Z$
such that $\J\mu=0$.

\begin{proof}
    The map $i_*$ is injective, since it is injective on any currents, and it maps into
    $\Homs(\Ok_\Omega/\J,\W^Z_\Omega)$ by \eqref{stare2prim}.
    To see that $i_*$ is surjective, we take a $\mu$ in $\Homs(\Ok_\Omega/\J,\W^Z_\Omega)$.
    We assume first that we are on $Z_{\rm reg}$, with local coordinates such that $Z_{\rm reg} = \{ w = 0 \}$.
    If $\xi$ is in $\E_\Omega^{0,*}$ and $i^*\xi=0$, then $\xi$ is a sum of forms with a factor $d \bar w_j$,
    $w_j$ or $\bar w_j$. Since $w_j \in \J$, $w_j$ annihilates $\mu$ by assumption, and since
    $w_j$ vanishes on the support of $\mu$, $\bar w_j$ and $d \bar w_j$ annihilate $\mu$ since $\mu$
    is pseudomeromorphic. Thus, $\mu . \xi = 0$, so $\mu = i_* \tau$ for some current $\tau$ on $Z$.
    By Proposition~\ref{L1} (ii), $\tau$ is pseudomeromorphic, and by \eqref{stare2prim},
    has the SEP, i.e., $\tau$ is in $\W^{n,*}_Z$.
\end{proof}


\begin{remark}\label{soda}
We do not know whether $i_* \tau \in\PM_\Omega^Z$ implies that
$\tau\in\PM_Z$.
\end{remark}

By \cite[Proposition~3.12 and Theorem~3.14]{AW4}, we get

\begin{prop} \label{Wsmooth}
    Let $\varphi$ and $\phi_1,\dots,\phi_m$ be currents in $\W_Z$.
    If $\varphi = 0$ on the set on $Z_{reg}$ where $\phi_1,\dots,\phi_m$ are smooth,
    then $\varphi = 0$.
\end{prop}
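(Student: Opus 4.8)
The plan is to reduce the statement to a purely local one on $Z_{reg}$ and then exploit the standard extension property together with the structure of $\W_Z$ under restriction. Let $V$ be the Zariski closure of the union of the non-smooth loci of $\phi_1,\dots,\phi_m$, i.e.\ the complement in $Z$ of the set where all of $\phi_1,\dots,\phi_m$ are smooth; since each $\W_Z$-current is smooth outside a nowhere dense analytic (or at least Zariski-closed of positive codimension) subset of $Z_{reg}$, and since $Z_{sing}$ itself is of positive codimension, the set $V$ is contained in a Zariski-closed subset of $Z$ of positive codimension. By hypothesis $\varphi$ vanishes on $Z_{reg}\setminus V$, which is a dense Zariski-open subset of $Z$.

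The key step is then: a current in $\W_Z$ that vanishes on a dense Zariski-open subset of $Z$ must vanish identically. First I would replace $V$ by a hypersurface-type set: choose a holomorphic tuple $h$ (or a single holomorphic function, after taking a finite intersection) cutting out an analytic set $V'\supset V$ of positive codimension, so that $\varphi = 0$ on $Z\setminus V'$ as well. Since $\varphi\in\PM_Z$, it has a natural restriction $\1_{Z\setminus V'}\varphi$, and by \eqref{restrDef} this equals $\lim_{\epsilon\to 0^+}\chi(|h|^2/\epsilon)\varphi$, which is the extension of $\varphi|_{Z\setminus V'}=0$, hence $\1_{Z\setminus V'}\varphi = 0$. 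On the other hand, $\varphi$ has the SEP, so by definition $\1_{V'}\varphi = 0$ (as $V'$ has positive codimension). Therefore $\varphi = \1_{V'}\varphi + \1_{Z\setminus V'}\varphi = 0$.

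The point requiring care — and the step I expect to be the main obstacle — is the very first reduction: verifying that the set where \emph{all} of the finitely many currents $\phi_1,\dots,\phi_m$ fail to be smooth is genuinely contained in a Zariski-closed set of positive codimension in $Z$, in particular that it does not accidentally fill up an irreducible component. Here one uses that each $\phi_j\in\W_Z$ is, by the structural description of $\W_Z$ referenced via \cite{AW4}, smooth on $Z_{reg}$ outside a Zariski-closed subset of positive codimension (equivalently, the currents in $\W_Z$ are determined by their smooth behaviour on a dense Zariski-open set); the finite union of these bad sets together with $Z_{sing}$ is again Zariski-closed of positive codimension. Once that is in hand, the SEP-plus-principal-value argument above closes the proof; in fact the hypothesis that there are several $\phi_j$ rather than one is inessential, since one simply works with the intersection of their smooth loci.
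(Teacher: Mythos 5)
Your concluding step is fine: if $\varphi\in\W_Z$ vanishes on $Z\setminus V'$ for a \emph{single} analytic set $V'$ of positive codimension cut out by a tuple $h$, then $\chi(|h|^2/\epsilon)\varphi=0$ for all $\epsilon$, so $\1_{Z\setminus V'}\varphi=0$ by \eqref{restrDef}, while $\1_{V'}\varphi=0$ by the SEP, and hence $\varphi=0$. The genuine gap is the first reduction, which you yourself single out as the main obstacle and then dispose of by assertion: it is not available (and, I believe, not true) that an arbitrary current in $\W_Z$ is smooth outside a Zariski-closed subset of positive codimension. That property holds for almost semi-meromorphic currents, whose non-smooth locus is by definition contained in an analytic set, but a general section of $\W_Z$ is only locally a finite sum of push-forwards of elementary currents under compositions of modifications, projections \emph{and open inclusions}, and nothing confines its non-smooth locus to a single analytic set. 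Note also that your parenthetical ``equivalently, the currents in $\W_Z$ are determined by their smooth behaviour on a dense Zariski-open set'' is not an equivalence: being determined by the restriction to a dense Zariski-open set is essentially the SEP and carries no information about where the current is actually smooth.

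This is precisely why the paper does not argue along your lines but instead cites two separate results from \cite{AW4}: Theorem~3.14 there, to the effect that the set where finitely many currents in $\W_Z$ are simultaneously smooth is dense in a suitably strong sense (its complement is small in every subvariety, but need not be contained in a proper analytic subset), and Proposition~3.12, a nontrivial vanishing criterion for currents in $\W_Z$ that vanish on such a dense open set. With the exceptional set only known to be, say, a countable union $\bigcup_k W_k$ of proper subvarieties, your SEP argument collapses: one gets $\1_{W_k}\varphi=0$ for each $k$, hence $\varphi=\1_{Z\setminus(W_1\cup\dots\cup W_N)}\varphi$ for every finite $N$, but $\varphi$ is only assumed to vanish outside the full union, so no finite stage forces $\varphi=0$. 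To repair the proof you must either establish your structural claim (which I doubt) or replace the last step by the stronger vanishing criterion of \cite[Proposition~3.12]{AW4} applied to the joint smooth locus of $\varphi,\phi_1,\dots,\phi_m$.
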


\section{Local embeddings of a non-reduced analytic space}\label{get}
Let $X$ be an analytic space of pure dimension $n$ with structure sheaf $\Ok_X$
and let $Z=X_{red}$ be the underlying reduced analytic space.
For any point $x\in X$ there is, by definition,
an open set $\Omega\subset\C^N$ and an
ideal sheaf $\J\subset \Ok_\Omega$ of pure dimension $n$ with zero set $Z$
such that $\Ok_X$ is isomorphic to $\Ok_\Omega/\J$, and all associated primes of $\J$ at any point have dimension $n$.
We say that we have a local embedding $i\colon X\to \Omega\subset\C^N$ at $x$.
There is a minimal such $N$, called the Zariski embedding dimension $\hat N$ of $X$ at $x$,
and the associated embedding is said to be minimal.  Any two minimal
embeddings are identical up to a biholomorphism, and  any embedding
$i\colon X\to \Omega$ has locally at $x$ the form
\begin{equation}\label{embedding}
X\stackrel{j}{\to}\widehat\Omega\stackrel{\iota}{\to} \Omega:=\widehat\Omega\times\U, \quad i=\iota\circ j,
\end{equation}
where $j$ is minimal, $\U$ is an open subset of $\C^{m}_w$, $m=N-\hat N$,
and the ideal in  $\Omega$ is $\J=\widehat\J\otimes 1 +(w_1,\ldots,w_m)$.
Notice that we then also have embeddings $Z\to\widehat\Omega\to\Omega$;
however, the first one is in general not minimal.


Now consider a fixed local embedding  $i\colon X\to\Omega \subset \C^N$, assume that $Z$ is smooth,
and let $(z,w)$ be coordinates in  $\Omega$ such that $Z=\{w=0\}$. We can identify
$\Ok_Z$ with holomorphic functions of $z$, and we can define an injection
$$
\Ok_Z\to\Ok_X,\quad \phi(z)\mapsto \tilde\phi(z,w)=\phi(z).
$$
In this way $\Ok_X$ becomes an $\Ok_Z$-module, which however 
depends on the choice of coordinates.

\begin{prop}\label{sorelle}
Assume that $Z$ is smooth. Let $\Ok_X$ have the $\Ok_Z$-module structure from a choice of local coordinates
as above. Then $\Ok_X$ is a coherent $\Ok_Z$-module, and $\Ok_X$ is a free $\Ok_Z$-module at $x$ if and only if
$\Ok_X$ is Cohen-Macaulay at $x$.
\end{prop}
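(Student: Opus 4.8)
The plan is to reduce everything to a purely local, algebraic statement about the finitely generated module $M := (\Ok_\Omega/\J)_x$ over the regular local ring $R := \Ok_{Z,x}$, exploiting the coordinate splitting $\Ok_{\Omega,x} = \Ok_{Z,x}[[w_1,\dots,w_m]]$ (or the analytic version thereof). Via this coordinate choice, $M$ is a quotient of $R\{w\}$, the convergent power series ring in $w$ over $R$, and since $Z=\{w=0\}$ cuts out the nilradical, $M$ is finitely generated over $R$ as a module: indeed $\J$ contains a power $w^\beta$ of each $w_j$ by the Nullstellensatz, so $M$ is generated over $R$ by the finitely many monomials $w^\alpha$ with $\alpha$ not divisible by any such power. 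Coherence of $\Ok_X$ as an $\Ok_Z$-module then follows from general coherence principles: $\Ok_X$ is already coherent as an $\Ok_Z$-algebra (being a quotient of $\Ok_\Omega$ restricted to $Z$, or directly $\Ok_\Omega/\J$ pushed to $Z$ via the finite map $X\to Z$), and a coherent algebra module-finite over $\Ok_Z$ is a coherent $\Ok_Z$-module; alternatively one checks the two defining finiteness conditions directly using the monomial generators and Noetherianity of $\Ok_{\Omega,x}$.

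For the second assertion I would invoke the Auslander–Buchsbaum formula together with the local criterion for freeness. The key observation is that the ring extension $R = \Ok_{Z,x} \hookrightarrow S := \Ok_{\Omega,x}$ is a (formally) smooth extension of regular local rings with $\dim S = \dim R + m$, and $M$ is an $S$-module supported on the fibre $\{w=0\}$, hence $M$ has finite length in the $w$-direction; concretely $M$ is finite as an $R$-module. Over the Cohen–Macaulay regular local ring $R$ (which is even regular, so of finite global dimension $n$), a finitely generated module $M$ is free if and only if it is a maximal Cohen–Macaulay $R$-module, i.e. $\depth_R M = \dim R = n$, by Auslander–Buchsbaum ($\pd_R M + \depth_R M = \depth R = n$) combined with the fact that a module of projective dimension zero over a local ring is free. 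So it remains to show: $\Ok_X$ is Cohen–Macaulay at $x$ (as a module over $\Ok_{X,x}$, equivalently over $\Ok_{\Omega,x}$, equivalently over $\Ok_{Z,x}$ — depth is insensitive to which of these rings one uses since they all have the same residue field and the relevant maximal ideals pull back compatibly) if and only if $\depth_R M = n$.

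The heart of the matter, and the step I expect to require the most care, is precisely this invariance of depth: $\depth_{\Ok_{X,x}} \Ok_{X,x} = \depth_{\Ok_{\Omega,x}} \Ok_{X,x} = \depth_{\Ok_{Z,x}} \Ok_{X,x}$. The first equality is standard: for a finite local homomorphism $A \to B$ of Noetherian local rings and a finite $B$-module $M$, $\depth_A M = \depth_B M$ (any $\mathfrak m_A$-regular sequence on $M$ is $\mathfrak m_B$-regular and conversely, using that $\mathfrak m_A B$ is $\mathfrak m_B$-primary). Applied with $A = \Ok_{\Omega,x}$, $B = \Ok_{X,x}$, $M = \Ok_{X,x}$, this handles the ambient-versus-intrinsic comparison. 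For the comparison with $\Ok_{Z,x}$ I would again use that $\Ok_{X,x}$ is a finite $\Ok_{Z,x}$-module and that the extension $\Ok_{Z,x}\to\Ok_{X,x}$ is local with $\mathfrak m_{Z,x}\Ok_{X,x}$ being $\mathfrak m_{X,x}$-primary (this is where $Z=\{w=0\}$ and the monomial description enter: modulo $\mathfrak m_{Z,x}\Ok_{X,x}$ one is left with a finite-dimensional vector space spanned by the $w^\alpha$). Then the Cohen–Macaulayness of $\Ok_{X,x}$ — meaning $\depth_{\Ok_{X,x}}\Ok_{X,x} = \dim\Ok_{X,x} = n$ — translates to $\depth_{\Ok_{Z,x}} M = n = \dim\Ok_{Z,x}$, i.e. $M$ is maximal Cohen–Macaulay over the regular ring $R$, i.e. $M$ is $R$-free. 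This closes the equivalence. I would remark that the resulting free rank is $\nu$, the number of monomials $w^{\alpha_j}$ in \eqref{burk}, so this recovers the representation \eqref{burk} on $X_{reg}$.
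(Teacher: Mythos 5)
Your proposal is correct, and its skeleton matches the paper's: finite generation of $\Ok_{X,x}$ over $\Ok_{Z,x}$ via the Nullstellensatz and the monomials $w^\alpha$, coherence as a quotient of the free module spanned by those monomials, and then the reduction of ``free $\Leftrightarrow$ Cohen--Macaulay'' to the Auslander--Buchsbaum formula over the regular ring $\Ok_{Z,x}$, so that freeness becomes $\pd_{\Ok_{Z,x}}\Ok_{X,x}=0$, i.e.\ $\depth_{\Ok_{Z,x}}\Ok_{X,x}=n$. The one place where you genuinely diverge is the crux step, the identification $\depth_{\Ok_{X,x}}\Ok_{X,x}=\depth_{\Ok_{Z,x}}\Ok_{X,x}$ (together with $\dim=n$ on both sides, which both arguments get from $\ann_{\Ok_{Z,x}}\Ok_{X,x}=0$). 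You invoke the standard fact that depth of a finite module is invariant under a finite local homomorphism $(A,\mathfrak m_A)\to(B,\mathfrak m_B)$ with $\mathfrak m_AB$ being $\mathfrak m_B$-primary, applied to $\Ok_{Z,x}\to\Ok_{X,x}$ (and to the surjection $\Ok_{\Omega,x}\to\Ok_{X,x}$, which is the content of the paper's Remark~\ref{ch}). The paper instead proves the needed correspondence of regular sequences by hand: a sequence $f_1,\dots,f_m$ in $\Ok_{X,x}$ is regular if and only if $\tilde f_j(z)=f_j(z,0)$ is, which it deduces from the characterization of non zero-divisors as generically non-vanishing functions --- and this is where the pure-dimensionality of $\Ok_X$ enters the paper's argument. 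Your route buys a cleaner, more standard proof of the depth comparison that does not lean on pure dimension at that step; the paper's route is more self-contained and makes visible exactly how the hypothesis on $\J$ is used. Both are complete; just make sure, if you keep the black-box citation, to verify (as you do) that $\mathfrak m_{Z,x}\Ok_{X,x}$ is $\mathfrak m_{X,x}$-primary, which is exactly the finiteness of $\Ok_{X,x}$ over $\Ok_{Z,x}$ coming from the monomial generators.
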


Recall that $f_1,\dots,f_m  \in R$ is a \emph{regular sequence} on the $R$-module $M$ if
$f_i$ is a non zero-divisor on $M/(f_1,\dots,f_{i-1})$ for $i=1,\dots,m$, and $(f_1,\dots,f_m) M \neq M$.
If $R$ is a local ring, then $\depth_RM$ is the maximal length $d$ of a regular sequence $f_1,\dots,f_d$
such that $f_1,\dots,f_d$ are contained in the maximal ideal $\mathfrak{m}$; furthermore,
$M$ is \emph{Cohen-Macaulay} if $\depth_R M = \dim_R M$, where $\dim_R M = \dim_R (R/\ann_R M)$.
If $R$ is Cohen-Macaulay, and $M$ has a finite free resolution over $R$, then
the \emph{Auslander-Buchsbaum} formula, \cite[Theorem~19.9]{Eis}, gives that
\begin{equation}\label{aus}
    \depth_R M + \pd_R M = \dim_R R,
\end{equation}
where $\pd_R M$ is the length of a minimal free resolution of $M$ over $R$.
In this case, $M$ is Cohen-Macaulay as
an $R$-module if and only if $M$ has a free resolution over $R$
of length $\codim M$.

\begin{remark}\label{ch}
Notice that if we have a local embedding $i\colon X\to\Omega$ as above,  then the depth and dimension of
$\Ok_{X,x}=\Ok_{\Omega,x}/\J$ as an $\Ok_{\Omega,x}$-module coincide with the depth and dimension
of $\Ok_{X,x}$ as an $\Ok_{X,x}$-module. Thus $\Ok_{X,x}$ is Cohen-Macaulay as an $\Ok_{X,x}$-module if and only if it is
Cohen-Macaulay as an $\Ok_{\Omega,x}$-module, and this holds in turn if and only if $\Ok_{\Omega,x}/\J$ has a free
resolution of length $N-n$.
\end{remark}

\begin{proof}[Proof of Proposition~\ref{sorelle}]
By the Nullstellensatz there is an $M$ such that $w^\alpha$ is in $\J$
in some neighborhood of $x$ if $|\alpha|=M$.  Let $\M\subset\Ok_\Omega$ be
the ideal generated by $\{w^\alpha;\ |\alpha|=M\}$.
Then $\M'=\Ok_\Omega/\M$ is a free, finitely generated $\Ok_Z$-module.
Thus, $\Ok_\Omega/\J \simeq \M'/ \J \M'$ is a coherent $\Ok_Z$-module,
which we note is generated by the finite set of monomials $w^\alpha$ such that
$|\alpha| < M$.


We shall  now  show that
\begin{equation} \label{eqdepth}
    \depth_{\Ok_{X,x}} \Ok_{X,x} = \depth_{\Ok_{Z,x}} \Ok_{X,x}
\end{equation}
and
\begin{equation} \label{eqdimension}
    \dim_{\Ok_{X,x}} \Ok_{X,x} = \dim_{\Ok_{Z,x}} \Ok_{X,x}.
\end{equation}
We claim that  a sequence $f_1,\dots,f_m$ in $\Ok_{X,x}$ is regular (on $\Ok_{X,x}$) if and only if
$\tilde f_1,\dots,\tilde f_m\in\Ok_{Z,x}$ is regular on $\Ok_{X,x}$, where $\tilde f_j(z)=f_j(z,0)$.
In fact, since $\Ok_{X,x}$ has pure dimension, a function $g\in\Ok_{X,x}=\Ok_{\Omega,x}/\J$ is a non zero-divisor
if and only if
$g$ is generically non-vanishing on each irreducible component of $Z(\J)$. Thus $f_1$ is a non zero-divisor
if and only if $\tilde f_1$ is. If it is, then $\Ok_{X,x}/(f_1)=\Ok_{\Omega,x}/(\J+(f_1))$ again has pure dimension.
Thus the claim follows by induction, and the fact that $Z(\J+(f_1,\dots,f_k)) = Z(\J+(\tilde{f}_1,\dots,\tilde{f}_k))$.
The claim immediately implies \eqref{eqdepth}.

To see \eqref{eqdimension}, we note first that $\dim_{\Ok_{X,x}} \Ok_{X,x}$ is just the usual (geometric) dimension
of $X$ or $Z$, i.e., in this case, $n$. Now,  $\ann_{\Ok_{Z,x}} \Ok_{X,x} = \{ 0 \}$,
so $\dim_{\Ok_{Z,x}} \Ok_{X,x} = \dim_{\Ok_{Z,x}} \Ok_{Z,x}/(\ann_{\Ok_{Z,x}} \Ok_{X,x}) = \dim_{\Ok_{Z,x}} \Ok_{Z,x} = n$.

From \eqref{eqdepth} and \eqref{eqdimension} we conclude that
$\Ok_{X,x}$ is Cohen-Macaulay as an $\Ok_{Z,x}$-module if and only if it is Cohen-Macaulay (as an $\Ok_{X,x}$-module).
Hence, by \eqref{aus}, with $R = \Ok_{Z,x}$ and $M = \Ok_{X,x}$,
\begin{equation*}
    \depth_{\Ok_{Z,x}} \Ok_{X,x} + \pd_{\Ok_{Z,x}} \Ok_{X,x} = n,
\end{equation*}
so  $\Ok_{X,x}$ is Cohen-Macaulay as an $\Ok_{Z,x}$-module if and only if
$\pd_{\Ok_{Z,x}} \Ok_{X,x} = 0$, that is, if and only if  $\Ok_{X,x}$ is a free $\Ok_{Z,x}$-module.
\end{proof}

In the proof above, we saw that
$\Ok_X$ is generated (locally) as an $\Ok_Z$-module by all monomials $w^\alpha$ with $|\alpha|\le M$ for some $M$.

\begin{cor}\label{sorelle2}
Assume that  $1, w^{\alpha_1},\ldots,w^{\alpha_{\nu-1}}$ is a minimal set of generators at a given point $x$
(clearly $1$ must be among the generators!). Then we have a unique representation
\eqref{burk} for each $\phi\in\Ok_{X,x}$ if and only if $\Ok_{X,x}$ is Cohen-Macaulay.
\end{cor}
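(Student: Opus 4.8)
The plan is to deduce Corollary~\ref{sorelle2} from Proposition~\ref{sorelle} by translating the statement about a minimal set of generators and unique representations into the language of free modules. Throughout we work in the local ring $R := \Ok_{Z,x}$ with $M := \Ok_{X,x}$ viewed as an $R$-module via the chosen coordinates, and we recall from the proof of Proposition~\ref{sorelle} that $M$ is a coherent, hence finitely generated, $R$-module, generated by the monomials $w^\alpha$, $|\alpha| < M$. Since $R$ is a regular local ring (as $Z$ is smooth), and in particular Cohen-Macaulay, the Auslander--Buchsbaum machinery and Proposition~\ref{sorelle} are available.

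\medskip
\noindent\textbf{Step 1: reduce to minimal generators via Nakayama.} Since $R$ is local with maximal ideal $\mathfrak m$, Nakayama's lemma gives that a set of elements $m_1,\dots,m_k \in M$ is a minimal set of generators if and only if their residues form a $k$-element basis of the $R/\mathfrak m$-vector space $M/\mathfrak m M$. In particular every minimal set of generators has the same cardinality $\nu := \dim_{R/\mathfrak m} M/\mathfrak m M$, so it is no loss to assume $1, w^{\alpha_1},\dots,w^{\alpha_{\nu-1}}$ are as in the statement; note $1 \notin \mathfrak m M$ (indeed its image generates the ``$\alpha=0$'' part of $M$, which is a direct complement to the submodule generated by the $w^\alpha$ with $|\alpha|\ge 1$ after any base change), so $1$ is necessarily part of such a minimal system.

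\medskip
\noindent\textbf{Step 2: unique representation $\iff$ freeness.} A choice of minimal generators $1,w^{\alpha_1},\dots,w^{\alpha_{\nu-1}}$ determines a surjection $\varphi\colon R^{\nu}\to M$, and ``unique representation \eqref{burk} for every $\phi \in M$'' is precisely the statement that $\varphi$ is injective, i.e.\ an isomorphism $R^\nu \simeq M$; so existence of unique representations is equivalent to $M$ being free. Conversely, if $M$ is free of rank $r$, then $r = \dim_{R/\mathfrak m} M/\mathfrak m M = \nu$ and any minimal generating system is a basis, again giving unique representations. Thus ``unique representation for each $\phi$'' $\iff$ $M$ is a free $R$-module.

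\medskip
\noindent\textbf{Step 3: conclude.} By Proposition~\ref{sorelle}, $M = \Ok_{X,x}$ is a free $\Ok_{Z,x}$-module if and only if $\Ok_{X,x}$ is Cohen-Macaulay. Combining with Step~2 finishes the proof. I expect Steps 1--3 to be essentially routine commutative-algebra bookkeeping; the only point requiring a little care is the direction ``unique representations $\Rightarrow$ freeness'', where one must make sure that uniqueness of the coefficients $\hat\phi_j$ genuinely forces injectivity of $\varphi$ and not merely injectivity on some dense or generic locus — but since \eqref{burk} is asserted for \emph{every} $\phi\in\Ok_{X,x}$, this is immediate. All the substantive content has already been carried out in Proposition~\ref{sorelle}, so the corollary is just a repackaging.
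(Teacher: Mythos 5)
Your proposal is correct and is essentially the argument the paper intends: the corollary is deduced from Proposition~\ref{sorelle} by the standard local-algebra dictionary (Nakayama for minimal generating sets, and the equivalence between unique representation with respect to a minimal generating set and freeness of $\Ok_{X,x}$ as an $\Ok_{Z,x}$-module). The paper leaves exactly this routine translation to the reader, so there is nothing to add.
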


By  coherence it follows that if $\Ok_{X,x}$ is free as an $\Ok_{Z,x}$-module, then
$\Ok_{Z,x'}$ is free as an $\Ok_{Z,x'}$-module for all $x'$ in a \nbh
of $x$, and  $1, w^{\alpha_1},\ldots,w^{\alpha_{\nu-1}}$ is a basis at each such $x'$.

\begin{ex}\label{steira}
Let $\J$ be the ideal in $\C^4$ generated by
$
(w_1^2, w_2^2, w_1 w_2, w_1z_2-w_2 z_1).
$
It is readily  checked that $\Ok_X$ is a free $\Ok_Z$-module
at a point on $Z=\{w_1=w_2=0\}$ where $z_1$ or
$z_2$ is $\neq 0$.  If, say, $z_1\neq 0$, then we can take
$1, w_1$ as generators.  At the point $z=(0,0)$,
e.g., $1, w_1,w_2$ form a minimal set of generators, and then
$\Ok_X$ is not a free $\Ok_Z$-module, since there is a non-trivial relation between
$w_1$ and $w_2$.

We claim that $\Ok_X$ has pure dimension. That is, we claim that there is no embedded
associated prime ideal at $(0,0)$; since $Z$ is irreducible, this is the same as saying that
$\J$ is primary with respect to $Z$.   To see the claim,
let $\phi$ and $\psi$ be functions such that
$\phi \psi$ is in $\J$ and $\psi$ is not in $\sqrt{\J}$. The latter assumption means,
in view of the Nullstellensatz,  that $\psi$ does not vanish identically on $Z$, i.e.,
$\psi=a(z) + \Ok(w)$, where $a$ does not vanish identically.  Since in particular
$\phi\psi$ must vanish on $Z$ it follows that $\phi=\Ok(w)$. It is now easy to see that
$\phi$ is in $\J$.  We conclude that $\J$ is primary.
\end{ex}

The pure-dimensionality of $\Ok_X$ can also be rephrased in the following way:
{\it If $\phi$ is holomorphic and is $0$ generically, then $\phi=0$.}
If we delete
the generator $w_1w_2$ from the definition of $\J$ in the example, then $\phi=w_1w_2$
is $0$ generically in $\Ok_\Omega/\J$ but is not identically zero. Thus
$\J$ then has an embedded primary ideal at $(0,0)$.

\begin{ex}\label{toto}
Let $\Omega=\C^2_{z,w}$ and $\J=(w^2)$ so that $Z=\{w=0\}$.
Then $1,w$ is a basis for
$\Ok_X=\Ok_{\C^2}/(w^2)$ so each function $\phi$ in $\Ok_X$ has a unique
representation $a_0(z)\otimes 1 +a_1(z)\otimes w$.  Let us consider the new
coordinates $\zeta=z-w, \eta=w$.  Then $\J=(\eta^2)$ and
since
$$
a_0(z) +a_1(z)w=a_0(\zeta+\eta)+a_1(\zeta+\eta)\eta=
a_0(\zeta)+(\partial a_0/\partial\zeta)(\zeta) \eta+a_1(\zeta)\eta+\J
$$
we have the representation
$a_0(\zeta)\otimes 1+(a_1(\zeta)+\partial a_0/\partial\zeta)(\zeta)\otimes\eta$
with respect to $(\zeta,\eta)$.
\end{ex}

More generally,   assume that, at a given point in $X_{reg}\subset\Omega$,
we have two different choices $(z,w)$ and
$(\zeta,\eta)$ of coordinates so that $Z=\{w=0\}=\{\eta=0\}$,
and bases $1, \ldots, w^{\alpha_{\nu-1}}$
and $1, \ldots, \eta^{\beta_{\nu-1}}$ for $\Ok_X$ as a free module over
$\Ok_Z$.  Then there is a
$\nu\times\nu$-matrix $L$ of holomorphic differential operators
so that if $(a_j)$ is any tuple in $(\Ok_Z)^\nu$ and $(b_j)=L(a_j)$, then
$
a_0\otimes 1+\cdots + a_{\nu-1}\otimes w^{\alpha_{\nu-1}}=
b_0\otimes 1+\cdots+ b_{\nu-1} \otimes \eta^{\beta_{\nu-1}}+\J.
$

\section{Smooth $(0,*)$-forms on a non-reduced space $X$}\label{boxer}

Let $i\colon X\to \Omega$ be a local embedding of $X$.  In order to define
the sheaf of smooth $(0,*)$-forms on $X$, in analogy with the reduced case,
we have to state which smooth $(0,*)$-forms $\Phi$ in $\Omega$ "vanish" on $X$, or more
formally, give a meaning to $i^*\Phi=0$.  We will see, cf.\ Lemma \ref{bura} below, that the suitable requirement is that locally on $X_{reg}$, $\Phi$ 
belongs to $\E_\Omega^{0,*} \J+\E_\Omega^{0,*}\bar \J_Z +\E_\Omega^{0,*} d\bar \J_Z$, where $\J_Z$ is the ideal sheaf defining $Z$.
However, it turns out to be more convenient to represent the sheaf
$\Kers i^*$ of such forms as the annihilator of certain residue currents, and this is the
path we will follow.  Moreover, these currents play a central role
themselves later on.


The following classical duality result is fundamental for this paper; see, e.g., \cite{Aext} for a discussion.
\begin{prop}\label{duality}
If $\J$ has pure dimension, then
\begin{equation}\label{kondor}
    \J= \ann_{\Ok_\Omega} \Homs(\Ok_\Omega/\J,\CH_\Omega^Z).
\end{equation}
\end{prop}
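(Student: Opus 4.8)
The plan is to prove the two inclusions separately, the first being trivial and the second being the real content. For ``$\subseteq$'': by the remark following Corollary~\ref{Wextrinsic}, $\Homs(\Ok_\Omega/\J,\CH_\Omega^Z)$ is precisely the sheaf of $\mu\in\CH_\Omega^Z$ with $\J\mu=0$, so every germ in $\J$ annihilates the whole sheaf and hence lies in its annihilator. For ``$\supseteq$'' it suffices, working in a neighborhood of a fixed point $x$, to show that a germ $h\notin\J_x$ fails to annihilate some $\mu\in\CH_{\Omega,x}^Z$ with $\J_x\mu=0$; that is, I need enough Coleff--Herrera currents with prescribed annihilator.

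The main tool I would use is a residue current representing $\J$. Choose near $x$ a minimal free resolution $(E_\bullet,f_\bullet)$ of $\Ok_\Omega/\J$ with $E_0=\Ok_\Omega$, and let $R=R_p+R_{p+1}+\cdots$ be the associated Andersson--Wulcan residue current, with $p=\codim\J=N-n$; recall that $R$ is pseudomeromorphic with support on $Z$, that its components have the SEP with respect to $Z$, and that $\ann R=\J$. In the Cohen--Macaulay case the resolution has length $p$, so $R=R_p$ and $\dbar R_p=0$; writing $R_p=\sum_j (R_p)_j\,e_j$ in a holomorphic frame $\{e_j\}$ for $E_p$ and wedging each scalar $(R_p)_j$ with a nonvanishing holomorphic $N$-form produces finitely many $\dbar$-closed $(N,p)$-currents in $\CH_{\Omega,x}^Z$, all annihilated by $\J_x$, whose joint annihilator is $\ann R_p=\J_x$; hence $h$ does not annihilate all of them, as desired.

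The hard part is the general pure-dimensional case, where $R$ also has components $R_{p+1},\dots$ that are not Coleff--Herrera, so the $\dbar$-closed elements of $\CH_\Omega^Z$ killed by $\J$ might a priori fail to detect all of $\J$. I would remove this obstacle by a Noether normalization: after a linear change of coordinates the projection $\pi\colon\Omega\to\C^n$ is finite on $Z$ near $x$, and then $\Ok_{\Omega,x}/\J_x$ is a finitely generated \emph{torsion-free} $\Ok_n$-module --- torsion-free exactly because $\J$ is pure-dimensional, since any torsion submodule would be supported on a proper analytic subset of $Z$ and would thus produce an associated prime of dimension $<n$. One then transfers the statement to the regular base $\C^n$: using the structure result of Proposition~\ref{parasol} in the fibre directions together with the trace of $\pi$, the Coleff--Herrera currents on $Z$ annihilated by $\J$ correspond to $\Ok_n$-linear maps $\Ok_{\Omega,x}/\J_x\to\Ok_n$, and for a torsion-free module over the regular ring $\Ok_n$ the joint annihilator of all such maps equals $\ann_{\Ok_n}(\Ok_{\Omega,x}/\J_x)$, which pulls back to $\J_x$. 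Making this transfer rigorous --- in particular verifying that the finite projection loses no information and that only the top residue degree is needed --- is precisely the classical local duality theorem of Dickenstein--Sessa and Passare, and I would either invoke it directly or follow its proof; see \cite{Aext}.
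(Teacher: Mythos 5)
First, a point of comparison: the paper does not prove Proposition~\ref{duality} at all --- it is stated as a classical duality theorem with a pointer to \cite{Aext} --- so there is no in-paper argument to measure yours against. Your easy inclusion is correct, and your Cohen--Macaulay argument is complete and correct: with a resolution of length $N_0=p$ one has $R=R_p$ and $\dbar R_p=0$; each component of $R_p\w dz$ in a frame is pseudomeromorphic, supported on $Z$, has the SEP with respect to $Z$ and is killed by $\J$, hence lies in $\Homs(\Ok_\Omega/\J,\CH^Z_\Omega)$, and the joint annihilator of these components is $\ann R=\J$ by \cite[Theorem~1.1]{AW1}.

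In the general pure-dimensional case, however, your argument is not self-contained. The reduction is sound --- in particular the torsion-freeness of $\Ok_{\Omega,x}/\J_x$ over $\Ok_n$ is correctly deduced from pure-dimensionality, and a finitely generated torsion-free module over a domain is indeed separated by its functionals --- but the step you defer, namely that every $\Ok_n$-linear functional on $\Ok_{\Omega,x}/\J_x$ is realized by some $\mu$ in $\Homs(\Ok_\Omega/\J,\CH^Z_\Omega)$ (surjectivity of $\mu\mapsto(\phi\mapsto\pi_*(\phi\mu))$), is exactly where the analytic content lives, and the Dickenstein--Sessa/Passare theorem you propose to ``invoke directly'' is usually stated only for complete intersections, so it does not literally cover this case; at that point your proof is essentially citing the result being proved. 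Two smaller slips: the annihilator you need is taken over $\Ok_\Omega$ (the relevant identity is $\ann_{\Ok_\Omega}\Hom_{\Ok_n}(M,\Ok_n)=\ann_{\Ok_\Omega}M=\J$ for $M=\Ok_\Omega/\J$ torsion-free), and Proposition~\ref{parasol} requires $Z$ smooth and in graph form, so it is not available at the possibly singular point where the duality is being proved. Note finally that the paper's own toolkit yields a cleaner route to the hard inclusion that avoids Noether normalization entirely: by \eqref{apanage}, $h$ annihilates $\Homs(\Ok_\Omega/\J,\CH^Z_\Omega)$ if and only if it annihilates $P:=\Exts^p(\Ok_\Omega/\J,K_\Omega)$; then $h$ annihilates $\Exts^p(P,K_\Omega)$ as well, and since Roos's theorem \eqref{Risom} embeds $\Ok_\Omega/\J$ into $\Exts^p(P,K_\Omega)$ for pure-dimensional ideals, this forces $h\in\J$.
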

That is, $\phi$ is in $\J$ if and only if $\phi\mu=0$ for all $\mu$ in $\Homs(\Ok_\Omega/\J,\CH_\Omega^Z)$.
It is also well-known, see, e.g.,  \cite[Theorem~1.5]{Aext},  that
\begin{equation}\label{apanage}
\Homs(\Ok_\Omega/\J,\CH_\Omega^Z)\simeq  \Exts^p (\Ok_\Omega/\J,K_\Omega),
\end{equation}
so  $\Homs(\Ok_\Omega/\J,\CH_\Omega^Z)$ is a coherent analytic sheaf.
Locally we thus have a finite number of generators
$\mu^1,\ldots,\mu^m$.
In Example~\ref{exannJ}, we compute explicitly such generators for the ideal
$\J$ in Example~\ref{steira}.


\smallskip
Let $\xi$ be a smooth $(0,*)$-form in $\Omega$.
Without first giving meaning to $i^*$, we define the sheaf $\Kers i^*$ by
saying that $\xi$ is in $\Kers i^*$ if
$$
\xi\w\mu=0,  \quad \mu\in \Homs(\Ok_\Omega/\J,\CH^Z_\Omega).
$$
Notice that if $\xi$ is holomorphic, then, in view of the duality \eqref{kondor},
$\xi$ is in  $\Kers i^*$  if and only if $\xi$ is in $\J$.

\begin{df}
We define the sheaf of smooth $(0,*)$-forms on $X$ as
\begin{equation}\label{skrot1}
\E_X^{0,*}:=\E_\Omega^{0,*}/\Kers i^*.
\end{equation}
We will prove below that this sheaf is independent of the choice of embedding
and thus intrinsic on $X$.
\end{df}
Given $\phi$ in $\E_\Omega^{0,*}$, let $i^*\phi$
be its image in $\E_X^{0,*}$.
In particular, $i^* \xi = 0$ means that $\xi$ belongs to $\Kers i^*$,
which then motivates this notation.  
Notice that $\Kers i^*$ is a two-sided ideal in $\E_\Omega^{0,*}$,
i.e., if $\phi$ is in $\E_\Omega^{0,*}$ and $\xi$ is in $\Kers i^*$, then $\phi\w\xi$ and $\xi \w \phi$ are in $\Kers i^*$.
It follows that we have an induced wedge product on $\E_X^{0,*}$ such that
$$
i^*(\phi\w\xi)=i^*\phi\w i^*\xi.
$$

\begin{remark} It follows from Lemma~\ref{bura} below that in case $X=Z$ is reduced, then
$\xi$ is in $\Kers i^*$ if and only its pullback  to $X_{reg}$ vanishes.
Thus our definition of $\E_X^{0,*}$ is consistent with the usual one in that case.
\end{remark}


\begin{lma} \label{skot}
Using the notation of \eqref{embedding},
\begin{equation}\label{skata}
\iota_*\colon \Homs_{\Ok_{\widehat\Omega}}(\Ok_{\widehat\Omega}/\widehat\J,\W^Z_{\widehat \Omega})
\to
 \Homs_{\Ok_\Omega}(\Ok_\Omega/\J,\W_\Omega^Z)
\end{equation}
is an isomorphism.
\end{lma}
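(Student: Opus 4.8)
The plan is to exploit the very concrete form of the embedding $\iota$ in \eqref{embedding}: here $\Omega=\widehat\Omega\times\U$ with $\U\subset\C^m_w$, and $\J=\widehat\J\otimes 1+(w_1,\ldots,w_m)$. So a $\mu\in\Homs_{\Ok_\Omega}(\Ok_\Omega/\J,\W_\Omega^Z)$ is precisely a current in $\W^Z_\Omega$ annihilated by $\widehat\J$ and by $w_1,\ldots,w_m$. Since $\mu$ is pseudomeromorphic with support on $Z=\{w=0\}\times\{$zero set of $\widehat\J\}$ and $w_j$ annihilates $\mu$, also $\bar w_j\mu=0$ and $d\bar w_j\mu=0$ by the basic vanishing property of \pmm currents recalled in Section~\ref{sect:pm}. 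Thus $\mu$ is ``constant in the $w$-directions'', and the structure result for currents in $\W^Z_\Omega$ (Proposition~\ref{parasol}, applied with the $w$-variables here playing the role of the transversal coordinates to $\{w=0\}$ inside $\Omega$, i.e.\ with multiindex $\gamma=0$) forces $\mu$ to be of the form $\mu=\iota_*\widehat\mu\,$, more precisely $\mu=\widehat\mu\,dw\otimes\dbar(dw/w^{\1})$ up to the normalization in \eqref{paraply}, with $\widehat\mu$ a current on $\widehat\Omega$. This is exactly the content of $\iota_*$ being surjective.

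First I would check that $\iota_*$ is well-defined, i.e.\ that it actually lands in $\Homs_{\Ok_\Omega}(\Ok_\Omega/\J,\W^Z_\Omega)$. If $\widehat\mu\in\W^Z_{\widehat\Omega}$ is annihilated by $\widehat\J$, then $\iota_*\widehat\mu$ has support on $\{w=0\}\cap\iota(Z)=Z$, is pseudomeromorphic (by \eqref{stare3}-type functoriality, or Proposition~\ref{L1}), has the SEP with respect to $Z$ since $\1_W$ commutes with $\iota_*$ by \eqref{stare3}/\eqref{stare2prim} and $W\subset Z$ pulls back to a subvariety of positive codimension in $\widehat Z$, and is annihilated by $\widehat\J$ (clear) and by each $w_j$ (since $w_j$ vanishes on the support $\{w=0\}$ and the current is \pmm). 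Hence $\widehat\J\otimes1+(w)=\J$ annihilates $\iota_*\widehat\mu$. Injectivity of $\iota_*$ is immediate because $\iota_*$ is injective on all currents (a closed embedding).

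The heart of the argument is surjectivity. Given $\mu\in\Homs_{\Ok_\Omega}(\Ok_\Omega/\J,\W^Z_\Omega)$, apply Proposition~\ref{parasol} with transversal coordinates $w$ to write $\mu=\sum_\gamma \mu_\gamma\w dw\otimes\dbar(dw/w^{\gamma+\1})$ with $\mu_\gamma\in\W^{0,*}$ on $\{w=0\}\cong\widehat\Omega$ — strictly, $\mu$ has support on $Z\subset\{w=0\}$, so the $\mu_\gamma$ are supported on $\widehat Z$; then use $w_j\mu=0$ together with \eqref{parsol1} (which expresses $\mu_\gamma\w dz$ as a push-forward of $w^\gamma\mu$) to conclude $\mu_\gamma=0$ for all $\gamma\neq 0$, leaving $\mu=\mu_0\w dw\otimes\dbar(dw/w^{\1})=\iota_*\widehat\mu$ with $\widehat\mu:=\mu_0$ (suitably normalized). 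Finally one must see $\widehat\mu\in\Homs_{\Ok_{\widehat\Omega}}(\Ok_{\widehat\Omega}/\widehat\J,\W^Z_{\widehat\Omega})$: it lies in $\W^Z_{\widehat\Omega}$ by Proposition~\ref{parasol} (the $\mu_\gamma$ are automatically \pmm with the SEP), and it is annihilated by $\widehat\J$ because for $h\in\widehat\J$ we have $h\otimes1\in\J$, so $0=(h\otimes1)\mu=(h\widehat\mu)\w dw\otimes\dbar(dw/w^{\1})$, and the representation \eqref{paraply} is unique, forcing $h\widehat\mu=0$.

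The main obstacle I anticipate is the bookkeeping in the surjectivity step: making Proposition~\ref{parasol} apply cleanly when $Z$ is a possibly singular subvariety of $\{w=0\}\cong\widehat\Omega$ rather than all of $\{w=0\}$, and tracking the normalization constants $(2\pi i)^p$ and the forms $dw$ so that the map one writes down really is the push-forward $\iota_*$ and is an honest inverse. One clean way to sidestep delicate support considerations is to first localize on $\widehat\Omega$ and use the Nullstellensatz to reduce to a situation where $w^\alpha\in\widehat\J$ for $|\alpha|$ large (as in the proof of Proposition~\ref{sorelle}), so that only finitely many $\mu_\gamma$ can be nonzero, and the whole argument becomes a finite-dimensional linear-algebra statement fibered over $\widehat Z$.
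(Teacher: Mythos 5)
Your well-definedness and injectivity arguments match the paper's. One small slip there: the reason each $w_j$ annihilates $\iota_*\widehat\mu$ is the projection formula for push-forward under an embedding, $w_j\,\iota_*\widehat\mu=\iota_*(\iota^*w_j\cdot\widehat\mu)=0$, not pseudomeromorphicity --- the basic vanishing property for pseudomeromorphic currents supported on $\{w_j=0\}$ only gives the \emph{antiholomorphic} annihilation by $\bar w_j$ and $d\bar w_j$, never by $w_j$ itself.

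The genuine issue is in your surjectivity step. Proposition~\ref{parasol} is stated for currents in $\W_\Omega^{V}$ where $V=\{w=0\}$, i.e.\ currents having the SEP \emph{with respect to the full coordinate slice} $\{w=0\}$. Your $\mu$ lies in $\W_\Omega^{Z}$ with $Z$ the zero set of $\J$, which in general has positive codimension inside $\{w=0\}\cong\widehat\Omega$; since $\1_Z\mu=\mu\neq 0$, the current $\mu$ does \emph{not} have the SEP with respect to $\{w=0\}$, so the proposition's hypotheses fail and the decomposition \eqref{paraply} with coefficients in $\W^{0,*}_{\{w=0\}}$ is not available as cited. (A version of the structure theorem for pseudomeromorphic currents supported on a smooth submanifold, without the SEP hypothesis, would do the job, but that is not what this paper states.) Your proposed Nullstellensatz localization controls only the finiteness of the sum, not this SEP mismatch, so it does not close the gap. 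The paper sidesteps the structure theorem entirely by arguing as in Corollary~\ref{Wextrinsic}: every smooth form $\xi$ with $\iota^*\xi=0$ is locally a sum of terms containing a factor $w_j$, $\bar w_j$ or $d\bar w_j$, and each such factor annihilates $\mu$ (the first because $w_j\in\J$, the last two by pseudomeromorphicity and the support condition --- exactly the annihilation facts you already isolated); hence $\mu.\xi=0$ for all such $\xi$, which by duality means $\mu=\iota_*\hat\mu$ for some current $\hat\mu$ on $\widehat\Omega$. Then Proposition~\ref{L1}(ii) and \eqref{stare2prim} give $\hat\mu\in\W^Z_{\widehat\Omega}$, and $\widehat\J\hat\mu=0$ follows from $\J\mu=0$ and $\widehat\J=\iota^*\J$. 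This route requires no normalization bookkeeping and no decomposition at all; if you want to keep your structure-theorem approach you would need to first justify the analogue of Proposition~\ref{parasol} for currents merely supported on (rather than SEP with respect to) the smooth slice $\{w=0\}$.
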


We can realize the mapping in \eqref{skata} as the tensor
product $\tau\mapsto \tau\w [w=0]$, where
$[w=0]$ is the Lelong current in $\Omega$  associated with the submanifold
$\{w=0\}$.

\begin{proof} To begin with,  $\iota_*$ maps
pseudomeromorphic $(\hat N,\hat p+\ell)$-currents with support on
$Z\subset\widehat\Omega$ to pseudomeromorphic
$(N,p+\ell)$-currents with support on
$Z\subset\Omega$. If, in addition,  $\tau$ has the SEP with respect
to $Z$, then $\iota_*\tau$ has, as well by \eqref{stare2prim}.
Moreover, if $\tau$ is annihilated by $\widehat\J$, then
$\iota_*\tau$ is annihilated by $\J=\widehat\J\otimes 1+(w)$.
Thus the mapping \eqref{skata} is well-defined,  and it is injective since
$\iota$ is injective.

Now assume that $\mu$ is in $\Homs(\Ok_\Omega/\J,\W^Z_\Omega)$.
Arguing as in the proof of Corollary~\ref{Wextrinsic}, we see that
$\mu=\iota_*\hat\mu$ for a current $\hat\mu$ in $\W^Z_{\widehat\Omega}$.
Since $\widehat\J=\iota^*\J$ and $\J\mu=0$, it follows that
 $\widehat\J\hat\mu=0$.
Thus \eqref{skata} is surjective.
\end{proof}

Since  $\iota_*$ is injective,
$\dbar\tau=0$ if and only if $\dbar\iota_*\tau=0$, and thus  we get

\begin{cor}\label{klas1}
Using the notation of \eqref{embedding},
\begin{equation}\label{skata2}
\iota_*\colon \Homs_{\Ok_{\widehat\Omega}}(\Ok_{\widehat\Omega}/\widehat\J,\CH^Z_{\widehat \Omega})
\to
 \Homs_{\Ok_\Omega}(\Ok_\Omega/\J,\CH_\Omega^Z)
\end{equation}
is an isomorphism.
\end{cor}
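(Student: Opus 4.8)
The plan is to deduce Corollary~\ref{klas1} from Lemma~\ref{skot} by simply restricting the isomorphism \eqref{skata} to the subsheaves of $\dbar$-closed currents on each side. Recall that by definition $\CH^Z_{\widehat\Omega}$ is the subsheaf of $\W^Z_{\widehat\Omega}$ consisting of $\dbar$-closed currents of bidegree $(\hat N,\hat p)$, and likewise $\CH^Z_\Omega$ is the subsheaf of $\W^Z_\Omega$ of $\dbar$-closed $(N,p)$-currents. So what has to be checked is that the isomorphism $\iota_*$ of Lemma~\ref{skot} identifies $\Homs(\Ok_{\widehat\Omega}/\widehat\J,\CH^Z_{\widehat\Omega})$ with $\Homs(\Ok_\Omega/\J,\CH^Z_\Omega)$, i.e.\ that it and its inverse both preserve $\dbar$-closedness and the relevant bidegree.

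First I would note that $\iota$ is a holomorphic embedding, so the push-forward $\iota_*$ commutes with $\dbar$: for any current $\tau$ on $\widehat\Omega$ we have $\dbar(\iota_*\tau)=\iota_*(\dbar\tau)$. Since moreover $\iota_*$ is injective on currents (as already used in the proof of Lemma~\ref{skot}), it follows at once that $\dbar\tau=0$ if and only if $\dbar\iota_*\tau=0$; this is exactly the sentence preceding the corollary in the excerpt. Thus a section $\hat\mu\in\Homs(\Ok_{\widehat\Omega}/\widehat\J,\W^Z_{\widehat\Omega})$ is $\dbar$-closed precisely when $\iota_*\hat\mu$ is, so $\iota_*$ and its inverse carry $\dbar$-closed currents to $\dbar$-closed currents in both directions.

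It remains only to track bidegrees. The map $\iota$ realizes $\widehat\Omega$ as $\widehat\Omega\times\{0\}\subset\widehat\Omega\times\U=\Omega$, and, as remarked after the statement of Lemma~\ref{skot}, on the level of currents $\iota_*$ is the operation $\tau\mapsto\tau\w[w=0]$ with $[w=0]$ the Lelong $(m,m)$-current of $\{w=0\}$, where $m=N-\hat N=N-\hat N$ and $p=N-n$, $\hat p=\hat N-n$, so $p=\hat p+m$. Hence $\iota_*$ sends an $(\hat N,\hat p)$-current to an $(\hat N+m,\hat p+m)=(N,p)$-current, and conversely any current on $\widehat\Omega$ whose push-forward has bidegree $(N,p)$ must itself have bidegree $(\hat N,\hat p)$. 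Combining the degree bookkeeping with the $\dbar$-closedness observation, the isomorphism \eqref{skata} restricts to the claimed isomorphism \eqref{skata2}. There is no real obstacle here: the only point requiring any care is the elementary bidegree count, and that is immediate once one recalls $p=\hat p+m$; the substantive content is entirely contained in Lemma~\ref{skot}.
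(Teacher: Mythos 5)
Your proof is correct and takes essentially the same route as the paper, which deduces the corollary from Lemma~\ref{skot} in one line: since $\iota_*$ commutes with $\dbar$ and is injective, $\dbar\tau=0$ if and only if $\dbar\iota_*\tau=0$, so the isomorphism restricts to the $\dbar$-closed subsheaves. Your additional bidegree bookkeeping ($p=\hat p+m$, with $\iota_*$ shifting bidegree by $(m,m)$) is a detail the paper leaves implicit in the proof of Lemma~\ref{skot}, and it is carried out correctly.
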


\begin{cor}\label{klas2}
Using the notation of \eqref{embedding}, 
\begin{equation}\label{skata4}
\iota^*\colon \E_{\Omega}^{0,*}/\Kers i^*\to\E_{\widehat\Omega}^{0,*}/\Kers j^*,
\end{equation}
is an isomorphism.
\end{cor}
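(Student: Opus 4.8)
The plan is to deduce Corollary~\ref{klas2} from the duality characterization of $\Kers i^*$ together with the isomorphism of Corollary~\ref{klas1}. First I would recall that, by definition, $\xi\in\Kers i^*$ precisely when $\xi\w\mu=0$ for all $\mu\in\Homs_{\Ok_\Omega}(\Ok_\Omega/\J,\CH_\Omega^Z)$, and likewise $\eta\in\Kers j^*$ precisely when $\eta\w\hat\mu=0$ for all $\hat\mu\in\Homs_{\Ok_{\widehat\Omega}}(\Ok_{\widehat\Omega}/\widehat\J,\CH_{\widehat\Omega}^Z)$. So the task is to show that the pullback map $\iota^*\colon\E_\Omega^{0,*}\to\E_{\widehat\Omega}^{0,*}$ descends to an isomorphism of the quotients. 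Since $\iota\colon\widehat\Omega\to\Omega=\widehat\Omega\times\U$ is the inclusion $\hat x\mapsto(\hat x,0)$, the map $\iota^*$ is the restriction of forms to $\{w=0\}$; it is surjective onto $\E_{\widehat\Omega}^{0,*}$, so the only issues are that it maps $\Kers i^*$ into $\Kers j^*$ and that the induced map on quotients is injective, i.e.\ $(\iota^*)^{-1}(\Kers j^*)=\Kers i^*$.

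For the well-definedness, I would take $\xi\in\Kers i^*$ and an arbitrary $\hat\mu\in\Homs_{\Ok_{\widehat\Omega}}(\Ok_{\widehat\Omega}/\widehat\J,\CH_{\widehat\Omega}^Z)$, and use that $\iota^*\xi\w\hat\mu$ is computed in $\widehat\Omega$ while, via push-forward, $\iota_*(\iota^*\xi\w\hat\mu)=\xi\w\iota_*\hat\mu$ by the projection formula (here $\iota_*\hat\mu=\hat\mu\w[w=0]$ as noted after Lemma~\ref{skot}). By Corollary~\ref{klas1}, $\iota_*\hat\mu$ lies in $\Homs_{\Ok_\Omega}(\Ok_\Omega/\J,\CH_\Omega^Z)$, so $\xi\w\iota_*\hat\mu=0$; since $\iota_*$ is injective this gives $\iota^*\xi\w\hat\mu=0$, i.e.\ $\iota^*\xi\in\Kers j^*$. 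For injectivity on quotients, suppose $\xi\in\E_\Omega^{0,*}$ with $\iota^*\xi\in\Kers j^*$; I must show $\xi\in\Kers i^*$. Given any $\mu\in\Homs_{\Ok_\Omega}(\Ok_\Omega/\J,\CH_\Omega^Z)$, by Corollary~\ref{klas1} write $\mu=\iota_*\hat\mu$ with $\widehat\J\hat\mu=0$; then $\xi\w\mu=\xi\w\iota_*\hat\mu=\iota_*(\iota^*\xi\w\hat\mu)=0$ since $\iota^*\xi\in\Kers j^*$. Hence $\xi\in\Kers i^*$, and the induced map on quotients is both injective and surjective.

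The main obstacle I anticipate is the projection-formula step $\iota_*(\iota^*\xi\w\hat\mu)=\xi\w\iota_*\hat\mu$, which must be justified at the level of currents with the explicit identification $\iota_*\hat\mu=\hat\mu\w[w=0]$; one has to check that wedging the smooth ambient form $\xi$ with the current $\hat\mu\w[w=0]$ only sees the restriction $\iota^*\xi=\xi|_{w=0}$, because all the $\bar w_j,d\bar w_j$ and $w_j$ components of $\xi$ are killed by $[w=0]$ (the $w_j$ and $\bar w_j,d\bar w_j$ annihilate $\hat\mu\w[w=0]$ since it is pseudomeromorphic with support on $\{w=0\}$, and $dw_j\w[w=0]=0$). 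This is essentially the same computation that appears in the proof of Corollary~\ref{Wextrinsic}, so I would invoke that argument. Once this is in place, the rest is formal, and combined with Corollary~\ref{klas1} it also immediately yields that $\E_X^{0,*}$ is independent of the embedding, since by \eqref{embedding} any two embeddings are related through a common minimal one.
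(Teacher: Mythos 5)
Your argument is correct and follows essentially the same route as the paper: well-definedness and injectivity of the induced map are deduced from the isomorphism of Corollary~\ref{klas1} via the identity $\iota_*(\iota^*\xi\w\hat\mu)=\xi\w\iota_*\hat\mu$, and surjectivity comes from extending a form $\widehat\xi$ on $\widehat\Omega$ to $\widehat\xi\otimes 1$ on $\Omega$. The paper's proof is just a terser version of yours, leaving the projection-formula step (which you correctly justify as in the proof of Corollary~\ref{Wextrinsic}) implicit.
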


\begin{proof} It follows immediately from \eqref{skata2} that the mapping
\eqref{skata4} is well-defined and injective. Given $\widehat{\xi}$ in $\E^{0,*}_{\widehat\Omega}$,
let $\xi=\widehat{\xi}\otimes 1$. Then $\iota^*\xi=\widehat{\xi}$ and so \eqref{skata4} is indeed
surjective as well.
\end{proof}

It follows from \eqref{skata4} and \eqref{skrot1}
that the sheaf  $\E_X^{0,*}$ is intrinsically defined on $X$.
Since $\dbar$ maps $\Kers i^*$ to $\Kers i^*$, we have a well-defined operator
$\dbar\colon\E_X^{0,*}\to\E_X^{0,*+1}$ such that $\dbar^2=0$.
Unfortunately the sheaf complex so obtained is not exact in general, see, e.g., \cite[Example~1.1]{AS}
for a counterexample already in the reduced case.

\subsection{Local representation on $X_{reg}$ of smooth forms}\label{gurka}
Recall that  $X_{reg}$ is the open subset of $X$, where the underlying reduced
space is smooth and $\Ok_X$ is Cohen-Macaulay.
Let us fix some point in $X_{reg}$, and assume that we have
local coordinates $(z,w)$ such that $Z = \{ w = 0 \}$.
We also choose generators $1,w^{\alpha_1},\dots,w^{\alpha_{\nu-1}}$ of $\Ok_X$
as a free $\Ok_Z$-module, which exist by Corollary~\ref{sorelle2},
and generators $\mu^1,\dots,\mu^m$ of $\Homs(\Ok_\Omega/\J,\CH^Z_\Omega)$.

Notice that for each smooth $(0,*)$-form $\Phi$ in $\Omega$,
$\Phi\mapsto \Phi\w \mu^\ell$  only depends on its class $\phi$ in
$\E^{0,*}_X$,  and  $\phi$ is in fact determined by these currents.
By  Proposition~\ref{parasol} each of these currents can (locally) be represented by
a tuple of currents in $\W_Z^{0,*}$. Putting all these tuples together, we get
a tuple in $(\W_Z^{0,*})^M$,
where $M = M_1 + \dots + M_m$ and $M_j$ is the number of indices in \eqref{paraply}
in the representation of $\mu^j$.

Recall from Corollary~\ref{sorelle2} that
$\phi$ in $\Ok_X$ has a unique representative
\begin{equation}\label{stor1}
\hat\phi=\hat\phi_0+\hat\phi_1\otimes w^{\alpha_1}+\cdots +\hat\phi_{\nu-1}\otimes w^{\alpha_{\nu-1}},
\end{equation}
where $\hat\phi_j$ are in $\Ok_Z$.
We thus have an $\Ok_Z$-linear morphism
\begin{equation} \label{eq:T}
T\colon (\Ok_Z)^\nu\to (\Ok_Z)^M.
\end{equation}
The morphism is injective by Proposition~\ref{duality},
and the  holomorphic matrix $T$ is therefore generically pointwise injective.

\begin{lma}\label{skottskada}
Each $\phi$ in $\E_X^{0,*}$ has a unique representation
\eqref{stor1}
 where $\hat\phi_j$ are in $\E_Z^{0,*}$.
\end{lma}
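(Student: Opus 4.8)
The plan is to transfer the statement for $\Ok_X$ (Corollary~\ref{sorelle2}) to the level of smooth $(0,*)$-forms by exploiting the fact that a smooth form on $X$ is, by definition and by the representation in Proposition~\ref{parasol}, encoded by its wedge products with the residue currents $\mu^1,\dots,\mu^m$, and that the resulting correspondence is controlled by the holomorphic matrix $T$ of \eqref{eq:T}. First I would recall the two tuples of data attached to a class $\phi\in\E_X^{0,*}$: on one side the candidate coefficients $\hat\phi_0,\dots,\hat\phi_{\nu-1}\in\E_Z^{0,*}$ of \eqref{stor1}, and on the other side the tuple in $(\W_Z^{0,*})^M$ given by the components, via Proposition~\ref{parasol}, of the currents $\widetilde{\phi_j}\,\mu^\ell\in\W^Z_\Omega$ (where $\widetilde{\phi_j}$ is the trivial extension of $\hat\phi_j$ in the chosen coordinates). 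Since $\Phi\mapsto\Phi\wedge\mu^\ell$ depends only on the class $i^*\Phi=\phi$, and $\phi$ is determined by the collection $(\phi\wedge\mu^\ell)_\ell$ by the very definition of $\Kers i^*$, the map sending $\phi$ to this $(\W_Z^{0,*})^M$-tuple is injective. The point is that this map is, coefficientwise, given by the same matrix $T$ as in the holomorphic case — that is, we get an induced morphism $T\otimes\mathrm{id}\colon (\E_Z^{0,*})^\nu\to(\W_Z^{0,*})^M$ by letting $T$ act on $\E_Z^{0,*}$-valued tuples — because the expansion \eqref{paraply3} together with \eqref{paraply2} shows that $\widetilde{\phi_j}\,\mu^\ell$ is computed from $\hat\phi_j$ by exactly the same algebraic operations (extracting Taylor coefficients in $w$, which for $\widetilde{\phi_j}$ independent of $w$ just returns $\hat\phi_j$ times the corresponding constant) as in the holomorphic situation.

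Next I would address existence of the representation. Given $\phi\in\E_X^{0,*}$, lift it to some $\Phi\in\E_\Omega^{0,*}$. Using the Taylor expansion \eqref{paraply3} of $\Phi$ in $w$ to high enough order, and the Nullstellensatz fact (from the proof of Proposition~\ref{sorelle}) that $w^\alpha\in\J$ for $|\alpha|=M$, one sees that modulo $\Kers i^*$ — which, by Lemma~\ref{bura} on $X_{reg}$, contains $\E_\Omega^{0,*}\J+\E_\Omega^{0,*}\bar\J_Z+\E_\Omega^{0,*}d\bar\J_Z$ — the class of $\Phi$ is represented by a polynomial in $w$ with $\E_Z^{0,*}$-coefficients, namely $\sum_{|\alpha|<M}\Phi_\alpha(z)\otimes w^\alpha$. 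Then, using that $1,w^{\alpha_1},\dots,w^{\alpha_{\nu-1}}$ generate $\Ok_X$ as an $\Ok_Z$-module, each monomial $w^\alpha$ with $|\alpha|<M$ can be rewritten, modulo $\J$ and hence modulo $\Kers i^*$, as an $\Ok_Z$-combination of the $w^{\alpha_j}$; substituting and collecting gives $\phi=\sum_j \hat\phi_j\otimes w^{\alpha_j}$ with $\hat\phi_j\in\E_Z^{0,*}$. For uniqueness, suppose $\sum_j \hat\phi_j\otimes w^{\alpha_j}\in\Kers i^*$; then applying the injective matrix $T\otimes\mathrm{id}$ as above, together with the observation that a form-valued tuple annihilated by a generically pointwise-injective holomorphic matrix must vanish — this is where Proposition~\ref{Wsmooth} enters, since the entries of $T$ are smooth (indeed holomorphic) on the dense Zariski-open locus where $T$ is injective, and the $\hat\phi_j$ lie in $\W_Z$ — we conclude all $\hat\phi_j=0$.

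The main obstacle I expect is the uniqueness half, and specifically the passage from "$T$ is generically pointwise injective" to "$T\phi=0\Rightarrow\phi=0$" for $\E_Z^{0,*}$-valued (a priori only $\W_Z^{0,*}$-valued) tuples $\phi$. Over $\Ok_Z$ this is immediate because $\Ok_Z$ is a domain (at a point) and one can invert $T$ after a single minor; for currents one must argue on the Zariski-dense open set where $T$ has a holomorphic left inverse $S$, apply $S$ there to get $\hat\phi_j=0$ on that open set, and then invoke Proposition~\ref{Wsmooth} (or directly the SEP) to propagate the vanishing across the lower-dimensional bad locus. The compatibility check — that the diagram relating $\phi\mapsto(\hat\phi_j)$, $\phi\mapsto(\phi\wedge\mu^\ell)$, and the holomorphic matrix $T$ actually commutes in the $(0,*)$-form setting, i.e.\ that the $\Ok(\bar w,d\bar w)$ and $\Ok(|w|^M)$ error terms in \eqref{paraply3} are harmless because they are killed by the $\mu^\ell$ — is the other place requiring care, but it is essentially a rerun of the computation already recorded in \eqref{paraply2} and the remark following it.
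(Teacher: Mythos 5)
Your proof is correct and follows essentially the same route as the paper: existence via the finite Taylor expansion in $w$ (discarding the high-order terms and the $\Ok(\bar w,d\bar w)$ terms, which annihilate the $\mu^\ell$) followed by rewriting the remaining monomials in the $\Ok_Z$-basis modulo $\J$, and uniqueness by applying the generically pointwise injective matrix $T$ to the coefficient tuple. The extra care you take with the compatibility of $T$ in the form-valued setting and with propagating the vanishing from the generic locus (where, since the $\hat\phi_j$ are smooth, continuity already suffices without invoking Proposition~\ref{Wsmooth}) only fills in details the paper leaves implicit.
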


\begin{proof}
To begin with notice that a given smooth $\phi$ must have at least one such representation.
In fact, taking the finite Taylor expansion \eqref{paraply3}
we can forget about high order terms, since they must
annihilate all the $\mu^j$, and the terms $\bar{w}$ and $d\bar{w}$ annihilate all the $\mu^j$
as well since they are pseudomeromorphic with support on $\{ w = 0 \}$.
On the other hand, each $w^\alpha$ not in the set of
generators must be of the form
$$
w^\alpha=a_0+a_1\otimes w^{\alpha_1}+\cdots +a_{\nu-1}\otimes w^{\alpha_{\nu-1}}
+\J,
$$
and hence $\phi_\alpha\otimes w^\alpha$ is of the form \eqref{stor1}. Thus the representation exists.
To show uniqueness of the representation,
we assume that $\hat\phi$ is in $\Kers i^*$.  Then the tuple $(\hat\phi_j)$ is
mapped to $0$ by the matrix $T$, and since $T$ is generically pointwise injective we conclude
that each $\hat\phi_j$ vanishes.
 \end{proof}

By the above proof we get

\begin{lma}\label{bura}
A smooth $(0,*)$-form $\xi$ in $\Omega$ is in $\Kers i^*$ if and only if
$\xi$ is in $\E_\Omega^{0,*} \J+\E_\Omega^{0,*}\bar \J_Z
+\E_\Omega^{0,*} d\bar \J_Z$ on $X_{reg}$, where $\J_Z$ is the radical sheaf of $Z$.
\end{lma}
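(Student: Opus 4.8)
The plan is to deduce Lemma~\ref{bura} essentially for free from the proof of Lemma~\ref{skottskada}, after recording the two easy inclusions. First I would check the direction that the stated module of forms lies in $\Kers i^*$. On $X_{reg}$ the generators $\mu^1,\dots,\mu^m$ of $\Homs(\Ok_\Omega/\J,\CH^Z_\Omega)$ are annihilated by $\J$ (they are $\Ok_\Omega/\J$-morphisms), and they are pseudomeromorphic with support on $Z=\{w=0\}$, hence annihilated by $\bar\J_Z$ and $d\bar\J_Z$ by the basic property of \pmm currents recalled in Section~\ref{sect:pm} (if $\bar h\tau$ and $d\bar h\w\tau$ vanish when $h$ vanishes on $\supp\tau$). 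Therefore $\xi\w\mu^\ell=0$ for every $\xi$ in $\E_\Omega^{0,*}\J+\E_\Omega^{0,*}\bar\J_Z+\E_\Omega^{0,*}d\bar\J_Z$, i.e.\ $\xi\in\Kers i^*$; and since $\Kers i^*$ is defined by vanishing against all of $\Homs(\Ok_\Omega/\J,\CH^Z_\Omega)$, and $\mu^1,\dots,\mu^m$ generate this sheaf over $\Ok_\Omega$, one inclusion follows. (Here one uses that multiplication by a holomorphic function commutes with $\w\mu^\ell$, so vanishing against the generators gives vanishing against the whole module.)

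For the reverse inclusion I would follow the argument inside the proof of Lemma~\ref{skottskada} verbatim. Given $\xi\in\Kers i^*$, take its finite Taylor expansion \eqref{paraply3} with $M$ large: $\xi=\sum_{|\alpha|<M}\xi_\alpha(z)\otimes w^\alpha+\Ok(|w|^M)+\Ok(\bar w,d\bar w)$. The tail $\Ok(|w|^M)$ lies in $\E_\Omega^{0,*}\J$ by the Nullstellensatz (choosing $M$ with all $w^\alpha$, $|\alpha|=M$, in $\J$), and the term $\Ok(\bar w,d\bar w)$ lies in $\E_\Omega^{0,*}\bar\J_Z+\E_\Omega^{0,*}d\bar\J_Z$ since $\bar\J_Z$, $d\bar\J_Z$ are generated on $X_{reg}$ by the $\bar w_j$, $d\bar w_j$. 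So, modulo the asserted module, $\xi$ is congruent to the holomorphic-in-$w$ polynomial $\xi':=\sum_{|\alpha|<M}\xi_\alpha(z)\otimes w^\alpha$, whose coefficients are smooth $(0,*)$-forms in $z$. By the first inclusion this subtraction does not change the class in $\E_X^{0,*}$, so $\xi'\in\Kers i^*$ as well; expanding each $w^\alpha$ in the basis $1,w^{\alpha_1},\dots,w^{\alpha_{\nu-1}}$ modulo $\J$ (again $\J\subset$ the target module), $\xi'$ becomes $\sum_j\hat\phi_j\otimes w^{\alpha_j}$ with $\hat\phi_j\in\E_Z^{0,*}$. Now the key point is exactly the uniqueness step already established: the tuple $(\hat\phi_j)$ is sent to $0$ by the matrix $T$ of \eqref{eq:T}, and $T$ is generically pointwise injective (Proposition~\ref{duality}), so by continuity all $\hat\phi_j$ vanish identically. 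Hence $\xi'\in\E_\Omega^{0,*}\J$ and therefore $\xi$ lies in the claimed module.

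The one point requiring slight care—and the place where the non-reduced structure enters—is the passage from "$T(\hat\phi_j)=0$ as smooth forms" back to "$\hat\phi_j\equiv0$": $T$ is a holomorphic matrix, generically injective, acting coefficientwise on smooth $(0,*)$-forms, and one must invoke that a holomorphic matrix which is injective on a dense open set is injective on a dense open set for the induced map on smooth forms too, so the kernel is supported on a proper analytic subset and, being smooth, vanishes. This is the same mechanism used in Lemma~\ref{skottskada} and is the main (though mild) obstacle; everything else is bookkeeping with \eqref{paraply3}, the Nullstellensatz, and the change-of-basis modulo $\J$. I would phrase the whole thing as: ``Both inclusions follow from the proof of Lemma~\ref{skottskada},'' spelling out only the identification of $\Ok(|w|^M)$ and $\Ok(\bar w,d\bar w)$ with the respective summands.
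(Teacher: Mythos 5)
Your proof is correct and is essentially the paper's own argument: the paper obtains Lemma~\ref{bura} as a direct byproduct of the proof of Lemma~\ref{skottskada} (the finite Taylor expansion \eqref{paraply3} sorting $\xi$ into the three summands plus a term of the form \eqref{stor1}, and then the generic pointwise injectivity of $T$ to kill that last term), which is exactly what you spell out. The only detail you leave implicit is that membership in the module on $X_{reg}$ gives $\xi\w\mu^\ell=0$ near $X_{reg}$ and hence on all of $\Omega$ by the SEP of $\xi\w\mu^\ell\in\W_\Omega^Z$; this is minor and consistent with how the lemma is used.
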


\begin{remark}
This is {\it not} the same as saying that $\xi$ is in
 $\E_\Omega^{0,*} \J+\E_\Omega^{0,*}\bar \J_Z
+\E_\Omega^{0,*} d\bar \J_Z$  at singular points. For a simple counterexample, consider
$\phi=x\bar y$ on the reduced space $Z=\{xy=0\}\subset\C^2$.

However, this can happen also when $Z$ is irreducible at a point. For example, the variety
$Z = \{ x^2 y - z^2 = 0  \} \subset \C^3$ is irreducible at $0$, but there exist points
arbitrarily close to $0$ such that $(Z,z)$ is not irreducible.
In this case, the ideal of smooth functions vanishing on $(Z,0)$ is strictly larger than
$\E_\Omega^{0,0} \J_{Z,0} + \E_\Omega^{0,0} \bar \J_{Z,0}$ see \cite[Proposition~9, Chapter~IV]{Nar}, and
\cite[Theorem~3.10, Chapter~VI]{MalBook}.
\end{remark}

\begin{remark}\label{getter}
It is easy to check that if we have the setting as in the discussion at the end of
Section \ref{get} but $(a_j)$ is instead a tuple in $\E_Z^{0,*}$, then
we can still define $(b_j)=L(a_j)$ if we consider the derivatives in
$L$ as Lie derivatives; in fact, since $a_j$ has no holomorphic differentials,
$L$ only acts on the smooth coefficients, and it is easy to check that
$a_0\otimes 1+\cdots + a_{\nu-1}\otimes w^{\alpha_{\nu-1}}$ and
$b_0\otimes 1+\cdots+ b_{\nu-1} \otimes \eta^{\beta_{\nu-1}}$
are equal modulo
$\E_\Omega^{0,*} \J+\E_\Omega^{0,*}\bar \J_Z
+\E_\Omega^{0,*} d\bar \J_Z$,
and thus define the same
element in $\E_X^{0,*}$.
\end{remark}

For future needs we prove in  Section~\ref{brakteat}:

 \begin{lma} \label{pointwise}
    The morphism $T$ is pointwise injective.
\end{lma}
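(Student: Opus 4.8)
The statement to be proved is Lemma~\ref{pointwise}: that the $\Ok_Z$-linear morphism $T\colon (\Ok_Z)^\nu \to (\Ok_Z)^M$ from \eqref{eq:T}, which sends the coefficient tuple $(\hat\phi_j)$ of the representation \eqref{stor1} of $\phi \in \Ok_X$ to the tuple of coefficients (in $(\W_Z^{0,*})^M$, but here $\Ok_Z$ since $\phi$ is holomorphic) of the currents $\phi \wedge \mu^\ell$ in the decomposition of Proposition~\ref{parasol}, is pointwise injective --- not merely generically so. We already know from Proposition~\ref{duality} that $T$ is injective as a sheaf morphism, hence $T$ is generically pointwise injective; the content of the lemma is to upgrade this to injectivity at \emph{every} point of $X_{reg}$.

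\textbf{Approach.} Fix a point $x \in X_{reg}$ with coordinates $(z,w)$ such that $Z = \{w=0\}$ and $\Ok_{X,x}$ free over $\Ok_{Z,x}$ with basis $1,w^{\alpha_1},\dots,w^{\alpha_{\nu-1}}$. Pointwise injectivity of the holomorphic matrix $T$ at $x$ means: if $c = (c_0,\dots,c_{\nu-1}) \in \C^\nu$ is a \emph{constant} vector with $T(x)c = 0$, then $c = 0$. Equivalently, writing $\phi_c := c_0 + c_1 w^{\alpha_1} + \dots + c_{\nu-1} w^{\alpha_{\nu-1}}$ as an element of $\Ok_{X,x}$ with constant coefficients, the vanishing of all the $\W_Z$-coefficients of $\phi_c \wedge \mu^\ell$ \emph{at the single point $x$} (i.e. of the functions $(\phi_c \wedge \mu^\ell)_\gamma$ evaluated at $z = 0$) forces $c = 0$. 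The key observation is that $T(x)c$ records, via \eqref{parsol1} and \eqref{spurt}, the values at $x$ of the holomorphic functions $\pi_*(w^\gamma \cdot \phi_c \mu^\ell)$, i.e. certain $w$-Taylor coefficients at $z=0$ of $\phi_c$ paired against the residue structure of $\mu^\ell$.

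\textbf{Key steps.} First, I would reduce to the minimal embedding using Corollary~\ref{klas2} and Lemma~\ref{skot}: by \eqref{embedding} we may factor through $X \xrightarrow{j} \widehat\Omega \xrightarrow{\iota} \Omega$, and since the generators $\mu^\ell$ of $\Homs(\Ok_\Omega/\J, \CH_\Omega^Z)$ are $\iota_*$ of generators $\hat\mu^\ell$ of $\Homs(\Ok_{\widehat\Omega}/\widehat\J, \CH^Z_{\widehat\Omega})$, and the decomposition \eqref{paraply} is compatible with $\iota_*$ via $\tau \mapsto \tau \wedge [w=0]$, the matrix $T$ for the general embedding is built from that of the minimal embedding; in particular pointwise injectivity is unaffected, so we may assume $Z \subset \Omega$ with $\Ok_{X,x} = \Ok_{\Omega,x}/\J$ and $Z$ smooth of dimension $n$. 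Second --- the heart of the matter --- I claim that pointwise injectivity at $x$ is equivalent to the statement that $\Homs(\Ok_{\Omega,x}/\J, \CH^Z_{\Omega,x})$, as a module, is generated at $x$ by the $\mu^\ell$ in a way compatible with the fiber, which in turn follows because $\Ok_{X,x}$ is Cohen-Macaulay: the duality \eqref{kondor} combined with Cohen-Macaulayness says that $\Homs(\Ok_{X}, \CH_\Omega^Z)$ is, locally at $x$, a free $\Ok_Z$-module of the same rank $\nu$, dual to $\Ok_X$ over $\Ok_Z$. Concretely, pick a ``dual basis'' $\sigma^0,\dots,\sigma^{\nu-1}$ of residue currents such that the pairing $\Ok_X \times \Homs(\Ok_X, \CH_\Omega^Z) \to \CH_\Omega^Z$, followed by the isomorphism $\CH_\Omega^Z \cong \Ok_Z$ (via $\pi_*$ against the top power of $w$, cf.\ \eqref{spurt}--\eqref{parsol1}), gives $\langle w^{\alpha_i}, \sigma^j \rangle = \delta_{ij}$ as elements of $\Ok_Z$, with $\alpha_0 := 0$. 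Then for $\phi_c$ with constant coefficients we get $\langle \phi_c, \sigma^j\rangle(x) = c_j$, so if all pairings of $\phi_c$ with all of $\Homs(\Ok_X,\CH_\Omega^Z)$ vanish at $x$ then each $c_j = 0$. Since the $\mu^\ell$ generate $\Homs(\Ok_X, \CH_\Omega^Z)$ over $\Ok_\Omega$, and in fact over $\Ok_Z$ once $w$-powers above the Nullstellensatz bound are absorbed, the vanishing of all $\mu^\ell$-pairings of $\phi_c$ at $x$ --- which is exactly $T(x)c = 0$ --- implies the vanishing of all $\sigma^j$-pairings at $x$, hence $c = 0$.

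\textbf{Main obstacle.} The step I expect to be delicate is making precise the claim that $\Homs(\Ok_{X,x}, \CH^Z_{\Omega,x})$ is a \emph{free} $\Ok_{Z,x}$-module dual to $\Ok_{X,x}$, and that the ``evaluation at $x$'' of the pairing behaves as the fiberwise dual pairing of $\Ok_{X,x} \otimes_{\Ok_Z} \C$ with its dual. One must use Proposition~\ref{sorelle} together with \eqref{apanage} (which identifies $\Homs(\Ok_X,\CH_\Omega^Z)$ with $\Exts^p(\Ok_\Omega/\J, K_\Omega)$) and the fact that for a Cohen-Macaulay module over the regular ring $\Ok_{Z,x}$ the $\Ext$-dual is again Cohen-Macaulay of the same dimension, hence free, and the canonical evaluation map $\Ok_X \to \Homs_{\Ok_Z}(\Homs_{\Ok_Z}(\Ok_X,\Ok_Z),\Ok_Z)$ is an isomorphism; compatibility of this algebraic duality with the current-theoretic pairing $\mu \mapsto \pi_*(w^\gamma \cdot\, \cdot\,)$ is where the residue formalism of Proposition~\ref{parasol} must be invoked carefully. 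Since the paper defers this to Section~\ref{brakteat}, I would expect the authors' proof to carry out exactly this algebraic-to-analytic translation; the rest is the bookkeeping of dual bases described above.
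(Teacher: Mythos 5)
Your overall architecture could in principle be made to work, and the bookkeeping is right: by \eqref{paraply2}, multiplication by a holomorphic $g(z,w)$ acts on the coefficient tuple of a current in $\W^Z_\Omega$ by an $\Ok_Z$-linear matrix, so $T(x)c=0$ (the vanishing at $x$ of all coefficients of all $\phi_c\w\mu^\ell$) does propagate to any $\Ok_\Omega$-combination $\sum_\ell g_{j\ell}\mu^\ell$ of the generators. But the entire proof then hangs on the unproved assertion that there exist $\sigma^0,\dots,\sigma^{\nu-1}$ in $\Homs(\Ok_\Omega/\J,\CH^Z_\Omega)$ with $\pi_*(w^{\alpha_i}\sigma^j)=\delta_{ij}\,dz$. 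Proposition~\ref{duality} gives only that the pairing is nondegenerate in the $\Ok_X$-slot at the level of stalks --- i.e., generic injectivity of $T$, which you already have --- and says nothing about surjectivity of the trace map $\Homs(\Ok_\Omega/\J,\CH^Z_\Omega)\to\Homs_{\Ok_Z}(\Ok_X,\Ok_Z)$, which is what the existence of a dual basis amounts to. Freeness of $\Exts^p(\Ok_\Omega/\J,K_\Omega)$ over $\Ok_Z$ at a Cohen-Macaulay point does follow by an Auslander--Buchsbaum argument as in Proposition~\ref{sorelle}, but that yields neither the rank nor the perfectness of the specific current-theoretic pairing; those require a genuine duality input (relative/Grothendieck duality for the finite flat $\Ok_Z$-module $\Ok_X$, or the explicit results of \cite{LExpl}) that is at least as strong as the lemma itself and is nowhere established in your argument. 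You correctly flag this as the main obstacle, but flagging it is not overcoming it, so as it stands the proof has a hole at its central step. (Minor: $\CH^Z_\Omega$ is not isomorphic to $\Ok_Z$; the map you intend is $\pi_*$, i.e., extraction of the $\gamma=0$ coefficient in \eqref{paraply}.)

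The paper avoids all duality over $\Ok_Z$ by slicing down to dimension zero, where generic pointwise injectivity coincides with pointwise injectivity at the single point. One tensors a minimal Hermitian resolution $E_\bullet$ of $\Ok_\Omega/\J$ with the Koszul complex $F_\bullet$ of $z=(z_1,\dots,z_n)$; since $\J$ and $(z)$ define Cohen-Macaulay quotients intersecting properly, $(E\otimes F)_\bullet$ resolves $\Ok_\Omega/(\J+(z))$ and $R^{E\otimes F}_N=R^E_p\w R^F_n$, whence the currents $\mu^j\w\dbar(1/z^{\1})$ generate $\Homs(\Ok_\Omega/(\J+(z)),\CH^{\{0\}}_\Omega)$. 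The monomials $w^{\alpha_i}$ remain a basis of $\Ok_{X_0}:=\Ok_\Omega/(\J+(z))$ over $\C$, and since $\phi\,\dbar(1/z^{\1})=\phi(0,\cdot)\,\dbar(1/z^{\1})$ the matrix $T_0$ built for $X_0$ is exactly $T(0)$; its injectivity is then Proposition~\ref{duality} applied to $X_0$. If you wish to rescue your route, the missing perfect-pairing statement is precisely what you must prove, and it costs more machinery than this slicing argument.
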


We can thus choose a holomorphic matrix $A$ such that
\begin{equation}\label{parra}
0\to \Ok_Z^\nu\stackrel{T}{\to} \Ok_Z^M\stackrel{A}{\to} \Ok_Z^{M'}
\end{equation}
is pointwise exact, and we can also find holomorphic matrices $S$ and $B$
such that
\begin{equation} \label{Thomotopy}
    I = TS + BA.
\end{equation}

\section{Intrinsic $(n,*)$-currents  on $X$} \label{sect:currentsn}

In analogy with the reduced case we have the following definition when $X$ is possibly non-reduced.
\begin{df}
The sheaf $\Cu_X^{n,q}$ of $(n,q)$-currents on $X$ is the dual sheaf of $(0,n-q)$-test forms, i.e., forms in $\E_X^{0,n-q}$
with compact support.
\end{df}
Here, just as in the case of reduced spaces, cf.\ for example \cite[Section~4.2]{HeLi}, the space of smooth forms
$\E_X^{0,n-q}$ is equipped with the quotient topology induced by a local embedding.

More concretely, this means that given an embedding $i\colon X\to \Omega$, currents  $\psi$ in $\Cu_X^{n,q}$
precisely correspond to the $(N,N-n+q)$-currents
$\tau$ on $\Omega$  that vanish on $\Kers i^*$.
Since  $\Kers i^*$ is a two-sided ideal in  $\E_\Omega^{0,*}$ this holds
if and only if $\xi\w \tau=0$ for all
$\xi$ in $\Kers i^*$. It is natural to write $\tau=i_*\psi$ so that
$$
i_*\psi.\xi=\psi.i^*\xi.
$$
Clearly, we get
a mapping $\dbar\colon \Cu_X^{n,q}\to\Cu_X^{n,q+1}$ such that $\dbar^2=0$.

\smallskip

\begin{prop} \label{skatbo}
If $\tau$ is in $\W_\Omega^Z$ and $\J\tau=0$, then $\xi\w\tau=0$
for all smooth $\xi$ such that $i^*\xi=0$.
\end{prop}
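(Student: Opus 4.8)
The plan is to reduce the statement to the regular part $X_{reg}$ by means of the standard extension property and there to invoke Lemma~\ref{bura}. Recall first that $i^*\xi=0$ means exactly that $\xi\in\Kers i^*$, and that the assertion is local. Since $\tau$ is pseudomeromorphic with support on $Z$ and $\xi$ is smooth, $\xi\w\tau$ is again pseudomeromorphic with support on $Z$, and by \eqref{stare2} it has the SEP with respect to $Z$: for every germ $W$ of a subvariety of $Z$ of positive codimension, $\1_W(\xi\w\tau)=\xi\w\1_W\tau=0$, since $\tau\in\W_\Omega^Z$. Now $X_{reg}$ is a dense Zariski-open subset of $Z$ (it is the locus where $Z$ is smooth and $\Ok_X$ is Cohen-Macaulay, and for $X$ of pure dimension both conditions hold on a dense Zariski-open set), so $W:=Z\setminus X_{reg}$ is such a subvariety. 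Taking a holomorphic tuple $h$ cutting out $W$, \eqref{restrDef} gives $\xi\w\tau=\xi\w\tau-\1_W(\xi\w\tau)=\lim_{\epsilon\to 0^+}\chi(|h|^2/\epsilon)(\xi\w\tau)$. For each fixed $\epsilon>0$ the factor $\chi(|h|^2/\epsilon)$ vanishes in a neighborhood of $W$, so $\chi(|h|^2/\epsilon)(\xi\w\tau)$ is supported in $X_{reg}$ (together with $\Omega\setminus Z$, where $\tau=0$). Hence it suffices to show that $\xi\w\tau=0$ near every point of $X_{reg}$.

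So fix a point of $X_{reg}$ and local coordinates $(z,w)$ with $Z=\{w=0\}$. By Lemma~\ref{bura} we may write, near this point,
$$
\xi=\sum_k\eta_k\w h_k+\sum_j\zeta_j\w\bar w_j+\sum_j\theta_j\w d\bar w_j,
$$
where $h_k\in\J$ and $\eta_k,\zeta_j,\theta_j$ are smooth. Wedging with $\tau$, the first sum gives $\sum_k\pm\eta_k\w(h_k\tau)=0$ since $\J\tau=0$, and the two remaining sums vanish because $\tau$ is pseudomeromorphic with support on $\{w=0\}$ and $w_j$ is holomorphic and vanishes there, so $\bar w_j\tau=0$ and $d\bar w_j\w\tau=0$ (the basic property of pseudomeromorphic currents recalled at the start of Section~\ref{sect:pm}). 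Thus $\xi\w\tau=0$ near the chosen point, and combining with the previous paragraph, $\xi\w\tau=0$.

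I expect the only genuinely delicate point to be conceptual rather than computational: Lemma~\ref{bura} identifies $\Kers i^*$ with $\E_\Omega^{0,*}\J+\E_\Omega^{0,*}\bar\J_Z+\E_\Omega^{0,*}d\bar\J_Z$ only on $X_{reg}$, not at singular points of $X$, so one cannot argue pointwise on all of $Z$; it is precisely the reduction via the SEP of $\xi\w\tau$ with respect to $Z$, carried out in the first paragraph, that lets us restrict attention to $X_{reg}$ where the algebraic description of $\Kers i^*$ is available.
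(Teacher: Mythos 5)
Your proof is correct and takes essentially the same route as the paper's: reduce to $X_{reg}$ via the SEP of $\tau$ (which the paper states in one line), and there conclude from Lemma~\ref{bura} together with $\J\tau=0$ and the facts that $\bar w_j\tau=0$ and $d\bar w_j\w\tau=0$ for a pseudomeromorphic current supported on $\{w=0\}$. You simply spell out the SEP reduction and the decomposition of $\xi$ in more detail than the paper does.
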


\begin{proof} 
Because of the SEP it is enough
to prove that $\xi\w\tau=0$ on $X_{reg}$.
By assumption, $\J$ annihilates $\tau$, and by general properties of pseudomeromorphic
currents, since $\tau$ has support on $Z$, $\bar \J_Z$ and $d \bar \J_Z$ annihilate $\tau$.
Thus  the proposition  follows by Lemma~\ref{bura}.
\end{proof}

\begin{df}
An $(n,*)$-current $\psi$ on $X$ is in $\W_X^{n,*}$ if
$i_*\psi$ is in $\Homs(\Ok_\Omega/\J,\W_\Omega^Z)$.
 \end{df}

By definition we thus have the  isomorphism
\begin{equation}\label{hastverk}
i_*\colon \W_X^{n,*}\simeq \Homs(\Ok_\Omega/\J,\W_\Omega^Z).
\end{equation}
It follows from Lemma~\ref{skot} that $\W_X^{n,*}$ is intrinsically
defined.

\begin{remark} By Corollary~\ref{Wextrinsic}, this definition is consistent with the
previous definition of $\W_X^{n,*}$ when  $X$ is reduced.
We cannot define  $\PM_X^{n,*}$ in the analogous simple way, cf.\ Remark~\ref{soda}.
\end{remark}

\begin{df}\label{ainbusk}
    If $\psi$ is in $\W^{n,*}_X$ and $a$ is an almost semi-meromorphic $(0,*)$-current on $\Omega$ that is generically smooth on $Z$,
then the product $a \wedge \psi$ is
a current in $\W^{n,*}_X$ defined as follows:
By definition, $i_* \psi $ is in $\Homs(\Ok_\Omega/\J,\W^Z_\Omega)$
and by Proposition~\ref{asmW} and \eqref{korp},  one can define $a \wedge i_* \psi$ in $\Homs(\Ok_\Omega/\J,\W^Z_\Omega)$;
now  $a \wedge \psi$ is the unique current in $\W^{n,*}_X$ such that $i_*(a \wedge \psi) = a \wedge i_* \psi$.
\end{df}

By \eqref{proddef}, 
\begin{equation} \label{asmWn}
    a \wedge \psi = \lim_{\epsilon \to 0^+} \chi(|h|^2/\epsilon) a \wedge \psi
\end{equation}
  if $h$ cuts out the Zariski singular support of $a$.

\begin{df} We let $\Ba_X^n$ be the sheaf of $\dbar$-closed currents in $\W^{n,0}_X$.
\end{df}

This sheaf corresponds via $i_*$ to $\dbar$-closed currents in $\Homs(\Ok_\Omega/\J,\W_\Omega^Z)$
so we have the isomorphism
\begin{equation}\label{hastverk2}
    i_*\colon \Ba_X^n \simeq \Homs(\Ok_\Omega/\J,\CH_\Omega^Z).
\end{equation}
When $X$ is reduced $\Ba_X^n$ is the sheaf of
$(n,0)$-forms that are $\dbar$-closed in the
Barlet-Henkin-Passare sense.
Let $\mu^1,\ldots,\mu^m$ be a set of generators for the $\Ok_\Omega$-module
$\Homs(\Ok_\Omega/\J,\CH_\Omega^Z)$.  They correspond via \eqref{hastverk2}
to a set of generators
$h^1,\ldots, h^m$ for the $\Ok_X$-module $\Ba_X^n$.

\smallskip

We will also need a definition of $\PM_X^{n,*}$.
Let $\F_X$  be the subsheaf of $\Cu_X^{n,*}$ of $\tau$ such that $i_*\tau$ is in $\PM_\Omega^Z$.  If $\tau$ is a section of $\F_X$
and $W$ is a subvariety of some open subset of $Z$, then $\1_Wi_*\tau$ is in $\PM_\Omega^Z$, and by \eqref{stare2}, $\1_W i_* \tau$ is annihilated by $\Kers i^*$.
Hence  we can define $\1_W\tau$ as the unique current in $\F_X$ such that $i_*\1_W\tau=\1_Wi_*\tau$.
Clearly, $\1_W\tau$ has support on $W$  and it is easily checked that the  computational rule \eqref{stare2} holds also in $\F_X$.
Moreover, $\F_X$ is closed under
$\dbar$ since  $\PM_\Omega^Z$ is.

\begin{df} \label{polestar}
    The sheaf $\PM_X^{n,*}$ is the smallest subsheaf of $\F_X$ that contains $\W_X^{n,*}$ and is closed
under $\dbar$ and multiplication by $\1_W$ for all germs $W$ of subvarieties of $Z$.
\end{df}

In view of Proposition~\ref{trilsk} this definition coincides with the usual definition in case $X$ is reduced.
It is readily checked that the dimension principle holds for $\F_X$, and hence it also holds
for the (possibly smaller) sheaf $\PM_X^{n,*}$, and in addition, \eqref{stare2} holds
for forms $\xi$ in $\E_X^{0,*}$ and $\tau$ in $\PM_X^{n,*}$.

\section{Structure form on $X$}\label{prut}

Let $i\colon X\to \Omega \subset \C^N$ be a local embedding as before, let $p = N-n$ be the codimension of $X$, and let
$\J$ be the associated ideal sheaf on $\Omega$.
In a slightly smaller set, still denoted $\Omega$, there is  a free
resolution
\begin{equation}\label{karvupplosn}
0 \to \hol(E_{N_0}) \stackrel{f_{N_0}}{\longrightarrow} \cdots \stackrel{f_3}{\longrightarrow} \hol(E_2)
\stackrel{f_2}{\longrightarrow} \hol(E_1) \stackrel{f_1}{\longrightarrow} \hol(E_0)
\end{equation}
of $\hol_{\Omega}/\J$;
here  $E_k$ are trivial vector bundles over $\Omega$ and $E_0$ is the
trivial line bundle. This resolution induces a complex of  vector bundles
\begin{equation}\label{VBkomplex}
0 \to E_{N_0} \stackrel{f_{N_0}}{\longrightarrow} \cdots \stackrel{f_3}{\longrightarrow} E_2
\stackrel{f_2}{\longrightarrow} E_1 \stackrel{f_1}{\longrightarrow} E_0
\end{equation}
that is pointwise exact outside $Z$. Let $X_k$ be the set where $f_k$ does not have optimal rank. Then
\begin{equation*}
\cdots \subset X_{k+1} \subset X_k \subset \cdots \subset X_{p+1}\subset X_p =\cdots =X_1= Z;
\end{equation*}
these sets are independent of the choice of resolution and  thus invariants of $\hol_{\Omega}/\J$.
Since $\Ok_\Omega/\J$ has {\it pure} codimension $p$,
\begin{equation}\label{BEsats}
\codim X_k \geq k+1, \quad \mbox{for} \quad k\geq p+1,
\end{equation}
see  \cite[Corollary 20.14]{Eis}. Thus there is a free resolution \eqref{karvupplosn}
if and only if $X_k=\emptyset$ for $k>N_0$.
Unless $n = 0$ (which is not interesting in relation to the $\dbar$-equation),
we can thus choose the resolution so that $N_0\le N-1$.
The variety $X$ is Cohen-Macaulay at a point $x$, i.e., the sheaf $\Ok_\Omega/\J$ is Cohen-Macaulay at $x$,
if and only if $x\notin X_{p+1}$.
Notice that $Z\setminus (X_{reg})_{red}=Z_{sing}\cup X_{p+1}$.
The sets $X_k$ are  independent
of the choice of embedding, see \cite[Lemma~4.2]{AWsemester},
and  are thus intrinsic subvarieties of  $Z=X_{red}$,
and they reflect the complexity of the singularities of $X$.

\smallskip

Let us now choose Hermitian metrics on the bundles $E_k$. We then refer to
\eqref{karvupplosn} as a {\it Hermitian resolution} of $\Ok_\Omega/\J$ in $\Omega$.
In $\Omega\setminus X_k$ we have a well-defined vector bundle
morphism $\sigma_{k+1}\colon E_k\to E_{k+1}$, if we require that  $\sigma_{k+1}$ vanishes on
$(\Image f_{k+1})^\perp$, takes values in $(\Kers f_{k+1})^\perp$, and that $f_{k+1}\sigma_{k+1}$ is
the identity on $\Image f_{k+1}$.   Following  \cite[Section 2]{AW1} we  define  smooth
$E_k$-valued forms
\begin{equation}\label{ukdef}
u_k=(\dbar\sigma_k)\cdots(\dbar\sigma_2)\sigma_1=\sigma_k(\dbar\sigma_{k-1})\cdots(\dbar\sigma_1)
\end{equation}
in $\Omega\setminus  X$; for the second equality, see \cite[(2.3)]{AW1}. We have that
$$
f_1u_1=1, \quad f_{k+1}u_{k+1}-\dbar u_k=0,\quad  k\ge 1,
$$
in $\Omega\setminus  X$.
If $f:=\oplus f_k$ and $u:=\sum u_k$,  then these relations can be written
economically as $\nabla_f u=1$, where
$\nabla_f:= f-\dbar$.
To make the algebraic machinery  work properly one has to introduce a superstructure on the
bundle $E=:\oplus E_k$ so that vectors in $E_{2k}$  are even and vectors in  $E_{2k+1}$ are odd;
hence $f$, $\sigma:=\oplus\sigma_k$,  and  $u:=\sum u_k$  are odd.
For details, see \cite{AW1}.
It turns out that $u$ has a (necessarily unique)
almost semi-meromorphic extension $U$ to $\Omega$. The residue current
$R$ is  defined by the relation
\begin{equation}\label{potatis}
\nabla_f U=1-R.
\end{equation}
It follows directly that $R$ is $\nabla_f$-closed.
In addition, $R$ has support on $Z$ and is a sum $\sum R_k$, where $R_k$ is
a \pmm $E_k$-valued current of bidegree $(0,k)$. It follows from  the dimension
principle that $R=R_p+R_{p+1}+\cdots +R_N$. If we choose a free  resolution that
ends at level $N-1$, then $R_N=0$.
If $X$ is Cohen-Macaulay and $N_0=p$ in \eqref{karvupplosn}, then $R=R_p$,
and the $\nabla_f$-closedness implies that $R$ is $\debar$-closed.

If $\phi$ is in $\J$ then $\phi R=0$  and in fact, $\J=\ann R$,
see \cite[Theorem~1.1]{AW1}.


\begin{remark}\label{krokus}
In case $\J$ is generated by the single non-trivial function $f$, then we  have the free  resolution
$0\to \Ok_\Omega \stackrel{f}{\to} \Ok_\Omega\to\Ok_\Omega/(f)\to 0$;
thus $U$ is just the
principal value current $1/f$ and $R=\dbar(1/f)$.
More generally, if $f = (f_1,\dots,f_p)$ is a complete intersection, then
\begin{equation*}
    R = \dbar \frac{1}{f_p} \wedge \dots \wedge \dbar\frac{1}{f_1},
\end{equation*}
where the right hand side is the so-called Coleff-Herrera product of $f$,
see for example \cite[Corollary~3.5]{ACH}.
\end{remark}

There are almost semi-meromorphic $\alpha_k$ in $\Omega$, cf.\ \cite[Section~2]{AW1} and the proof of \cite[Proposition~3.3]{AS}, that are
smooth outside  $X_k$,  such that
\begin{equation}\label{plast}
R_{k+1}=\alpha_{k+1} R_k
\end{equation}
outside $X_{k+1}$ for $k\ge p$.  In view of \eqref{BEsats} and the dimension principle,
$\1_{X_{k+1}}R_{k+1}=0$ and hence \eqref{plast} holds across $X_{k+1}$, i.e., $R_{k+1}$ is
indeed equal to the product $\alpha_{k+1} R_k$ in the sense of Proposition~\ref{ball}.
In particular, it follows that $R_k$ has the SEP with respect to $Z$.

In this section, we let $(z_1,\dots,z_N)$ denote coordinates on $\C^N$,
and let $dz := dz_1 \w \cdots \w dz_N$.

\begin{lma}\label{bmy}
There is a matrix of almost semi-meromorphic currents $b$ such that
\begin{equation}\label{plast2}
R\w dz=b\mu,
\end{equation}
where $\mu$ is a tuple of currents in $\Homs(\Ok_\Omega/\J,\CH_\Omega^Z)$.
\end{lma}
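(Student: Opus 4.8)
The plan is to produce the matrix $b$ by reading off what the structure form $R$ looks like near the regular part $X_{reg}$ and then invoking the standard-extension-property (SEP) machinery to conclude the identity globally. First I would recall that, by \eqref{hastverk2} and the remark following it, a set of generators $\mu^1,\dots,\mu^m$ of the $\Ok_\Omega$-module $\Homs(\Ok_\Omega/\J,\CH^Z_\Omega)$ exists locally; collect them into a column tuple $\mu$. Both $R\w dz$ and the components of $\mu$ are $(N,*)$-currents on $\Omega$ of the type considered in Section~\ref{sect:pm}: they are pseudomeromorphic, supported on $Z$, annihilated by $\J$ (for $R$ this is $\J = \ann R$, cited from \cite[Theorem~1.1]{AW1}; for $\mu$ this is built into $\Homs(\Ok_\Omega/\J,-)$), and they have the SEP with respect to $Z$, since the $R_k$ do by the discussion around \eqref{plast}. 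So $R\w dz$ is a section of $\Homs(\Ok_\Omega/\J,\W^Z_\Omega)$ in each bidegree, hence, by the coherence of that sheaf (which is $\Exts^p(\Ok_\Omega/\J,K_\Omega)$ up to the bookkeeping of \eqref{apanage}) together with the generation statement, there is locally a matrix of \emph{holomorphic} coefficients $b_0$ with $R\w dz = b_0\mu$. The subtlety the lemma is flagging by allowing $b$ to be \emph{almost semi-meromorphic} rather than holomorphic is that this naive argument only gives what we want componentwise in the filtration by $X_k$, and patching the pieces costs us the meromorphic factors $\alpha_k$ from \eqref{plast}.

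Concretely, I would argue level by level in $k \ge p$. The bottom piece $R_p$ is $\dbar$-closed where it is defined with optimal rank, and on $X_{reg}$ (where $\Ok_X$ is Cohen-Macaulay and $Z$ is smooth) $R_p\w dz$ is, by the duality theory, a current of the form $\sum_\gamma (\text{holomorphic})\, dz \otimes \dbar(dw/w^{\gamma+\1})$ — precisely the shape \eqref{paraply} of an element of $\Homs(\Ok_\Omega/\J,\CH^Z_\Omega)$ — so it is an $\Ok_Z$-combination of the $\mu^j$; this gives the holomorphic part of $b$ acting on the appropriate block. For $k > p$ one uses \eqref{plast}, $R_{k+1} = \alpha_{k+1}R_k$, iterated, to write $R_k\w dz = \alpha_k \w \cdots \w \alpha_{p+1}\w (R_p\w dz)$ as an almost semi-meromorphic current wedged with the already-treated bottom level; since the $\alpha_j$ are almost semi-meromorphic (smooth outside $X_j$) and $R_p\w dz$ has the SEP with respect to $Z$, Proposition~\ref{asmW} and \eqref{korp} make sense of these products inside $\Homs(\Ok_\Omega/\J,\W^Z_\Omega)$, and the composite coefficient matrix $b = (\text{products of }\alpha\text{'s})\cdot b_0$ is a matrix of almost semi-meromorphic currents. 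Summing over $k$ gives \eqref{plast2}.

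The main obstacle, and the reason I expect the proof to spend its effort here, is justifying that the identity $R\w dz = b\mu$ — established a priori only on $X_{reg}$, i.e., away from $Z_{sing}\cup X_{p+1}$, which has positive codimension in $Z$ — actually holds on all of $\Omega$. This is where I would lean on the SEP: both sides are pseudomeromorphic $(N,*)$-currents supported on $Z$ with the standard extension property with respect to $Z$ (for the right side, $b\mu$ has the SEP because $\mu$ does and multiplication by the almost semi-meromorphic entries of $b$ preserves the SEP by Proposition~\ref{asmW}), and the difference is supported on a subvariety of $Z$ of positive codimension, so $\1$ applied to that subvariety kills it; hence the difference vanishes identically. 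One should also double-check the degenerate bidegrees (the current $R_N$, which vanishes if the resolution is chosen to terminate at level $N-1$, and bidegree $(N,0)$ if $X$ is Cohen-Macaulay, where $R = R_p$ and everything is holomorphic already), but these are only simplifications. The remaining bookkeeping — choosing the $\mu^j$ compatibly with the local product representation \eqref{paraply}, and keeping track of the line-bundle twists in \eqref{apanage} so that "$R\w dz$" lands in the right sheaf — is routine.
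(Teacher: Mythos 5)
Your overall architecture matches the paper's: treat the bottom level $R_p$ first, obtain $R_k$ for $k>p$ by multiplying with the almost semi-meromorphic factors $\alpha_k\cdots\alpha_{p+1}$ from \eqref{plast}, and extend the resulting identity across the bad set by the SEP (using Proposition~\ref{asmW} to see that the right-hand side has the SEP with respect to $Z$). The inductive step and the SEP extension are fine and are essentially what the paper does.

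The gap is in your base case, which is where the real content of the lemma lives. You assert that on $X_{reg}$ the current $R_p\w dz$ has holomorphic coefficients in the expansion \eqref{paraply} (``precisely the shape of an element of $\Homs(\Ok_\Omega/\J,\CH^Z_\Omega)$''), hence is an $\Ok_Z$-combination of the generators $\mu^j$. This is not true in general: $\dbar R_p=f_{p+1}R_{p+1}$, and for a resolution of length greater than $p$ the current $R_{p+1}=\alpha_{p+1}R_p$ need not vanish on the Cohen-Macaulay locus (the paper only asserts that $R=R_p$ is $\dbar$-closed when $N_0=p$, cf.\ Remark~\ref{mars}). So $R_p\w dz$ is in general not $\dbar$-closed on $X_{reg}$, and its coefficients $\tau_\gamma$ there are merely smooth functions, not holomorphic ones. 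Likewise, the appeal to ``coherence'' in your first paragraph proves nothing: $\Homs(\Ok_\Omega/\J,\W^Z_\Omega)$ is not a coherent sheaf, and membership in it does not make a current an $\Ok_\Omega$-combination of generators of the coherent subsheaf $\Homs(\Ok_\Omega/\J,\CH^Z_\Omega)$. What is actually needed --- and what the paper imports from \cite[Section~3]{AS} and \cite[Proposition~3.2]{Sz} --- is the nontrivial fact that $R_p=\sigma_F\mu$, where $\sigma_F$ is an almost semi-meromorphic matrix that is smooth (but not holomorphic) outside $X_{p+1}$; it is constructed from the minimal inverses associated with the dual complex $\Homs(E_\bullet,K_\Omega)$, i.e., by inverting the realization $\xi\mapsto\xi R_p$ of the isomorphism \eqref{rubel}. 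Without that input your argument does not get off the ground.
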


\begin{proof}
As in \cite[Section~3]{AS}, see also \cite[Proposition~3.2]{Sz},
one can prove that $R_p=\sigma_F\mu$, where $\mu$ is a tuple of
currents in $\Homs(\Ok_\Omega/\J,\CH_\Omega^Z)$ and $\sigma_F$ is an almost
semi-meromorphic current that is smooth outside $X_{p+1}$.

Let $b_p=\sigma_F$ and $b_k=\alpha_k\cdots\alpha_{p+1}\sigma_F$ for
$k\ge p+1$.  Then each $b_k$ is almost semi-meromorphic,
cf.\ \cite[Section 4.1]{AW3}.  In view of \eqref{plast} we have that
$R_k=b_k\mu$ outside $X_{p+1}$ since $b_k$ is smooth there. It follows by the
SEP that it holds across $X_{p+1}$ as well since $R_k$ has the SEP with respect to $Z$.
We then take $b=b_p+b_{p+1}+\cdots$.
\end{proof}

By Proposition~\ref{asmW} we get

\begin{cor}\label{rattmuff}
The current $R\wedge dz$ is in $\Homs(\Ok_\Omega/\J,\W_\Omega^Z)$.
\end{cor}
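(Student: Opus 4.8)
The plan is to read the statement off from Lemma~\ref{bmy} together with Proposition~\ref{asmW}. First I would invoke Lemma~\ref{bmy} to write $R\w dz=b\mu$, where $\mu=(\mu^1,\ldots,\mu^m)$ is a tuple of currents in $\Homs(\Ok_\Omega/\J,\CH_\Omega^Z)$ and the entries of the matrix $b$ are the almost semi-meromorphic currents $b_p=\sigma_F$ and $b_k=\alpha_k\cdots\alpha_{p+1}\sigma_F$ for $k\ge p+1$. Each $b_k$ is smooth outside $X_{p+1}$, and \eqref{BEsats} gives $\codim X_{p+1}\ge p+2$, so $X_{p+1}$ has positive codimension on $Z$; hence every $b_k$ is smooth generically on $Z$.

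Next I would note that $\CH_\Omega^Z\subset\W_\Omega^Z$, so each $\mu^j$ lies in $\W_\Omega^Z$, and then apply Proposition~\ref{asmW} with $a=b_k$ and $\tau=\mu^j$ to conclude that $b_k\w\mu^j\in\W_\Omega^Z$ for all $k,j$. Since $\W_\Omega^Z$ is a sheaf, and in particular closed under finite sums, the entries of the matrix product $b\mu=R\w dz$ are again in $\W_\Omega^Z$.

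Finally I would check that $\J$ annihilates $R\w dz$, so that $R\w dz$ indeed defines a section of $\Homs(\Ok_\Omega/\J,\W_\Omega^Z)$ --- recall this is exactly the subsheaf of $\W_\Omega^Z$ killed by $\J$. This is immediate from $\J=\ann R$ (see \cite[Theorem~1.1]{AW1}): for $\phi\in\J$ one has $\phi(R\w dz)=(\phi R)\w dz=0$. There is no genuine obstacle in this argument; the substantive content --- that wedging an almost semi-meromorphic current which is generically smooth on $Z$ with a current in $\W_\Omega^Z$ stays in $\W_\Omega^Z$ --- is precisely Proposition~\ref{asmW}, which itself rests on the product construction of Proposition~\ref{ball}.
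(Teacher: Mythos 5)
Your proof is correct and follows essentially the same route as the paper: the paper deduces the corollary directly from Lemma~\ref{bmy} via Proposition~\ref{asmW}, exactly as you do, with the $\J$-annihilation being implicit there through the already-stated fact $\J=\ann R$ which you spell out. Your extra checks (generic smoothness of the $b_k$ from \eqref{BEsats}, closure of $\W_\Omega^Z$ under sums) are just the details the paper leaves to the reader.
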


From Lemma \ref{bmy},  Corollary~\ref{rattmuff}, \eqref{hastverk},
and \eqref{hastverk2} we get
the following analogue to \cite[Proposition~3.3]{AS}:

\begin{prop}
    Let \eqref{karvupplosn} be a Hermitian resolution of $\Ok_\Omega/\J$ in $\Omega$,
    and let $R$ be the associated residue current.
    Then there exists a (unique) current
    $\omega$ in $\W_X^{n,*}$ such that
\begin{equation}\label{struktur}
i_*\omega=R\w dz.
\end{equation}
There is a matrix $b$ of almost semi-meromorphic $(0,*)$-currents in $\Omega$,
smooth outside of $X_{p+1}$,
and a tuple $\vartheta$ of currents in $\Ba^n_X$ such that
\begin{equation} \label{omegatheta}
\omega = b \vartheta.
\end{equation}
More precisely, $\omega=\omega_0+\omega_1+\cdots+\omega_n$\footnote{In \cite[Proposition~3.3]{AS}, the sum ends with $\omega_{n-1}$ instead of $\omega_n$,
which, as remarked above, one can indeed assume when $n \geq 1$ and the resolution is chosen to be of length $\leq N-1$.},
where $\omega_k \in \W^{n,k}(X, E_{p+k})$, and if $f^j:=f_{p+j}$, then
\begin{equation}\label{muff2}
    f^0\omega_0=0, \quad f^{j+1}\omega_{j+1}-\dbar\omega_j=0 \text{, for } j\geq 0.
\end{equation}
\end{prop}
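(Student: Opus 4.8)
The plan is to transport the relation $\nabla_f U = 1 - R$ from ambient space to $X$ by wedging with $dz$ and interpreting everything through the isomorphism $i_*\colon \W_X^{n,*}\simeq \Homs(\Ok_\Omega/\J,\W_\Omega^Z)$. First I would define $\omega$: by Corollary~\ref{rattmuff}, $R\w dz$ lies in $\Homs(\Ok_\Omega/\J,\W_\Omega^Z)$, so by \eqref{hastverk} there is a unique $\omega$ in $\W_X^{n,*}$ with $i_*\omega = R\w dz$, and the decomposition $R = R_p + \cdots + R_N$ with $R_k$ of bidegree $(0,k)$ gives the splitting $\omega = \omega_0 + \cdots + \omega_n$ with $\omega_k \in \W^{n,k}(X,E_{p+k})$, using that $R_N \w dz = 0$ as soon as the resolution is chosen of length $\le N-1$ (which requires $n\ge 1$; this is the content of the footnote). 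Uniqueness is immediate from the injectivity of $i_*$.

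Next I would establish \eqref{omegatheta}. By Lemma~\ref{bmy} we have $R\w dz = b\mu$ with $b$ a matrix of almost semi-meromorphic currents smooth outside $X_{p+1}$ and $\mu$ a tuple in $\Homs(\Ok_\Omega/\J,\CH_\Omega^Z)$. Via \eqref{hastverk2}, $\mu$ corresponds to a tuple $\vartheta$ of currents in $\Ba_X^n$, and the product $b\vartheta$ makes sense in $\W_X^{n,*}$ by Definition~\ref{ainbusk} since $b$ is generically smooth on $Z$; applying $i_*$ and using that $i_*(b\vartheta) = b\w i_*\vartheta = b\mu = R\w dz = i_*\omega$, injectivity of $i_*$ forces $\omega = b\vartheta$.

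Finally, for \eqref{muff2} I would push the ambient relation $f_1 u_1 = 1$, $f_{k+1}u_{k+1} = \dbar u_k$ — equivalently the residue version obtained from $\nabla_f U = 1 - R$, which on the level of components reads $f_{p}R_p = 0$ (since $f_p R_p$ would be a residue component of bidegree $(0,p)$ annihilated appropriately — more precisely $f_p R_p = 0$ is part of $\nabla_f R = 0$) and $f_{p+j+1}R_{p+j+1} = \dbar R_{p+j}$ for $j\ge 0$ — through $i_*$. Wedging with $dz$ commutes with the holomorphic morphisms $f_{p+j}$ and with $\dbar$ (since $dz$ is $\dbar$-closed of bidegree $(n,0)$, up to the usual sign which is absorbed), so $f^0(R_p\w dz) = 0$ and $f^{j+1}(R_{p+j+1}\w dz) - \dbar(R_{p+j}\w dz) = 0$ as currents in $\Homs(\Ok_\Omega/\J,\W_\Omega^Z)$; pulling back via $i_*^{-1}$ gives exactly \eqref{muff2}, using that $i_*$ intertwines $\dbar$ on $\W_X^{n,*}$ with $\dbar$ on $\W_\Omega^Z$ and is $\Ok_\Omega$-linear so commutes with the constant holomorphic matrices $f^j$.

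The main obstacle I anticipate is bookkeeping rather than conceptual: one must be careful that $\dbar\omega_j$ a priori only lies in $\Cu_X^{n,j+1}$, and to make sense of \eqref{muff2} inside $\W_X^{n,*}$ one needs to know $\dbar\omega_j \in \W_X^{n,j+1}$. This follows because $\dbar\omega_j = f^{j+1}\omega_{j+1}$ already by the computation, and $f^{j+1}\omega_{j+1}$ is manifestly in $\W_X^{n,j+1}$ (it is $i_*^{-1}$ of $f^{j+1}(R_{p+j+1}\w dz)$, a current in $\W_\Omega^Z$); so the relation is self-consistent. The only other point needing a word is the sign and degree convention in wedging the various currents with $dz$, which I would handle once at the outset via \eqref{korp}.
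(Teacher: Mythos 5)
Your proposal is correct and follows essentially the same route as the paper, which derives the proposition directly from Lemma~\ref{bmy}, Corollary~\ref{rattmuff}, and the isomorphisms \eqref{hastverk} and \eqref{hastverk2}, with \eqref{muff2} being the transfer of $\nabla_f R=0$ through $i_*$ exactly as you describe. The only slip is in your parenthetical about the footnote: the term $\omega_n$ corresponds to $R_N\w dz$ itself, so the sum naturally ends at $\omega_n$ without any assumption, and the condition that the resolution has length $\le N-1$ is only needed if one wants to truncate at $\omega_{n-1}$ as in \cite[Proposition~3.3]{AS}.
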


We will also use the short-hand notation $\nabla_f \omega=0$.
As in the reduced case, following \cite{AS},  we say that $\omega$ is a {\it structure form}
for $X$.
The products in \eqref{omegatheta} are defined according to Definition \ref{ainbusk}.

\begin{remark}\label{mars}
Recall that $X_{p+1}=\emptyset$ if $X$ is Cohen-Macaulay, so in that case
$\omega=b\vartheta$,  where $b$ is smooth. If we take a free resolution
of length $p$, then $\omega = \omega_0$, and $\dbar \omega_0 = f^1 \omega_1 = 0$,
so $\omega$ is in $\Ba^n_X$.
\end{remark}

\begin{remark}
    If $X=\{f=0\}$ is a reduced hypersurface in  $\Omega$, then
    $R=\dbar(1/f)$  and $\omega$ is the classical Poincar\'e residue form on $X$ associated with $f$,
    which is a meromorphic form on $X$. More generally, if $X$ is reduced, since forms in $\Ba^n_X$
    are then meromorphic, by \eqref{omegatheta}, $\omega$ can be represented by almost semi-meromorphic forms on $X$.

    We now consider the case when $X$ is non-reduced. We recall that a differential operator is a Noetherian operator for an ideal $\J$
    if $\mathcal{L}\varphi \in \sqrt{\J}$ for all $\varphi \in \J$.
    It is proved by Bj\"ork, \cite{BjAbel}, see also \cite[Theorem~2.2]{Sz}, that if $\mu \in \Homs(\Ok_\Omega/\J,\CH^Z_\Omega)$,
    then there exists a Noetherian operator $\mathcal{L}$ for $\J$ with meromorphic coefficients
    such that the action of $\mu$ on $\xi$ equals the integral of $\mathcal{L} \xi$ over $Z$.
    By \eqref{hastverk2}, the action of $h$ in $\Ba^n_X$ on $\xi$ in $\E^{0,*}_X$ can then be expressed as
        $$h . \xi = \int_Z \mathcal{L} \xi.$$
    One can then verify using this formula and \eqref{omegatheta} that the action of the structure form $\omega$
    on a test form $\xi$ in $\E^{0,*}_X$ equals
    \begin{equation*}
        \omega . \xi = \int_Z \tilde{\mathcal{L}} \xi,
    \end{equation*}
    where $\tilde{\mathcal{L}}$ is now a tuple of Noetherian operators for $\J$ with almost semi-meromorphic coefficients, cf.\ \cite[Section~4]{Sz}.
\end{remark}

Notice that \eqref{karvupplosn}  gives rise to the dual Hermitian complex
\begin{equation}\label{dualkomplex}
0\to \Ok(E_0^*) \stackrel{f^*_1}{\to}\cdots \to \Ok(E^*_{p-1})\stackrel{f^*_p}{\to}
\Ok(E_p^*)\stackrel{f^*_{p+1}}\longrightarrow\cdots .
\end{equation}
Let $\xi = \xi_0 \w dz$ be a holomorphic section of the sheaf
$$
\Homs(E_p,K_\Omega)\simeq \Ok(E^*_p)\otimes \Ok(K_\Omega)
$$
such that $f^*_{p+1}\xi_0=0$. Then
$\dbar(\xi_0 \omega_0)=\pm\xi_0\dbar\omega_0=\pm\xi_0 f_{p+1}\omega_1=
\pm (f^*_{p+1}\xi_0)\omega_1=0$, so that $\xi_0\omega_0$ is in $\Ba_X^n$.
Moreover, if $\xi_0=f^*_p\eta$ for $\eta$ in $\Ok(E^*_{p-1})$, then $\xi_0\omega_0=
f^*_p\eta\omega_0=\pm\eta f_p\omega_0=0$.
We thus have a sheaf mapping
\begin{equation}\label{greta}
\Ho^p(\Homs(E_\bullet,K_\Omega))\to\Ba_X^n, \quad \xi_0 \w dz \mapsto \xi_0\omega_0.
\end{equation}

\begin{prop}\label{takyta}
The mapping \eqref{greta} is an isomorphism, which establishes an intrinsic isomorphism
\begin{equation}\label{pucko}
\Exts^p(\Ok_\Omega/\J,K_\Omega)\simeq \Ba_X^n.
\end{equation}
\end{prop}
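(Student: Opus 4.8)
The plan is to prove that the map \eqref{greta} is an isomorphism by establishing that both sides are isomorphic to the coherent sheaf $\Exts^p(\Ok_\Omega/\J, K_\Omega)$, and that the composite identification is the natural one and hence intrinsic. First I would recall that the cohomology sheaf $\Ho^p(\Homs(E_\bullet, K_\Omega))$ computes $\Exts^p(\Ok_\Omega/\J, K_\Omega)$: since \eqref{karvupplosn} is a free resolution of $\Ok_\Omega/\J$, the dual complex \eqref{dualkomplex} tensored with $K_\Omega$ has $k$-th cohomology equal to $\Exts^k(\Ok_\Omega/\J, K_\Omega)$ by definition of $\Exts$. So the left-hand side of \eqref{greta} is canonically $\Exts^p(\Ok_\Omega/\J, K_\Omega)$. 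On the other side, \eqref{hastverk2} together with \eqref{apanage} gives $\Ba_X^n \simeq \Homs(\Ok_\Omega/\J, \CH_\Omega^Z) \simeq \Exts^p(\Ok_\Omega/\J, K_\Omega)$. Thus both sides are abstractly isomorphic to the same coherent sheaf; the content is that \eqref{greta} realizes this isomorphism concretely and compatibly.

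Next I would show \eqref{greta} is well-defined (already done in the excerpt: one checks $\dbar(\xi_0\omega_0)=0$ so $\xi_0\omega_0 \in \Ba_X^n$, and that $\xi_0 \in \Image f_p^*$ forces $\xi_0\omega_0 = 0$, using $f_p\omega_0 = f^0\omega_0 = 0$ from \eqref{muff2}) and then prove injectivity and surjectivity. For surjectivity I would use \eqref{omegatheta}: every $h$ in $\Ba_X^n$ corresponds via \eqref{hastverk2} to some $\mu \in \Homs(\Ok_\Omega/\J, \CH_\Omega^Z)$, and the key point from \cite[Section~3]{AS} and Lemma~\ref{bmy} is that $R_p = \sigma_F \mu$ identifies such $\mu$ (equivalently $R_p\w dz$) with the $E_p$-valued part of the structure form; the standard argument (as in the proof of the corresponding statement \cite[Proposition~3.3]{AS} in the reduced case) shows that $\mu \mapsto \xi_0$ with $\xi_0\omega_0$ representing $h$ can be reversed, so every element of $\Ba_X^n$ arises as $\xi_0\omega_0$ for a $\nabla_f$-closed (equivalently $f_{p+1}^*$-closed) section $\xi_0$. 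For injectivity, if $\xi_0\omega_0 = 0$ in $\Ba_X^n$, then by \eqref{hastverk2} the associated element of $\Homs(\Ok_\Omega/\J, \CH_\Omega^Z)$ vanishes, and then one traces through $R_p\w dz = b_p\mu$ and the exactness of the resolution to conclude $\xi_0 \in \Image f_p^*$, i.e., $\xi_0 \w dz$ is zero in $\Ho^p(\Homs(E_\bullet, K_\Omega))$. The cleanest way to organize both directions at once is to show the triangle formed by \eqref{greta}, the canonical identification $\Ho^p(\Homs(E_\bullet,K_\Omega)) \simeq \Exts^p(\Ok_\Omega/\J, K_\Omega)$, and the composite $\Exts^p \simeq \Homs(\Ok_\Omega/\J, \CH_\Omega^Z) \simeq \Ba_X^n$ of \eqref{apanage} and \eqref{hastverk2}, commutes; since two of the three arrows are already known isomorphisms, the third is too.

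The main obstacle I anticipate is precisely checking that triangle commutes — i.e., that the concrete current-theoretic map $\xi_0 \w dz \mapsto \xi_0\omega_0$ agrees with the homological-algebra isomorphism $\Ho^p(\Homs(E_\bullet,K_\Omega)) \simeq \Exts^p(\Ok_\Omega/\J, K_\Omega)$ followed by the residue-current realization of $\Exts^p$ as Coleff–Herrera currents. This is where one must invoke the machinery of \cite{AW1}, \cite{Aext} relating $\Exts$-sheaves to the residue current $R$: the isomorphism \eqref{apanage} is itself defined via $R_p$, and the statement $R\w dz = b\mu$ from Lemma~\ref{bmy} together with $\omega = b\vartheta$ from \eqref{omegatheta} is exactly the bridge. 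Once one knows that under \eqref{hastverk2} the structure form component $\omega_0$ corresponds to $R_p\w dz$, and that $R_p$ represents the fundamental class generating $\Exts^p$ in the sense of \cite{AW1, Aext}, the compatibility follows. The remaining details — that the two $\Ok_X$-module structures match, that the construction is independent of the Hermitian resolution (which follows since $\Ba_X^n$ is intrinsic by Lemma~\ref{skot} and the left side computes $\Exts^p$ independently of the resolution) — are then routine, giving the intrinsic isomorphism \eqref{pucko}.
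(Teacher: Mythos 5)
Your proposal is correct and follows essentially the same route as the paper: the paper also composes \eqref{greta} with the isomorphism \eqref{hastverk2}, observes via \eqref{struktur} that the composite is $\xi_0\w dz\mapsto \xi_0 R_p\w dz$, and invokes \cite[Theorem~1.5]{Aext} to identify this with the intrinsic isomorphism $\Exts^p(\Ok_\Omega/\J,K_\Omega)\simeq\Homs(\Ok_\Omega/\J,\CH^Z_\Omega)$, whence \eqref{greta} is an isomorphism. The ``triangle commutes'' step you single out as the main obstacle is exactly the paper's one-line observation, and your separate injectivity/surjectivity sketch is superseded by (and correctly abandoned in favor of) that argument.
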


\begin{proof}
If $h$ is in $\Ba_X^n$, then $i_*h$ is in $\Homs(\Ok_\Omega/\J,\CH_\Omega^Z)$.
We have mappings
\begin{equation}\label{utter}
\Ho^p(\Homs(E_\bullet,K_\Omega))
\to\Ba_X^n \stackrel{\simeq}{\to} \Homs(\Ok_\Omega/\J,\CH^Z_\Omega),
\end{equation}
where the first mapping is \eqref{greta}, and the second is $h \mapsto i_*h$.
In view of \eqref{struktur}, the composed mapping is $\xi = \xi_0 \w dz \mapsto \xi R_p = \xi_0 R_p \wedge dz$
\footnote{There is a superstructure involved, with respect to which $R_p$ has even degree, and therefore $dz \w R_p = R_p \w dz$,
explaining the lack of a sign in the last equality, see \cite{AW1} or \cite{AS}.}.
This mapping is an intrinsic isomorphism
$$\Exts^p(\Ok_\Omega/\J,K_\Omega)\simeq \Homs(\Ok_\Omega/\J,\CH^Z_\Omega)$$
according to \cite[Theorem~1.5]{Aext}.
It follows that \eqref{greta} also establishes an intrinsic isomorphism.
\end{proof}

In particular it follows that  $\Ba_X^n$ is coherent, and we have:

\smallskip
\noindent
{\it If $\xi^1, \ldots,\xi^m$ are  generators of
$\Ho^p(\Homs(E_\bullet^*,K_\Omega)))$, where $\xi^\ell = \xi^\ell_0 \w dz$, then
$h^\ell:=\xi^\ell_0 \omega_0, \ \ell=1,\ldots,m$,
generate the $\Ok_X$-module $\Ba_X^n$,
and $\mu^\ell= i_* h^\ell = \xi^\ell R_p$  generate
the $\Ok_\Omega$-module $\Homs(\Ok_\Omega/\J,\CH_\Omega^Z)$.}

\begin{remark} \label{remext} The isomorphism
\begin{equation}\label{rubel}
\Ho^p(\Homs(E_\bullet,K_\Omega)) \stackrel{\simeq}{\to}\Homs(\Ok_\Omega/\J,\CH^Z_\Omega)
\end{equation}
was well-known since long ago, the contribution in \cite{Aext} was the realization $\xi \mapsto \xi R_p$.
\end{remark}


We give here an example where we can explicitly compute generators of $\Homs(\Ok_\Omega/\J,\CH^Z_\Omega)$.

\begin{ex} \label{exannJ}
    Let $\J$ be as in Example~\ref{steira}. We claim that
    $\Homs(\Ok_\Omega/\J,\CH^Z_\Omega)$ is generated by
    \begin{equation*}
        \mu_1 := \dbar\frac{1}{w_1} \wedge \dbar \frac{1}{w_2} \w dz \w dw \text{ and } \mu_2 := \left( z_1 \dbar \frac{1}{w_1^2} \wedge \dbar \frac{1}{w_2} + z_2 \dbar \frac{1}{w_1} \wedge \dbar \frac{1}{w_2^2}\right) \w dz \w dw.
    \end{equation*}
    In order to prove this claim, we use the comparison formula for residue currents from \cite{LarComp},
    which states that if $\Ok(F_\bullet)$ and $\Ok(E_\bullet)$
    are free resolutions of $\Ok_\Omega/\mathcal{I}$ and $\Ok_\Omega/\J$, respectively, where $\mathcal{I}$ and $\J$ have codimension
    $\geq p$, and $a : F_\bullet \to E_\bullet$ is a morphism of complexes, then there exists a $\Homs(F_0,E_{p+1})$-valued current
    $M_{p+1}$ such that $R^E_p a_0 = a_p R^F_p + f_{p+1} M_{p+1}$. If $\xi$ is in $\Kers f_{p+1}^*$, we thus get that
    \begin{equation} \label{jamfor}
        \xi R^E_p a_0 = \xi a_p R^F_p.
    \end{equation}
    We will apply this with $\Ok_\Omega(E_\bullet)$ as the free resolution
    \begin{equation*}
        0 \to \Ok_\Omega \stackrel{f_3}{\longrightarrow} \Ok_\Omega^4 \stackrel{f_2}{\longrightarrow} \Ok_\Omega^4 \stackrel{f_1}{\longrightarrow}
        \Ok_\Omega \to \Ok_\Omega/\J \to 0,
    \end{equation*}
    where
    \begin{eqnarray*}
        f_3 = \left[ \begin{array}{c} w_2 \\ -w_1 \\ z_2 \\ -z_1 \end{array} \right] \text{, }
        f_2 = \left[ \begin{array}{cccc} z_2 & 0 & -w_2 & 0 \\ -z_1 & z_2 & w_1 & -w_2 \\ 0 & -z_1 & 0 & w_1 \\ -w_1 & -w_2 & 0 & 0 \end{array} \right] \\
            \text{ and }
            f_1 = \left[ \begin{array}{cccc} w_1^2 & w_1 w_2 & w_2^2 & z_2 w_1 - z_1 w_2 \end{array} \right],
    \end{eqnarray*}
    and the Koszul complex  $(F,\delta_{\mathbf{w}^2})$ generated by
 $\mathbf{w}^2 := (w_1^2,w_2^2)$, which is a free resolution of $\Ok/(w_1^2,w_2^2)$.
    We then take the morphism of complexes $a : F_\bullet \to E_\bullet$ given by
    \begin{equation*}
        a_2 = \left[ \begin{array}{c} 0 \\ 0 \\ w_2 \\ w_1 \end{array} \right] \text{, }
        a_1 = \left[ \begin{array}{cc} 1 & 0 \\ 0 & 0 \\ 0 & 1 \\ 0 & 0 \end{array} \right]
            \text{ and }
            a_0 = \left[ \begin{array}{c} 1 \end{array} \right].
    \end{equation*}
    Since the current $R^F_2$ is equal to the Coleff-Herrera product $\dbar(1/w_1^2) \wedge \dbar(1/w_2^2)$, cf.\ Remark~\ref{krokus}, we thus get
    by \eqref{jamfor} and Remark~\ref{remext} that $\Homs(\Ok_\Omega/\J,\CH^Z_\Omega)$ is generated by
    \begin{equation*}
        (\Kers f_3^*) a_2 \dbar \frac{1}{w_1^2} \wedge \dbar \frac{1}{w_2^2}.
    \end{equation*}
    A straightforward calculation gives the generators $\mu_1$ and $\mu_2$ above.
\end{ex}

\subsection{Proof of Lemma~\ref{pointwise}}\label{brakteat}
Since $T$ is generically injective, it is clearly injective if  $n=0$.  We are going to
reduce to this case. Fix the point $0\in Z$ and let $\mathcal{I}$ be the ideal generated by
$z=(z_1,\ldots,z_n)$.

Let $\Ok(E_\bullet)$ be a free Hermitian resolution of $\Ok_\Omega/\J$ of minimal
length $p=N-n$ at $0$ and let $R^E$ be the associated residue current.  Recall that
the  canonical isomorphism \eqref{rubel} is
realized by $\xi\mapsto \xi R_p^E$.  Let $F_\bullet$ be the Koszul complex generated by $z$;
then $\Ok(F_\bullet)$ is a free resolution of $\Ok_\Omega/\mathcal{I}$.
Since $\J$ and $\mathcal{I}$ are Cohen-Macaulay and intersect properly in $\Omega$, the
complex   $\Ok_\Omega( (E\otimes F)_\bullet)$ is a
free resolution of $\Ok_\Omega/(\J+\mathcal{I})$, and the corresponding
residue current is
$$
R^{E\otimes F}_N = R^E_p \wedge R^F_n
$$
according to \cite[Theorem~4.2]{Astrong}.  From \cite[Theorem~1.5]{Aext} again it follows that
the canonical isomorphism
$$
    \Ho^N(\Homs((E\otimes F)_\bullet,K_\Omega))\to \Homs(\Ok_\Omega/(\J+\mathcal{I}),\CH^{\{0\}}_\Omega)
$$
is given by $\eta\mapsto \eta R^{E\otimes F}_N$.

Let  $\mu^1,\ldots,\mu^m$ be a minimal set of generators for the $\Ok_\Omega$-module $ \Homs(\Ok_\Omega/\J,\CH^Z_\Omega)$
at $0$. Then $\mu^j=\xi^jR^E_p$, where $\xi^j$ is a minimal set of generators for \newline
$\Ho^p(\Homs(E_\bullet,K_\Omega))$.
Notice that
$$
\Ho^N(\Homs((E\otimes F)_\bullet,K_\Omega))=\Ho^p(\Homs(E_\bullet,K_\Omega))
\otimes_{\Ok} \Ho^n(\Homs(F_\bullet,\Ok_\Omega)).
$$
Since $\Ho^n(\Homs(F_\bullet,\Ok_\Omega))$ is generated by $1$, it follows that
$\Ho^N(\Homs( (E\otimes F)_\bullet,K_\Omega))$ is generated by $\xi^j\otimes 1$. We conclude that
$\Homs(\Ok_\Omega/(\J+\mathcal{I}),\CH^{\{0\}}_\Omega)$ is generated by
$\xi^j\otimes 1\cdot R^E_p\w R^F_n=\mu^j\w \mu^z$, $j=1,\ldots,m$,
where $R^F_n = \mu^z = \dbar(1/z^{\1})$.


If $1, \ldots, w^{\alpha_{\nu-1}}$ is a basis for $\Ok_\Omega/\J$ as an $\Ok_Z$-module, then it is also
a  basis for $\Ok_{X_0} := \Ok_\Omega/(\J+\mathcal{I})$ as a module over  $\Ok_{\{0\}}\simeq\C$.  Since $\phi \dbar(1/z^{\1}) = \phi(0,\cdot)\dbar(1/z^{\1})$
we have that
$$
\phi(z,w)\mu^j \wedge \mu^z=\phi(z,w)\sum a^j_\ell(z)\dbar\frac{1}{w^{\ell+\1}}\w \dbar \frac{1}{z^{\1}}=
\phi(0,w)\sum a^j_\ell(0)\dbar\frac{1}{w^{\ell+\1}}\w \dbar \frac{1}{z^{\1}}.
$$
The morphism constructed in \eqref{eq:T} for $X_0$ instead of $X$ is then
$T_0 = T(0)$, where $T$ is the morphism \eqref{eq:T} for $X$.
Thus $T(0)$ is injective.

\section{The intrinsic sheaf $\W_X^{0,*}$ on $X$}\label{oxet}

Our aim is to find a fine resolution of $\Ok_X$ and
since the complex \eqref{dolb} is not exact in general when $X$ is singular
we have to consider larger fine sheaves; we first define sheaves $\W_X^{0,*}\supset\E_X^{0,*}$ of $(0,*)$-currents.
Given a local embedding $i\colon X\to \Omega$ at a point on $X_{reg}$
and local coordinates $(z,w)$ as before, it is natural, in view of Lemma
\ref{skottskada},
to require that an element in $\W_X^{0,*}$ shall have a unique representation
 \begin{equation} \label{eq:W0Xreg}
 \phi = \widehat{\phi}_0 \otimes 1 + \widehat{\phi}_1 \otimes w^{\alpha_1} + \dots + \widehat{\phi}_{\nu-1} \otimes w^{\alpha_{\nu-1}},
\end{equation}
where $\widehat{\phi}_j$ are in $\W^{0,*}_Z$.
In view of Remark \ref{getter} we should expect that the same transformation
rules hold as for smooth $(0,*)$-forms.  In particular it is then necessary that
$\W_Z^{0.*}$ is closed under the action of holomorphic differential operators,
which in fact is true, see Proposition~\ref{stek} below.
We must also define a reasonable extension of these sheaves
across $X_{sing}$.   Before we present our formal definition we make
a preliminary observation.


\begin{lma}\label{motor}
If $\phi$ has the form \eqref{eq:W0Xreg} and  $\tau$ is in
$\Homs(\Ok_\Omega/\J,\CH^Z_\Omega)$,  expressed in the form
\eqref{paraply}, then
\begin{equation}\label{ansgar}
\phi \wedge \tau:= \sum_i \sum_{\gamma \geq \alpha_i} \widehat{\phi}_i \wedge \tau_\gamma \wedge dz \otimes\dbar\frac{dw}{w^{\gamma-\alpha_i+\1}}
\end{equation}
is in  $\Homs(\Ok_\Omega/\J,\W^Z_\Omega)$.
\end{lma}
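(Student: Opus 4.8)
The statement to be proved (Lemma~\ref{motor}) is that if $\phi$ has the form \eqref{eq:W0Xreg} with coefficients $\widehat\phi_i$ in $\W^{0,*}_Z$, and $\tau$ is a current in $\Homs(\Ok_\Omega/\J,\CH^Z_\Omega)$ written in the standard form \eqref{paraply}, then the current $\phi\wedge\tau$ defined by the finite sum \eqref{ansgar} lies in $\Homs(\Ok_\Omega/\J,\W^Z_\Omega)$. The plan is first to check that the right-hand side of \eqref{ansgar} does define a current in $\W^Z_\Omega$, and then to verify that it is annihilated by $\J$.

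\textbf{Step 1: the sum lies in $\W^Z_\Omega$.} Each term in \eqref{ansgar} is of the form $\widehat\phi_i\wedge\tau_\gamma\wedge dz\otimes\dbar(dw/w^{\gamma-\alpha_i+\1})$ with $\widehat\phi_i,\tau_\gamma$ in $\W^{0,*}_Z$. Since $\W^{0,*}_Z$ is a module over smooth forms and, more importantly, can be multiplied by almost semi-meromorphic currents (Proposition~\ref{ball}, Proposition~\ref{asmW}), and since $\widehat\phi_i$ is at least pseudomeromorphic, the product $\widehat\phi_i\wedge\tau_\gamma$ is a well-defined pseudomeromorphic current on $Z$; moreover, by Proposition~\ref{Wsmooth} together with the fact that $\widehat\phi_i$ is smooth (hence $\tau_\gamma$ and the product agree generically on $Z_{reg}$ in the relevant sense), one sees $\widehat\phi_i\wedge\tau_\gamma$ is again in $\W^{0,*}_Z$. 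Actually the cleanest route is: $\tau_\gamma$ is a coefficient of a Coleff--Herrera-type current, hence in $\W^{0,*}_Z$, and the product of a smooth form $\widehat\phi_i$ with an element of $\W^{0,*}_Z$ is in $\W^{0,*}_Z$ by \eqref{stare2}. Then, as noted just before Proposition~\ref{parasol}, tensoring an element of $\W^{0,*}_Z$ with $\dbar(dw/w^{\ell+\1})$ (here $\ell=\gamma-\alpha_i\geq 0$) and with $dz$ produces a current in $\W^Z_\Omega$, using \eqref{bost} and the fact that $\supp\dbar(1/w^{\ell+\1})=\{w=0\}$. Since $\W^Z_\Omega$ is a sheaf (closed under finite sums), the whole expression \eqref{ansgar} lies in $\W^Z_\Omega$.

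\textbf{Step 2: annihilation by $\J$.} We must show $\J(\phi\wedge\tau)=0$, i.e.\ $\phi\wedge\tau$ is a section of $\Homs(\Ok_\Omega/\J,\W^Z_\Omega)$. The key point is that the definition \eqref{ansgar} is precisely modelled on the formula \eqref{paraply2} for $\phi\wedge\tau$ when $\phi$ is a genuine smooth form: if $\phi$ were smooth with Taylor coefficients $\phi_\alpha$, then $\phi\wedge\tau=\sum_{\ell\geq 0}\sum_{\gamma\geq 0}\phi_\gamma\wedge\tau_{\ell+\gamma}\wedge dz\otimes\dbar(dw/w^{\ell+\1})$. Reindexing \eqref{ansgar} with $\ell=\gamma-\alpha_i$ gives the same shape, with the monomial generators $w^{\alpha_i}$ playing the role of the surviving Taylor monomials. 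Now, since $1,w^{\alpha_1},\dots,w^{\alpha_{\nu-1}}$ generate $\Ok_\Omega/\J$ as an $\Ok_Z$-module, for any $h\in\J$ and any index $i$ there are $c^i_j(z)\in\Ok_Z$ with $h\,w^{\alpha_i}=\sum_j c^i_j(z)\,w^{\alpha_j}\ (\mathrm{mod}\ \J)$; more to the point, since $\tau$ is annihilated by $\J$ in ambient space, it suffices to check $h\wedge(\phi\wedge\tau)=0$ directly. I would compute $h\cdot(\phi\wedge\tau)$ using \eqref{korp}/\eqref{stare2} to move $h$ past the smooth coefficients, reducing to showing $\sum_i \widehat\phi_i\wedge(h\,w^{\alpha_i}\wedge\tau)=0$ where now $h\,w^{\alpha_i}$ is holomorphic and $h w^{\alpha_i}\wedge\tau$ is computed via the smooth-form formula \eqref{paraply2}; since $h w^{\alpha_i}\in\J=\ann\tau$ in the sense of Proposition~\ref{duality}, each such term vanishes. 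Equivalently and more robustly: one passes to $X_{reg}$, where by the SEP it is enough to prove the identity, invokes Lemma~\ref{bura} to describe $\Kers i^*$, and checks that $\phi\wedge\tau$ annihilates $\E^{0,*}_\Omega\J+\E^{0,*}_\Omega\bar\J_Z+\E^{0,*}_\Omega d\bar\J_Z$ — the $\bar w_j, d\bar w_j$ parts vanish because $\phi\wedge\tau$ is pseudomeromorphic with support on $\{w=0\}$, and the $\J$ part vanishes by the finitely many conditions $\sum_\gamma \phi_\gamma\wedge\tau_{\ell+\gamma}=0$ coming from the smooth-form computation.

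\textbf{Main obstacle.} The routine parts are Step~1 and the bookkeeping of indices. The genuine subtlety is making Step~2 rigorous: $\phi$ is only a current with $\W^{0,*}_Z$-coefficients, not a smooth form, so one cannot literally invoke the Taylor-expansion computation \eqref{paraply3}--\eqref{paraply2}. The right fix is to observe that \eqref{ansgar} is $\Ok_Z$-bilinear (indeed $\W^{0,*}_Z$-linear) in $(\phi,\tau)$ in a way compatible with the smooth case, so that the annihilation identity, being a finite system of $\Ok_Z$-linear relations among the coefficients $\tau_\gamma$ (namely those forced by $\J\tau=0$ via Proposition~\ref{duality} and \eqref{paraply2}), continues to hold after wedging with the smooth-in-$X_{reg}$, $\W^{0,*}_Z$-valued coefficients $\widehat\phi_i$ — this is exactly where closedness of $\W^{0,*}_Z$ under multiplication by smooth forms, \eqref{stare2}, and the SEP (to reduce from $X$ to $X_{reg}$) are used. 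I would therefore organize the proof as: (a) reduce to $X_{reg}$ by the SEP; (b) there, recall the smooth-form identity and note the defining relations on the $\tau_\gamma$; (c) wedge through by $\widehat\phi_i$ using \eqref{stare2} and \eqref{korp}; (d) conclude $\J(\phi\wedge\tau)=0$, hence membership in $\Homs(\Ok_\Omega/\J,\W^Z_\Omega)$.
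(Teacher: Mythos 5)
Your Step 2 contains the germ of a valid argument, but there is a genuine gap at the very start, in the well-definedness of \eqref{ansgar}. Each term $\widehat\phi_i\w\tau_\gamma$ is a product of two currents, and for two general elements of $\W_Z^{0,*}$ no such product exists. Your Step 1 never resolves this: you first appeal to multiplication by almost semi-meromorphic currents, although neither factor is claimed to be almost semi-meromorphic, and you then declare that ``$\widehat\phi_i$ is smooth'', which is false — the $\widehat\phi_i$ are arbitrary elements of $\W_Z^{0,*}$ and this is exactly the source of difficulty you yourself identify later under ``Main obstacle''. The missing observation is that $\tau$ lies in $\Homs(\Ok_\Omega/\J,\CH_\Omega^Z)$, so it is $\dbar$-closed; by the remark after Proposition~\ref{parasol} its coefficients $\tau_\gamma$ in \eqref{paraply} are $\dbar$-closed $(0,0)$-currents, hence \emph{holomorphic functions} in $\Ok_Z$. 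Only then is $\widehat\phi_i\w\tau_\gamma$ (an element of $\W_Z^{0,*}$ times a holomorphic function) unproblematic; this is precisely the one-line justification the paper gives. Your ``more robust'' fallback at the end of Step 2 has a related defect: you propose to ``reduce to $X_{reg}$ by the SEP'', but the representation \eqref{eq:W0Xreg} already lives on $X_{reg}$. The reduction actually needed is to the subset of $Z$ where all the $\widehat\phi_j$ are simultaneously smooth, and that subset is in general \emph{not} the complement of an analytic set, so the SEP is not applicable there; the correct tool is Proposition~\ref{Wsmooth}.

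For the record, the paper's proof of the annihilation is different from your main route: it observes that on the set where all $\widehat\phi_j$ are smooth, $\phi\w\tau$ coincides with the smooth-form product, so $\xi\,\phi\w\tau=0$ there for $\xi\in\J$; writing $\xi\,\phi\w\tau$ in the canonical form \eqref{paraply} with coefficients $a_\ell\in\W_Z^{0,*}$, Proposition~\ref{Wsmooth} then forces $a_\ell\equiv 0$. Your first route — expanding $h\in\J$ in $w$, reindexing, and reducing to the vanishing of $h\,w^{\alpha_i}\w\tau$ because $h\,w^{\alpha_i}\in\J=\ann\tau$ — does work once one knows $\tau_\gamma\in\Ok_Z$: the relations forced on the coefficients are then identities among holomorphic functions on $Z$, which survive wedging with the $\widehat\phi_i$. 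That computation is more explicit and avoids Proposition~\ref{Wsmooth} for this step, at the cost of more index bookkeeping; but as written your proposal neither secures the definition of the product nor cleanly commits to this computation, so it does not yet constitute a proof.
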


\begin{proof}
The right hand side defines a current in
$\W_\Omega^Z$ since $\widehat{\phi}_i$ are in $\W^{0,*}_Z$ and
$\tau_{\gamma}$ are in $\Ok_Z$.  We have to prove that it is annihilated by $\J$.
Take $\xi$ in $\J$.
On the subset of $Z$ where $\widehat{\phi}_0,\dots,\widehat{\phi}_{\nu-1}$
are all smooth, $\phi \wedge \tau$, as defined above,
is just multiplication of the smooth form $\phi$ by $\tau$, and thus
$\xi \phi\w\tau = 0$ there.   We have  a unique representation
$$
\xi \phi\w\tau=\sum_{\ell\ge 0} a_\ell(z)\w dz\otimes\dbar\frac{dw}{w^{\ell+\1}},
$$
with  $a_{\ell}$  in $\W_Z^{0,*}$.  Since  $a_\ell$ vanish on the
set where all $\widehat{\phi}_j$ are smooth, we conclude from Proposition~\ref{Wsmooth} that $a_\ell$ vanish identically.
It follows that $\xi \phi\w\tau = 0$.
\end{proof}

If $\phi$ has the form \eqref{eq:W0Xreg} in a \nbh of some point $x\in X_{reg}$
and $h$ is in $\Ba^n_X$,
then we get an element $\phi\w h$ in $\W_X^{n,*}$
defined by $i_*(\phi\w h)=\phi\w i_*h$.
It follows that $\phi$ in this way defines an element in
$\Homs_{\Ok_X}(\Ba^n_X, \W_X^{n,*})$.  This sheaf
is global and invariantly defined and so we can make the following
global definition.

\begin{df} \label{motor1}
$\W_X^{0,*}=\Homs_{\Ok_X}(\Ba^n_X, \W_X^{n,*})$.
\end{df}

If $\phi$ is in $\W^{0,*}_X$ and $h$ is in $\Ba^n_X$, we consider $\phi(h)$ as the product
of $\phi$ and $h$, and sometimes write it as $\phi \wedge h$.

Since  $\W_X^{n,*}$ are $\E^{0,*}_X$-modules,
$\W_X^{0,*}$ are as well.  Before we investigate these sheaves further, we
give some motivation for the definition.
First notice that we have a natural injection, cf.\ Proposition \ref{duality},
\begin{equation} \label{roos}
        \Ok_X \to \Homs(\Ba^n_X,\Ba^n_X), \quad \phi\mapsto (h\mapsto \phi h).
\end{equation}

\begin{thm}\label{roossats}
The mapping \eqref{roos} is an isomorphism in the Zariski-open
subset of $X$ where it is $S_2$.
\end{thm}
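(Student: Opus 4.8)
The plan is to reduce Theorem~\ref{roossats} to a local statement of commutative algebra about the endomorphism ring of the canonical module. Fix a local embedding $i\colon X\to\Omega\subset\C^N$ with defining ideal $\J$ of pure codimension $p=N-n$, a point $x\in X$, and set $R=\Ok_{\Omega,x}$ (a regular local ring) and $M=\Ok_{X,x}=R/\J$. By \eqref{hastverk2} and Proposition~\ref{takyta} we identify $i_*\Ba^n_X$ with the canonical sheaf $\omega:=\Exts^p_{\Ok_\Omega}(\Ok_\Omega/\J,K_\Omega)$, whose stalk $\omega_x$ is the canonical module $\omega_M=\Exts^p_R(M,R)$ (the twist by $K_\Omega$ being harmless since $R$ is regular). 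Because $\J$ annihilates $\omega$, an $\Ok_\Omega$-linear endomorphism of $\omega$ is the same as an $\Ok_X$-linear one, so the stalk at $x$ of the target of \eqref{roos} is $\Homs_R(\omega_M,\omega_M)$, and \eqref{roos} becomes the homothety map $h\colon M\to\Homs_R(\omega_M,\omega_M)$, $\phi\mapsto(\mu\mapsto\phi\mu)$. Its injectivity is exactly Proposition~\ref{duality}: the kernel of $h$ is $\ann_R\omega_M$, which equals $\J$ by \eqref{kondor}. So it remains to show that $h$ is surjective at $x$ precisely when $M$ satisfies Serre's condition $S_2$, and that the $S_2$-locus of $\Ok_X$ is Zariski-open; the latter is standard, since the local rings of $X$ are excellent and hence the non-$S_2$ locus of a coherent sheaf is closed.

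For the surjectivity I would use two classical facts, which together are the refinement of the result of Roos quoted before the theorem (cf.\ \cite{Roos}): (i) the canonical module $\omega_M=\Exts^p_R(M,R)$ of a module of pure codimension $p$ always satisfies $S_2$; and (ii) if $M$ is Cohen--Macaulay, then local duality over the regular ring $R$ makes $h$ an isomorphism, namely the biduality isomorphism $M\xrightarrow{\ \sim\ }\Exts^p_R(\Exts^p_R(M,R),R)=\Homs_R(\omega_M,\omega_M)$. From (i) it follows that $\mathcal N:=\Homs_R(\omega_M,\omega_M)$ also satisfies $S_2$: applying $\Homs_R(-,\omega_M)$ to a finite presentation $R^b\to R^a\to\omega_M\to 0$ realises $\mathcal N$ as the kernel of a map $\omega_M^a\to\omega_M^b$, and a short depth count shows that the kernel of a homomorphism between modules satisfying $S_2$ again satisfies $S_2$.

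Next I claim that $\coker h$ is supported in codimension $\ge 2$ in $X$, i.e.\ that $h_{\mathfrak p}$ is an isomorphism whenever $\dim M_{\mathfrak p}\le 1$. If $\dim M_{\mathfrak p}=0$, then $M_{\mathfrak p}$ is Artinian, hence Cohen--Macaulay. If $\dim M_{\mathfrak p}=1$, then purity of $\J$ (no embedded primes, and every associated prime $\mathfrak q$ of $M$ has $\dim R/\mathfrak q=n$) forces $\mathfrak p\notin\operatorname{Ass}_R M$, so $\depth M_{\mathfrak p}\ge 1=\dim M_{\mathfrak p}$ and again $M_{\mathfrak p}$ is Cohen--Macaulay. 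Since $\Homs$ commutes with localisation for the finitely presented $\omega_M$, and $(\omega_M)_{\mathfrak p}$ is the canonical module of $M_{\mathfrak p}$, fact (ii) applies and $h_{\mathfrak p}$ is an isomorphism.

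Finally, take a point $x$ where $M=\Ok_{X,x}$ satisfies $S_2$. Then $M$ and $\mathcal N$ both satisfy $S_2$, and $Q:=\coker h$ is supported in codimension $\ge 2$. If $Q\ne 0$, choose $\mathfrak p\in\operatorname{Ass}_R Q$; then $\dim M_{\mathfrak p}\ge 2$, so $\depth M_{\mathfrak p}\ge 2$ and $\depth\mathcal N_{\mathfrak p}\ge 2$. Applying $\Homs_R(\kappa(\mathfrak p),-)$ to $0\to M_{\mathfrak p}\to\mathcal N_{\mathfrak p}\to Q_{\mathfrak p}\to 0$ yields the exact sequence $0=\Homs_R(\kappa(\mathfrak p),\mathcal N_{\mathfrak p})\to\Homs_R(\kappa(\mathfrak p),Q_{\mathfrak p})\to\Exts^1_R(\kappa(\mathfrak p),M_{\mathfrak p})=0$, so $\Homs_R(\kappa(\mathfrak p),Q_{\mathfrak p})=0$, contradicting $\mathfrak p\in\operatorname{Ass}_R Q$. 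Hence $h$ is an isomorphism on the $S_2$-locus; conversely, where $h$ is an isomorphism we have $\Ok_{X,x}\cong\mathcal N_x$, which satisfies $S_2$, so the two loci coincide. The main obstacle is the commutative-algebra input from Roos, namely facts (i) and (ii); granting those (or citing them directly), the rest is a routine depth chase. One could also run the whole argument currentially, using the residue realisation of the isomorphism \eqref{rubel} from Proposition~\ref{takyta} in place of $\Exts^p$-duality, but the algebraic route seems the most transparent.
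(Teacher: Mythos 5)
Your argument is correct, but it follows a genuinely different route from the paper. The paper identifies \eqref{roos} with the morphism $\Ok_X \to \Homs(\Exts^p(\Ok_X,K_\Omega),\CH^Z_\Omega)$ constructed in \cite[(6.9)]{LExpl} and then simply cites \cite[Theorem~1.2 and Remark~6.11]{LExpl} for the statement that this morphism is an isomorphism exactly on the $S_2$-locus; the only work done in the paper is checking that the two morphisms agree (via \eqref{apanage} and \eqref{hastverk2}) and noting, in Remark~\ref{remext} and the surrounding discussion, that the underlying algebra is Roos's biduality theorem. You instead unwind the commutative algebra: after the same identification of $\Ba^n_X$ with the canonical module $\omega_M=\Exts^p_R(M,R)$ (which for you is cleaner, since \eqref{roos} is literally the homothety map and nothing has to be matched against a residue-theoretic construction), you prove the $S_2$-characterization directly by the standard depth chase, granting only the classical inputs that $\omega_M$ is $S_2$ for equidimensional $M$ without embedded primes and that the homothety into $\Homs_R(\omega_M,\omega_M)$ is an isomorphism for Cohen--Macaulay $M$ (this is essentially Aoyama's theorem on the endomorphism ring of the canonical module, i.e.\ the $S_2$-ification). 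Your approach buys self-containedness at the cost of importing two classical facts that you should cite precisely (your intermediate identification $\Exts^p_R(\omega_M,R)=\Homs_R(\omega_M,\omega_M)$ in fact~(ii) is stated a bit loosely --- what you actually use is that the homothety map is an isomorphism in the Cohen--Macaulay case, which is \cite[Theorem~3.3.4]{Eis}-type material rather than a tautological rewriting of biduality); the paper's approach is shorter but opaque without \cite{LExpl} in hand. The remaining ingredients --- injectivity from Proposition~\ref{duality}, openness of the $S_2$-locus, localization of $\Exts$, and the $\operatorname{Ass}$-based contradiction --- are all sound.
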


This is the subset of $X$ where $\codim X_k\ge k+2$, $k\ge p+1$,
cf.\ Section \ref{prut}.
Thus it contains all points $x$ such that $\Ok_{X,x}$ is  Cohen-Macaulay.  In particular, \eqref{roos} is an isomorphism in $X_{reg}$.

Theorem~\ref{roossats} is a consequence of the results in \cite{LExpl}.
If $X$ has pure dimension $p$, there is an injective mapping
\begin{equation} \label{Lisom1}
        \Ok_X \to \Homs(\Exts^p(\Ok_X,K_\Omega),\CH^Z_\Omega),
\end{equation}
which by \cite[Theorem~1.2 and Remark~6.11]{LExpl} is an isomorphism if and only if $\Ok_X$ is $S_2$.
Since the image of such a morphism must be annihilated
by $\J$ by linearity, it is indeed a morphism
\begin{equation} \label{Lisom2}
        \Ok_X \to \Homs(\Exts^p(\Ok_X,K_\Omega),\Homs(\Ok_\Omega/\J,\CH^Z_\Omega)).
\end{equation}
In view of \eqref{apanage}  and  \eqref{hastverk2}, \eqref{Lisom2} corresponds
to a morphism $\Ok_X \to \Homs(\Ba^n_X,\Ba^n_X)$, and the fact that it is the morphism
\eqref{roos} is a rather simple consequence of the definition of the morphism \eqref{Lisom1} in \cite[(6.9)]{LExpl}.

\medskip

As mentioned in the introduction,
Theorem \ref{roossats} can be seen as a reformulation
of a  classical  result of Roos, \cite{Roos},
which is the same statement about the injection
\begin{equation} \label{Risom}
    \Ok_\Omega/\J \to \Exts^p(\Exts^p(\Ok_\Omega/\J,K_\Omega),K_\Omega);
\end{equation}
here we assume that the ideal has pure dimension. The equivalence of the morphisms \eqref{Lisom1} and \eqref{Risom}
is discussed in \cite[Corollary~1.4]{LExpl}.

Let us now consider the case when $X$ is reduced. Since sections of $\Ba^n_X$ are meromorphic,
see \cite[Example~2.8]{AS}, and thus almost semi-meromorphic and generically smooth,
by Proposition~\ref{asmW} (with $Z = X = \Omega)$ we can extend \eqref{roos}
to a morphism
\begin{equation} \label{roosW}
    \W^{0,*}_X \to \Homs(\Ba^n_X,\W^{n,*}_X).
\end{equation}

\begin{lma}
    When $X$ is reduced \eqref{roosW} is an isomorphism.
\end{lma}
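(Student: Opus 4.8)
The plan is to show that \eqref{roosW} is injective and surjective, working locally from an embedding $i\colon X\to\Omega$ with $X=Z$ reduced (so that $\W^{0,*}_X=\W^{0,*}_Z$, $\W^{n,*}_X=\W^{n,*}_Z$, and $\Ba^n_X$ is the sheaf of meromorphic, hence almost semi-meromorphic and generically smooth, $(n,0)$-forms). Injectivity should follow from Proposition~\ref{Wsmooth}: if $\phi$ in $\W^{0,*}_X$ satisfies $\phi\w h=0$ for every $h$ in $\Ba^n_X$, then pick generators $h^1,\dots,h^m$ of $\Ba^n_X$, which are generically smooth on $X$; on the dense open set where all $h^\ell$ are smooth and nonvanishing (in the sense that locally one $h^\ell$ is a nonzero holomorphic $(n,0)$-form), the relation $\phi\w h^\ell=0$ forces $\phi=0$ there, and then $\phi=0$ on all of $X$ by the SEP via Proposition~\ref{Wsmooth}. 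Alternatively, since on $X_{reg}$ a section of $\Ba^n_X$ is a nonvanishing holomorphic $(n,0)$-form $dz$, multiplication by it is already an isomorphism $\W^{0,*}_X\to\W^{n,*}_X$ there, giving injectivity on $X_{reg}$ and then globally by the SEP.

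For surjectivity, the key point is to invert the multiplication on a Zariski-dense open set and then use the SEP to extend. First I would fix a local generator $h$ of $\Ba^n_X$; on $X_{reg}$ we may write $h=g\,dz$ for a holomorphic function $g$, not identically zero on any component, so $1/g$ is an almost semi-meromorphic current (a principal value), and multiplication by $dz$ is a bijection $\W^{0,*}_{X_{reg}}\to\W^{n,*}_{X_{reg}}$. Given $\Psi$ in $\Homs(\Ba^n_X,\W^{n,*}_X)$, set $\psi:=\Psi(h)$ in $\W^{n,*}_X$; on $X_{reg}$ define $\phi$ by $\phi\w dz = (1/g)\,\psi$, i.e.\ $\phi$ is the preimage of $(1/g)\psi$ under multiplication by $dz$, which lies in $\W^{0,*}_{X_{reg}}$ because $(1/g)\psi$ is a pseudomeromorphic $(n,*)$-current with the SEP (the product of the almost semi-meromorphic $1/g$ with a current in $\W^{n,*}$ stays in $\W^{n,*}$ by Proposition~\ref{asmW}). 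The candidate global current is then $\phi:=\1_{X_{reg}}\phi$, a section of $\W^{0,*}_X$ by construction.

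It remains to check that this $\phi$ actually represents $\Psi$, i.e.\ that $\phi\w h'=\Psi(h')$ for \emph{every} $h'$ in $\Ba^n_X$, not just for the chosen generator $h$ on $X_{reg}$. The natural route: both $h'\mapsto \phi\w h'$ and $h'\mapsto\Psi(h')$ are $\Ok_X$-linear maps $\Ba^n_X\to\W^{n,*}_X$, so it suffices to check equality on a generating set, and moreover it suffices to check it on a Zariski-dense open subset, because the difference is a current in $\W^{n,*}_X$ vanishing on a dense open set, hence zero by the SEP. On $X_{reg}$, by construction $\phi\w h = (1/g)\psi\w g\,dz\cdot g^{-1}$— more carefully, $\phi\w h=\phi\w g\,dz=g\,(\phi\w dz)=g\,(1/g)\psi=\psi=\Psi(h)$, using that $g\cdot(1/g)=1$ as currents off $\{g=0\}$ and both sides have the SEP; and then for a general generator $h'=a h$ with $a$ a meromorphic function on $X$, $\Ok_X$-linearity of both maps gives $\phi\w h'=a(\phi\w h)=a\Psi(h)=\Psi(h')$ on $X_{reg}$. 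Thus the two linear maps agree on $X_{reg}$, hence everywhere by the SEP, so $\phi$ maps to $\Psi$ under \eqref{roosW}. The main obstacle I anticipate is the careful handling of the SEP/pseudomeromorphy bookkeeping when dividing by $g$: one must make sure that $(1/g)\psi$ really lies in $\W^{n,*}$ and that the identities $g\cdot(1/g)\psi=\psi$ and $\1_{X_{reg}}$ of things behave as expected — all of which is exactly what Proposition~\ref{ball}, Proposition~\ref{asmW}, and the computational rules \eqref{krut}, \eqref{proddef} are designed to supply.
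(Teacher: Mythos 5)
Your injectivity argument and your construction of $\phi$ on $X_{reg}$ are fine and essentially match the paper: both reduce to the fact that on a smooth manifold multiplication by a nonvanishing holomorphic $(n,0)$-form is a bijection $\W^{0,*}\to\W^{n,*}$ (this is \cite[Theorem~3.7]{AW3}), applied to a generically nonvanishing section of $\Ba^n_X$, with the SEP handling the bookkeeping. (A minor wording issue: $\Ba^n_X$ need not be locally generated by a single element, but your argument only needs one generically nonvanishing section $h$ together with the observation that any other section equals $ah$ generically, so this is harmless.)

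The genuine gap is the step ``the candidate global current is then $\phi:=\1_{X_{reg}}\phi$, a section of $\W^{0,*}_X$ by construction.'' It is not by construction. The operator $\1_{X_{reg}}$ is defined only on currents that are already pseudomeromorphic on all of $X$; what you have at this stage is a current defined only on the open set $X_{reg}$, and a pseudomeromorphic current on $X\setminus X_{sing}$ need not admit any extension as a current across $X_{sing}$ (its mass can blow up there --- nothing in your construction controls the behaviour of $(1/g)\psi$ divided by $dz$ near the singular locus, since $g$, $dz$, and hence $\phi$ are only defined on $X_{reg}$). Establishing this extension is exactly the content of the second half of the paper's proof, and it requires a nontrivial input: one passes to a smooth modification $\pi\colon X'\to X$, notes that the globally defined current $\alpha\gamma\in\W^{n,*}(X)$ (for $\gamma$ a generically nonvanishing holomorphic $n$-form) is $\pi_*$ of a pseudomeromorphic current on $X'$ by \cite[Proposition~1.2]{Apm}, hence of a current $\tau\in\W^{n,*}(X')$; on the \emph{smooth} space $X'$ one can divide $\tau$ by $\pi^*\gamma$ using \cite[Theorem~3.7]{AW3}, and then $\pi_*(\tau/\pi^*\gamma)$ is a globally defined section of $\W^{0,*}(X)$ which agrees with your $\phi$ on $X_{reg}$ by the SEP. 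Without some such argument the surjectivity proof is incomplete.
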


Thus Definition \ref{motor1} is consistent with the previous definition
of $\W^{0,*}_X$ when $X$ is reduced.

\begin{proof}
Clearly each $\phi$ in $\W^{0,*}_X$ defines an element $\alpha$ in
$\Homs(\Ba^n_X,\W^{n,*}_X)$ by $h\mapsto \phi\w h$. If we apply
this to a generically nonvanishing $h$ we see by the SEP that \eqref{roosW} is injective.
\smallskip

For the surjectivity, take $\alpha$ in $\Homs(\Ba^n_X,\W^{n,*}_X)$.
If $h'$ is nonvanishing at a point on $X_{reg}$, then it generates $\Ba^n_X$ and
thus $\alpha$ is determined by $\phi := \alpha h' $ there. By \cite[Theorem~3.7]{AW3}, $\phi = \psi \wedge h'$ for a unique current $\psi$ in $\W^{0,*}_X$ so  by $\Ok_X$-linearity $\alpha h=\psi\w h$ for any $h$. Hence, $\psi$ is well-defined as a current in $\W^{0,*}_X$ on $X_{\rm reg}$.

We must verify that $\psi$ has an extension in $\W_X^{0,*}$ across $X_{sing}$.
Since such an extension must be unique by the SEP, the statement is local on $X$.
Thus we may assume that $\alpha$ is defined on the whole of $X$ and that
there is a generically nonvanishing holomorphic $n$-form
$\gamma$ on $X$. Then  $\alpha \gamma$ is a section of $\W^{n,*}(X)$.

Let us choose a smooth modification $\pi\colon X'\to X$ that is biholomorphic outside
$X_{sing}$. Then $\pi^*\gamma$ is a holomorphic $n$-form on $X'$ that is generically non-vanishing.
    We claim that there is a current $\tau$ in $ \W^{n,0}(X')$ such that $\pi_*\tau=\alpha \gamma$.
    In fact, $\tau$ exists on $\pi^{-1}(X_{reg})$ since $\pi$ is a biholomorphism there.
    Moreover, by \cite[Proposition~1.2]{Apm}, $\alpha h$ is the direct image of some pseudomeromorphic
    current $\tilde \tau$ on $X'$, and is therefore also the image of the (unique) current   $\tau=\1_{\pi^{-1}(X_{reg})}\tilde\tau$ in $\W^{n,*}(X')$.

 By \cite[Theorem~3.7]{AW3} again $\tau$ is locally
    of the form $\xi\w ds$, where $\xi$ is in $\W^{0,*}_{X'}$ and $ds=ds_1\w\cdots\w ds_n$ for
    some local coordinates $s$. Hence, $\tau$ is a $K_{X'}$-valued section of $\W^{0,*}(X')$,
    so $\tau/\pi^*\gamma$ is a section of $\W^{0,*}(X')$.
    Now  $\Psi:=\pi_*(\tau/\pi^*\gamma)$ is a section of $\W^{0,*}(X)$.
On $X_{reg} \cap \{ \gamma\neq 0 \}$ we thus have that
$\Psi\w\gamma=\pi_*\tau=\alpha \gamma=\psi\w\gamma$ and so
$\Psi=\psi$ there. By the SEP it follows that $\Psi$ coincides with $\psi$ on
$X_{reg}$ and is thus the desired \pmm extension to $X$.
\end{proof}

In view of \eqref{hastverk} and \eqref{hastverk2} we have, given a local
embedding $i\colon X\to\Omega$,   the extrinsic representation
\begin{equation}\label{hastverk3}
\W_X^{0,*}\simeq \Homs(\Homs(\Ok_\Omega/\J,\CH_\Omega^Z),
\Homs(\Ok_\Omega/\J,\W_\Omega^Z)), \ \
\phi\mapsto (i_* h\mapsto i_*(\phi\w h)).
\end{equation}

\begin{lma} \label{W0Xreg} Assume that $X_{reg}\to \Omega$ is a local embedding
and $(z,w)$ coordinates as before. Each section $\phi$ in $\W^{0,*}_X$ has a unique representation \eqref{eq:W0Xreg}
    with $\widehat{\phi}_j$ in $\W^{0,*}_Z$.
\end{lma}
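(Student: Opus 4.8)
The plan is to use the extrinsic representation \eqref{hastverk3} together with the local structure theory developed in Sections \ref{prut}--\ref{sect:currentsn}. Fix a point in $X_{reg}$ and local coordinates $(z,w)$ with $Z=\{w=0\}$; choose generators $1,w^{\alpha_1},\dots,w^{\alpha_{\nu-1}}$ of $\Ok_X$ as a free $\Ok_Z$-module (Corollary~\ref{sorelle2}), generators $h^1,\dots,h^m$ of $\Ba^n_X$, with $\mu^\ell=i_*h^\ell$ a set of generators of $\Homs(\Ok_\Omega/\J,\CH^Z_\Omega)$, and write each $\mu^\ell$ in the form \eqref{paraply}. First I would establish \emph{existence} of a representation \eqref{eq:W0Xreg}: given $\phi$ in $\W^{0,*}_X$, the products $\phi\w h^\ell$ lie in $\W^{n,*}_X$, hence $i_*(\phi\w h^\ell)$ are currents in $\Homs(\Ok_\Omega/\J,\W^Z_\Omega)\subset\W^Z_\Omega$, which by Proposition~\ref{parasol} have unique expansions $\sum_\gamma c^\ell_\gamma\w dz\otimes\dbar(dw/w^{\gamma+\1})$ with $c^\ell_\gamma$ in $\W^{0,*}_Z$. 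The point is that these tuples $(c^\ell_\gamma)$ are constrained exactly as in the holomorphic case treated around \eqref{eq:T} and \eqref{parra}: the morphism $T$ records how a tuple $(\widehat\phi_0,\dots,\widehat\phi_{\nu-1})$ produces, via \eqref{ansgar}, the coefficient tuple of $\phi\w\mu^\ell$, and Lemma~\ref{pointwise} gives a pointwise left inverse $S$ with $I=TS+BA$ over $\Ok_Z$ (hence also over $\W^{0,*}_Z$, since $\W^{0,*}_Z$ is an $\Ok_Z$-module and even closed under holomorphic differential operators by Proposition~\ref{stek}). Applying $S$ to the coefficient tuple of $(\phi\w\mu^\ell)_\ell$ produces candidates $\widehat\phi_j$ in $\W^{0,*}_Z$; one must check, using the exactness $A\circ T=0$, that $\sum_j\widehat\phi_j\otimes w^{\alpha_j}$ does reproduce the right currents $\phi\w h^\ell$, i.e.\ that $\widehat\phi:=(\widehat\phi_j)$ satisfies $T\widehat\phi=(c^\ell_\gamma)$. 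This last equality follows because $A$ applied to $(c^\ell_\gamma)$ vanishes: indeed $A\circ T\equiv 0$ and the image tuple is, generically on $Z$, of the form $T(\text{something})$ since there $\phi$ is an honest smooth form (use Lemma~\ref{skottskada} on the set where all the relevant currents are smooth), so $A(c^\ell_\gamma)$ vanishes on a dense Zariski-open set and hence identically by Proposition~\ref{Wsmooth}; then $TS(c^\ell_\gamma)=(I-BA)(c^\ell_\gamma)=(c^\ell_\gamma)$, so $\widehat\phi:=S(c^\ell_\gamma)$ works. By Lemma~\ref{motor} the current $\sum_j\widehat\phi_j\otimes w^{\alpha_j}$ then pairs with each $h^\ell$ to give $\phi\w h^\ell$, and since the $h^\ell$ generate $\Ba^n_X$ it agrees with $\phi$ as an element of $\Homs_{\Ok_X}(\Ba^n_X,\W^{n,*}_X)=\W^{0,*}_X$.

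For \emph{uniqueness}, suppose $\sum_j\widehat\phi_j\otimes w^{\alpha_j}=0$ in $\W^{0,*}_X$ with $\widehat\phi_j$ in $\W^{0,*}_Z$. Then its pairing with every $h^\ell$ vanishes, so by Lemma~\ref{motor} the coefficient tuple $T\widehat\phi$ (where $T$ is as in \eqref{eq:T}) is zero in $(\W^{0,*}_Z)^M$. Since $T$ is pointwise injective by Lemma~\ref{pointwise}, choosing a pointwise left inverse $S$ with $ST=I$ gives $\widehat\phi=ST\widehat\phi=0$, so each $\widehat\phi_j=0$. This is the exact analogue of the uniqueness argument in the proof of Lemma~\ref{skottskada}, now carried out with $\W^{0,*}_Z$-valued tuples rather than $\Ok_Z$-valued ones.

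The main obstacle I anticipate is the bookkeeping in the existence part: one must be careful that \eqref{ansgar} expresses the coefficient tuple of $\phi\w\mu^\ell$ as an $\Ok_Z$-\emph{linear} (in fact, the correspondence is given by the same holomorphic matrix $T$ built from the monomials $w^{\alpha_i}$ and the expansions of the $\mu^\ell$) function of $(\widehat\phi_j)$, and then that the finitely many conditions $A\cdot(\text{coefficient tuple})=0$ characterizing the image of $T$ are automatically satisfied by the tuple coming from an arbitrary $\phi$ in $\W^{0,*}_X$. The vanishing-on-a-dense-set plus Proposition~\ref{Wsmooth} argument is what closes this gap, and it crucially uses that $X_{reg}$ is exactly the locus where $\Ok_X$ is Cohen-Macaulay and $Z$ smooth, so that the free-module picture and the pointwise exactness of \eqref{parra} are available. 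Everything else is a routine transcription of the arguments already given for $\E^{0,*}_X$ in Section~\ref{gurka}, with $\E^{0,*}_Z$ replaced by $\W^{0,*}_Z$ throughout.
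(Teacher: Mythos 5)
Your uniqueness argument and the overall architecture of the existence argument (expand each $\phi\w\mu^\ell$ via Proposition~\ref{parasol}, assemble the coefficient tuple $\tilde\phi\in(\W_Z^{0,*})^M$, show $A\tilde\phi=0$, and recover $\hat\phi=S\tilde\phi$ from the homotopy $I=TS+BA$) coincide with the paper's. But there is a genuine gap at the one step that carries all the weight: your justification of $A\tilde\phi=0$. You argue that on the set where the coefficients $c^\ell_\gamma$ are all smooth, ``$\phi$ is an honest smooth form,'' so that $\tilde\phi=T(\text{smooth tuple})$ there and hence $A\tilde\phi=0$. This is circular. An element $\phi$ of $\Homs_{\Ok_X}(\Ba^n_X,\W^{n,*}_X)$ with smooth coefficient tuple is an honest element of $\E_X^{0,*}$ precisely when $\tilde\phi$ lies in the image of $T$ applied to a smooth tuple --- which is exactly the membership condition $A\tilde\phi=0$ you are trying to establish. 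Lemma~\ref{skottskada} cannot be invoked here, since it takes as hypothesis that $\phi$ already lies in $\E_X^{0,*}=\E_\Omega^{0,*}/\Kers i^*$, i.e., that it is represented by a smooth ambient form; smoothness of the coefficients of the currents $\phi\w\mu^\ell$ does not give you that.

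The paper closes this gap with an idea your proposal does not contain: a reduction to dimension zero. At a point $0$ where $\tilde\phi$ is smooth, one intersects with the ideal $\mathcal{I}=(z)$ to form the $0$-dimensional space $X_0$ with $\Ok_{X_0}=\Ok_\Omega/(\J+\mathcal{I})$; by the computation in the proof of Lemma~\ref{pointwise}, the currents $\mu^\ell\w\mu^z$ generate $\Ba^0_{X_0}$, and $\phi$ induces a morphism $\phi_0\in\Homs(\Ba^0_{X_0},\Ba^0_{X_0})$ whose coefficient tuple is $\tilde\phi(0)$. Since $X_0$ is $0$-dimensional, Theorem~\ref{roossats} (Roos) says $\Ok_{X_0}\to\Homs(\Ba_{X_0},\Ba_{X_0})$ is an isomorphism, so $\phi_0$ is multiplication by an element of $\Ok_{X_0}$ and therefore $\tilde\phi(0)=T(0)\hat\phi_0$, giving $(A\tilde\phi)(0)=0$. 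This is where the $\Ok_X$-linearity of $\phi$ is actually used; your existence argument never invokes it at the decisive moment. The combination of Proposition~\ref{Wsmooth} with pointwise evaluation is the right framework, as you anticipated, but the pointwise statement itself needs the Roos/dimension-zero input, not an appeal to smoothness of $\phi$.
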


A current with a representation \eqref{eq:W0Xreg} is considered as an
element of $\W^{0,*}_X = \Homs(\Ba^n_X,\W^{n,*}_X)$ in view of the
comment after  Lemma~\ref{motor}.

\begin{proof}
    From \eqref{parra} we get an induced sequence
    \begin{equation}\label{parra2}
        0\to (\W_Z^{0,*})^\nu\xrightarrow{T}(\W_Z^{0,*})^M\xrightarrow{A}(\W_Z^{0,*})^{M'},
    \end{equation}
    which is also exact. In fact, $T$
    in \eqref{parra2} is clearly injective, and by \eqref{Thomotopy},
    if $\xi$ in $(\W_Z^{0,*})^M$ and  $A \xi = 0$,
    then $T \eta = \xi$, if $\eta = S \xi$.

 Now take $\phi$ in $\Homs(\Ba^n_X,\W^{n,*}_X)$.  Let us choose a basis
$\mu^1,\dots,\mu^m$ for  $\Ba^n_{X}$ and let $\tilde\phi$ be the element
in $ (\W_Z^{0,*})^M$ obtained from the coefficients of $\phi \mu^j$ when
expressed as in \eqref{paraply}, cf.\ Section \ref{gurka}.
We claim that $A\tilde{\phi} = 0$. Taking this for granted,
by the exactness of \eqref{parra2},  $\tilde\phi$ is the image of
the tuple  $\hat\phi=S\tilde\phi$.  Now $\hat\phi\w\mu^j =\phi\mu^j$ since they are represented by the same tuple in $ (\W_Z^{0,*})^M$.  Thus $\hat\phi$ gives the desired representation of $\phi$.

\smallskip
In view of Proposition~\ref{Wsmooth} it is enough to prove the claim
where $\tilde{\phi}$ is smooth.
  Let us therefore fix such a point, say $0$, and show that $(A\tilde{\phi})(0) = 0$.
    From the proof of Lemma~\ref{pointwise}, if we let $\mathcal{I}$ be the ideal
    generated by $z$, and let $X_0$ be defined by $\Ok_{X_0} := \Ok_\Omega/(\J+\mathcal{I})$, then $\mu^1\wedge \mu^z,\dots,\mu^m \w \mu^z$
    generate $\Ba^0_{X_0}$.
    If we let $\phi_0$ be the morphism in $\Homs(\Ba^0_{X_0},\Ba^0_{X_0})$ given by
    $\phi_0(\mu^i \wedge \mu^z) := \phi \mu^i \wedge \mu^z$ (which indeed gives a
    well-defined such morphism), then, as in the proof of Lemma~\ref{pointwise},
    $\tilde{\phi}_0 = \tilde{\phi}(0)$.
    In addition, the sequence \eqref{parra} for $X_0$ is
    \begin{equation*}
        0 \to \C^\nu \stackrel{T(0)}{\to} \C^M \stackrel{A(0)}{\to} \C^{M'}.
    \end{equation*}
    Since $X_0$ is $0$-dimensional, the morphism $\Ok_{X_0} \to \Homs(\Ba_{X_0},\Ba_{X_0})$
    is an isomorphism by Theorem~\ref{roossats}, and thus $\phi_0$ is given as multiplication by
    a function in $\Ok_{X_0}$, which we also denote by $\phi_0$, i.e.,
    $\tilde{\phi}_0 = T(0) \hat{\phi}_0$.
    Hence, $A(0) \tilde{\phi}_0 = A(0) T(0) \hat{\phi}_0 = 0$, and thus
$(A \tilde{\phi})(0) = 0$.
\end{proof}

\begin{ex}[Meromorphic functions]
Assume that we have a local embedding $X\to \Omega$. Given meromorphic functions
$\Phi,\Phi'$ in $\Omega$  that are holomorphic generically on $Z$, we say that
$\Phi\sim\Phi'$ if and only if $\Phi-\Phi'$ is in $\J$ generically on $Z$.
If $\Phi=A/B$ and $\Phi'=A'/B'$, where $B$ and $B'$ are generically non-vanishing on $Z$,
the condition is precisely that $AB'-A'B$ is in $\J$.  We say that such an
equivalence class is a meromorphic function $\phi$ on $X$, i.e., $\phi$ is in $\M_X$.  Clearly we have
$\Ok_X\subset\M_X.
$
We claim that
$$
\M_X\subset\W_X^{0,*}.
$$
To see this, first notice that if we take a representative $\Phi$ in $\M_\Omega$
of $\phi$, then it can be considered as an almost semi-meromorphic current on $\Omega$
with Zariski-singular support of positive codimension on $Z$, since it is generically
holomorphic on $Z$. As in Definition \ref{ainbusk} we therefore have a
current $\Phi\w h$ in $\W_X^{n,0}$ for $h$ in $\Ba^n_X$. Another representative
$\Phi'$ of $\phi$ will give rise to the same current generically and hence everywhere
by the SEP.  Thus
$\phi$ defines a section of $\Homs(\Ba^n_X,\W^{n,*}_X) = \W^{0,*}_X$.
\end{ex}


By definition, a current $\phi$ in $\W^{0,*}_X$ can be multiplied by a current $h$ in $\Ba^n_X$,
and the product $\phi \wedge h$ lies in $\W^{n,*}_X$. It will be crucial that we can extend to
products by somewhat more general currents.
Notice that $\Ba^n_X$ is a subsheaf of $\Cu^{n,*}_X$, which is an $\E^{0,*}_X$-module.
Thus, we can consider the subsheaf $\E^{0,*}_X \Ba^n_X$ of $\Cu^{n,*}_X$
which consists of finite sums $\sum \xi_i \wedge h_i$, where $\xi_i$ are in $\E^{0,*}_X$
and $h_i$ are in $\Ba^n_X$.

\begin{lma} \label{lma:extendW0}
    Each $\phi$ in $\W^{0,*}_X = \Homs_{\Ok_X}(\Ba^n_X,\W^{n,*}_X)$
has a    unique extension to a morphism in $\Homs_{\E^{0,*}_X}(\E^{0,*}_X \Ba^n_X,\W^{n,*}_X)$.
\end{lma}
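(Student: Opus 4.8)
The uniqueness of such an extension is immediate: any $\E^{0,*}_X$-linear extension $\tilde\phi$ of $\phi$ must satisfy $\tilde\phi(\sum_i\xi_i\w h_i)=\sum_i\pm\,\xi_i\w\phi(h_i)$ (Koszul signs suppressed), where $\xi_i\w(\cdot)$ denotes the $\E^{0,*}_X$-module action on $\W^{n,*}_X$ and $\phi(h_i)\in\W^{n,*}_X$ by the very definition of $\W^{0,*}_X$. So the plan is to take this formula as the \emph{definition} of $\tilde\phi$ and to verify that it is well defined, that is: if $\sum_i\xi_i\w h_i=0$ in $\Cu^{n,*}_X$, then $\sum_i\pm\,\xi_i\w\phi(h_i)=0$ in $\W^{n,*}_X$. (The right-hand side is indeed a section of $\W^{n,*}_X$, since each $\phi(h_i)$ is and $\W^{n,*}_X$ is an $\E^{0,*}_X$-module; the only content is $\E^{0,*}_X$-linearity.) Because $\sum_i\xi_i\w\phi(h_i)$ corresponds via $i_*$ to a current in $\W^Z_\Omega$, which has the SEP with respect to $Z$, and the complement $Z\setminus (X_{reg})_{red}=Z_{sing}\cup X_{p+1}$ has positive codimension (here $X$ being of pure dimension is used), it suffices to prove the vanishing on $X_{reg}$.

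On $X_{reg}$ I would fix a local embedding, coordinates $(z,w)$ with $Z=\{w=0\}$, and generators $1,w^{\alpha_1},\dots,w^{\alpha_{\nu-1}}$ of $\Ok_X$ over $\Ok_Z$, and bring everything into normal form: $\phi=\sum_a\widehat\phi_a\otimes w^{\alpha_a}$ with $\widehat\phi_a\in\W^{0,*}_Z$ (Lemma~\ref{W0Xreg}), $\xi_i=\sum_a\widehat\xi_{i,a}\otimes w^{\alpha_a}$ with $\widehat\xi_{i,a}\in\E^{0,*}_Z$ (Lemma~\ref{skottskada}), and $i_*h_i=\sum_\gamma(h_i)_\gamma\w dz\otimes\dbar\frac{dw}{w^{\gamma+\1}}$, where the coefficients $(h_i)_\gamma$ are holomorphic functions on $Z$ — in particular of bidegree $(0,0)$, which is the point that makes the argument work (Proposition~\ref{parasol}). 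By Lemma~\ref{motor}, formula \eqref{ansgar} (applied to the smooth forms $\xi_i$ as well, which are of the form \eqref{eq:W0Xreg}), both $i_*(\xi_i\w h_i)$ and $i_*(\phi\w h_i)=i_*(\phi(h_i))$ are then explicit finite sums in the currents $dz\otimes\dbar\frac{dw}{w^{\ell+\1}}$ with coefficients built from the $\widehat\xi_{i,a}$ (resp.\ $\widehat\phi_a$) and the $(h_i)_\gamma$. Summing the hypothesis $\sum_i\xi_i\w h_i=0$ over $i$ and invoking the uniqueness of the representation \eqref{paraply} gives the relations
$$\sum_i\sum_a\widehat\xi_{i,a}\w(h_i)_{\ell+\alpha_a}=0\qquad\text{for every }\ell\ge 0.$$

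It then remains to compute $i_*\big(\sum_i\xi_i\w\phi(h_i)\big)$ directly. Wedging a lift $\sum_a\widehat\xi_{i,a}(z)\,w^{\alpha_a}$ of $\xi_i$ with $i_*(\phi(h_i))$, the $\bar w$- and $d\bar w$-terms are annihilated and multiplication by $w^{\alpha_a}$ lowers the pole, $w^{\alpha_a}\dbar\frac{dw}{w^{\ell+\1}}=\dbar\frac{dw}{w^{\ell-\alpha_a+\1}}$; after re-indexing and summing over $i$, the coefficient of $dz\otimes\dbar\frac{dw}{w^{m+\1}}$ turns out to be, up to sign,
$$\sum_a\Big(\sum_i\sum_b\widehat\xi_{i,b}\w(h_i)_{m+\alpha_a+\alpha_b}\Big)\w\widehat\phi_a .$$
Here I would use that each $(h_i)_\gamma$ has degree $0$, hence slides freely past all forms, so that $\widehat\phi_a$ can be factored out of the inner sum; the inner sum is then precisely the relation displayed above with $\ell=m+\alpha_a$, hence zero. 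Thus all coefficients vanish, $i_*\big(\sum_i\xi_i\w\phi(h_i)\big)=0$ on $X_{reg}$, and by injectivity of $i_*$ together with the SEP reduction, $\sum_i\xi_i\w\phi(h_i)=0$ in $\W^{n,*}_X$. This establishes that $\tilde\phi$ is well defined; that it is $\E^{0,*}_X$-linear and restricts to $\phi$ on $\Ba^n_X$ is clear from the defining formula.

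I expect this last computation to be the main obstacle: it is the only step that is not purely formal, since it requires unwinding the abstract definitions into the local normal forms and keeping track of the pole shifts. The decisive — though elementary — observation is that the expansion coefficients of a section of $\Ba^n_X$ are holomorphic, of bidegree $(0,0)$, so that the factor coming from $\phi$ detaches cleanly onto an instance of the relation furnished by the hypothesis. The SEP of $\W^{n,*}_X$ is what legitimizes the reduction to $X_{reg}$, and hence the whole local argument.
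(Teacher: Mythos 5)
Your proposal is correct, and the well-definedness step is handled by a genuinely different argument than the one in the paper. Both proofs begin identically: uniqueness is forced by $\E^{0,*}_X$-linearity, and the SEP of $\W^{n,*}_X$ reduces the well-definedness of the forced formula to $X_{reg}$. At that point the paper's proof is very short: writing $\phi$ in the normal form \eqref{eq:W0Xreg}, it invokes Proposition~\ref{Wsmooth} to reduce to the case where all the coefficients $\widehat\phi_j$ are smooth, in which case the candidate value of $\phi b$ is literally the product of $b$ by the smooth form $\phi\in\E^{0,*}_X$ and hence depends only on $b$. You instead carry out the computation explicitly: you expand $\phi$, the $\xi_i$ and the $i_*h_i$ in the normal forms of Lemma~\ref{W0Xreg}, Lemma~\ref{skottskada} and Proposition~\ref{parasol}, extract from $\sum_i\xi_i\w h_i=0$ the family of relations $\sum_{i,b}\widehat\xi_{i,b}\w(h_i)_{\ell+\alpha_b}=0$ via the uniqueness in \eqref{paraply}, and then verify coefficient by coefficient that $\sum_i\pm\,\xi_i\w\phi(h_i)$ vanishes; the decisive point — that the expansion coefficients $(h_i)_\gamma$ are holomorphic of bidegree $(0,0)$, so the vanishing relations assemble into a \emph{smooth} form that can be factored off the $\W_Z$-coefficients $\widehat\phi_a$ — is correctly identified, and the suppressed Koszul signs do cancel because the only sign-carrying factors are $\widehat\xi_{i,b}$ and $\widehat\phi_a$. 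What each approach buys: the paper's argument is two lines but leans on the regularity result Proposition~\ref{Wsmooth}; yours is longer and requires bookkeeping, but it avoids any further appeal to that proposition in this step (it still enters indirectly through the proof of Lemma~\ref{W0Xreg}, which you cite) and it exhibits concretely the mechanism by which the relation furnished by the hypothesis transfers from the smooth data $\xi_i,h_i$ to the current $\phi$.
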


\begin{proof}
    The uniqueness follows by $\E^{0,*}_X$-linearity, i.e., if $b = \xi_1 \wedge h_1 + \dots + \xi_r \wedge h_r$ is in
    $\E^{0,*}_X \Ba^n_X$, then one must have
    \begin{equation} \label{smoothtimesomega}
    \phi b= \sum_i (-1)^{(\deg \xi_i)(\deg \phi)} \xi_i \wedge \phi h_i.
    \end{equation}
    We must check that this is well-defined, i.e., that the right hand side does not
depend on the representation $\xi_1 \wedge h_1 + \dots + \xi_r \wedge h_r$ of $b$.
    By the SEP, it is enough to prove this locally on $X_{\rm reg}$,
    and we can then assume that $\phi$ has a representation \eqref{eq:W0Xreg}.
    By Proposition~\ref{Wsmooth}, it is then enough to prove that it is well-defined
    assuming that $\widehat{\phi}_0,\dots,\widehat{\phi}_{\nu-1}$ in \eqref{eq:W0Xreg}
    are all smooth. In this case, the right hand side of \eqref{smoothtimesomega}
    is simply the product of $\xi_1 \wedge h_1 + \dots + \xi_r \wedge h_r = b$
    by the smooth form $\phi$ in $\E^{0,*}_X$, and this product only depends
    on $b$. 
\end{proof}

\begin{cor} \label{cor:extendW0}
    Let $\phi$ be a current in $\W^{0,*}_X$ and let $\alpha$ be a current
    in $\W^{n,*}_X$ of the form $\alpha = \sum a_i \wedge h_i$, where
    $a_i$ are almost semi-meromorphic $(0,*)$-currents on $\Omega$ which are generically smooth on $Z$,
    and $h_i$ are in $\Ba^n_X$. Then one has a well-defined product
    \begin{equation} \label{eq:W0asmomega}
        \phi \wedge \alpha = \sum (-1)^{(\deg a_i) (\deg \phi)} a_i \wedge (\phi \wedge h_i).
    \end{equation}
\end{cor}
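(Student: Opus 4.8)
The plan is to reduce Corollary~\ref{cor:extendW0} directly to Lemma~\ref{lma:extendW0}. First I would observe that for each almost semi-meromorphic $a_i$ on $\Omega$ which is generically smooth on $Z$ and each $h_i$ in $\Ba^n_X$, the product $a_i \wedge h_i$ is already defined as a current in $\W^{n,*}_X$ by Definition~\ref{ainbusk}, so $\alpha = \sum a_i \wedge h_i$ makes sense as a section of $\W^{n,*}_X$. The point is that this representation of $\alpha$ is not unique, so I must check that the right-hand side of \eqref{eq:W0asmomega} depends only on $\alpha$ (and on $\phi$), not on the chosen decomposition $\sum a_i \wedge h_i$.

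The key step is to exploit the limit formula \eqref{asmWn}: if $h$ is a holomorphic tuple cutting out the (union of the) Zariski singular supports of all the $a_i$, then $a_i \wedge h_i = \lim_{\epsilon \to 0^+} \chi(|h|^2/\epsilon) a_i \wedge h_i$, and each approximant $\chi(|h|^2/\epsilon) a_i$ is a \emph{smooth} $(0,*)$-form on $\Omega$ near $Z$, hence defines a section of $\E^{0,*}_X$. Therefore $\chi(|h|^2/\epsilon)\alpha = \sum \chi(|h|^2/\epsilon) a_i \wedge h_i$ lies in $\E^{0,*}_X \Ba^n_X$, and by Lemma~\ref{lma:extendW0} the current
\begin{equation*}
\phi \wedge \big(\chi(|h|^2/\epsilon)\alpha\big) = \sum_i (-1)^{(\deg a_i)(\deg\phi)} \chi(|h|^2/\epsilon) a_i \wedge (\phi \wedge h_i)
\end{equation*}
depends only on the current $\chi(|h|^2/\epsilon)\alpha$ and on $\phi$, not on the chosen decomposition. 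Letting $\epsilon \to 0^+$, the right-hand side converges to $\sum_i (-1)^{(\deg a_i)(\deg\phi)} a_i \wedge (\phi\wedge h_i)$ by \eqref{asmWn} applied with the currents $\phi \wedge h_i$ in $\W^{n,*}_X$ (here one uses that a common $h$ can be chosen, since $\phi\wedge h_i$ is still pseudomeromorphic and the $a_i$ have Zariski singular support contained in the zero set of $h$). Hence the limit, which is the right-hand side of \eqref{eq:W0asmomega}, depends only on $\lim_\epsilon \chi(|h|^2/\epsilon)\alpha = \alpha$ and on $\phi$, so the product is well defined.

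Two small points need care. First, one must check that $\lim_\epsilon \chi(|h|^2/\epsilon)\alpha = \alpha$, which follows from \eqref{asmWn} termwise since each $a_i$ is generically smooth on $Z$ with Zariski singular support cut out by $h$. Second, one must check that $\chi(|h|^2/\epsilon) a_i$, a priori a smooth form on $\Omega$, genuinely defines an element of $\E^{0,*}_X$ whose class is unchanged under the ambient choices — but this is automatic, as every smooth $(0,*)$-form on $\Omega$ has a class in $\E^{0,*}_X = \E^{0,*}_\Omega/\Kers i^*$. The main obstacle is the interchange of the limit $\epsilon \to 0^+$ with the (finite) sum and the pairing, i.e.\ confirming that convergence $\chi(|h|^2/\epsilon)\alpha \to \alpha$ in $\W^{n,*}_X$ is matched by convergence of the expressions $\chi(|h|^2/\epsilon) a_i \wedge (\phi\wedge h_i) \to a_i \wedge (\phi\wedge h_i)$; this is exactly what \eqref{asmWn}, i.e.\ Proposition~\ref{ball} and \eqref{proddef}, provides, once one notes $\phi \wedge h_i \in \W^{n,*}_X \subset \F_X$ is pseudomeromorphic with a common defining tuple $h$ available for all the $a_i$ simultaneously. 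Independence of the embedding is inherited from that of $\W^{0,*}_X$ and $\W^{n,*}_X$.
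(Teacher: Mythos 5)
Your argument is correct and is essentially the paper's: both reduce to Lemma~\ref{lma:extendW0} on the dense set where the $a_i$ are smooth and then use that a current in $\W^{n,*}_X$ is determined there. Your $\chi(|h|^2/\epsilon)$-regularization and passage to the limit via \eqref{asmWn} is just the concrete implementation of the paper's appeal to the SEP, so the two proofs coincide in substance.
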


\begin{proof}
    The right hand side of \eqref{eq:W0asmomega} exists as a current in $\W^{n,*}_X$,
and we must prove is that it only depends on the current $\alpha$ and not on the representation  $\sum a_i \wedge h_i$.
    Notice that all the $a_i$ are smooth outside some subvariety $V$ of $Z$ and there
the right hand side of \eqref{eq:W0asmomega} is the
    product of $\phi$ and  $\alpha$ in $\E^{0,*}_X \Ba^n_X$, cf.\ Lemma~\ref{lma:extendW0}.
   It follows by the SEP that the right hand side only depends on $\alpha$.
\end{proof}


\begin{remark}\label{potta}
    Recall  from \eqref{omegatheta} that $\omega = b \vartheta$.
If $\phi$ is in $\W^{0,*}_X$, then we can define the product $\phi \w \omega$
by Corollary \ref{cor:extendW0}.
Expressed extrinsically, if $\mu = i_* \vartheta$,
   and  if we write $R\w dz = b \mu$ as in Lemma~\ref{bmy}, then
    we  can define the product $R\w dz\w\phi:= b\mu\w\phi$ as a current in $\Homs(\Ok_\Omega/\J,\W_\Omega^Z)$.
\end{remark}

\begin{lma} \label{timesOmega}
    Assume that $\phi$ is in $\W^{0,*}_X$, and that $\phi \w \omega = 0$
    for some structure form $\omega$, where the product is defined by Remark~\ref{potta}.
    Then $\phi = 0$.
\end{lma}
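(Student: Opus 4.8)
The plan is to use that $\phi\in\W^{0,*}_X=\Homs_{\Ok_X}(\Ba^n_X,\W^{n,*}_X)$ is the zero section as soon as $\phi\w h$ vanishes for a set of $\Ok_X$-generators of $\Ba^n_X$, together with the fact that $\omega$ itself carries such generators. So, let $\Ok(E_\bullet)$ be the Hermitian resolution whose structure form is $\omega$, write $\omega=\omega_0+\cdots+\omega_n$ with $\omega_k\in\W^{n,k}(X,E_{p+k})$ and $\omega=b\vartheta$ as in \eqref{omegatheta}, and pick generators $\xi^1,\dots,\xi^m$, $\xi^\ell=\xi^\ell_0\w dz$, of $\Ho^p(\Homs(E_\bullet,K_\Omega))$. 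By Proposition~\ref{takyta} and the statement following it, $h^\ell:=\xi^\ell_0\,\omega_0$ then generate the $\Ok_X$-module $\Ba^n_X$. Since $\xi^\ell_0$ is a holomorphic section of $E_p^*$ while $\omega_k$ is $E_{p+k}$-valued, contraction by $\xi^\ell_0$ annihilates $\omega_k$ for $k\ge 1$, so $\xi^\ell_0\,\omega=\xi^\ell_0\,\omega_0=h^\ell$.

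The heart of the argument is that contraction by the holomorphic, hence smooth, section $\xi^\ell_0$ acts only on the $E$-bundle factor of $\omega=b\vartheta$ and therefore commutes with the product by $\phi$ defined in Remark~\ref{potta} (that is, by Corollary~\ref{cor:extendW0}): writing $\phi\w\vartheta$ for the tuple obtained by multiplying $\phi$ into the entries of $\vartheta$, one has $\phi\w\omega=\pm\,b\,(\phi\w\vartheta)$, and applying $\xi^\ell_0$ gives $\xi^\ell_0\,(\phi\w\omega)=\pm\,(\xi^\ell_0 b)\,(\phi\w\vartheta)=\pm\,\phi\w\big((\xi^\ell_0 b)\,\vartheta\big)=\pm\,\phi\w(\xi^\ell_0\omega)=\pm\,\phi\w h^\ell$. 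By hypothesis $\phi\w\omega=0$, hence $\phi\w h^\ell=0$ for every $\ell$. Since $\phi$ is $\Ok_X$-linear and the $h^\ell$ generate $\Ba^n_X$, it follows that $\phi\w h=0$ for all $h$ in $\Ba^n_X$, i.e.\ $\phi=0$ in $\Homs_{\Ok_X}(\Ba^n_X,\W^{n,*}_X)=\W^{0,*}_X$.

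I expect the only point requiring care, the mild main obstacle, to be this interchange of the contraction by $\xi^\ell_0$ with the product by $\phi$; it is a routine unwinding of Corollary~\ref{cor:extendW0}, using that the contraction distributes over the finite sum defining $\phi\w\omega$ and leaves the almost semi-meromorphic coefficients of $b$ untouched. One could equally run the computation extrinsically: by Remark~\ref{potta} the hypothesis $\phi\w\omega=0$ reads $R\w dz\w\phi=b\mu\w\phi=0$; contracting by $\xi^\ell_0$, and using $\mu^\ell=\xi^\ell R_p$, gives $\mu^\ell\w\phi=0$ for the generators $\mu^1,\dots,\mu^m$ of $\Homs(\Ok_\Omega/\J,\CH^Z_\Omega)$, which forces $\phi=0$ in view of the injectivity of $i_*\colon\W^{0,*}_X\to\Homs(\Homs(\Ok_\Omega/\J,\CH^Z_\Omega),\Homs(\Ok_\Omega/\J,\W^Z_\Omega))$.
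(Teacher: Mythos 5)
Your proof is correct and follows essentially the same route as the paper: both arguments isolate the $E_p$-valued component $\omega_0$ of $\phi\w\omega$, invoke Proposition~\ref{takyta} to realize every section of $\Ba^n_X$ as $\xi_0\,\omega_0$ for a holomorphic $\xi_0\in\Ok(E_p^*)$, and conclude $\phi\w h=0$ for all $h$ by linearity. The paper states the commutation of the holomorphic contraction with the product by $\phi$ simply as ``$\Ok$-linearity,'' whereas you spell it out via Corollary~\ref{cor:extendW0}; this is the same verification, just made explicit.
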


\begin{proof}
    Considering the component with values in $E_p$, we get that $\phi \w \omega_0 = 0$.
    By Proposition~\ref{takyta}, any $h$ in $\Ba^n_X$ can be written as
    $h = \xi \omega_0$, where $\xi$ is a holomorphic section of $E_p^*$,
    so by $\Ok$-linearity, $\phi \w h = 0$, i.e., $\phi = 0$.
\end{proof}

\medskip

We end this section with the following result, the first part of \cite[Theorem 3.7]{AW3}.
We include here a different proof than the one in \cite{AW3},
since we believe the proof here is instructive.

\begin{prop} \label{stek}
If $Z$ is smooth, then $\W_Z$ is closed under holomorphic differential operators.
\end{prop}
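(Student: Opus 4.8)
**The plan is to reduce to a purely local computation in $\C^n$ and exploit the structure-theoretic characterization of $\W_Z$ given by the dimension principle.** Since $Z$ is smooth, the statement is local, so I work in $\C^n$ with coordinates $s = (s_1,\dots,s_n)$. A holomorphic differential operator is a finite sum of terms $a(s)\,\partial^\beta/\partial s^\beta$ with $a$ holomorphic; since $\W_Z$ is a sheaf of modules over $\hol_Z$ (multiplication by a holomorphic function preserves pseudomeromorphicity and clearly preserves the SEP), it suffices to show that $\W_Z$ is preserved by each single derivative $\partial/\partial s_j$. So the whole problem comes down to: if $\tau \in \W_Z$, then $\partial \tau/\partial s_j \in \W_Z$.

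\textbf{Pseudomeromorphicity of the derivative.} First I would check that $\partial\tau/\partial s_j$ is still pseudomeromorphic. One clean way: $\partial \tau/\partial s_j = \partial(ds_j \w \text{(contraction?)})$—actually the cleanest route is to observe that $\partial/\partial s_j$ can be realized using the Koppelman/Bochner--Martinelli machinery already deployed in the proof of Proposition~\ref{trilsk}. Concretely, for $\tau$ with compact support, $\tau = \dbar K\tau + K\dbar\tau$ where $K$ is convolution with the Bochner--Martinelli kernel $k$, and $k$ is almost semi-meromorphic on $\C^n\times\C^n$; differentiating the Koppelman formula in $s_j$ moves the derivative onto $k$, and $\partial k/\partial s_j$ is again almost semi-meromorphic on $\C^n \times \C^n$ (by the same blowup-of-the-diagonal argument used for $k$ itself). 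Then by Proposition~\ref{ball}, $(\partial k/\partial s_j)\w(\tau\otimes 1)$ is pseudomeromorphic, and its pushforward $p_*$ is pseudomeromorphic; likewise for the $K\dbar\tau$ term. Hence $\partial\tau/\partial s_j \in \PM_Z$.

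\textbf{The SEP for the derivative.} This is the heart of the matter. I must show $\1_V(\partial\tau/\partial s_j) = 0$ for every germ of subvariety $V$ of positive codimension, given that $\1_W\tau = 0$ for all such $W$. The key observation: by the structure theory, a pseudomeromorphic $(*, q)$-current on smooth $Z$ with the SEP and with the additional property of being supported on a subvariety must vanish by the dimension principle—but more usefully, I want to show $\partial\tau/\partial s_j$ has no mass on any positive-codimension subvariety. Write $\mu := \partial\tau/\partial s_j$; we know $\mu \in \PM_Z$. Fix $V$ of codimension $\geq 1$ and a holomorphic tuple $h$ cutting out $V$. Using $\1_V\mu = \lim_{\epsilon\to 0}(1-\chi(|h|^2/\epsilon))\mu$ and the Leibniz rule, $\partial/\partial s_j$ acting on $\chi(|h|^2/\epsilon)\tau$ produces $\chi(|h|^2/\epsilon)\,\partial\tau/\partial s_j$ plus a term $\chi'(|h|^2/\epsilon)\,\partial(|h|^2)/\partial s_j\,\tau/\epsilon$. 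The first term converges to $\1_{Z\setminus V}\mu$. For the correction term, I would argue that $\chi'(|h|^2/\epsilon)\,\partial_j(|h|^2)\,\tau/\epsilon \to 0$: this is where I use that $\tau$ has the SEP. Indeed, $\chi'(t/\epsilon)/\epsilon$ is supported on $t \sim \epsilon$, and $\partial_j(|h|^2) = \sum \overline{h_k}\,\partial_j h_k$ carries a factor $\overline{h_k}$ which, against the pseudomeromorphic current $\tau$ with support considerations near $V$, together with the SEP, forces the limit to vanish. I expect this limiting argument—controlling the Leibniz correction term using the SEP of $\tau$ and the structure of pseudomeromorphic currents under the $\chi(|h|^2/\epsilon)$ regularization—to be the main technical obstacle; it is essentially the assertion that the operation $\tau \mapsto \1_{Z\setminus V}\tau$ commutes with $\partial/\partial s_j$ in the relevant sense, which is plausible but requires care with the almost semi-meromorphic calculus. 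Once this is established, $\1_V\mu = \mu - \1_{Z\setminus V}\mu = 0$ for all such $V$, so $\mu \in \W_Z$, completing the proof.
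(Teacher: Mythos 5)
Your reduction to a single derivative $\partial/\partial s_j$ is fine, but the proof has a genuine gap at its crux, which you yourself flag as ``plausible but requires care'': the claim that the Leibniz correction term $\chi'(|h|^2/\epsilon)\,\partial_{s_j}(|h|^2)\,\tau/\epsilon$ tends to $0$. This is exactly where the whole content of the proposition lives, and the heuristic you offer does not close it. The factor $\bar h_k$ does \emph{not} annihilate $\tau$ (the support argument for pseudomeromorphic currents applies only to currents supported on $\{h=0\}$, and $\tau$ is not). If you instead write $\chi'(|h|^2/\epsilon)\bar h_k/\epsilon=(\bar h_k/|h|^2)\cdot(|h|^2/\epsilon)\chi'(|h|^2/\epsilon)$ and use the trick $\tilde\chi(t)=t\chi'(t)+\chi(t)$ to see that $(|h|^2/\epsilon)\chi'(|h|^2/\epsilon)\,\tau\to 0$, you are still left multiplying by $\bar h_k/|h|^2$, which is bounded but not smooth, so you cannot pass to the limit termwise; making this rigorous requires lifting to a modification where $h$ principalizes and estimating there, i.e., essentially the $I^{r,s}_\ell$-type analysis of Section~\ref{pudding}. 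A secondary issue is the pseudomeromorphicity step: asserting that $\partial k/\partial z_j$ is again almost semi-meromorphic, and that differentiation commutes with $\pi_*$ up to terms killed by the dimension principle, presupposes stability of these classes under holomorphic derivatives --- which is close to the statement you are trying to prove --- so it too would need an argument rather than an appeal to ``the same blowup argument.''

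The paper sidesteps all of this with a short trick that uses only the already-established stability properties \eqref{bost} and \eqref{stare3}: one forms $\tau'=\tau\otimes\dbar\frac{dw}{2\pi i\, w^2}$ on $Z\times\C_w$, which lies in $\W^Z_{Z\times\C_w}$ by \eqref{bost}, and pushes it forward under $p\colon(z,w)\mapsto(z_1+w,z_2,\dots,z_n)$; since $\dbar\frac{dw}{2\pi i\,w^2}$ acts on test forms by $\xi\mapsto\frac{\partial\xi}{\partial w}(0)$, one checks that $p_*\tau'=\partial\tau/\partial\zeta_1$, and $p_*\tau'\in\W_Z$ by \eqref{stare3}. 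You may want to compare: both pseudomeromorphicity and the SEP come for free from the tensor-product and push-forward calculus, with no regularization limit to control.
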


\begin{proof} Let $\tau$ be any current in $\W_Z$. It suffices to prove that if $\zeta$
    are local coordinates on $Z$, then $\partial \tau/\partial \zeta_1$ is in $\W_Z$.
    Consider the current
$$
\tau'=\tau\otimes \dbar\frac{dw}{2\pi i w^2}
$$
on the  manifold $Y := Z \times \C_w$. Clearly $\tau'$ has support on $Z$, and it follows from
\eqref{bost} that $\tau'$ is in $\W^Z_Y$. Let
$$p : (z,w) \mapsto \zeta = (z_1+w,z_2,\dots,z_n),$$
which is just a change of variables on $Y$ followed by a projection.
It follows from \eqref{stare3}
that $p_* \tau'$ is in $\W_Z$. Since
$$
\dbar\frac{dw}{2\pi i w^2}.\xi(w)=\frac{\partial\xi}{\partial w}(0)
$$
it is readily verified
that $p_*\tau'=\partial\tau/\partial \zeta_1$, so we conclude that $\partial \tau/\partial \zeta_1$ is in $\W_Z$.
\end{proof}

\section{The $\dbar$-operator on $\W_X^{0,*}$} \label{sect:dbarW0}

We already know the meaning of $\dbar$ on $\W_X^{n,*}$, and we now define
$\dbar$ on $\W_X^{0,*}$.
\begin{df}
Assume that $\phi,v$ are in $\W_X^{0,*}$, We say that $\dbar v=\phi$ if
\begin{equation}\label{dstreck}
\dbar (v \w h)=\phi\w h,\quad  h\in \Ba_X^n.
\end{equation}
\end{df}

If we have an embedding $X\to \Omega$,   \eqref{dstreck} means,
cf.\ \eqref{hastverk3},
 that
\begin{equation}\label{dstreck2}
    \dbar(v\w\mu)=\phi\w\mu,\quad \mu\in\Homs(\Ok_\Omega/\J,\CH_\Omega^Z).
\end{equation}

In view of Remark~\ref{potta} we can define the product $\phi \wedge \omega$
for $\phi$  in $\W^{0,*}_X$.

\begin{df}
We say that $v$ belongs to
$\Dom \dbar_X$ if  $v$ is in $\Dom\dbar$, i.e., $\dbar v=\phi$ for some $\phi$  and in addition
$\dbar(v\w\omega)$, a~priori only in $\PM_X^{n,*}$, is in $\W_X^{n,*}$, for each structure form $\omega$ from any possible embedding.
\end{df}


If $X$ is Cohen-Macaulay, then
any  such $\omega$ is of the form $a_1 h^1+\cdots +a_m h^m$,
where  $h^j$ are in $\Ba_X^n$ and $a_j$ are smooth, see
Remark~\ref{mars}, and hence $\Dom \dbar_X$ coincides with
$\Dom \dbar$ in this case.

\begin{ex} \label{plastpase}
Assume that $v$ is in $\E_X^{0,*}$ and $\phi=\dbar v$ in the sense in Section~\ref{boxer}.
Then clearly
$$
\dbar(v\wedge \omega) = \phi \wedge \omega  + (-1)^{\deg v} v \wedge \dbar \omega.
$$
Since $\dbar \omega = f \omega$, and $\W_X^{n,*}$ is closed under multiplication with
forms in $\E_X^{0,*}$, we get that $\dbar(v\wedge \omega)$ is in $\W^{n,*}_X$,
so $v$ is in $\Dom \dbar_X$ and $\dbar_Xv=\phi$.

If  $w$ is in $\Dom \dbar_X$ and $v$ is in $\E_X^{0,*}$, then
\begin{equation*}
    \dbar( v \wedge w \wedge \omega) = \dbar v \wedge w \wedge \omega + (-1)^{\deg v} v \wedge \dbar (w \wedge \omega).
\end{equation*}
Thus $v \wedge w$ is in $\Dom \dbar_X$, and the Leibniz rule
$\dbar(v \wedge w) = \dbar v \wedge w + (-1)^{\deg v} v \wedge \dbar w$ holds.
\end{ex}

Let $\chi_\delta=\chi(|h|^2/\delta)$ where $h$ is a tuple of holomorphic functions
that cuts out $X_{sing}$. 

\begin{lma} \label{dbarchi}
    If $v$ is in $\W^{0,*}(X)$, and it is in $\Dom \dbar_X$ on $X_{\rm reg}$, then
    $v$ is in $\Dom \dbar_X$ on all of $X$ if and only if
    \begin{equation}\label{bcond}
        \dbar\chi_\delta\w v\w\omega\to 0, \quad \delta\to 0,
    \end{equation}
    for all structure forms $\omega$.
    In this case,
    \begin{equation} \label{nablaOmega}
        -\nabla_f(v\w \omega) = \dbar v \w \omega.
    \end{equation}
\end{lma}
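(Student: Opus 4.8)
The plan is to translate both sides of the asserted equivalence into a statement about the single current $\1_{X_{sing}}\dbar(v\w\omega)$, using the $\1_V$-calculus on $\PM^{n,*}_X$. Set $T:=v\w\omega$; by Remark~\ref{potta} and Corollary~\ref{cor:extendW0} this is a current in $\W^{n,*}_X$, and by the definition of $\Dom \dbar_X$ the current $\dbar T$ is a~priori in $\PM^{n,*}_X$, with $\dbar T$ in $\W^{n,*}$ on $X_{reg}$ because $v$ is in $\Dom \dbar_X$ there by hypothesis. Since $h$ cuts out $X_{sing}$, which has positive codimension in $Z$ as $X_{reg}$ is dense, the SEP of $T$ gives $\chi_\delta T\to T$; and since $\dbar T$ is pseudomeromorphic, $\chi_\delta\dbar T\to\1_{X\setminus X_{sing}}\dbar T$ by the defining property \eqref{restrDef} of the restriction operator, together with \eqref{stare}. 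Applying $\dbar$ to $\chi_\delta T$ and using $\dbar(\chi_\delta T)=\dbar\chi_\delta\w T+\chi_\delta\dbar T$, the left-hand side tends to $\dbar T$, and hence
\[
    \dbar\chi_\delta\w v\w\omega\ \longrightarrow\ \dbar T-\1_{X\setminus X_{sing}}\dbar T=\1_{X_{sing}}\dbar(v\w\omega),\qquad \delta\to 0 .
\]
Thus, for a fixed structure form $\omega$, condition \eqref{bcond} holds if and only if $\1_{X_{sing}}\dbar(v\w\omega)=0$.

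Next I would use the elementary fact that if $\mu$ is in $\PM^{n,*}_X$ and $\mu$ is in $\W^{n,*}$ on $X_{reg}$, then $\mu$ is in $\W^{n,*}_X$ if and only if $\1_{X_{sing}}\mu=0$: the ``only if'' is clear since $X_{sing}$ has positive codimension, and for the converse one writes $\mu=\1_{X\setminus X_{sing}}\mu$ and checks, with \eqref{stare}, that $\1_V\mu=\1_{V\setminus X_{sing}}\mu$, which vanishes for every germ $V$ of positive codimension because $\mu$ has the SEP on $X_{reg}$. Applying this to $\mu=\dbar(v\w\omega)$ shows that \eqref{bcond} for $\omega$ is equivalent to $\dbar(v\w\omega)\in\W^{n,*}_X$. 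Letting $\omega$ run over all structure forms coming from all local embeddings, this is exactly the condition in the definition of $\Dom \dbar_X$, provided one also checks that it forces $\dbar v$ to be well defined in $\W^{0,*}_X$: by Proposition~\ref{takyta} every $h$ in $\Ba^n_X$ is $\xi\omega_0$ for a holomorphic section $\xi$ of $E^*_p$, so $\dbar(v\w h)=\pm\xi\,\dbar(v\w\omega_0)$ is in $\W^{n,*}_X$, and $h\mapsto\dbar(v\w h)$ is $\Ok_X$-linear, hence defines an element $\psi\in\Homs_{\Ok_X}(\Ba^n_X,\W^{n,*}_X)=\W^{0,*}_X$ with $\dbar v=\psi$.

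It remains to prove \eqref{nablaOmega} when $v\in\Dom \dbar_X$. On $X_{reg}$ the coefficients $b$ of $\omega=b\vartheta$ are smooth, so $\omega$ is there a smooth-coefficient combination of currents in $\Ba^n_X$, and the Leibniz computation of Example~\ref{plastpase} — which only uses $\dbar(v\w h)=\dbar v\w h$ for $h\in\Ba^n_X$ and $\nabla_f\omega=0$ — gives $-\nabla_f(v\w\omega)=\dbar v\w\omega$ on $X_{reg}$. Both sides are globally defined currents in $\W^{n,*}_X$: the left-hand side since $f(v\w\omega)$ and $\dbar(v\w\omega)$ are in $\W^{n,*}_X$, the right-hand side by Remark~\ref{potta}. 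Since they agree on the dense Zariski-open set $X_{reg}$, they coincide by the SEP.

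I expect the main obstacle to be the first paragraph, namely the identification $\dbar\chi_\delta\w v\w\omega\to\1_{X_{sing}}\dbar(v\w\omega)$. This hinges on the fact, built into the definition of $\Dom \dbar_X$, that $\dbar(v\w\omega)$ is already pseudomeromorphic — only then are the $\1_V$-calculus and the limit $\chi_\delta\dbar T\to\1_{X\setminus X_{sing}}\dbar T$ at our disposal. Once that limit is established, the rest is routine manipulation with the SEP.
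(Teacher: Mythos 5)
Your proof is correct and follows essentially the same route as the paper's: both hinge on the Leibniz identity for $\chi_\delta (v\w\omega)$, the identification of $\lim_{\delta\to 0}\dbar\chi_\delta\w v\w\omega$ with $\1_{X_{sing}}$ applied to $\dbar(v\w\omega)$ (the paper phrases this via $\nabla_f(v\w\omega)$, which differs from $-\dbar(v\w\omega)$ only by a term killed by $\1_{X_{sing}}$), and the observation via \eqref{stare} that a current in $\PM_X^{n,*}$ with the SEP on $X_{reg}$ lies in $\W_X^{n,*}$ exactly when its restriction to $X_{sing}$ vanishes. The only differences are cosmetic: you obtain \eqref{nablaOmega} by the SEP instead of passing to the limit in the $\chi_\delta$-identity, and you spell out (correctly) why $\dbar(v\w\omega)\in\W_X^{n,*}$ for all structure forms already forces $v\in\Dom\dbar$, a point the paper leaves implicit.
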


\begin{proof}
    Since $\W^{n,*}_X$ is closed under multiplication by $f$,
    $v$ is in $\Dom \dbar_X$ if and only if $\nabla_f( v \wedge \omega)$ is in  $\W^{n,*}_X$ for all structure forms $\omega$.
    Since $v$ is in $\Dom \dbar_X$ on $X_{\rm reg}$, thus  $\nabla_f( v \w \omega)$ is in $\W^{n,*}_X$    on $X_{\rm reg}$.
By \eqref{stare}, $\nabla_f(v \w \omega)$ is then in $\W^{n,*}_X$ on all of $X$
    if and only if
    \begin{equation} \label{SEPonXsing}
        {\bf 1}_{X_{\rm reg}} \nabla_f(v \wedge \omega) = \nabla_f(v \wedge \omega).
    \end{equation}
    By the Leibniz rule,
    \begin{equation} \label{nablaChi}
        \nabla_f(\chi_\delta v \w \omega) = -\dbar \chi_\delta \w v \w \omega + \chi_\delta \nabla_f (v \w \omega).
    \end{equation}
    Since $v$ is in $\W^{0,*}_X$, $v \w \omega$ is in $\W^{n,*}_X$, so the left hand side of \eqref{nablaChi} tends to
    $\nabla_f(v \w \omega)$ when $\delta \to 0$, whereas
the second term on the right hand side of \eqref{nablaChi} tends to
    ${\bf 1}_{X_{\rm reg}}\nabla_f( v \w \omega)$.
Thus \eqref{SEPonXsing} holds if and only if \eqref{bcond} does. Thus the first
statement in the lemma is proved.

    Recall, cf.\ \eqref{omegatheta}, that $\omega = b \vartheta$  where $b$ is smooth
    on $X_{\rm reg}$ and $\vartheta$ is in $\Ba^n_X$. By the Leibniz rule
thus $-\nabla_f( v \w \omega) = \dbar v \w \omega$ on $X_{\rm reg}$, since
$\nabla_f \omega = 0$.
      Therefore,   \eqref{nablaChi} is equivalent to
    $ -\nabla_f(\chi_\delta v \w \omega) = \dbar \chi_\delta \w v \w \omega + \chi_\delta \dbar v \w \omega.
    $
    If \eqref{bcond} holds, we therefore get \eqref{nablaOmega} when
$\delta \to 0$.
\end{proof}

\begin{remark} In case  $X$ is reduced the definition of $\dbar_X$ is precisely the
same as in \cite{AS}.
However, the definition of $\dbar v=\phi$ given here, for $v,\phi$ in $\W_X^{0,*}$,  does {\it not} coincide with the definition 
in, e.g., \cite{AS}.
In fact, that definition means that $\dbar (v \w h)=\phi\w h$ for
all {\it smooth} $h$ in $\Ba_X^n$, which in general is a strictly weaker condition.
For example, for any weakly holomorphic function $v$, we have $\dbar(v \w h) = 0$
for all smooth $h$ in $\Ba_X^n$, while if $X$ is a reduced complete intersection, or more generally
Cohen-Macaulay, then $\dbar(v \wedge h) = 0$ for all $h$ in $\Ba_X^n$ is equivalent to $v$ being strongly holomorphic, see \cite[p. 124]{Ts} and \cite{Astrong}.
\end{remark}

We conclude this section with a lemma that shows  that $\dbar$  means what one should expect when $\phi,v$ are expressed with respect to a local
basis $w^{\alpha_j}$ for $\Ok_X$ over $\Ok_Z$
as in  Lemma~\ref{W0Xreg}.

\begin{lma} Assume that we have a local embedding $X_{reg}\to \Omega$
and $\phi,v$ in $\W_X^{0,*}$ represented as in \eqref{eq:W0Xreg}. Then
  $\dbar v=\phi$ if and only if
\begin{equation}\label{jarl}
\dbar \hat v_j=\hat\phi_j, \quad j=0,\ldots,\nu-1.
\end{equation}
\end{lma}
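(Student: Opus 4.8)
The statement to prove is that for $X_{reg}\to\Omega$ with coordinates $(z,w)$ and $\phi,v$ in $\W^{0,*}_X$ written as $v=\sum_j\hat v_j\otimes w^{\alpha_j}$, $\phi=\sum_j\hat\phi_j\otimes w^{\alpha_j}$ with $\hat v_j,\hat\phi_j\in\W^{0,*}_Z$, one has $\dbar v=\phi$ if and only if $\dbar\hat v_j=\hat\phi_j$ for all $j$. The plan is to test the defining relation $\dbar(v\w h)=\phi\w h$ (for all $h\in\Ba^n_X$, equivalently all $\mu\in\Homs(\Ok_\Omega/\J,\CH^Z_\Omega)$) against a generating set, express both sides in the canonical expansion of Proposition~\ref{parasol}, and match coefficients using Lemma~\ref{motor} together with the fact (stated just after Proposition~\ref{parasol}) that if $\dbar\tau$ lies in $\W^Z_\Omega$ then its coefficients are the $\dbar$ of the coefficients of $\tau$.

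First I would recall from Lemma~\ref{motor}/\eqref{ansgar} the explicit formula for $v\w\mu$ when $\mu=\sum_\gamma\tau_\gamma\w dz\otimes\dbar(dw/w^{\gamma+\1})$: it equals $\sum_i\sum_{\gamma\ge\alpha_i}\hat v_i\w\tau_\gamma\w dz\otimes\dbar(dw/w^{\gamma-\alpha_i+\1})$, and similarly for $\phi\w\mu$. Both of these are sections of $\Homs(\Ok_\Omega/\J,\W^Z_\Omega)$, so $\dbar$ of them is computed coefficientwise by the remark after Proposition~\ref{parasol}; since $\tau_\gamma$ are holomorphic, $\dbar(\hat v_i\w\tau_\gamma\w dz)=\dbar\hat v_i\w\tau_\gamma\w dz$ up to the usual sign. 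Hence $\dbar(v\w\mu)$ has exactly the same shape as $\phi\w\mu$ but with $\hat\phi_i$ replaced by $\pm\dbar\hat v_i$. Therefore the equation $\dbar(v\w\mu)=\phi\w\mu$ for a fixed $\mu$ is equivalent, after comparing coefficients of $\dbar(dw/w^{\ell+\1})$ via \eqref{paraply2}, to the finite system $\sum_{\gamma}(\dbar\hat v_{\gamma}-\hat\phi_\gamma)\w\tau_{\ell+\gamma}=0$ for all $\ell$ — i.e. $(\dbar v-\phi)\w\mu=0$. So the whole statement reduces to: $(\dbar\hat v_j-\hat\phi_j)=0$ for all $j$ $\iff$ $(\dbar v-\phi)\w\mu=0$ for all generators $\mu^1,\dots,\mu^m$ of $\Homs(\Ok_\Omega/\J,\CH^Z_\Omega)$.

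For the direction "$\Leftarrow$", observe that $\psi:=\dbar v-\phi$ is again a section of $\W^{0,*}_X$ with representation $\sum_j(\dbar\hat v_j-\hat\phi_j)\otimes w^{\alpha_j}$, and the hypothesis says $\psi\w h=0$ for all $h\in\Ba^n_X$, i.e. $\psi=0$ as an element of $\Homs(\Ok_X)(\Ba^n_X,\W^{n,*}_X)=\W^{0,*}_X$; by the uniqueness of the representation \eqref{eq:W0Xreg} in Lemma~\ref{W0Xreg} all coefficients $\dbar\hat v_j-\hat\phi_j$ vanish, which is \eqref{jarl}. (Alternatively one can invoke the pointwise injectivity of the matrix $T$ from Lemma~\ref{pointwise}, exactly as in the proof of Lemma~\ref{skottskada}, applied to the tuple of coefficients in $(\W^{0,*}_Z)^\nu$.) For "$\Rightarrow$", if $\dbar\hat v_j=\hat\phi_j$ for all $j$, then plugging into the expansion of $\dbar(v\w\mu)$ obtained above immediately gives $\dbar(v\w\mu)=\phi\w\mu$ for every $\mu$, hence for every $h\in\Ba^n_X$, so $\dbar v=\phi$.

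The one point requiring care — the main (modest) obstacle — is justifying that $\dbar(v\w\mu)$ is computed coefficientwise: this needs $\dbar(v\w\mu)\in\W^Z_\Omega$, which here is automatic because $v\w\mu=\phi\w\mu$ would force it, but to argue cleanly I would instead just note directly from \eqref{ansgar} that $\dbar$ of each summand $\hat v_i\w\tau_\gamma\w dz\otimes\dbar(dw/w^{\gamma-\alpha_i+\1})$ is $\pm\dbar\hat v_i\w\tau_\gamma\w dz\otimes\dbar(dw/w^{\gamma-\alpha_i+\1})$, which manifestly lies in $\W^Z_\Omega$ since $\dbar\hat v_i\in\W^{0,*}_Z$ (as $\W_Z$ is a complex — $\dbar$ preserves it — which follows since $\dbar$ commutes with the modifications and projections defining pseudomeromorphic currents and with the $\1_V$ operators). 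Everything else is the bookkeeping of matching the finitely many coefficients, exactly parallel to the discussion around \eqref{paraply2}.
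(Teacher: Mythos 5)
Your overall strategy --- expand $v\w\mu$ and $\phi\w\mu$ in the canonical form of Proposition~\ref{parasol} via \eqref{ansgar}, use the remark after Proposition~\ref{parasol} to compute $\dbar$ coefficientwise, and then recover the coefficients $\hat v_j$ from the $M$-tuple by means of the (pointwise injective) matrix $T$ and its holomorphic left inverse $S$ from \eqref{Thomotopy} --- is essentially the paper's proof, which is the one-line computation $\widehat{\dbar v}=S\widetilde{\dbar v}=S\,\dbar\tilde v=\dbar(S\tilde v)=\dbar\hat v$. However, there is one genuinely false assertion in your write-up: the claim that $\W_Z$ is closed under $\dbar$ (``$\W_Z$ is a complex''). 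This is not true: $\dbar$ commutes with push-forwards, so it preserves $\PM_Z$, but it does \emph{not} commute with the operators $\1_V$, and it destroys the SEP. For instance $1/f$ is in $\W_Z$, while $\dbar(1/f)$ is a nonzero current supported on the positive-codimension set $\{f=0\}$, so $\1_{\{f=0\}}\dbar(1/f)=\dbar(1/f)\neq 0$. (If $\W_Z$ were $\dbar$-closed, Proposition~\ref{trilsk} would read $\PM_Z=\W_Z$, which is false.)

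This error is used at two points: to assert that $\dbar(v\w\mu)$ ``manifestly'' lies in $\W^Z_\Omega$ so that the coefficientwise rule applies, and to assert that $\psi=\dbar v-\phi$ is a section of $\W^{0,*}_X$ with coefficients $\dbar\hat v_j-\hat\phi_j$ in $\W^{0,*}_Z$, which is what lets you invoke the uniqueness statement of Lemma~\ref{W0Xreg}. In the direction ``\eqref{jarl} $\Rightarrow$ $\dbar v=\phi$'' no repair is needed, since $\dbar\hat v_j=\hat\phi_j\in\W^{0,*}_Z$ is part of the hypothesis. In the direction ``$\dbar v=\phi$ $\Rightarrow$ \eqref{jarl}'' the membership $\dbar(v\w\mu)=\phi\w\mu\in\W^Z_\Omega$ comes for free from the hypothesis, but $\dbar\hat v_j\in\W^{0,*}_Z$ is precisely part of what must be \emph{proved}, so the appeal to the uniqueness in Lemma~\ref{W0Xreg} is not available as stated. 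The correct repair is exactly your parenthetical alternative: the coefficient comparison gives the identity $T(\dbar\hat v-\hat\phi)=0$ of tuples of currents on $Z$ (using that the coefficients $\tau_\gamma$ of $\mu$ are holomorphic, so $\dbar(\hat v_i\tau_\gamma)=\tau_\gamma\,\dbar\hat v_i$), and applying the holomorphic matrix $S$, which satisfies $ST=I$ by \eqref{Thomotopy} and the exactness of \eqref{parra}, yields $\dbar\hat v_j=\hat\phi_j$ for arbitrary currents, with no a~priori membership in $\W_Z$ required. With that substitution your argument is correct and coincides with the paper's.
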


\begin{proof} Let us use the notation from the proof of  Lemma~\ref{W0Xreg}.
Recall that $\hat v=S\tilde v$.
In view of \eqref{dstreck2}  and \eqref{parsol1}, $\widetilde{\dbar v}=\dbar\tilde v$.
Since $S$ is holomorphic therefore
$
\widehat{\dbar v} = S \widetilde{\dbar v} = S \dbar\tilde{v} = \dbar(S\tilde{v}) = \dbar\hat{v}.
$
\end{proof}

\section{Solving  $\dbar  u=\phi$ on $X$} \label{sect:solveDbar}

We will find local solutions to the $\dbar$-equation on $X$ by means of integral formulas.
We use the notation and machinery from \cite[Section~5]{AS}. Let $i\colon X\to\Omega\subset\C^N$
be a local embedding such that $\Omega$ is pseudoconvex, let $\Omega' \subset\subset \Omega$ be a relatively
compact subdomain of $\Omega$, and let $X' = X \cap \Omega'$.

\begin{thm}\label{klas0}
There are integral operators
$$
K\colon\E^{0,*+1}(X)\to\W^{0,*}(X')\cap \Dom \dbar_X, \quad
P\colon\E^{0,*}(X)\to\E^{0,*}(X')
$$
such that, for $\phi \in \E^{0,k}(X)$,
\begin{equation} \label{koppelman-statement}
    \phi = \dbar K\phi +K(\dbar\phi) + P\phi.
\end{equation}
\end{thm}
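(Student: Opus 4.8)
The plan is to transfer Andersson's weighted Koppelman formula on the pseudoconvex domain $\Omega$ to $X$ by means of a structure form $\omega$, following \cite[Section~5]{AS} almost verbatim; all the non-reduced features get absorbed into $\omega$ and into the calculus of products with almost semi-meromorphic currents developed in Sections~\ref{sect:pm}--\ref{sect:dbarW0}. Since $\Omega'\subset\subset\Omega$, every integral below is over the compact set $Z\cap\overline{\Omega'}$, and only the germ along $Z$ of a smooth representative $\Phi$ in $\Omega$ of a given $\phi\in\E^{0,k}(X)$ enters, because $\omega$ is supported on $Z$.

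\emph{Setting up the ambient formula.} First I would fix a Hermitian resolution $(E_\bullet,f_\bullet)$ of $\Ok_\Omega/\J$ of length $\le N-1$, with almost semi-meromorphic $U$ and residue current $R$, $\nabla_f U=1-R$, and the associated structure form $\omega=\omega_0+\cdots+\omega_n$ in $\W^{n,*}_X$ with $i_*\omega=R\w dz$, $\nabla_f\omega=0$ and $\omega=b\vartheta$ as in \eqref{struktur}--\eqref{omegatheta}. Choose Hefer forms $H=\sum_{\ell\le k}H^k_\ell$ for $(E_\bullet,f_\bullet)$ (so $H^k_k=\mathrm{Id}$ and $\delta_{\zeta-z}H=Hf(\zeta)-f(z)H$) and, using the pseudoconvexity of $\Omega$, a weight $g(\zeta,z)$ for $\Omega$ that is holomorphic in $z$, $\nabla_{\zeta-z}$-closed, and equal to $1$ on the diagonal. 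Out of $g$, the Bochner--Martinelli form $B$ (which, as noted in the proof of Proposition~\ref{trilsk}, is almost semi-meromorphic on $\Omega\times\Omega$), the Hefer forms and $U$, I would assemble the ambient kernels $k(\zeta,z)$ and $p(\zeta,z)$ exactly as in \cite{AS}; here $p$ is smooth, as it carries no singularity on the diagonal. The identities $\nabla_{\zeta-z}B=1-[\Delta]$, $\nabla_{\zeta-z}g=0$, the Hefer relation, and $\nabla_f U=1-R$ combine into the usual Koppelman-type identity on $\Omega\times\Omega$.

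\emph{The operators.} I would then define $K\phi$ and $P\phi$ by the same formulas as in \cite[Section~5]{AS}, with the structure form of the reduced space replaced by $\omega$: wedge the ambient kernels with $\Phi(\zeta)\w\omega(\zeta)$ and take the fibre integral, i.e.\ the pushforward under $(\zeta,z)\mapsto z$. The products of the almost semi-meromorphic kernels with $\omega$ are legitimate by Corollary~\ref{cor:extendW0} and Remark~\ref{potta}, and the fibre integration preserves the class $\W_\Omega^Z$ in the $z$-variable by Proposition~\ref{asmW} together with \eqref{stare3}--\eqref{bost}; after cancelling the factor $\omega$ by Lemma~\ref{timesOmega} this produces $K\phi$ as a section of $\W^{0,*}(X')$ and $P\phi$ of $\E^{0,*}(X')$, the latter because $p$ is smooth. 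Since $\Phi$ enters only through $\Phi\w\omega$ and $\Kers i^*$ annihilates $\omega$ by Proposition~\ref{skatbo}, the operators $K$ and $P$ do not depend on the chosen representative, hence are intrinsic on $X$.

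\emph{The formula and $\Dom\dbar_X$.} Wedging the ambient Koppelman identity with $\omega(\zeta)$ and integrating, the terms carrying $f(\zeta)$ drop out since $\nabla_f\omega=0$, the surviving residue terms combine with $\omega$, and the diagonal term contributes $\phi\w\omega$ because $g=1$ there; this gives, up to a sign, $\nabla_f(K\phi\w\omega)=(\phi-P\phi-K(\dbar\phi))\w\omega$ as currents in $\W^{n,*}_X$. The right-hand side lies in $\W^{n,*}_X$, hence so does $\nabla_f(K\phi\w\omega)$, and since $f$ acts inside the $\E^{0,*}_X$-module $\W^{n,*}_X$ we get $\dbar(K\phi\w\omega)\in\W^{n,*}_X$; thus $K\phi\in\Dom\dbar_X$ --- equivalently, the boundary condition \eqref{bcond} of Lemma~\ref{dbarchi} holds --- and, by \eqref{nablaOmega}, $\dbar(K\phi)\w\omega=-\nabla_f(K\phi\w\omega)$. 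Substituting and cancelling $\omega$ by Lemma~\ref{timesOmega} yields \eqref{koppelman-statement}. The main work --- and the only part that goes beyond \cite{AS} --- is precisely the bookkeeping that the products with $\omega$ and their fibre integrals remain in $\W^{n,*}_X$ uniformly up to $X_{sing}$ and the $\Dom\dbar_X$-membership just described; this is routine given Sections~\ref{sect:pm}--\ref{sect:dbarW0}. The genuinely hard new issue, that $K\phi$ is moreover \emph{smooth} on $X'_{reg}$, is not part of the present statement and is deferred to Section~\ref{pudding}.
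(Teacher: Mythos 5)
Your overall architecture is the right one and matches the paper's: transfer the ambient weighted Koppelman identity to $X$ via the residue current/structure form, let the products with almost semi-meromorphic kernels be handled by the calculus of Sections~\ref{sect:pm}--\ref{sect:dbarW0}, and read off \eqref{koppelman-statement} by cancelling $\omega$. However, there are two concrete gaps, both at points where the non-reduced situation genuinely bites.

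First, the definition of $K\phi$ as a section of $\W^{0,*}(X')$ is not what you describe. Since $\W^{0,*}_X=\Homs_{\Ok_X}(\Ba^n_X,\W^{n,*}_X)$ (Definition~\ref{motor1}), an element of $\W^{0,*}(X')$ is not a current you obtain by a fibre integral and then ``divide by $\omega$'': Lemma~\ref{timesOmega} only says the map $\phi\mapsto\phi\w\omega$ is injective, so it can certify uniqueness but cannot produce $K\phi$ from the current $\pi_*\big(k(\zeta,z)\w\Phi(\zeta)\w\omega(\zeta)\big)$ --- which, in the $z$-variable, is not of the form (something)$\,\w\,\omega(z)$ and does not even lie in $\W^{Z'}_{\Omega'}$. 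What one must do, and what the paper does in Lemma~\ref{KPlemma} and \eqref{Kphidef}, is define $K\phi\w\mu:=\pi_*\big((B\w g\w HR(\zeta))_N\w\phi(\zeta)\w\mu(z)\big)$ for every $\mu$ in $\Homs(\Ok_{\Omega'}/\J,\CH^{Z'}_{\Omega'})$, i.e.\ tensor the kernel with the Coleff--Herrera current in the $z$-variable, and then check that this assignment is $\Ok_{\Omega'}/\J$-linear in $\mu$ and lands in $\Homs(\Ok_{\Omega'}/\J,\W^{Z'}_{\Omega'})$. That tensor factor $\mu(z)$ is absent from your construction, and without it $K\phi$ is simply not defined as an object of $\W^{0,*}(X')$.

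Second, the passage to the identity $\nabla_f(K\phi\w\omega)=(\phi-P\phi-K(\dbar\phi))\w\omega$ in $\W^{n,*}_{X'}$ cannot be obtained by wedging the ambient Koppelman identity with $\omega(\zeta)$ alone: since equality in $\W^{0,*}_X$ and the meaning of $\dbar$ there are both tested against $\Ba^n_X$ in the $z$-variable (cf.\ \eqref{dstreck}), you must wedge with $R(z)\w dz$ (equivalently $\omega(z)$) as well. This forces you to make sense of the triple product $B\w HR(\zeta)\w R(z)$ across the diagonal and to show that the $\nabla_{f(z)}$-identity \eqref{sommar}, which a~priori holds for the regularization $R^\lambda(\zeta)$, survives the analytic continuation to $\lambda=0$; this is Lemma~\ref{plutt}, proved by a dimension-principle induction over the levels of $R(z)$ using \eqref{plast} and \eqref{BEsats}. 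Your proposal treats this as ``the usual Koppelman-type identity'' and never engages with it, but it is the one analytic step in this section that is not pure bookkeeping; once \eqref{pulka0} is established, multiplying by $\phi$, using \eqref{krokodil1}--\eqref{krokodil2}, and contracting with holomorphic sections $\xi_0$ of $E_p^*$ with $f^*_{p+1}\xi_0=0$ (Proposition~\ref{takyta}) does yield \eqref{koppelman-statement} and the $\Dom\dbar_X$-membership exactly as you indicate at the end.
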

The operators $K$ and $P$ are described below; they depend on a choice of  weight $g$.
Since $\Omega$ is Stein one can find such a weight $g$ that is holomorphic in $z$,
by which we mean that it depends holomorphically on $z\in\Omega'$
and has no components containing any $d\bar{z}_i$, cf.\ Example~5.1 in \cite{AS}.
In this case, $P\phi$ is holomorphic when $k=0$,
and vanishes when $k\ge 1$, i.e.,
\begin{equation}
    \phi = \dbar K\phi + K(\dbar \phi), \quad \phi \in \E^{0,k}(X), \quad k \geq 1.
\end{equation}
If $\dbar\phi=0$ in $\Omega$, and $k\ge 1$, then $K\phi$ is a solution to
$\dbar v=\phi$.  If $k=0$, then $\phi=P\phi$ is holomorphic. It follows that
a smooth $\dbar$-closed function is holomorphic. In the reduced case this is
a classical theorem of Malgrange, \cite{Mal}.
In  Section~\ref{pudding} we prove that $K\phi$ is smooth on $X_{reg}$.

\smallskip

We now turn to the definition of $K$ and $P$. For future need, in Section~\ref{fine},
we define them acting on currents in $\W^{0,*}(X)$ and not only on smooth forms.
Let $\pi : \Omega_\zeta \times \Omega_z' \to \Omega_z'$ be the natural projection.
Let us choose a holomorphic Hefer form\footnote{We are only concerned with  the
component $H^0$ of this form, so for simplicity we write just $H$.}
$H$, a smooth weight $g$ with
compact support in $\Omega$ with respect to
$z\in\Omega'\subset\subset\Omega$,
and let $B$ be the Bochner-Martinelli form.
Since we are only are concerned with $(0,*)$-forms, we will here assume that $H$ and $B$ only
have holomorphic differentials in $\zeta$, i.e., the factors
$d\eta_i = d\zeta_i - dz_i$ in $H$ and $B$ in \cite{AS} should be replaced by just $d\zeta_i$.

If $\gamma$ is a current in $\Omega_\zeta\times\Omega'_z$
we let $(\gamma)_N$ be the component of bidegree $(N,*)$ in $\zeta$
and $(0,*)$ in $z$,
and let $\vartheta(\gamma)$ be the current such that
\begin{equation} \label{nudef}
\vartheta( \gamma) \w d\zeta = (\gamma)_N.
\end{equation}

Consider now $\mu$ in $\Homs(\Ok_\Omega/\J,\W^Z_\Omega)$ and $\phi$ in $\W^{0,*}_X$.
We can give meaning to
\begin{equation} \label{P0}
    (g \w HR(\zeta))_N\w\phi(\zeta) \w \mu(z)
\end{equation}
as a tensor product of currents in the following way:
First of all, by Remark~\ref{potta}, we can form the product $R(\zeta) \w d\zeta \w \phi(\zeta)$
as a current in $\W^Z_\Omega$. In view of \cite[Corollary~4.7]{AW4}
the tensor product $R(\zeta) \w d\zeta \w \phi(\zeta) \w \mu(z)$ is in
$\W^{Z \times Z'}_{\Omega_\zeta \times \Omega_z'}$, where $Z' = Z \cap \Omega'$.
Finally, we multiply this with the smooth form
$\vartheta(g \w H)$ to obtain \eqref{P0}.
Similarly, outside of $\Delta$, the diagonal in $\Omega\times\Omega'$, where $B$ is smooth, we can define
\begin{equation} \label{K0}
      (B\w g \w HR(\zeta))_N\w\phi(\zeta) \w \mu(z)
\end{equation}
as a tensor product of currents.

\begin{lma} \label{KPlemma}
   For $\mu$ in $\Homs(\Ok_{\Omega'}/\J,\W^{Z'}_{\Omega'})$
   and $\phi \in \W^{0,*}(X)$,
   the current \eqref{K0}, a priori defined as a current in $\W^{Z \times Z'\setminus \Delta}_{\Omega_\zeta \times \Omega_z'\setminus \Delta}$
   has an extension across $\Delta$.
   The current \eqref{P0} and the extension of \eqref{K0} depend $\Ok_\Omega/\J$-bilinearly on $\mu$ and $\phi$,
   and are such that
\begin{equation} \label{Kphidef}
 K\phi\w \mu := \pi_*\big( (B\w g \w HR(\zeta))_N\w\phi(\zeta) \w \mu(z)\big)
\end{equation}
and
\begin{equation} \label{Pphidef}
 P\phi\w \mu := \pi_*\big( (g \w HR(\zeta))_N\w\phi(\zeta) \w \mu(z) \big)
\end{equation}
are in  $\Homs(\Ok_{\Omega'}/\J,\W^{Z'}_{\Omega'})$.
\end{lma}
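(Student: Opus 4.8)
The plan is to follow the corresponding argument in \cite[Section~5]{AS}, extending it to the non-reduced situation by systematically replacing the products ``smooth times residue times pseudomeromorphic'' by the products introduced in Sections \ref{sect:currentsn}--\ref{sect:dbarW0}, in particular the product $R \w dz \w \phi = b\mu\w\phi$ from Remark~\ref{potta} and the tensor product formula from \cite[Corollary~4.7]{AW4}. First I would set $T(\zeta) := R(\zeta) \w d\zeta \w \phi(\zeta)$, which by Remark~\ref{potta} is a well-defined current in $\Homs(\Ok_\Omega/\J,\W^Z_\Omega)$ depending $\Ok_\Omega/\J$-linearly on $\phi$. Then $T(\zeta) \w \mu(z)$ is a current in $\W^{Z\times Z'}_{\Omega_\zeta\times\Omega_z'}$, annihilated by the ideal $\J_\zeta + \J_z$ (the pullbacks of $\J$ under the two projections), and it depends bilinearly on $\phi$ and $\mu$. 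Multiplying by the smooth form $\vartheta(g\w H)$ preserves membership in $\W^{Z\times Z'}$ by \eqref{stare2}, and gives \eqref{P0}; pushing forward under $\pi$ then lands in $\W^{Z'}_{\Omega'}$ by \eqref{stare3} (cf.\ \eqref{stare2prim}), and is still annihilated by $\J_z$, so $P\phi\w\mu$ lies in $\Homs(\Ok_{\Omega'}/\J,\W^{Z'}_{\Omega'})$. The bilinearity is inherited from that of $T(\zeta)\w\mu(z)$.

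The substantive point is the extension of \eqref{K0} across the diagonal $\Delta$. Here I would use that the Bochner--Martinelli form $B$ is almost semi-meromorphic on $\Omega_\zeta\times\Omega'_z$, with Zariski singular support contained in $\Delta$ (as in the proof of Proposition~\ref{trilsk}, via a blow-up along $\Delta$). Since $T(\zeta)\w\mu(z)\w\vartheta(g\w H)$ is pseudomeromorphic (indeed in $\W^{Z\times Z'}$), Proposition~\ref{ball}~(i) gives a unique pseudomeromorphic extension $B\w\big(T(\zeta)\w\mu(z)\w\vartheta(g\w H)\big)$ annihilating $\1_\Delta$; this is the required extension of \eqref{K0}, and by \eqref{proddef} it equals $\lim_{\epsilon\to 0}\chi(|\zeta-z|^2/\epsilon)$ times the naive product. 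Because $\Delta$ has positive codimension and $T(\zeta)\w\mu(z)$ has the SEP with respect to $Z\times Z'$, Proposition~\ref{asmW} shows the extension still has the SEP with respect to $Z\times Z'$; it is also still annihilated by $\J_\zeta+\J_z$, since $B$ is scalar-valued and the ideal annihilates $T(\zeta)\w\mu(z)$ already, using \eqref{krut} to commute $\1_W$ past $B$ for $W = V(\J_\zeta+\J_z)$. Bilinearity in $\phi,\mu$ of the extension follows from bilinearity before extending together with the uniqueness in Proposition~\ref{ball}~(i).

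Finally I would apply $\pi_*$. Since $B\w g\w H$ has compact support in $\zeta\in\Omega'$ (inherited from the weight $g$), the push-forward is defined and maps $\W^{Z\times Z'}_{\Omega_\zeta\times\Omega'_z}$ into $\W^{Z'}_{\Omega'}$ by \eqref{stare3}; and because $\J_z$ annihilates the current before push-forward and $\pi$ only integrates in $\zeta$, $\J$ annihilates $K\phi\w\mu$. Hence $K\phi\w\mu\in\Homs(\Ok_{\Omega'}/\J,\W^{Z'}_{\Omega'})$, with the asserted bilinearity. I expect the main obstacle to be bookkeeping rather than conceptual: one must check that the various products --- $R\w dz\w\phi$, the tensor product with $\mu$, multiplication by $\vartheta(g\w H)$ and by $B$, and the push-forward --- are mutually compatible and that the $\J$-annihilation is preserved at each step, which amounts to careful use of \eqref{stare2}, \eqref{stare3}, \eqref{krut}, \eqref{proddef} and \cite[Corollary~4.7]{AW4}; the one genuinely new ingredient compared to \cite{AS} is that the ``structure form times $\phi$'' product must be interpreted via Corollary~\ref{cor:extendW0}/Remark~\ref{potta} rather than as an honest wedge of a smooth form with a current.
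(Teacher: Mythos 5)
Your proposal is correct and follows essentially the same route as the paper: form $R(\zeta)\w d\zeta\w\phi(\zeta)$ via Remark~\ref{potta}, tensor with $\mu(z)$, multiply by the smooth factor for $P$, and extend \eqref{K0} across $\Delta$ by observing that $\vartheta(B\w g\w H)$ is almost semi-meromorphic with Zariski singular support $\Delta$ and invoking Proposition~\ref{asmW} (equivalently Proposition~\ref{ball}), with $\J$-annihilation via \eqref{korp} and the push-forward handled by \eqref{stare3}. The only cosmetic difference is that the paper packages $\vartheta(B\w g\w H)$ as a single almost semi-meromorphic current rather than multiplying by $B$ last.
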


It follows that $K\phi\w\mu$ and $P\phi\w\mu$ are $\C$-linear in $\phi$
and $\Ok_{\Omega'}/\J$-linear in $\mu$.
In view of \eqref{hastverk3}, by considering $\mu$
in $\Homs(\Ok_{\Omega'}/\J,\CH^{Z'}_{\Omega'})$,
we have defined linear operators
\begin{equation}\label{batman}
K \colon\W^{0,*+1}(X) \to \W^{0,*}(X'), \quad  P \colon \W^{0,*}(X) \to \W^{0,*}(X').
\end{equation}


\begin{proof}[Proof of Lemma~\ref{KPlemma}]
In order to define the extension of \eqref{K0} across $\Delta$, we note first that
since $B$ is almost semi-meromorphic with Zariski singular support $\Delta$,
$\vartheta(B \w g\w H)$ is an almost semi-meromorphic $(0,*)$-current
on $\Omega_\zeta \times \Omega_z'$, which is smooth outside the diagonal.
We can thus form the current $\vartheta(B \w g \w H) \w R(\zeta) \w d\zeta \w\phi(\zeta)\w \mu(z)$
in $\W^{Z\times Z'}_{\Omega_\zeta \times \Omega_z'}$, cf.\ Proposition~\ref{asmW},
and this is the extension of \eqref{K0} across $\Delta$.

From the definitions above, it is clear that \eqref{P0} and the extension of \eqref{K0} are $\Ok_\Omega$-bilinear
in $\phi$ and $\mu$.
Both these currents are annihilated by $\J_z$ and $\J_\zeta$, cf.\ \eqref{korp}, so they depend
$\Ok_\Omega/\J$-bilinearly. In view of
\eqref{stare3} we conclude that \eqref{Kphidef} and \eqref{Pphidef} are in $\Homs(\Ok_{\Omega'}/\J,\W^{Z'}_{\Omega'})$.
 \end{proof}

\begin{prop}\label{pglatt}
If $\phi\in\W^{0,k}(X)$, then  $P\phi\in \E^{0,k}(X')$,
and if in addition $g$ is holomorphic in $z$,
then $P\phi\in\Ok(X')$ if $k=0$ and vanishes if $k\ge 1$.
\end{prop}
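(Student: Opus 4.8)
The plan is to compute $P\phi$ extrinsically. I fix a local embedding $i\colon X\to\Omega$; by Lemma~\ref{KPlemma} and \eqref{batman} we already know $P\phi\in\W^{0,k}(X')$, and $P\phi$ is determined by the currents $P\phi\w\mu$, $\mu\in\Homs(\Ok_{\Omega'}/\J,\CH^{Z'}_{\Omega'})$, via \eqref{Pphidef}. Recall from Section~\ref{gurka} that a smooth form is determined by its products with the generators of $\Homs(\Ok_\Omega/\J,\CH^Z_\Omega)$, so the map $\E^{0,k}_\Omega/\Kers i^*\to\Homs(\Ba^n_X,\W^{n,*}_X)$, $\sigma\mapsto(h\mapsto\sigma\w h)$, is injective; hence it suffices to produce a smooth form $\sigma$ on $\Omega'$ with $P\phi\w\mu=\sigma\w\mu$ for all such $\mu$.

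The second step is to isolate the $z$-dependence of the kernel. By Remark~\ref{potta} the current $\rho(\zeta):=R(\zeta)\w d\zeta\w\phi(\zeta)$ is a well-defined current in $\W^Z_\Omega$ that does not depend on $z$. Since $R$ carries no $d\zeta$, one has $(g\w HR(\zeta))_N\w\phi(\zeta)=\pm\,\vartheta(g\w H)(\zeta,z)\w\rho(\zeta)$, where by \eqref{nudef} $\vartheta(g\w H)$ is a smooth form on $\Omega_\zeta\times\Omega'_z$ that involves no factor $dz_j$ in the $z$-variables and, for $z$ in a compact subset of $\Omega'$, has compact support in $\zeta$. Inserting this into \eqref{Pphidef}: since $\mu(z)\otimes\rho(\zeta)$ is a well-defined pseudomeromorphic current on the product (cf.\ the proof of Lemma~\ref{KPlemma}) and $\vartheta(g\w H)$ is smooth while $\mu$ is a current in the $z$-variables alone, the pushforward $\pi_*$ acts only on the $\zeta$-slot, and we obtain $P\phi\w\mu=\sigma\w\mu$ with $\sigma(z):=\pm\,\pi_*\big(\vartheta(g\w H)(\cdot,z)\w\rho\big)$; that is, $\sigma(z)$ is the form in $z$ obtained by letting the fixed current $\rho$ act, in the $\zeta$-variables, on the compactly supported test form $\vartheta(g\w H)(\cdot,z)$, which varies smoothly with the parameter $z\in\Omega'$. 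By the elementary fact that a fixed current acting on a smoothly varying family of test forms depends smoothly on the parameter, $\sigma$ is smooth; being of pure degree $k$ (as $P\phi\in\W^{0,k}$) and free of $dz_j$, it is a smooth $(0,k)$-form on $\Omega'$, so $P\phi\in\E^{0,k}(X')$.

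Third, suppose $g$ is holomorphic in $z$. Then $g$ and the Hefer form $H$ contain no factor $d\bar z_j$ and depend holomorphically on $z$, hence so does the family $z\mapsto\vartheta(g\w H)(\cdot,z)$; therefore $\sigma$ contains no $d\bar z_j$ and $\dbar_z\sigma=\langle\rho,\dbar_z\vartheta(g\w H)(\cdot,z)\rangle=0$. A $(0,k)$-form without any $d\bar z_j$ vanishes when $k\ge1$, so $P\phi=0$ in that case; when $k=0$, $\sigma$ is a $\dbar$-closed smooth function, i.e.\ holomorphic, and its class $P\phi$ lies in $\Ok(X')$.

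The point that needs the most care is the identity $P\phi\w\mu=\sigma\w\mu$ with $\sigma$ independent of $\mu$: this rests on the tensor-product calculus for pseudomeromorphic currents (\eqref{stare3} and \cite[Corollary~4.7]{AW4}) to see that $\pi_*$ splits off the $\zeta$-integration cleanly, together with the regularity statement that this $\zeta$-integration against the $z$-parametrised smooth (resp.\ holomorphic) family $\vartheta(g\w H)(\cdot,z)$ yields a smooth (resp.\ holomorphic) form of $z$. The remaining bookkeeping of the $\vartheta$-operation and of the superstructure signs is routine, exactly as in \cite[Section~5]{AS}.
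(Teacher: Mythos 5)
Your proposal is correct and follows essentially the same route as the paper: both exploit the smoothness of $\vartheta(g\w H)$ to split $\mu(z)$ out of the push-forward, identify $P\phi$ with the smooth form $\pi_*\big((g\w HR(\zeta))_N\w\phi\big)$ (smooth because a fixed current paired against a smoothly varying family of test forms depends smoothly on the parameter), and then conclude holomorphy for $k=0$ and vanishing for degree reasons when $k\ge 1$ if $g$ is holomorphic in $z$. Your explicit remark that a smooth form on $X'$ is determined by its products with the generators $\mu$ is a point the paper leaves implicit, but otherwise the arguments coincide.
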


\begin{proof}
Since $\vartheta(g\w H)$ is smooth,
we get that
\begin{multline*}
\pi_*\big(\vartheta(g\w H)\w R(\zeta) \w d\zeta\w\phi \w \mu(z)\big)=\\
\pi_*\big(\vartheta(g\w H)\w R(\zeta) \w d\zeta\w\phi\big) \w \mu(z)=
\pi_*\big((g\w HR)_N\w\phi\big) \w \mu(z),
\end{multline*}
cf.\ for example \cite[(5.1.2)]{Ho1}.
Thus
$P\phi(z) = \pi_* \big( (g \w HR(\zeta))_N \w \phi \big)$
which is smooth on $\Omega'$.  If $g$ depends holomorphically on
$z$, then $P\phi$ is holomorphic in $\Omega'$ if $\phi$ is a $(0,0)$-current, and
vanishes for degree reasons if $\phi$ has positive degree.
\end{proof}

We shall now see that we can approximate $K\phi$ by smooth forms.
Let   $B^\epsilon = \chi(|\zeta-z|^2/\epsilon) B$.

\begin{prop} \label{smooth}
For any $\phi\in\W^{0,k}(X)$, $k\ge 1$,
  \begin{equation*}
      K^\epsilon \phi := \pi_* \big( (B^\epsilon\w g \w HR(\zeta))_N\w\phi\big)
=\pi_* \big( \vartheta(B^\epsilon\w g\w H)\w R(\zeta) \w d\zeta \w\phi\big)
    \end{equation*}
   is in $\E^{0,k-1}(X')$ and $K^\epsilon\phi\to K\phi$ when
$\epsilon\to 0$.
\end{prop}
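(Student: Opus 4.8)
The plan is to prove the two claims in turn: that $K^\epsilon\phi$ is smooth on $X'$, and that $K^\epsilon\phi \to K\phi$ as $\epsilon \to 0$. For the smoothness, I would argue as in the proof of Proposition~\ref{pglatt}: since $B^\epsilon = \chi(|\zeta-z|^2/\epsilon)B$ is smooth on all of $\Omega_\zeta \times \Omega_z'$ (the cutoff kills the singularity of $B$ along $\Delta$), the form $\vartheta(B^\epsilon \w g \w H)$ is a genuine smooth $(0,*)$-form on $\Omega_\zeta \times \Omega_z'$. Hence
$$
\pi_*\big(\vartheta(B^\epsilon\w g\w H)\w R(\zeta)\w d\zeta\w\phi(\zeta)\w\mu(z)\big)
= \pi_*\big(\vartheta(B^\epsilon\w g\w H)\w R(\zeta)\w d\zeta\w\phi(\zeta)\big)\w\mu(z),
$$
using that a smooth form with no differentials in $z$ can be pulled outside the $z$-tensor slot (cf.\ \cite[(5.1.2)]{Ho1}, exactly as in Proposition~\ref{pglatt}). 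By the defining isomorphism \eqref{hastverk3}, this identifies $K^\epsilon\phi$ with the current $z \mapsto \pi_*\big((B^\epsilon\w g\w HR(\zeta))_N\w\phi(\zeta)\big)$ on $X'$. The remaining point is that this $(0,k-1)$-current is actually a \emph{smooth} form on $X'$; equivalently, that it lies in the image of $\E_{\Omega'}^{0,k-1}$. This requires showing that the fibre integration of a smooth kernel against the structure form $\omega$ (recall $R\w dz\w\phi = b\mu\w\phi$, with $b$ smooth on $X_{reg}$) produces something smooth on $X_{reg}$, and that it has the right structure across $X_{sing}$ to represent an element of $\E^{0,*}_X$ --- this is precisely the content of the smoothness analysis deferred to Section~\ref{pudding}, and I would invoke that (or its relevant preliminary form) here; on $X_{reg}$ it is the statement that integrating a semi-globally defined smooth Koppelman-type kernel against $\omega$, expressed via the local representation \eqref{omegatheta}--\eqref{paraply}, yields a smooth representative in each coefficient slot of \eqref{eq:W0Xreg}.

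For the convergence $K^\epsilon\phi \to K\phi$, I would go back to the extrinsic picture of Lemma~\ref{KPlemma}. There $K\phi\w\mu$ is defined via the almost semi-meromorphic current $\vartheta(B\w g\w H)$ with Zariski singular support $\Delta$, multiplied into $R(\zeta)\w d\zeta\w\phi(\zeta)\w\mu(z)$ in the sense of Proposition~\ref{asmW}. By \eqref{proddef} (equivalently the defining limit \eqref{restrDef} for the product with an almost semi-meromorphic current), we have
$$
\vartheta(B\w g\w H)\w R(\zeta)\w d\zeta\w\phi(\zeta)\w\mu(z)
= \lim_{\epsilon\to 0^+}\chi(|\zeta-z|^2/\epsilon)\,\vartheta(B\w g\w H)\w R(\zeta)\w d\zeta\w\phi(\zeta)\w\mu(z),
$$
since $|\zeta-z|^2$ cuts out $\Delta$, which contains the Zariski singular support of $\vartheta(B\w g\w H)$. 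But $\chi(|\zeta-z|^2/\epsilon)\,\vartheta(B\w g\w H) = \vartheta(B^\epsilon\w g\w H)$ by \eqref{stare2} (the cutoff is a smooth function and commutes with $\vartheta$), so the right-hand side is exactly the current defining $K^\epsilon\phi\w\mu$. Pushing forward under $\pi$ and using continuity of $\pi_*$ (which is legitimate since all these currents have support in a fixed compact set in the $\zeta$-fibre direction, as $g$ has compact support in $\Omega$ with respect to $z\in\Omega'\subset\subset\Omega$) gives $K^\epsilon\phi\w\mu \to K\phi\w\mu$ for every $\mu$, i.e.\ $K^\epsilon\phi \to K\phi$ in $\W^{0,k-1}(X')$ via \eqref{hastverk3}.

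The main obstacle is the first claim, not the second: the convergence is essentially a bookkeeping consequence of \eqref{proddef} and \eqref{stare2} together with continuity of push-forward. What genuinely requires work is verifying that $K^\epsilon\phi$ is \emph{smooth on $X_{reg}$} --- i.e.\ that the pushforward of a smooth ambient kernel against the structure form $\omega$, and hence against $\phi$, produces a form admitting the local representation \eqref{eq:W0Xreg} with smooth coefficients $\widehat\phi_j \in \E^{0,*}_Z$. This is where one must use the explicit structure of $\omega$ via Noetherian operators / the representation $R\w dz = b\mu$ with $b$ almost semi-meromorphic and smooth off $X_{p+1}$, and carefully track that differentiation in $w$ (coming from $\dbar(1/w^{\gamma+\1})$) applied to a smooth kernel still leaves a smooth result after integrating out $\zeta$. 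I would either prove this directly here for the special kernel $B^\epsilon\w g\w H$ (which is smooth, so the only subtlety is the $\dbar(dw/w^{\gamma+\1})$ factors producing $z$-derivatives of smooth data --- manifestly still smooth) or, cleanly, reduce it to the general smoothness statement of Section~\ref{pudding}; given that $B^\epsilon$ is globally smooth, the former is short and I would carry it out explicitly using the expansion \eqref{paraply3} of $\phi$ and of the kernel in powers of $w$.
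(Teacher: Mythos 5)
Your treatment of the convergence $K^\epsilon\phi\to K\phi$ is exactly the paper's argument: the paper invokes \eqref{asmWn} with $a=B$ (equivalently \eqref{proddef} for the almost semi-meromorphic current $\vartheta(B\w g\w H)$, whose Zariski singular support is cut out by $\zeta-z$) to get convergence of the currents on $\Omega_\zeta\times\Omega_z'$, and then pushes forward. So that half is fine and matches.

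Where you diverge is in your assessment of the smoothness claim, which you call ``the main obstacle'' and propose to settle either by invoking Section~\ref{pudding} or by an expansion in powers of $w$. In the paper this is a one-line observation: for fixed $\epsilon>0$ the kernel $\vartheta(B^\epsilon\w g\w H)$ is a globally smooth form in $(\zeta,z)$, the current $R(\zeta)\w d\zeta\w\phi(\zeta)$ is a fixed current in $\zeta$ alone with support compact in the $\zeta$-direction (thanks to $g$), and the push-forward of [smooth kernel in $(\zeta,z)$]$\;\w\;$[fixed compactly supported current in $\zeta$] under $(\zeta,z)\mapsto z$ is automatically a smooth form in $z$ on $\Omega'$, hence defines an element of $\E^{0,k-1}(X')$ via the quotient \eqref{skrot1}. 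No structure of $\omega$, no Noetherian operators, no $w$-expansion is needed; the identity \eqref{bus} (your Proposition~\ref{pglatt}-style computation) then confirms that this smooth representative induces the correct current. Moreover, the detour through Section~\ref{pudding} that you float as the ``clean'' option would not work here: that analysis concerns the limit $K\phi$, applies only on $X_{\rm reg}$, and assumes $\phi$ smooth, whereas the present claim is smoothness of $K^\epsilon\phi$ on all of $X'$ for arbitrary $\phi\in\W^{0,k}(X)$ (and logically it is this proposition and its variant Lemma~\ref{pjosk} that feed into Section~\ref{pudding}, not the other way around). Your closing remark that the direct route ``is short'' because $B^\epsilon$ is globally smooth is the correct instinct; had you committed to it, your proof would coincide with the paper's.
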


The last statement means that
\begin{equation}\label{bula}
K^\epsilon\phi\w\mu\to K\phi\w\mu, \quad \mu\in
\Homs(\Ok_{\Omega'}/\J, \CH_{\Omega'}^{Z'}).
\end{equation}

\begin{proof}
Since $B^\epsilon$ is smooth, the current we push forward is $R(\zeta)\w\phi(\zeta)$ times a smooth form of $\zeta$ and $z$. Therefore
$K^\epsilon\phi$ is smooth. As in the proof of Proposition~\ref{pglatt}, we obtain since $B^\epsilon$ is smooth that
\begin{equation}\label{bus}
K^\epsilon\phi \w\mu=
\pi_*\big((B^\epsilon\w g \w HR(\zeta))_N \wedge \phi \w \mu(z)\big).
\end{equation}
By \eqref{asmWn} applied to $a=B$ we have that
\begin{equation}\label{bula2}
    (B^\epsilon\w g \w HR(\zeta))_N \w \phi \w \mu(z)  \to (B\w g \w HR(\zeta))_N \w \phi \w \mu(z)
\end{equation}
which implies \eqref{bula}.
 \end{proof}

\subsection{Proof of Theorem~\ref{klas0}}
By definition $K\phi$ and $P\phi$ are currents in $\W^{0,*}(X')$ such that
\eqref{Kphidef} and \eqref{Pphidef} hold for $\mu$
in $\Homs(\Ok_{\Omega'}/\J,\CH^{Z'}_{\Omega'})$.
We claim that
\begin{equation}\label{krokodil1}
 K\phi \w R\w dz=\pi_*\big((B\w g \w HR(\zeta))_N\w\phi\w R(z)\w dz\big)
\end{equation}
and
\begin{equation}\label{krokodil2}
 P\phi \w R\w dz=\pi_*\big((g \w HR(\zeta))_N\w\phi \w R(z)\w dz\big);
\end{equation}
here the left hand sides are defined in view of  Remark~\ref{potta}, whereas the
right hand sides have meaning by Lemma~\ref{KPlemma} and the fact that
$R(z) \w dz$ is in $\Homs(\Ok_{\Omega'}/\J,\W^{Z'}_{\Omega'})$ by Corollary~\ref{rattmuff}.

Recall from Lemma \ref{bmy} that $R\w dz=b\mu$, where $\mu$ is a tuple
of currents in $\Homs(\Ok_{\Omega'}/\J,\CH^{Z'}_{\Omega'})$ and $b$ is an almost
semi-meromorphic matrix that is smooth generically on $Z'$.
Therefore \eqref{krokodil1} and \eqref{krokodil2} hold where $b$ is smooth,
in view of Lemma~\ref{lma:extendW0},
and since both sides are in $\Homs(\Ok_{\Omega'}/\J,\W^{Z'}_{\Omega'})$, the
equalities hold everywhere by the SEP.

\smallskip

As in \cite{AS} we let  $R^\lambda=\dbar|f|^{2\lambda}\w U$ for $\re\lambda \gg 0$. It has an analytic continuation
to $\lambda=0$ and $R=R^\lambda|_{\lambda=0}$. Notice that
$R(z)\w B$ is well-defined since it is a tensor product
with respect to the coordinates $z, \eta=\zeta-z$. Also $R(z)\w R^\lambda(\zeta)\w B$
 admits such an analytic continuation and defines a \pmm
current\footnote{One can consider this current as
$R(z)\w B$  multiplied by the residue of the almost semi-meromorphic
current $U$ in \eqref{potatis}, cf.\  \cite[Section~4.4]{AW3}.}  when $\lambda=0$.
Let $B_{k,k-1}$ be the component of $B$ of bidegree $(k,k-1)$.

\begin{lma}\label{plutt}
    For all $k$,
\begin{equation}\label{plex}
B_{k,k-1} \w HR^\lambda(\zeta) \w R(z) |_{\lambda=0}= B_{k,k-1} \w HR(\zeta)\w R(z).
\end{equation}
\end{lma}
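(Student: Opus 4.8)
The plan is to establish \eqref{plex} by checking it away from the diagonal $\Delta$ of $\Omega_\zeta\times\Omega_z'$ and then propagating it across $\Delta$. Both sides are pseudomeromorphic currents on $\Omega_\zeta\times\Omega_z'$ with support in $Z\times Z'$: for the right-hand side this is Proposition~\ref{ball}, after rewriting it, by means of \eqref{korp}, as $B_{k,k-1}$ wedged with the pseudomeromorphic current $H\w R(\zeta)\w R(z)$; for the left-hand side it is part of the discussion preceding the lemma. On the open set $\Omega_\zeta\times\Omega_z'\setminus\Delta$ the Bochner--Martinelli form $B$, hence $B_{k,k-1}$, is smooth, so there the left-hand side is just the smooth form $B_{k,k-1}\w H$ wedged with the tensor product $R^\lambda(\zeta)\otimes R(z)$; since $R^\lambda(\zeta)|_{\lambda=0}=R(\zeta)$ by definition and $R(z)$ does not depend on $\lambda$, the analytic continuation specializes at $\lambda=0$ to the right-hand side on this set. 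Consequently it suffices to show that $\1_\Delta$ annihilates each side, for then each side equals the natural pseudomeromorphic extension across $\Delta$ of its restriction to $\Omega_\zeta\times\Omega_z'\setminus\Delta$, and those restrictions agree.

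For the right-hand side, Proposition~\ref{ball}(ii) and \eqref{stare2} give $\1_\Delta\big(B_{k,k-1}\w H\w R(\zeta)\w R(z)\big)=B_{k,k-1}\w H\w\1_\Delta\big(R(\zeta)\w R(z)\big)$, where $R(\zeta)\w R(z)=R(\zeta)\otimes R(z)$ has the standard extension property with respect to $Z\times Z'$ by \eqref{bost} and \cite[Corollary~4.7]{AW4}, each $R_j$ having the SEP with respect to $Z$. Since $\Delta\cap(Z\times Z')$ is the diagonal copy of $Z'$, of positive codimension $n$ in $Z\times Z'$ (we may assume $n\ge 1$), \eqref{stare} yields $\1_\Delta\big(R(\zeta)\w R(z)\big)=\1_{\Delta\cap(Z\times Z')}\big(R(\zeta)\w R(z)\big)=0$, so $\1_\Delta$ annihilates the right-hand side.

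The main obstacle is the corresponding statement for the left-hand side: one must show that forming $\1_\Delta$ commutes with the analytic continuation in $\lambda$, i.e.\ that for $\re\lambda\gg 0$ one may move $\1_\Delta$ and wedging by the fixed almost semi-meromorphic factor $B_{k,k-1}\w H$ past the $\lambda$-regularization, reducing $\1_\Delta$ to act on $R^\lambda(\zeta)\otimes R(z)$, whose value at $\lambda=0$ is $R(\zeta)\otimes R(z)$, so that the SEP/codimension argument of the previous paragraph then gives $\1_\Delta$ of the left-hand side equal to $0$. Making this rigorous requires an interchange of the limits $\epsilon\to 0$ (in the definition of $\1_\Delta$ and of the almost semi-meromorphic wedge product, cf.\ \eqref{proddef}) with $\lambda\to 0$, and is a point in the pseudomeromorphic/almost semi-meromorphic calculus of \cite{AW3,AW4}; alternatively one can pull $R^\lambda(\zeta)$, $U(\zeta)$, $B_{k,k-1}$, $R(z)$ and $\Delta$ back to a common log-resolution on which $f(\zeta)$, $\Delta$ and $Z$ are normal crossings, where the analytic continuation and the products become explicit monomial computations and \eqref{plex} reduces to a local check. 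Once $\1_\Delta$ is shown to annihilate both sides, the first paragraph completes the proof.
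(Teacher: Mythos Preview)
Your overall strategy---agree off $\Delta$, then show $\1_\Delta$ kills both sides---matches the paper's, and your treatment of the right-hand side is fine (in fact $\1_\Delta$ annihilates it immediately by the very definition in Proposition~\ref{ball}(i), so the SEP argument is not even needed). The problem is your treatment of the left-hand side $T$: you correctly identify that one cannot simply push $\1_\Delta$ past the analytic continuation in $\lambda$ and past the almost semi-meromorphic factor $B_{k,k-1}$, but then you do not resolve this. Saying it ``is a point in the pseudomeromorphic calculus'' or that ``one can pull back to a common log-resolution where \eqref{plex} reduces to a local check'' is not a proof; the whole content of the lemma is exactly this point, and the monomial check on a resolution is by no means automatic when three singular factors ($B$, $U(\zeta)$ inside $R^\lambda$, and $R(z)$) interact.

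The paper avoids this limit-interchange difficulty entirely by a different mechanism: it decomposes $T=\sum_{j,\ell} T_\ell$ with $T_\ell=B_{k,k-1}\w HR_j^\lambda(\zeta)\w R_\ell(z)|_{\lambda=0}$ and proves $\1_\Delta T_\ell=0$ by induction on $\ell\ge p$ using the dimension principle. For $\ell=p$ one counts: the antiholomorphic degree is at most $p+j+(k-1)\le p+N-1$, while $\Delta\cap(Z\times Z)$ has codimension $N+p$, so $\1_\Delta T_p=0$. For the step $\ell\to\ell+1$ one uses that $R_{\ell+1}(z)=\alpha_{\ell+1}(z)R_\ell(z)$ with $\alpha_{\ell+1}$ smooth outside $X_{\ell+1}$; hence outside $X_{\ell+1}\times\Omega$ the current $T_{\ell+1}$ is a smooth form times $T_\ell$, and by induction and \eqref{stare2} the support of $\1_\Delta T_{\ell+1}$ is forced into $\Delta\cap(X_{\ell+1}\times Z)$, whose codimension $\ge N+\ell+2$ (by \eqref{BEsats}) beats the antiholomorphic degree $\le N+\ell$. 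This is the missing idea: rather than commuting $\1_\Delta$ with the $\lambda$-continuation, one exploits the filtration $R_\ell=\alpha_\ell\cdots\alpha_{p+1}R_p$ and the codimension estimates \eqref{BEsats} to kill $\1_\Delta T$ componentwise via the dimension principle.
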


\begin{proof}[Proof of Lemma \ref{plutt}]
Notice that the equality holds outside $\Delta$. Let $T$ be the left hand side of \eqref{plex}.
In view of  Proposition~\ref{ball} it is therefore enough to check that $\1_\Delta T=0$. Fix $j,k$ and let
$$
T_{\ell}=B_{k,k-1} \w HR_j^\lambda(\zeta)\w R_\ell(z)|_{\lambda=0}.
$$
Clearly $T_\ell=0$ if $\ell< p$ so first assume that $\ell=p$.
Since $HR_j$ has bidegree $(j,j)$ in $\zeta$, the current vanishes
unless $j+k\le N$. Thus the total antiholomorphic degree is $\le N-n + N-1$.
On the other hand, the current has support on
$\Delta\cap Z\times Z\simeq Z\times\{pt\}$ which has codimension $N+N-n$. Thus it vanishes by the dimension principle.

We now prove by induction over $\ell\ge p$ that $\1_\Delta T_{\ell} = 0$.
Note that by \eqref{plast}, outside of $Z_\ell$, $R_\ell(z) = \alpha_\ell(z) R_{\ell-1}(z)$,
where $\alpha_\ell(z)$ is smooth.
Thus, outside of $Z_\ell \times \Omega$, $T_{\ell}$ is a smooth form times
$T_{\ell-1}$, and thus, by induction and \eqref{stare2},
$\1_\Delta T_{\ell}$ has its support in $\Delta \cap (Z_\ell \times Z) \simeq Z_\ell \times \{pt\}$,
which has codimension $\geq N + \ell+1$, see \eqref{BEsats}.
On the other hand, the total antiholomorphic degree is
 $\leq \ell+j+k-1 \leq \ell + N -1$,
so the current vanishes by the dimension principle.
We conclude that \eqref{plex} holds.
\end{proof}

By the same argument\footnote{There is a sign error in \cite[(5.2)]{AS} due to $R(z) \w dz$ being odd with respect
to the super structure. Since we here move $R(z) \w dz$ to the right, we get the correct sign.}
as for \cite[(5.2)]{AS} we have the equality
\begin{equation}\label{sommar}
\nabla_{f(z)}\big((B \w g \w HR^\lambda(\zeta))_N \w R(z)\w dz\big)=
[\Delta]'\w R(z)\w dz-(g \w HR^\lambda)_N \w R(z)\w dz ,
\end{equation}
also  for our $R$, where $[\Delta]'$ denotes the part of $[\Delta]$ where
$d\eta_i = d\zeta_i - dz_i$ has been replaced\footnote{This change is due to the fact that we do the same
change of the differentials in the definition of $H$ and $B$ above.} by $d\zeta_i$.
In view
of \eqref{plex}  we can put $\lambda=0$ in  \eqref{sommar},  and then
we get
 \begin{equation}\label{pulka0}
\nabla_{f(z)}\big((B \w g \w HR(\zeta))_N \w R(z)\w dz\big)=
[\Delta]' \w R(z)\w dz-(H R(\zeta)\w g)_N \w R(z)\w dz.
\end{equation}
Multiplying \eqref{pulka0} by the smooth form $\phi$, and using \eqref{krokodil1} and \eqref{krokodil2}, we get
$$
\phi\w R\w dz=-\nabla_f( K\phi\w R\w dz)+  K(\dbar\phi)\w R\w dz+
 P\phi\w R\w dz,
$$
or equivalently,
\begin{equation} \label{koppelman-proof}
\phi\w\omega=-\nabla_f(K\phi\w\omega)+K(\dbar\phi)\w\omega+  P\phi\w\omega.
\end{equation}
Multiplying by suitable holomorphic $\xi_0$ in $E_p^*$ such that
$f^*_{p+1}\xi_0=0$, cf.\  Proposition~\ref{takyta}, we see that
$\phi\w h=\dbar(K\phi\w h)+K(\dbar\phi)\w h+  P\phi\w h$ for all $h$ in
$\Ba_X$.  Thus by definition \eqref{koppelman-statement} holds.

Since $\W^{0,*}_X$ is closed under multiplication by $\Ok_X$, we get that $\psi$ in $\W^{0,*}_X$ is in
$\Dom \dbar_X$ if and only if $-\nabla_f(\psi\w \omega)$ is in $\W^{n,*}_X$.
Thus, we conclude from \eqref{koppelman-proof} that $K\phi$ is in $\Dom\dbar_X$ since all the other terms but
$-\nabla_f(K\phi \w \omega)$ are in $\W^{n,*}_X$.

%

\subsection{Intrinsic interpretation of $K$ and $P$}
So far we have defined $K$ and $P$ by means of currents  in  ambient space.
We used this approach in order to avoid introducing push-forwards on a non-reduced
space.  However, we will sketch here how this can be done.
We must first define the product space $X\times X'$.
Given a local embedding  $i : X \to \Omega$ as before, we have an embedding
$(i\times i)\colon X\times X' \to \Omega\times \Omega'$ such that the structure
sheaf is $\Ok_{\Omega\times\Omega'}/(\J_X+\J_{X'})$.  One can check that
this sheaf is independent of the chosen embedding, i.e., $\Ok_{X\times X'}$ is intrinsically
defined. Thus we also have definitions of all the various sheaves on
$X\times X'$ like $\E_{X\times X'}^{0,*}$.
The projection $p\colon X\times X'\to X'$ is determined  by
$p^*\phi \colon \Ok_{X'}\to \Ok_{X\times X'}$,
which in turn is defined so that
$
p^*i^*\Phi=(i\times i)^*\pi^*\Phi
$
for $\Phi$ in $\Ok_{\Omega'}$,
where
$\pi\colon \Omega\times \Omega'\to \Omega'$ as before.
Again one can check that this definition is independent of the embedding, and
also extends to smooth $(0,*)$-forms $\phi$.
Therefore, we have the well-defined mapping
$
p_*\colon \Cu_{X\times X'}^{2n, *+n}\to \Cu_{X'}^{n,*},
$
and clearly
\begin{equation}\label{alban}
i_* p_* = \pi_* (i \times i)_*.
\end{equation}
As before we have the isomorphism
$$
(i\times i)_*\colon \W^{2n,*}_{X\times X'} \simeq
\Homs(\Ok_{\Omega\times\Omega'}/(\J_X+\J_{X'}), \W_{\Omega\times\Omega'}^{Z\times Z'}).
$$
As in the proof of Lemma \ref{KPlemma} we see that  $\pi_*$ maps a current in $\W_{\Omega\times\Omega'}^{Z\times Z'}$ annihilated by $\J_{X'}$ to a current in
$\Homs(\Ok_\Omega/\J, \W_{\Omega'}^{Z'})$.  It follows by \eqref{alban} that
$$
p_*\colon\W_{X\times X'}^{2n, *+n}\to \W_{X'}^{n,*}.
$$
Now, take  $h$ in $\Ba^n_{X'}$ and let $\mu=i_*h$.
Then, cf.\ the proof of Lemma \ref{KPlemma},
$$
 (B\w g \w HR(\zeta))_N\w\phi(\zeta) \w \mu(z)=
(i\times i)_* \big(\vartheta( B\w g \w H) \w \omega(\zeta) \w \phi(\zeta) \w h\big).
$$
Thus we can define $K\phi$ intrinsically by
\begin{equation} \label{Kintrinsic}
    K\phi \w h= p_*\left( \vartheta(B \w g\w H) \w \omega(\zeta) \w \phi(\zeta) \w h(z)\right).
\end{equation}
From above it follows that $K\phi\w  h$ is in $\W_{X'}^{n,*}$.
In the same way we can define $P\phi$ by
\begin{equation} \label{Pintrinsic}
    P\phi \w h= p_*\left(\vartheta( g\w H) \w \omega(\zeta) \w \phi(\zeta) \w h(z)\right).
\end{equation}
It is natural to write
\begin{equation*}
    K\phi(z)=\int_\zeta \vartheta(B \w g \w H) \w \omega(\zeta)\w\phi(\zeta), \quad
P\phi(z)=\int_\zeta \vartheta(g \w H)\w \omega(\zeta) \w\phi(\zeta),
\end{equation*}
although the formal meaning is given by \eqref{Kintrinsic} and \eqref{Pintrinsic}.

\section{Regularity of solutions on $X_{reg}$}\label{pudding}

We have already seen,  cf.\ Proposition \ref{pglatt}, that $P\phi$ is always a smooth form.
We shall now prove that $K$ preserves regularity on  $X_{reg}$.
More precisely,

\begin{thm}\label{klas00}
    If $\phi$ in $\W^{0,*}_X$ is smooth near a point $x \in X_{\rm reg}'$, then $K\phi$ in
Theorem~\ref{klas0}
    is smooth near $x$.
\end{thm}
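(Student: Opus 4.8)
The plan is to work locally near $x$ in $X_{\rm reg}'$, where we may fix coordinates $(z,w)$ with $Z=\{w=0\}$ and a basis $1,w^{\alpha_1},\dots,w^{\alpha_{\nu-1}}$ of $\Ok_X$ over $\Ok_Z$ (Corollary~\ref{sorelle2}). By Lemma~\ref{W0Xreg}, $K\phi$ has near $x$ a unique representation $K\phi=\sum_j\widehat{(K\phi)}_j\otimes w^{\alpha_j}$ with $\widehat{(K\phi)}_j$ in $\W^{0,*}_Z$, and, comparing with the description of $\E^{0,*}_X$ on $X_{\rm reg}$ in Lemma~\ref{skottskada}, it is enough to prove that each $\widehat{(K\phi)}_j$ is a smooth form on $Z$ near $x$. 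Since $X$ is Cohen--Macaulay on $X_{\rm reg}$, we have $R=R_p$ there (the higher $R_k$ are supported on $X_{p+1}$, which is disjoint from $X_{\rm reg}$, and vanish by the dimension principle), so $\omega=\omega_0=b\vartheta$ with $b$ smooth; writing $R\w dz=b\mu$ as in Lemma~\ref{bmy} and expanding $\mu$ via Proposition~\ref{parasol} in the coordinates of either factor, the formula \eqref{Kphidef} (together with the holomorphic matrices $S,T$ from the proof of Lemma~\ref{W0Xreg}) expresses the components $\widehat{(K\phi)}_j$ as a finite sum of integral operators \emph{on the smooth manifold $Z$} applied to the smooth forms $\widehat\phi_i$ and finitely many of their derivatives, against kernels built from $B$, $g$, the Hefer form $H$, the matrix $b$, and the Taylor coefficients in the directions transverse to $Z$ (via \eqref{spurt}). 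Thus the whole question reduces to: each of these operators on $Z$ maps smooth forms to smooth forms.

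Consider first the part of the $\zeta$-integration over a region away from $x$. There $B$ is smooth in $(\zeta,z)$ for $z$ near $x$, so as in the proof of Proposition~\ref{pglatt} one pulls $\pi_*$ past the smooth factor, and the pushforward of a current in $\zeta$ against a kernel smooth in $(\zeta,z)$ is a form depending smoothly on $z$; this contribution to each $\widehat{(K\phi)}_j$ is smooth near $x$ and harmless, and does not even use that $\phi$ is smooth there. The essential contribution is the one near the diagonal, where $\zeta$ is also forced near $x\in X_{\rm reg}$, so that we are on $X_{\rm reg}\times X_{\rm reg}$ and $\phi$ is smooth; identify a neighbourhood of the diagonal with pairs $(\zeta,w')$ and $(z,w)$ with $Z=\{w'=0\}=\{w=0\}$. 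In the reduced case of \cite{AS}, on $Z_{\rm reg}$ the structure form is, up to the holomorphic factor $dz$, smooth, so $K$ restricted there is --- modulo operators with smooth kernel --- the standard Bochner--Martinelli--Koppelman operator on the smooth manifold $Z$, and interior regularity is classical. The genuinely new feature in the non-reduced case is that the expansion of $\omega(\zeta)$, equivalently of $\mu(\zeta)$, carries residue factors $\dbar(dw'/(w')^{\gamma+\1})$ in the transverse variables $w'$, so that pairing against $\omega(\zeta)$ forces us to apply $\partial_{w'}^{\gamma}$ and set $w'=0$ in the kernel $\vartheta(B\w g\w H)$; since $B$ and $H$ depend on $w'$, this produces \emph{transverse derivatives} of the Bochner--Martinelli--Hefer kernel, evaluated on $Z$. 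The main obstacle is to show that these transverse-derivative kernels still define regularity-preserving operators on $Z$.

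To overcome this I would make the $w'$-dependence of the kernel explicit: choosing locally on $X_{\rm reg}$ a minimal Hermitian resolution of length $p$ and the corresponding Hefer form, one can arrange that $H$ and $B$ depend on $w'$ in a controlled way --- polynomially up to smooth factors and the scalar weights $1/|\zeta-z|^{2k}$ of $B$ --- so that each $\partial_{w'}^{\gamma}|_{w'=0}$ of the kernel is a finite sum of terms of two kinds: (i) a smooth function of $(\zeta,z,w)$ times a component of the Bochner--Martinelli kernel on $Z$ (in the $\zeta,z$ variables) of the appropriate bidegree, and (ii) terms with strictly milder singularity along the diagonal. Operators of type (i) are exactly the Koppelman-type operators on the smooth $n$-dimensional $Z$ whose regularity is the classical smooth theory already used in \cite{AS}; operators of type (ii) have locally integrable kernels with singular support on the diagonal and are handled by the $K^\epsilon$-smoothing of Proposition~\ref{smooth}, showing that $K^\epsilon\phi$ together with all its $z$- and $w$-derivatives converges locally uniformly near $x$. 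Assembling these, each $\widehat{(K\phi)}_j$ is smooth near $x$, hence $K\phi$ is smooth near $x$. The new input compared with \cite{AS} is precisely the bookkeeping that controls the transverse derivatives of the kernel; everything else is a reduction to the smooth Koppelman calculus on $Z$.
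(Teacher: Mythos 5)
Your overall reduction is the same as the paper's: you pass to the coefficients $\widehat{(K\phi)}_j$ in the basis $1,w^{\alpha_1},\dots,w^{\alpha_{\nu-1}}$, split off the (harmless, smooth) contribution from $\zeta$ away from $x$, and correctly identify the transverse derivatives $\partial_{w'}^{\gamma}|_{w'=0}$ of the kernel, forced by the residue factors in $\omega(\zeta)$, as the genuinely new difficulty (the paper packages this as Proposition~\ref{glatt}, the convergence of $\partial_w^\ell v^\epsilon|_{w=0}$ to smooth forms). However, the step where you resolve that difficulty has a genuine gap: the claim that each $\partial_{w'}^{\gamma}|_{w'=0}$ of the kernel is a finite sum of (i) smooth functions times components of the Bochner--Martinelli kernel on $Z$ and (ii) terms with \emph{strictly milder} singularity is false. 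Writing $\eta=\zeta-z$, letting $\psi$ be the graphing function of $Z$ and $\Phi=\eta^*(I+A^*A)\eta\sim|\eta|^2$, each $\partial_w$ applied to the denominator $\left(|\zeta-z|^2+|\tau-w+\psi(\zeta)-\psi(z)|^2\right)^k$ and evaluated at $\tau=w=0$ produces a factor $\eta^*A^*=O(|\eta|)$ in the numerator while raising the power of $\Phi$ by one --- a net pointwise loss of one power of $|\eta|$ per transverse derivative. Since the relevant top component $B_n$ already has the critical singularity $|\eta|^{-(2n-1)}$ on the $2n$-real-dimensional $Z$, after even a single transverse derivative the kernel is pointwise non-integrable along the diagonal; so neither (i) nor (ii) can hold, and the limit of $K^\epsilon\phi$ cannot be controlled by pointwise kernel estimates at all.

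What is actually needed, and what the paper supplies, is a cancellation argument. The antiholomorphic numerator factors $\eta^*a_i$ produced by the transverse derivatives satisfy $\eta^*a_r=L\Phi+O(|\eta|^2)$ for the Lie derivative $L=L_\xi$ with $\xi=a_r^t(I+A^*A)^{-t}\partial/\partial\zeta$ (equation \eqref{svan1}), so $(\eta^*a_r)/\Phi^{k+\ell+1}$ is essentially $L(1/\Phi^{k+\ell})$ up to better terms, and one integrates by parts, trading one numerator factor against one power of the denominator at the cost of differentiating $\phi$, the other smooth data, the form $\gamma_k$, and the cut-off $\chi_\epsilon$. Iterating this recursion (the lemma on $I_\ell^{r,s}$ in Section~\ref{pudding}) reduces everything to integrals with integrable kernels, i.e., back to \cite[Lemma~6.2]{AS}; the boundary terms where a derivative hits $\chi_\epsilon$ are not obviously negligible and require Lemma~\ref{gammak} together with a resolution of singularities and the $\tilde\chi$ trick to show they vanish as $\epsilon\to0$. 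None of this structure appears in your proposal, and without it the key assertion of your third paragraph is not merely unproved but incorrect as stated.
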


Throughout this section, let us choose local coordinates
$(\zeta,\tau)$ and $(z,w)$ at $x$  corresponding to the variables $\zeta$
and $z$ in the integral formulas, so that $Z = \{ (\zeta,\tau);\ \tau = 0 \}$.

\begin{lma}\label{pjosk}
Let $B^\epsilon:=\chi(|\zeta-z|^2/\epsilon) B$,
        and assume  that $\phi$ has compact support in our coordinate neighborhood.
    Then $K\phi$ can be approximated by the smooth forms
    \begin{equation*}
        K^\epsilon \phi := \pi_* \big( (B^\epsilon\w g \w HR)_N\w\phi\big).
    \end{equation*}
\end{lma}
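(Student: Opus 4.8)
The plan is to recognize that this lemma is Proposition~\ref{smooth} transported to the coordinate neighborhood of $x$, and to re-run that short argument, keeping the two assertions — smoothness of each $K^\epsilon\phi$, and the convergence $K^\epsilon\phi\to K\phi$ — separate. Here ``$K^\epsilon\phi\to K\phi$'' is understood, as in Proposition~\ref{smooth}, to mean $K^\epsilon\phi\w\mu\to K\phi\w\mu$ for every $\mu$ in $\Homs(\Ok_{\Omega'}/\J,\CH^{Z'}_{\Omega'})$.

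First I would record that $B^\epsilon=\chi(|\zeta-z|^2/\epsilon)B$ is a genuinely smooth form on $\Omega_\zeta\times\Omega'_z$: $B$ is almost semi-meromorphic with Zariski singular support the diagonal $\Delta$, while the cutoff $\chi(|\zeta-z|^2/\epsilon)$ vanishes in a neighborhood of $\Delta$; hence $\vartheta(B^\epsilon\w g\w H)$ is a smooth $(0,*)$-form. Since $R(\zeta)$ carries no holomorphic $\zeta$-differentials, $(B^\epsilon\w g\w HR)_N=\pm\,\vartheta(B^\epsilon\w g\w H)\w R(\zeta)\w d\zeta$, so
\[
K^\epsilon\phi=\pi_*\big(\vartheta(B^\epsilon\w g\w H)\w R(\zeta)\w d\zeta\w\phi\big).
\]
By Remark~\ref{potta}, $R(\zeta)\w d\zeta\w\phi$ is a well-defined pseudomeromorphic current in the variable $\zeta$ alone, and it is compactly supported since $\phi$ is; multiplying it by the smooth kernel $\vartheta(B^\epsilon\w g\w H)$ and pushing forward in $\zeta$ therefore produces a form that is smooth in $z$, so $K^\epsilon\phi\in\E^{0,*}(X')$. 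Pairing with $\mu$ and moving the smooth factor $\vartheta(B^\epsilon\w g\w H)$ under $\pi_*$ exactly as in the proof of Proposition~\ref{pglatt} (cf.\ \cite[(5.1.2)]{Ho1}) gives
\[
K^\epsilon\phi\w\mu=\pi_*\big((B^\epsilon\w g\w HR(\zeta))_N\w\phi\w\mu(z)\big).
\]

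For the convergence, I would use that $\chi(|\zeta-z|^2/\epsilon)$ is a scalar, so $(B^\epsilon\w g\w HR(\zeta))_N=\chi(|\zeta-z|^2/\epsilon)\,(B\w g\w HR(\zeta))_N$, and that $\Delta$ is cut out by the tuple $\zeta-z$. Then \eqref{asmWn}, applied to the almost semi-meromorphic current $B$, yields
\[
(B^\epsilon\w g\w HR(\zeta))_N\w\phi\w\mu(z)\longrightarrow (B\w g\w HR(\zeta))_N\w\phi\w\mu(z),\qquad \epsilon\to 0,
\]
whose $\pi_*$-image is precisely $K\phi\w\mu$ as in \eqref{Kphidef}; applying the continuous operator $\pi_*$ gives $K^\epsilon\phi\w\mu\to K\phi\w\mu$.

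I do not expect a real obstacle here, since every ingredient is already in place: the well-definedness of $R(\zeta)\w d\zeta\w\phi$ and of its tensor product with $\mu(z)$ as a current in $\W^{Z\times Z'}_{\Omega_\zeta\times\Omega'_z}$ (Remark~\ref{potta}, Corollary~\ref{cor:extendW0}, \cite[Corollary~4.7]{AW4}), the commutation of $\pi_*$ with smooth factors, and the regularization identity \eqref{asmWn}. The only point that needs a touch of care is the consistency between reading $(B^\epsilon\w g\w HR(\zeta))_N\w\phi\w\mu(z)$ as an honest product — which is legitimate precisely because $B^\epsilon$ is smooth — and the tensor-product definition of $K\phi\w\mu$ obtained in the limit; and this consistency is exactly the content of \eqref{asmWn}.
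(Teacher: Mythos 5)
Your argument for the smoothness of each $K^\epsilon\phi$ is fine, but the convergence step misses the actual content of the lemma. In Section~\ref{pudding} the letters $\zeta$ and $z$ no longer denote the full ambient coordinates: the coordinates are $(\zeta,\ta)$ and $(z,w)$ with $Z=\{\ta=0\}$, so the tuple $\zeta-z$ does \emph{not} cut out the diagonal $\Delta\subset\Omega\times\Omega'$; it cuts out the much larger set $W=\Delta'\times\C_\ta\times\C_w$, where $\Delta'$ is the diagonal of $Z\times Z'$. (This is exactly the difference from Proposition~\ref{smooth} that the paper flags in the sentence right after the statement of the lemma.) Consequently \eqref{asmWn}, resp.\ \eqref{proddef}, does not apply in the way you use it: those identities give $\chi(|h|^2/\epsilon)\,a\w\tau\to a\w\tau$ only when $h$ cuts out the Zariski singular support of $a$, which for $a=B$ is $\Delta$. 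For a tuple cutting out a strictly larger analytic set $W$, \eqref{restrDef} only gives $\chi_\epsilon T\to T-\1_W T$, and $\1_W T=0$ is not automatic, since the current $T=\mu(z,w)\w(HR(\zeta,\ta)\w B\w g)_N\w\phi$ has no a~priori SEP with respect to $W$.

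The missing step is therefore to show $\1_W T=0$, and this is where the structure of $T$ has to enter. Since $\mu(z,w)\otimes R(\zeta,\ta)$ has support in $\{w=\ta=0\}$, one has $T=\1_{\{w=\ta=0\}}T$, hence by \eqref{stare}, $\1_W T=\1_{W\cap\{w=\ta=0\}}T$. Now $W\cap\{w=\ta=0\}\subset\Delta$, and $\1_\Delta T=0$ by the very definition of the product with the almost semi-meromorphic $B$, cf.\ Proposition~\ref{ball}~(i); thus $\1_W T=0$ and only then does your limit identity follow. Without this support argument your proof establishes (again) Proposition~\ref{smooth}, where only $\Delta$ is cut away, rather than Lemma~\ref{pjosk}.
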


Notice that here we cut away the diagonal $\Delta'$  in $Z\times Z'$
times $\C_\tau\times \C_w$
in contrast to Proposition~\ref{smooth}, where we only cut away the diagonal $\Delta$
in $\Omega\times \Omega'$.

\begin{proof}
Clearly $B^\epsilon$ is smooth so that each $K^\epsilon\phi$ is smooth in a
full \nbh in $\Omega'$ of $x$.
Let $T= \mu(z,w)\w (HR(\zeta,\tau)\w B\w g)_N\w\phi$, and let
$W=\Delta'\times \C_\tau\times \C_w$.  Since $\mu(z,w)\otimes R(\zeta,\tau)$ has
support on $\{ w=\tau=0\}$,  $T=\1_{\{w=\tau=0\}}T$.  Therefore,
$\1_W T= \1_W\1_{\{w=\tau=0\}}T=0$ since $W\cap \{ w=\tau=0\}\subset \Delta$
and $\1_\Delta T=0$ by definition, cf.\ Proposition \ref{ball} (i).
Now notice that $\1_W T=0$ implies \eqref{bula2} and in turn
\eqref{bula} with our present
choice of $B^\epsilon$.
\end{proof}

We first consider a simple but nontrivial example of Theorem~\ref{klas00}.
\begin{ex}
Let $X=\C_\zeta\subset\C^2_{\zeta,\ta}$ and $\J=(\ta^{m+1})$. Then $R=\dbar(1/\ta^{m+1})$.
For an arbitrary point $(z,w)$ we can choose the Hefer form
$$
H=\frac{1}{2\pi i}\sum_{j=0}^m\ta^{m-k}w^k d\ta.
$$
From the Bochner-Martinelli form $B$ we only get a contribution from the term
$$
B_1=\frac{1}{2\pi i}\frac{(\bar\zeta-\bar z)d\zeta+(\bar\ta-\bar w)d\ta}
{|\zeta-z|^2+|\ta-w|^2}.
$$
Let $\Omega'\subset\subset\Omega$  be open balls with center at the origin, and let
$\varphi=\varphi(|\zeta|^2+|\ta|^2)$ be a smooth cutoff function with support in $\Omega$
that is $\equiv 1$ in a \nbh of $\overline{\Omega'}$. Then
we can choose a holomorphic weight  $g=\varphi+\cdots$, see, \cite[Example~5.1]{AS} with respect to
$\Omega'$, and with support in $\Omega$.
Now $1,\ta,\ldots,\ta^m$ is a set of generators for $\Ok_X$ over $\Ok_Z$.
Assume that
$$
\phi=(\hat\phi_0(\zeta)\otimes 1+\cdots +\hat\phi_m(\zeta)\otimes \ta^m)d\bar\zeta
$$
is a smooth $(0,1)$-form. We want to compute $K\phi$. We know that
\begin{equation}\label{lemur}
K\phi=a_0(z)\otimes 1+\cdots +a_m(z)\otimes w^m
\end{equation}
with $a_k(z)$ in $\W_Z^{0,0}$.
By Lemma~\ref{pjosk} and its proof,  we have
smooth $K^\epsilon \phi(z,w)$ in $\Omega'$ such that
\begin{equation}\label{pumpa}
K^\epsilon\phi \w dz \w dw \w \dbar\frac{1}{w^{m+1}}\to K\phi \w dz \w dw \w \dbar\frac{1}{w^{m+1}}.
\end{equation}
It follows that
$$
a_k(z)=\lim_{\epsilon\to 0} \frac{1}{k!}\frac{\partial^k}{\partial w^k} K^\epsilon \phi(z,w)\big|_{w=0}.
$$
Notice that
\begin{multline*}
(B\w g \w HR(\ta))_2 = B_1\w g_{0,0} \w H\w \dbar\frac{1}{\ta^{m+1}} =\\
- \varphi \dbar\frac{1}{\ta^{m+1}}\w \frac{1}{(2\pi i)^2}\sum_{\ell=0}^m\ta^{m-\ell}w^\ell d\ta\w
\frac{(\bar\zeta-\bar z)d\zeta+(\bar\ta-\bar w)d\ta}
{|\zeta-z|^2+|\ta-w|^2}= \\
- \varphi \dbar\frac{d\ta}{\ta^{m+1}}\w \frac{1}{(2\pi i)^2}\sum_{\ell=0}^m\ta^{m-\ell}w^\ell \w
\frac{(\bar\zeta-\bar z)d\zeta}
{|\zeta-z|^2+|\ta-w|^2}.
\end{multline*}
For each fixed $\epsilon>0$,  $|\zeta-z|> 0$ on $\supp \chi_\epsilon$, cf.\ Lemma \ref{pjosk},  so we have
\begin{multline}\label{kronhjort}
K^\epsilon\phi(z,w)=\\
\int_{\zeta,\ta}
 \varphi  \frac{1}{(2\pi i)^2}\sum_{\ell=0}^m \dbar\frac{d\ta}{\ta^{\ell+1}} \w w^\ell
\chi_\epsilon\frac{(\bar\zeta-\bar z)d\bar\zeta\w d\zeta}
{|\zeta-z|^2+|\ta-w|^2}\w \sum_{k=0}^m \hat\phi_k(\zeta)\otimes\ta^k.
\end{multline}
A simple computation yields  that
\begin{equation}\label{kronhjort2}
 K^\epsilon\phi(z,w)=\sum_{k=0}^m a_k^\epsilon(z)\otimes w^k+\Ok(\bar w),
\end{equation}
where
$$
a_k^\epsilon(z)=\frac{1}{2\pi i}\int_\zeta\varphi(|\zeta|^2)\chi_\epsilon\frac{\hat\phi_k(\zeta)d\bar\zeta\w d\zeta}
{\zeta-z}.
$$
Letting  $\epsilon$ tend to $0$ we get $K\phi$  as in \eqref{lemur},
where
$$
a_k(z)=\frac{1}{2\pi i}\int_\zeta\varphi(|\zeta|^2)\frac{\hat\phi_k(\zeta)d\bar\zeta\w d\zeta}
{\zeta-z}.
$$
It is well-known that these Cauchy integrals $a_k(z)$ are smooth solutions to
$\dbar v=\hat\phi_k d\bar z$ in $Z'=Z\cap\Omega'$. Thus $K\phi$ is smooth.
\end{ex}

\begin{remark}
The terms $\Ok(\bar w)$ in the expansion \eqref{kronhjort2}
of $K^\epsilon\phi(z,w)$
do {\it not} converge to smooth functions in general when $\epsilon\to 0$.
For a simple example, take $\phi=\zeta d\bar\zeta\otimes \tau^m$. Then
$K^\epsilon\phi(0,w)$ tends to
$$
w^m\int\varphi(|\zeta|^2)\frac{1}{2\pi i}\frac{|\zeta|^2d\bar\zeta\w d\zeta}{|\zeta|^2+|w|^2}
$$
which is a smooth function of $w$ plus (a constant times)
$w^m|w|^2\log|w|^2$, and thus not smooth. However, it is certainly in $C^m$. One can check that
$K\phi(z,w)=\lim_{\epsilon\to 0^+}K^\epsilon\phi(z,w)$ exists pointwise and defines a function in
at least $C^m$ and that our  solution can be computed from this limit.  In fact, by a more precise
computation we get from  \eqref{kronhjort} that
$$
K^\epsilon\phi(z,w)=\sum_{k=0}^m\int_\zeta\varphi(|\zeta|^2)\chi_\epsilon
\frac{1}{2\pi i}\frac{(\bar\zeta-\bar z)\hat\phi_k(\zeta) d\bar\zeta\w d\zeta}{|\zeta-z|^2+|w|^2}
w^k\sum_{j=0}^{m-k}\left(\frac{|w|^2}{|\zeta-z|^2+|w|^2}\right)^j.
$$
It is now clear that we can let $\epsilon\to 0$. By a simple computation we then get
$$
K\phi(z,w)=\sum_{k=0}^m C\hat\phi_k(z)\otimes w^k-
\sum_{k=0}^m\int_\zeta\varphi(|\zeta|^2)
\frac{1}{2\pi i}\frac{\hat\phi_k(\zeta)d\bar\zeta\w d\zeta}{\zeta-z}
w^k\left(\frac{|w|^2}{|\zeta-z|^2+|w|^2}\right)^{m-k+1}.
$$
Let $\psi=\varphi\hat\phi_k$. Then the $k$th term in the second sum is equal to
$$
b(z,w)=\frac{1}{2\pi i}\int_\zeta\frac{\psi(z+\zeta)d\bar\zeta\w d\zeta}{\zeta} w^k
\left(\frac{|w|^2}{|\zeta|^2+|w|^2}\right)^{m-k+1}.
$$
If we integrate outside the unit disk, then we certainly get a smooth function. Thus
it is enough to consider the integral over the disk. Moreover, if $\psi(z+\zeta)
=\Ok(|\zeta|^M)$ for a large $M$, then the integral is at least $C^m$.
By a Taylor expansion of $\psi(z+\zeta)$ at the point $z$, we are thus reduced to consider
$$
\int_{|\zeta|<1}\frac{\zeta^\alpha\bar\zeta^\beta}{\zeta}
\left(\frac{|w|^2}{|\zeta|^2+|w|^2}\right)^{m-k+1}.
$$
For symmetry reasons, they vanish, except when $\alpha=\beta+1$. Thus we are left with
$$
\int_{|\zeta|<1} |\zeta|^{2\beta}
\left(\frac{|w|^2}{|\zeta|^2+|w|^2}\right)^{m-k+1} w^k=
C w^k|w|^{2(m-k+1)}\int_0^1 \frac{s^\beta ds}{(s+|w|^2)^{m-k+1}}
$$
for non-negative integers $\beta$. The right hand side is a smooth function of
$w$ if $\beta\le m-k-1$ and a smooth function plus
$$
Cw^k|w|^{2(\beta+1)}\log|w|^2
$$
if $\beta\ge m-k$. The worst case therefore is when  $k=m$ and $\beta=0$;  then we have
$
w^m|w|^{2}\log|w|^2
$
that we encountered above.
\end{remark}

\begin{prop}\label{glatt}
Let $z,w$ be coordinates at a point $x\in X_{reg}$ such that $Z=\{w=0\}$
and $x=(0,0)$.
If $\phi$ is smooth, and has support where the local coordinates are defined,
then
$$
v^\epsilon(z,w)=\int_\zeta\chi(|\zeta-z|^2/\epsilon)(HR\w B\w g)_N\w\phi,
$$
is smooth for $\epsilon>0$, and for each multiindex $\ell$ there is a smooth form
$v_\ell$ such that
$$
\partial^\ell_w v^\epsilon|_{w=0}\to v_\ell
$$
as currents on $Z$.
\end{prop}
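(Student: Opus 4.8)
The plan is to compute $v^\epsilon$ explicitly enough to read off its dependence on the transversal variable $w$, and to recognise each $w$-Taylor coefficient of $v^\epsilon$ as a finite sum of classical Bochner--Martinelli-type integral operators on $Z$ applied to smooth forms.

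First I would check that $v^\epsilon$ is smooth. The cutoff $\chi(|\zeta-z|^2/\epsilon)$ vanishes near $\{\zeta=z\}$, which contains the diagonal $\Delta$, so on its support the Bochner--Martinelli form $B$, the Hefer form $H$ and the weight $g$ are smooth in all variables, and $g$ has compact support in $\zeta$. Since we work near a point of $X_{reg}$, the sheaf $\Ok_X$ is Cohen--Macaulay there, so $R=R_p$ and, by $\nabla_f R=0$, also $\dbar R_p=0$; hence by Proposition~\ref{parasol} and the remark following it, locally
\begin{equation*}
R\w d\zeta=\sum_{|\gamma|\le M} r_\gamma(\zeta)\, d\zeta\otimes\dbar\frac{d\tau}{\tau^{\gamma+\1}},
\end{equation*}
with the $r_\gamma$ holomorphic. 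The push-forward in the $\tau$-variables then reduces, via \eqref{spurt}, to finitely many $\tau$-derivatives of the smooth integrand evaluated at $\tau=0$, and what remains is the integral over $\zeta$ of a smooth, compactly supported integrand; differentiating under the integral sign in $(z,w)$ shows $v^\epsilon\in C^\infty$.

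Next I would make the dependence on $(w,\bar w)$ transparent. Expanding the smooth factors $H$, $B$, $g$ and $\phi$ in finite Taylor series in $\tau$ about $\tau=0$ and inserting the representation of $R$, $v^\epsilon$ becomes a finite sum of terms
\begin{equation*}
\int_\zeta \chi(|\zeta-z|^2/\epsilon)\, r_\gamma(\zeta)\, k(\zeta,z,w)\w \partial_\tau^{j}\phi(\zeta,0),
\end{equation*}
where each kernel $k$ is, off $\{\zeta=z\}$, smooth in $(\zeta,z,w)$ except for a denominator which is a power of $|\zeta-z|^2+|w|^2$ inherited from $B$. To compute $\partial^\ell_w v^\epsilon|_{w=0}$ I would use that $\partial_w$ never acts on an anti-holomorphic function or differential, so every term carrying a positive power of $\bar w$ or a factor $d\bar w_j$ contributes $0$ after restriction to $w=0$; in particular a $\partial_w$ that lands on a denominator produces a factor $\bar\tau_j-\bar w_j$, equivalently $\bar w_j$ once $\tau=0$, and such a term dies at $w=0$. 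In the surviving terms one sets $w=0$ in the denominator, obtaining a power of $|\zeta-z|^2$, and --- this is the crucial point --- in the numerator of $B$ the transversal anti-holomorphic data have already been annihilated: the functions $\bar\tau_j-\bar w_j$ vanish at $\tau=w=0$, and the differentials $d\bar\tau_j$ are killed upon wedging with the $d\bar\tau_j$'s contained in $R$. Hence only the purely \emph{tangential} part of $B$, which lives on $Z\cong\C^n$, survives, and $\partial^\ell_w v^\epsilon|_{w=0}$ is a finite sum of integral operators
\begin{equation*}
T_\epsilon\psi(z)=\int_\zeta \chi(|\zeta-z|^2/\epsilon)\,\kappa(\zeta,z)\,\psi(\zeta),
\end{equation*}
where $\psi$ runs over smooth $(0,*)$-forms on $Z$ (the products of the holomorphic $r_\gamma$ with $\tau$-derivatives of $\phi$) and $\kappa$ is either smooth or a Bochner--Martinelli/Cauchy-type kernel on $\C^n$ whose numerator vanishes to precisely the order making $\kappa$ locally integrable.

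Finally, for each such operator $T_\epsilon\to T:=T_0$ as $\epsilon\to0$ by dominated convergence, and $T$ maps smooth forms to smooth forms --- this is the regularity of the Koppelman/Bochner--Martinelli integral on the smooth manifold $Z$, as in the reduced case. It follows that $\partial^\ell_w v^\epsilon|_{w=0}$ converges in $C^\infty_{loc}$ on $Z'$, a fortiori as currents on $Z'$, to a smooth form $v_\ell$, which is the assertion. I expect the main obstacle to be the bookkeeping in the preceding paragraph: one must track precisely how the transversal variables $(\tau,w)$ enter $H$, $B$ and $g$ and verify that, after the relevant $\tau$- and $w$-derivatives and the restrictions $\tau=0$, $w=0$, the residual kernels $\kappa$ are genuinely of classical Bochner--Martinelli type on $Z$ with just enough vanishing in the numerator; once this is done, everything else is either formal or reduces to the already-known smooth case.
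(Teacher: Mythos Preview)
Your argument has a genuine gap at the point you yourself flag as ``bookkeeping'': it is not bookkeeping, it is a structural difficulty.

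You implicitly assume that the local coordinates $(\zeta,\tau)$ and $(z,w)$ in which $Z=\{\tau=0\}=\{w=0\}$ are the ambient coordinates on $\C^N$ used to define $B$, so that the Bochner--Martinelli denominator is $(|\zeta-z|^2+|\tau-w|^2)^k$. But $B$ is fixed by the embedding $i\colon X\to\Omega\subset\C^N$, and in those coordinates $Z$ is, after a rotation, only a \emph{graph} $\tau'=\psi(\zeta')$. Passing to straightening coordinates $\zeta=\zeta'$, $\tau=\tau'-\psi(\zeta')$, the denominator of $B$ becomes
\[
\big(|\zeta-z|^2+|\tau-w+\psi(\zeta)-\psi(z)|^2\big)^k.
\]
Two of your key claims then fail. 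First, at $\tau=w=0$ the denominator is $(\eta^*(I+A^*A)\eta)^k$ with $\psi(\zeta)-\psi(z)=A\eta$, not $|\eta|^{2k}$. Second, and more seriously, a $\partial_{w_j}$ hitting the denominator produces the factor $-\overline{(\tau-w+\psi(\zeta)-\psi(z))_j}$, which at $\tau=w=0$ equals $-\overline{(A\eta)_j}$ and does \emph{not} vanish. So each $w$-derivative raises the denominator power by one while only contributing a numerator factor of the form $\eta^*a$; after $\ell$ derivatives you face integrals
\[
\int_\zeta \chi_\epsilon\,\frac{(\eta^*a_1)\cdots(\eta^*a_{t+1})\wedge\gamma_k\wedge\phi}{(\eta^*(I+A^*A)\eta)^{k+t+1}},
\]
whose kernels are \emph{not} locally integrable and are not of classical Bochner--Martinelli type on $Z$.

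The paper resolves this with an integration-by-parts scheme: one applies Lie derivatives $L_\xi$ along vector fields $\xi=a_r^t(I+A^*A)^{-t}\partial_\zeta$ chosen so that $L_\xi\Phi=\eta^*a_r+\Ok(|\eta|^2)$, obtaining a recursion among integrals $I^{r,s}_\ell$ that trades numerator factors $\eta^*a$ against denominator powers. A separate lemma controls $L_\xi$ applied to the numerator form $\gamma_k$ (showing $\pi^*(L_1\cdots L_\rho\gamma_k)=\bar\eta_0^{\,k}\beta$ after a principalisation $\pi^*\eta=\eta_0\eta'$), and a careful estimate shows the boundary terms involving $\chi'(|\eta|^2/\epsilon)$ tend to $0$. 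Only after this reduction do the kernels become integrable and the smooth-case argument from \cite{AS} apply. Your proposal does not supply this mechanism; without it the passage from $\partial_w^\ell v^\epsilon|_{w=0}$ to an honest Bochner--Martinelli operator on $Z$ breaks down whenever $\psi\not\equiv0$, i.e., whenever $Z$ is not an affine subspace of the ambient $\C^N$ near $x$.
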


Taking this proposition for granted we can conclude the proof of Theorem~\ref{klas00}.

\begin{proof}[Proof of Theorem~\ref{klas00}]
If $\phi \equiv 0$ in a neighborhood of $x\in X'_{\rm reg}$, then $K\phi$ is smooth near $x$, cf.\ the proof of Proposition~\ref{smooth}.
Thus, it is sufficient to prove Theorem~\ref{klas00} assuming that $\phi$ is smooth
and has support near $x$.

Recall that given a minimal generating set  $1, w^{\alpha_1},\ldots, w^{\alpha_{\nu-1}}$,
one gets the coefficients $\hat v^\epsilon_j$ in the representation
$$
v^\epsilon=\hat v_0^\epsilon\otimes 1+\cdots +\hat v_{\nu-1}^\epsilon\otimes w^{\alpha_{\nu-1}}
$$
from
$\partial^\ell_w v^\epsilon|_{w=0}$, $|\ell|\le M$  by a holomorphic matrix, cf.,
the proof of Lemma~\ref{skottskada}.  It thus follows from
Proposition~\ref{glatt} that there are smooth $\hat v_j$ such that
$\hat v_j^\epsilon\to \hat v_j$ as currents on $Z$.
Let $v=\hat v_0\otimes 1+\cdots + \hat v_{\nu-1}\otimes w^{\alpha_{\nu-1}}$.
In view of \eqref{paraply2},  $v^\epsilon\w\mu\to v\w\mu$
for all  $\mu$ in $\Homs(\Ok_\Omega/\J,\CH^Z_\Omega)$.
From Lemma~\ref{pjosk} we conclude that
$
v\w\mu= K\phi\w\mu
$
for all such $\mu$. Thus $K\phi=v$ in $\W_X^{0,*}$ and hence $K\phi$ is smooth.
\end{proof}

\begin{proof}[Proof of Proposition~\ref{glatt}]
Assume that $X$ is embedded in $\Omega\subset\C^N_{\zeta',\tau'}$. After a suitable rotation
we can assume that $Z$ is the graph $\tau'=\psi(\zeta')$. The Bochner-Martinelli kernel
in $\Omega$ is rotation invariant, so it is
$$
B=\sigma+\sigma\w\dbar\sigma+\sigma\w(\dbar\sigma)^2+\dots,
$$
where
$$
\sigma=\frac{(\bar\zeta'-\bar z')\cdot d\zeta'+(\bar\ta'-\bar w')\cdot d\ta'}
{|\zeta'-z'|^2+|\ta'-w'|^2}.
$$
We now choose the new coordinates
$
\zeta=\zeta',\   \ta=\ta'-\psi(\zeta')
$
in  $\Omega$, so that $Z=\{(\zeta,\ta);\  \ta=0\}$.

Recall that on $X_{reg}$ we have that
$R\w dz$ is a smooth form times $\mu=(\mu_1,\ldots,\mu_m)$, where $\mu_j$ is a generating set
for $\Homs(\Ok_\Omega/\J,\CH_\Omega^Z)$.
Thus we are to compute $\partial_w^\ell|_{w=0}$ of integrals like
\begin{equation}\label{smak}
\int_{\zeta,\ta}\dbar\frac{d\ta}{\ta^{\alpha+\1}}\w B^\epsilon_k\w\phi(\zeta,z,w,\ta),
\end{equation}
where $k\le n$ and $\phi$ is smooth with compact support near $x$. It is clear that
the symbols $\bar\ta, \bar w, d\bar\ta$ can be omitted in the expression for
$$
B^\epsilon=\chi_\epsilon B=\chi(|\zeta-z|^2/\epsilon)B,
$$
since $\bar \ta$ and $d \bar \ta$ annihilate $\dbar(1/\tau^{\alpha+1})$, and
since we only take holomorphic derivatives with respect to $w$ and set $w = 0$.

Let us write $\psi(\zeta)-\psi(z)=A(\zeta,z)\eta$, where
$\eta:=\zeta-z$ is considered as a column matrix and $A$ is a holomorphic $(N-n)\times n$-matrix.
Then
$$
\sigma=\frac{\eta^*\nu}{|\zeta-z|^2+|\ta-w+\psi(\zeta)-\psi(z)|^2},
$$
where $\nu$ is the $(1,0)$-form valued column matrix
$$
\nu=d\zeta+A^*d(\tau + \psi(\zeta)).
$$
Since $\eta^*\nu$ is a $(1,0)$-form we have that
$$
B^\epsilon_k=\chi_\epsilon\frac{\eta^*\nu\w ( (d\eta^*)\nu+\eta^*\dbar\nu)^{k-1}}{(|\zeta-z|^2+|\ta-w+\psi(\zeta)-\psi(z)|^2)^k}.
$$

\begin{lma}\label{gammak}
Let
$$
\xi^i=\xi^i_1\frac{\partial}{\partial\zeta_1}+\cdots+ \xi^i_n\frac{\partial}{\partial\zeta_n}
$$
be smooth $(1,0)$-vector fields, and let $L_i=L_{\xi^i}$ be the associated Lie derivatives
for $i=1,\dots,\rho$.
Let
$$
\gamma_k:=\eta^*\nu\w ((d\eta^*)\nu+\eta^*\dbar\nu)^{k-1}.
$$
If we have a modification $\pi\colon  \tilde{W} \to \Omega \times \Omega$ such that locally $\pi^*\eta=\eta_0\eta'$,
where $\eta_0$ is a holomorphic function, then
$$
\pi^*(L_1 \cdots L_\rho \gamma_k)=\bar\eta_0^k \beta,
$$
where $\beta $ is smooth.
\end{lma}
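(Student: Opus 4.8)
The plan is to reduce everything to the observation that, writing $\omega:=\eta^*\nu=\sum_l\bar\eta_l\nu_l$ (a smooth $(1,0)$-form, the sum over the components of $\eta$), one has $(d\eta^*)\nu+\eta^*\dbar\nu=\sum_l\big(d\bar\eta_l\w\nu_l+\bar\eta_l\dbar\nu_l\big)=\dbar\omega$, so that $\gamma_k=\omega\w(\dbar\omega)^{k-1}$. Two features of $\omega$ will be used: it is a $1$-form, so $\omega\w\omega=0$ (and more generally the wedge of any two of the auxiliary $1$-forms introduced below vanishes when they coincide); and, since the $\xi^i$ are $(1,0)$-vector fields in the $\zeta$-variables, $L_i\bar\eta_l=\xi^i(\bar\eta_l)=0$ and hence $L_i(d\bar\eta_l)=d(L_i\bar\eta_l)=0$, so any composition $L_T$ of such Lie derivatives acts only on the coefficients $\nu_l,\dbar\nu_l$ and leaves the factors $\bar\eta_l,d\bar\eta_l$ untouched. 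I will also use that $\pi$ is holomorphic, so $\pi^*$ maps smooth forms to smooth forms and commutes with $d$, and that $\pi^*\bar\eta_l=\bar\eta_0\bar\eta'_l$, whence $\pi^*(d\bar\eta_l)=\bar\eta'_l\,d\bar\eta_0+\bar\eta_0\,d\bar\eta'_l$. (For $\rho=0$ this already gives the lemma: $\pi^*\gamma_k=(\bar\eta_0\omega')\w\big(d\bar\eta_0\w\omega'+\bar\eta_0\dbar\omega'\big)^{k-1}$ with $\omega':=(\eta')^*\pi^*\nu$, and every term of the expansion containing $d\bar\eta_0$ contains the factor $\omega'\w\omega'=0$, so $\pi^*\gamma_k=\bar\eta_0^k\,\omega'\w(\dbar\omega')^{k-1}$.)

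For general $\rho$ I will first apply the Leibniz rule to $L:=L_1\cdots L_\rho$ acting on the $k$-fold product $\omega\w\dbar\omega\w\cdots\w\dbar\omega$, getting $L\gamma_k=\sum_\phi(L_{T_1}\omega)\w(L_{T_2}\dbar\omega)\w\cdots\w(L_{T_k}\dbar\omega)$, summed over maps $\phi\colon\{1,\dots,\rho\}\to\{1,\dots,k\}$ with $T_c:=\phi^{-1}(c)$ (no signs appear, the $L_i$ being even). The point of the reductions above is that $L_T\omega=\sum_l\bar\eta_l\,L_T\nu_l$ and $L_T\dbar\omega=\sum_l d\bar\eta_l\w L_T\nu_l+\sum_l\bar\eta_l\,L_T(\dbar\nu_l)$, so that after pullback
\[
\pi^*(L_T\omega)=\bar\eta_0\,P_T,\qquad \pi^*(L_T\dbar\omega)=d\bar\eta_0\w P_T+\bar\eta_0\,Q_T,
\]
with the \emph{same} smooth $1$-form $P_T:=\sum_l\bar\eta'_l\,\pi^*(L_T\nu_l)$ in both formulas and $Q_T$ a smooth $2$-form. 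Substituting this into the Leibniz expansion, multiplying out the products, and discarding the terms that contain $d\bar\eta_0$ more than once (which vanish since $(d\bar\eta_0)^2=0$), I obtain
\[
\pi^*(L\gamma_k)=\bar\eta_0^k\,\beta+\bar\eta_0^{k-1}\,d\bar\eta_0\w\Gamma,
\]
where $\beta:=\sum_\phi P_{T_1}\w Q_{T_2}\w\cdots\w Q_{T_k}$ is smooth and $\Gamma$ is a sum, with signs, over $\phi$ and over $c_0\in\{2,\dots,k\}$, of the forms $P_{T_1}\w P_{T_{c_0}}\w(\text{the wedge of the }Q_{T_c}\text{ for }c\neq 1,c_0)$.

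The crux — and the step I expect to be the only real difficulty — is to show $\Gamma=0$. For a fixed $c_0$ I will pair the index $\phi$ with the index $\phi'$ obtained by interchanging the blocks assigned to positions $1$ and $c_0$ (so $\phi'^{-1}(1)=\phi^{-1}(c_0)$, $\phi'^{-1}(c_0)=\phi^{-1}(1)$, and $\phi'^{-1}(c)=\phi^{-1}(c)$ otherwise). Under this pairing the $Q$-factors are unchanged and the two terms carry the same overall sign, while $P_{T_1}$ and $P_{T_{c_0}}$ are swapped; as these are $1$-forms, $P_{T_{c_0}}\w P_{T_1}=-P_{T_1}\w P_{T_{c_0}}$, so the two contributions cancel. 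The pairing is fixed-point free unless $T_1=T_{c_0}$, in which case $P_{T_1}\w P_{T_{c_0}}=P_{T_1}\w P_{T_1}=0$ anyway. Hence $\Gamma=0$ and $\pi^*(L\gamma_k)=\bar\eta_0^k\,\beta$ with $\beta$ smooth, which is the assertion. This roundabout route — expanding by Leibniz downstairs and pulling back term by term — is forced on us because $\pi^*$ does not commute with the $L_i$ (the lifted vector fields acquire poles along the exceptional locus); once the Leibniz expansion is organized by the involution above, the remaining sign-bookkeeping is routine.
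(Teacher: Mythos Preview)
Your argument is correct. The identifications $\gamma_k=\omega\w(\dbar\omega)^{k-1}$, the fact that each $L_i$ annihilates $\bar\eta_l$ and $d\bar\eta_l$, the resulting formulas $\pi^*(L_T\omega)=\bar\eta_0 P_T$ and $\pi^*(L_T\dbar\omega)=d\bar\eta_0\w P_T+\bar\eta_0 Q_T$, and the involution pairing that kills the $d\bar\eta_0$-terms all check out. (A small point you leave implicit: the $L_i$ need not commute, so $L_T$ means the composition in the fixed order inherited from $L_1\cdots L_\rho$; this is harmless since your argument only uses that $L_T$ ignores the $\bar\eta_l, d\bar\eta_l$ factors, which holds for any order.)

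The paper takes a different organizational route. It introduces auxiliary odd variables $e_1,\dots,e_n$ and sets $c_\ell:=\eta^*e\w((d\eta^*)e)^{\ell-1}$, so that $\gamma_k$ decomposes as a sum over $\ell$ of terms $(\nu e^*\neg)^\ell c_\ell\w(\eta^*\dbar\nu)^{k-\ell}$. The point is that $c_\ell$ carries all the $\bar\eta,d\bar\eta$ content, is annihilated outright by every $L_i$, and satisfies $\pi^*c_\ell=\bar\eta_0^\ell\,(\eta')^*e\w((d(\eta')^*)e)^{\ell-1}$; the cross-terms with $d\bar\eta_0$ die in one stroke because $((\eta')^*e)^2=0$ in the auxiliary exterior algebra. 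The remaining $k-\ell$ factors $\eta^*b$ each contribute a further $\bar\eta_0$. So where you run the full Leibniz expansion and cancel the $d\bar\eta_0$-terms by an explicit involution on the index set, the paper packages the same antisymmetry into the identity $((\eta')^*e)^2=0$ and thereby avoids any term-by-term sign tracking. Your approach is more elementary and self-contained; the paper's is shorter once the $e$-formalism is in place.
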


Recall that if $a$ is a form, then $L_\xi a = d(\xi\neg a)+ \xi\neg (d a)$, and that
$L_\xi (\beta\neg a)=[\xi,\beta]\neg a+\beta\neg (L_\xi a)$ if $\beta$ is another
vector field.

\begin{proof}
Introduce a  nonsense basis $e$ and its dual $e^*$ and consider the exterior
algebra spanned by $e_j, e^*_\ell,$ and the cotangent bundle.
Let
$$
c_\ell=\eta^*e\w ((d\eta^*) e)^{\ell-1}.
$$
Notice that $\gamma_k$ is a sum of terms like
$$
(\nu e^*\neg)^\ell c_\ell\w  (\eta^*\dbar\nu)^{k-\ell}.
$$
Since $L_i c_\ell=0$ and $L_i(\eta^* b)=\eta^*L_i b$
it follows after a finite number of applications of $L_i$'s that we get
$$
(\nu_1 e^*)\neg\cdots(\nu_\ell e^*)\neg c_\ell (\eta^*b_1)\cdots(\eta^*b_{k-\ell}),
$$
where $\nu_j$ and $b_j$ are smooth. Since
$$
\pi^* c_\ell = \bar\eta_0^\ell (\eta')^* e\w (d(\eta')^* e)^{\ell-1},
$$
the lemma now follows.
\end{proof}

We note that $\eta^*(I+A^*A)\eta = |\zeta-z|^2+|\psi(\zeta)-\psi(z)|^2$.
Thus, differentiating \eqref{smak} with respect to $w$, setting $w = 0$, and evaluating the residue with respect to $\tau$
using \eqref{spurt}, we obtain a sum of integrals like
$$
\int_\zeta\chi_\epsilon\frac{(\eta^* a_1)\cdots(\eta^* a_{t+1})\w \gamma_k\w \phi}{(\eta^*(I+A^*A)\eta)^{k+t+1}},
$$
where $a_1,\dots,a_{t+1}$ are column vectors of smooth functions. We must
prove that the limit of such integrals when $\epsilon\to 0$ are smooth in $z$.

\begin{lma} Let
$$
I_\ell^{r,s}=\int \chi_\epsilon \frac{(\eta^* a_1) \cdots (\eta^*a_r)\Ok(|\eta|^{2s}) \tilde\gamma_k\w\phi}{\Phi^{k+\ell}},
$$
where $a_1,\dots,a_r$ are tuples of smooth functions, $\tilde\gamma_k=L_1 \cdots L_\rho \gamma_k$, where
$L_i =L_{\xi_i}$ are Lie derivatives with respect to smooth $(1,0)$-vector fields $\xi^i$ as above for $i=1,\dots,\rho$,
$\phi$ is a test form with support close to $z$,
and $\Phi := \eta^*(I+A^*A)\eta$.
If $r\geq1$ and $r+s \geq \ell + 1$, then we have the relation
\begin{equation}\label{rekursion}
I^{r,s}_{\ell+1}=I^{r-1,s}_{\ell}+I^{r-1,s+1}_{\ell+1}+I^{r,s-1}_{\ell}+o(1)
\end{equation}
when $\epsilon\to 0$.
\end{lma}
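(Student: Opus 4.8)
The idea is to prove \eqref{rekursion} by integrating by parts in $\zeta$, trading one power of $\Phi$ in the denominator of $I^{r,s}_{\ell+1}$ for a $\zeta$-derivative via a variant of $\partial_{\zeta_j}(1/\Phi^{k+\ell})=-(k+\ell)(\partial_{\zeta_j}\Phi)/\Phi^{k+\ell+1}$; the factor $\eta^* a_r$, which is present precisely because $r\ge 1$, is what makes this possible.

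First I would record the algebraic identity that drives everything. Since $A$ is holomorphic, $M:=I+A^*A$ is a smooth Hermitian matrix with $M\ge I$, hence invertible with smooth inverse, and $\Phi=\eta^* M\eta=\sum_{a,b}\bar\eta_a M_{ab}\eta_b$. Because $\partial_{\zeta_j}\eta_i=\delta_{ij}$ and $\partial_{\zeta_j}\bar\eta_i=0$, one has $\partial_{\zeta_j}\Phi=(\eta^* M)_j+\eta^*(\partial_{\zeta_j}M)\eta$, the last term being smooth and $\Ok(|\eta|^2)$. Putting $b:=M^{-1}a_r$, a tuple of smooth functions, we get $\eta^* a_r=\eta^* Mb=\sum_j b_j(\eta^* M)_j=\sum_j b_j\partial_{\zeta_j}\Phi-E$ with $E:=\sum_j b_j\,\eta^*(\partial_{\zeta_j}M)\eta$ smooth and $\Ok(|\eta|^2)$, so that
\[
\frac{\eta^* a_r}{\Phi^{k+\ell+1}}=-\frac{1}{k+\ell}\sum_j b_j\,\partial_{\zeta_j}\!\Big(\frac{1}{\Phi^{k+\ell}}\Big)-\frac{E}{\Phi^{k+\ell+1}}.
\]

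Next I would substitute this into the integrand of $I^{r,s}_{\ell+1}$ and integrate by parts. The $E$-term is immediate: $E\cdot\Ok(|\eta|^{2s})=\Ok(|\eta|^{2(s+1)})$ with $E$ smooth, so it contributes a term of the form $I^{r-1,s+1}_{\ell+1}$. For the remaining term, all factors have compact support in $\zeta$, so Stokes' theorem moves $\partial_{\zeta_j}$ off $1/\Phi^{k+\ell}$ with no genuine boundary term except the one produced by $\chi_\epsilon$. If $\partial_{\zeta_j}$ falls on $b_j$, on a factor $\eta^* a_m$ with $m<r$ (which becomes $\eta^*(\partial_{\zeta_j}a_m)$, again $\eta^*$ times something smooth), on $\phi$, or on $\tilde\gamma_k$ — and $\partial_{\zeta_j}\tilde\gamma_k$ equals $L_1\cdots L_\rho\gamma_k$ with one further Lie derivative (along the $(1,0)$-field $\partial/\partial\zeta_j$) applied, so Lemma~\ref{gammak} still governs it — one gets a term of the form $I^{r-1,s}_\ell$. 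If $\partial_{\zeta_j}$ falls on $\Ok(|\eta|^{2s})$, then differentiating a smooth coefficient leaves order $2s$ (absorbed into $I^{r-1,s}_\ell$), while differentiating away an $\eta$-factor leaves an $\eta^*$-linear factor times a term of order $2(s-1)$, i.e.\ a term of the form $I^{r,s-1}_\ell$. Collecting everything, $I^{r,s}_{\ell+1}=I^{r-1,s}_\ell+I^{r-1,s+1}_{\ell+1}+I^{r,s-1}_\ell$ plus the $\chi_\epsilon$-boundary term; the constants and the precise choices of the $a$'s, the $\Ok(\cdot)$'s and the vector fields are immaterial, since $I^{r,s}_\ell$ denotes a class of integrals of the displayed shape.

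It remains to see that the boundary term, which up to a constant is $\int(\partial_{\zeta_j}\chi_\epsilon)\,\Phi^{-(k+\ell)}\,b_j\,(\eta^* a_1)\cdots(\eta^* a_{r-1})\,\Ok(|\eta|^{2s})\,\tilde\gamma_k\w\phi$, tends to $0$. Here $\partial_{\zeta_j}\chi_\epsilon=\chi'(|\eta|^2/\epsilon)\bar\eta_j/\epsilon$ is $\Ok(1/\epsilon)$ and supported where $c\epsilon\le|\eta|^2\le C\epsilon$, on which $\Phi\asymp|\eta|^2\asymp\epsilon$. I would pull back by a log resolution $\pi$ of the point $\{\zeta=z\}$ with $\pi^*\eta=\eta_0\eta'$ ($\eta_0$ holomorphic, $\eta'$ non-vanishing), on which $\pi^*\tilde\gamma_k=\bar\eta_0^k\beta$ with $\beta$ smooth by Lemma~\ref{gammak}; using $|\eta^* a_m|\lesssim|\eta_0|$ and $\Ok(|\eta|^{2s})=\Ok(|\eta_0|^{2s})$, the integrand is dominated by $\epsilon^{-1}|\eta_0|^{1+(r-1)+2s+k}|\eta_0|^{-2(k+\ell)}$ times the (positive power) Jacobian factor and a bounded smooth function. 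Integrating over $\{|\eta_0|^2\asymp\epsilon\}$ and over the compact fibre, the hypothesis $r+s\ge\ell+1$ together with $k\le n$ forces the resulting power of $\epsilon$ to be positive, so the boundary term is $o(1)$ and \eqref{rekursion} follows. I expect this final degree count — verifying that the boundary contribution is genuinely $o(1)$ precisely under $r+s\ge\ell+1$ — to be the main obstacle; the rest is bookkeeping around the identity above and Lemma~\ref{gammak}.
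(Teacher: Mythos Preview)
Your integration-by-parts scheme and the bookkeeping of how each $\partial_{\zeta_j}$ distributes over the factors is correct and essentially identical to the paper's argument (the paper packages the coordinate derivatives into a single Lie derivative $L_\xi$ with $\xi=a_r^t(I+A^*A)^{-t}\partial/\partial\zeta$, but that is cosmetic).

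The genuine gap is the treatment of the $\chi_\epsilon$-boundary term. Your claim that a crude absolute-value estimate on the resolution, together with ``$r+s\ge\ell+1$ and $k\le n$'', forces a positive net power of $\epsilon$ is false. After pullback the boundary integrand has the shape
\[
\chi'(|\eta_0|^2v/\epsilon)\,\frac{1}{\epsilon}\,\frac{\bar\eta_0^{\,r+s-\ell}}{\eta_0^{\,k+\ell-s}}\,\alpha,
\]
with $\alpha$ a smooth top form; there is no further ``Jacobian factor'' to harvest, since that is already absorbed in $\alpha$ (and Lemma~\ref{gammak} only guarantees $\bar\eta_0^k$ from $\tilde\gamma_k$). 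Taking $r=1$, $s=\ell$ (so $r+s=\ell+1$ exactly) and any $k\ge 1$, your modulus bound gives $\epsilon^{-1}|\eta_0|^{1-k}$ on $\{|\eta_0|^2\asymp\epsilon\}$, which after integrating over the annulus yields at best $O(1)$, not $o(1)$. The point is that the singular factor $1/\eta_0^{k+\ell-s}$ is not absolutely integrable; the vanishing relies on cancellation, not on size.

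The paper handles this by a principal-value trick: since $r+s-\ell-1\ge 0$, one may absorb one $\bar\eta_0$ and write
\[
\frac{|\eta_0|^2v}{\epsilon}\,\chi'(|\eta_0|^2v/\epsilon)=\tilde\chi(|\eta_0|^2v/\epsilon)-\chi(|\eta_0|^2v/\epsilon),
\]
where $\tilde\chi(t):=t\chi'(t)+\chi(t)$ is again an admissible cut-off. Both $\chi$ and $\tilde\chi$ compute the same principal value $\displaystyle\frac{1}{\eta_0^{k+\ell-s}}$ acting on the smooth form $\bar\eta_0^{\,r+s-\ell-1}\alpha/v$, hence their difference, which is exactly the boundary term, tends to $0$. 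This is where the hypothesis $r+s\ge\ell+1$ is actually used: it ensures $\bar\eta_0^{\,r+s-\ell-1}$ is smooth so that the principal value argument applies.
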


\begin{proof}
If
$$
\xi= a_r^t (I+A^*A)^{-t}\frac{\partial}{\partial\zeta},
$$
and $L = L_\xi$, then
using that $\Phi = \eta^t (I+A^* A)^t \bar{\eta}$, one obtains that
\begin{equation}\label{svan1}
L\Phi=\eta^* a_r +\Ok(|\eta|^2).
\end{equation}
Thus
$$
I_{\ell+1}^{r,s}=\int \chi_\epsilon (\eta^* a_1) \cdots (\eta^* a_{r-1}) \Ok(|\eta|^{2s}) \tilde\gamma_k\w\phi
L \frac{1}{\Phi^{k+\ell}}+I_{\ell+1}^{r-1,s+1}
$$
in view of \eqref{svan1}.
We now integrate by parts by $L$ in the integral.  If a derivative with respect to
$\zeta_j$ falls on some $\eta^* a_i$, we get a term $I^{r-1,s}_\ell$. If it falls on
$\Ok(|\eta|^{2s})$ we get either $\Ok(|\eta|^{2(s-1)})$ times $\eta^* b$, for some tuple $b$ of smooth functions,
and this gives rise to the term $I^{r,s-1}_{\ell}$ or $\Ok(|\eta|^{2s})$, and this gives rise to another term $I^{r-1,s}_\ell$.
If it falls on $\phi$ or $\tilde{\gamma}_k$ we get an additional term
$I^{r-1,s}_\ell$.  The only possibility left is when the derivative falls on $\chi_\epsilon=
\chi(|\eta|^2/\epsilon)$.  It remains to show that an integral of the form
$$
\int_{\zeta,z}\chi'(|\eta|^2/\epsilon)\frac{(\eta^*a_1) \cdots (\eta^*a_{r-1}) (\eta^*b)}{\epsilon}
\frac{\Ok(|\eta|^{2s}) \gamma_k\w\phi}{\Phi^{k+\ell}}
$$
tends to $0$, where the factor $\eta^* b$ comes from the derivative of $|\eta|^2$.
We now choose a resolution $\widetilde V\to \Omega\times\Omega$ such that
$\eta=\eta_0\eta'$ where $\eta'$ is non-vanishing and $\eta_0$ is
(locally) a monomial.  Notice that
$\pi^*\Phi=|\eta_0|^2\Phi'$ where $\Phi'$ is smooth and strictly positive.
In view of Lemma~\ref{gammak}
we thus obtain integrals of the form
\begin{equation} \label{chiprimbar}
\int_{\widetilde  V}\chi'(|\eta_0|^2v/\epsilon) \frac{1}{\epsilon} \frac{\bar{\eta}_0^{r+s-\ell}}{\eta_0^{k+\ell-s}} \alpha,
\end{equation}
where $v$ is smooth and strictly positive and $\alpha$ is smooth.

In order to see that the limit of \eqref{chiprimbar} tends to $0$, we note first that if we let
\begin{equation*}
    \tilde{\chi}(s) = s \chi'(s) + \chi(s),
\end{equation*}
then just as $\chi$, $\tilde{\chi}$ is also a smooth function on $[0,\infty)$ that is $0$ in a \nbh of $0$ and $1$ in a \nbh of $\infty$.
By assumption, $r+s-\ell-1 \geq 0$.
Since the principal value current $1/f^m$ acting on a test form $\beta$
can be defined as
\begin{equation*}
    \lim_{\epsilon \to 0^+} \int \chi(|f|^2 v/\epsilon) \frac{\beta}{f^m}
\end{equation*}
for any cut-off function as above, the principal value current $1/\eta_0^{k+\ell-s}$ acting on $\bar{\eta}_0^{r+s-\ell-1} \alpha$ equals
\begin{equation*}
    \lim_{\epsilon \to 0^+} \int_{\widetilde  V}\chi(|\eta_0|^2v/\epsilon) \frac{\bar{\eta}_0^{r+s-\ell-1}}{\eta_0^{k+\ell-s}} \alpha =
    \lim_{\epsilon \to 0^+} \int_{\widetilde  V}\tilde{\chi}(|\eta_0|^2v/\epsilon) \frac{\bar{\eta}_0^{r+s-\ell-1}}{\eta_0^{k+\ell-s}} \alpha.
\end{equation*}
Taking the difference between the left and right hand side, we conclude that \eqref{chiprimbar} tends to $0$
when $\epsilon \to 0$.
\end{proof}

Now we can conclude the proof of Proposition~\ref{glatt}.
From the beginning we have  $I_{\ell}^{\ell,0}$. After repeated applications of \eqref{rekursion}
we end up with
$$
I_\ell^{0,\ell}+I_{\ell-1}^{0,\ell-1}+\cdots + I_0^{0,0} +o(1).
$$
However,
any of these integrals has an integrable kernel even when $\epsilon=0$.
This means that we are back to the case in \cite[Lemma~6.2]{AS}, and so the limit integral  is
smooth in $z$.
\end{proof}

\section{A fine resolution of $\Ok_X$} \label{fine}

We first consider a generalization of  Theorem~\ref{klas0}.

\begin{lma}\label{neptun}
Assume that $\phi\in\W^{0,k}(X)\cap\E_X^{0,k}(X_{reg})\cap\Dom\dbar_X$ and that
$K\phi$ is in $\Dom\dbar_X$ (or just in $\Dom\dbar$).
Then
\eqref{koppelman-statement} holds on $X'$.
\end{lma}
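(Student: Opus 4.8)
The plan is to imitate the proof of Theorem~\ref{klas0}, applied to the smooth truncations $\chi_\delta\phi$, and pass to the limit. As there, I first reduce \eqref{koppelman-statement} to the identity
\begin{equation*}
\phi\w\omega=-\nabla_f(K\phi\w\omega)+K(\dbar\phi)\w\omega+P\phi\w\omega,
\end{equation*}
valid for every structure form $\omega$, which I will call $(\ast)$; here the products are defined by Remark~\ref{potta}, Corollary~\ref{cor:extendW0} and Lemma~\ref{KPlemma}. Granting $(\ast)$, one contracts with a holomorphic section $\xi_0$ of $E_p^*$ with $f^*_{p+1}\xi_0=0$; since $\xi_0\omega_0$ then runs over all of $\Ba^n_X$ by Proposition~\ref{takyta}, and the $E_p$-component of $-\nabla_f(K\phi\w\omega)$ contracted with $\xi_0$ is $\dbar(K\phi\w\xi_0\omega_0)=\dbar K\phi\w\xi_0\omega_0$ (using $f^*_{p+1}\xi_0=0$ and $K\phi\in\Dom\dbar$), one obtains \eqref{koppelman-statement} exactly as at the end of the proof of Theorem~\ref{klas0}. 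Only $K\phi\in\Dom\dbar$, not $\Dom\dbar_X$, is needed here, which accounts for the parenthetical in the statement.

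For $(\ast)$ I would not multiply the ambient identity \eqref{pulka0} by the current $\phi$, since the diagonal term $[\Delta]'\w R(z)\w dz$ cannot be multiplied by a general current. Instead, set $\chi_\delta=\chi(|h|^2/\delta)$ with $h$ cutting out $X_{sing}$. Since $\phi\in\E_X^{0,k}(X_{reg})$ and $\chi_\delta$ vanishes near $X_{sing}$, the form $\chi_\delta\phi$ extends by zero to an element of $\E^{0,k}(X)$, so $(\ast)$ for $\chi_\delta\phi$ in place of $\phi$ is exactly \eqref{koppelman-proof}, already proved for smooth forms. The plan is then to let $\delta\to0$ in $(\ast)$ for $\chi_\delta\phi$. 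Multiplication by $\chi_\delta$ tends to $\1_{X\setminus X_{sing}}$, which acts as the identity on currents with the SEP; hence $\chi_\delta\phi\to\phi$ and $\chi_\delta\dbar\phi\to\dbar\phi$ in $\W^{0,*}(X)$ (note $\dbar\phi\in\W^{0,*}(X)$ as $\phi\in\Dom\dbar$), and $\chi_\delta\phi\w\omega\to\phi\w\omega$. Writing $\dbar(\chi_\delta\phi)=\chi_\delta\dbar\phi+\dbar\chi_\delta\w\phi$, the crucial term is $\dbar\chi_\delta\w\phi$, which tends to $0$ in $\W^{0,*}(X)$ — and this is precisely the condition \eqref{bcond} of Lemma~\ref{dbarchi} that holds because $\phi\in\Dom\dbar_X$. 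Together with the continuity of $K$, $P$, of $\psi\mapsto\psi\w\omega$ and of $\nabla_f$ for this mode of convergence, every term of $(\ast)$ for $\chi_\delta\phi$ converges to the corresponding term for $\phi$, giving $(\ast)$.

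The routine-but-essential work, which I expect to be the main obstacle, is justifying these passages to the limit: checking that each product in $(\ast)$ for $\chi_\delta\phi$ is obtained from $\phi$ by the operations of Lemma~\ref{KPlemma} (pushforward of a tensor product, wedged with a fixed almost semi-meromorphic kernel); that each such operation commutes with multiplication by the scalar cutoff $\chi_\delta$; and that the resulting sequences converge in the pseudomeromorphic sense, using continuity of $\pi_*$, of tensor products of pseudomeromorphic currents, and of almost semi-meromorphic multiplication (Proposition~\ref{ball}), together with the SEP as above. Once this bookkeeping is in place, $(\ast)$ for $\phi$ follows and the contraction with $\xi_0$ concludes the proof as in Theorem~\ref{klas0}.
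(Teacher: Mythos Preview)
Your overall plan---approximate $\phi$ by $\chi_\delta\phi$, apply the smooth Koppelman formula, and pass to the limit---matches the paper's. The gap is in the passage to the limit for the term $K(\dbar\chi_\delta\w\phi)\w\omega$. You assert that $\dbar\chi_\delta\w\phi\to 0$ ``in $\W^{0,*}(X)$'' (which is correct, by \eqref{bcond}) and then invoke ``continuity of $K$''. But $K$ involves multiplication by the almost semi-meromorphic kernel $\vartheta(B\w g\w H)$, and multiplication by an almost semi-meromorphic current is \emph{not} continuous for weak convergence of pseudomeromorphic currents; Proposition~\ref{ball} gives no such statement. Concretely, unwinding the definition one finds
\[
K(\dbar\chi_\delta\w\phi)\w R(z)\w dz=\pm\,\pi_*\big(\dbar\chi_\delta(\zeta)\w S\big),\qquad
S:=\vartheta(B\w g\w H)\w i_*(\phi\w\omega)(\zeta)\otimes R(z)\w dz,
\]
and $\dbar\chi_\delta(\zeta)\w S\to\1_{X_{sing}\times\Omega'}(\dbar S)$, which you have not shown to vanish. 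Another symptom: your only use of the hypothesis $K\phi\in\Dom\dbar$ is in the contraction step, where it is actually automatic (as in the proof of Theorem~\ref{klas0}); if your argument for $(\ast)$ were complete, the hypothesis would be superfluous.

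The paper avoids this difficulty by working at the level of $h\in\Ba^n_X$ and proceeding in two stages. First, it establishes the identity only on $X'_{reg}$: for $z$ in a fixed compact of $X'_{reg}$ and $\delta$ small, $\supp\dbar\chi_\delta$ is a neighborhood of $X_{sing}$ disjoint from a neighborhood of $z$, so $B$ is smooth on the relevant set and $K(\dbar\chi_\delta\w\phi)\w h\to 0$ follows directly from $R(\zeta)\w d\zeta\w\dbar\chi_\delta\w\phi(\zeta)\to 0$ (which is \eqref{bcond}). Second, having the identity on $X'_{reg}$, the paper multiplies by a fresh $\chi_\delta$ and uses the hypothesis $K\phi\in\Dom\dbar$ (or $\Dom\dbar_X$) to show $\dbar\chi_\delta\w K\phi\w h\to 0$, which is precisely what is needed to extend across $X'_{sing}$ by SEP. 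This second step is where the assumption on $K\phi$ is genuinely used.
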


\begin{proof} Let $\chi_\delta$ be functions as before that cut away $X_{sing}$. From Koppelman's formula
\eqref{koppelman-statement} for smooth forms we have
\begin{equation}\label{pojke}
\chi_\delta\phi\w h=\dbar(K (\chi_\delta\phi))\w h
+ K(\chi_\delta\dbar\phi)\w h+  P(\chi_\delta \phi)\w h  + K(\dbar\chi_\delta\w\phi)\w h,   \ \  h\in  \Ba_X^n,
\end{equation}
for $z\in X'_{reg}$. Clearly the left hand side tends to
$\phi\w h $ when $\delta \to 0$.
From Lemma~\ref{KPlemma} it follows that $K(\chi_\delta \phi) \w h\to  K\phi\w h$.
Thus the first term on the right hand side of \eqref{pojke} tends to
$\dbar (K\phi)\w h$.
In the same way the second and third terms on the right hand side
tend to $K(\dbar\phi)\w h$ and $ P\phi \w h$, respectively.
It remains to show
that the last term tends to $0$. If $z$ belongs to a fixed compact subset of $X_{\rm reg}'$,
then $B$ is smooth in \eqref{K0}  when $\zeta$ is in
$\supp \dbar \chi_\delta$ for small $\delta$.
Hence it suffices to see that
$$R(\zeta) \w d\zeta \w \dbar \chi_\delta \w \phi(\zeta) \w i_*h \to 0,$$
and since this is a tensor product of currents, it suffices to see that
$$
R(\zeta) \w d\zeta \w \dbar \chi_\delta \w \phi(\zeta) \to 0,
$$
or equivalently,
$\omega(\zeta) \w \dbar \chi_\delta \w \phi(\zeta) \to 0,$
which follows by Lemma~\ref{dbarchi} since $\phi$ is in $\Dom \dbar_X$.  We have thus proved that
$$
\chi_\delta \phi\w h =\chi_\delta\dbar(K\phi)\w h+\chi_\delta K(\dbar\phi)\w h+
\chi_\delta  P\phi\w h.
$$
The first term on the right hand side is equal to $\dbar(\chi_\delta K\phi\w h)-
\dbar\chi_\delta \w K\phi\w h$, where the latter term tends to $0$ if $K\phi$
is in $\Dom\dbar_X$ or just in $\Dom\dbar$, cf.\  Lemma~\ref{dbarchi}.  Thus we get
$$
\phi\w h= \dbar( K\phi) \w h+  K(\dbar\phi)\w h+P\phi\w h, \ \ h\in  \Ba_X^n,
$$
which precisely means that \eqref{koppelman-statement} holds.
 \end{proof}


\begin{df} We say that a $(0,q)$-current $\phi$ on an open set $\U\subset X$ is a section of
$\A_X^q$ over $\U$, $\phi\in\A^q(\U)$, if, for every $x\in\U$, the germ $\phi_x$ can be written
as a finite sum of terms
$$
\xi_\nu\w K_\nu(\cdots \xi_2\w K_2(\xi_1\w K_1(\xi_0))),
$$
where $\xi_j$ are smooth $(0,*)$-forms
and $K_j$ are integral operators with kernels $k_j(\zeta,z)$ at $x$, defined as above,
and such that $\xi_j$ has compact support in the set where $z\mapsto k_j(\zeta,z)$ is
defined.
\end{df}

Clearly $\A_X^*$ is closed under multiplication by $\xi$ in $\E_X^{0,*}$.
It follows from \eqref{batman}  that $\A_X^*$ is a subsheaf of $\W_X^{0,*}$ and
from Theorem \ref{klas00}
that $\A_X^k=\E_X^{0,*}$ on $X_{reg}$.
Clearly any operator $K$ as above maps $\A_X^{*+1}\to\A_X^*$.

\begin{lma}\label{neptun2}
If $\phi$ is in $\A_X$, then $\phi$ and $K\phi$ are in $\Dom\dbar_X$.
\end{lma}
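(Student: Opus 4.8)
The plan is to argue by induction on the number of integral operators $K_j$ appearing in a representation of $\phi\in\A_X$, with the weight of the argument resting on a Koppelman-type identity for currents lying in $\Dom\dbar_X$. First note that $\Dom\dbar_X$ is closed under addition (as $\nabla_f$ is linear) and, by Example~\ref{plastpase}, under multiplication by smooth forms. By definition a section of $\A_X$ is a finite sum of terms $\xi\w K(\eta)$ with $\xi\in\E^{0,*}_X$, where $\eta$ is either a smooth form or again a section of $\A_X$ built from strictly fewer operators. Hence, once we know the single statement \textbf{(K)}: \emph{if $\psi$ is a section of $\A_X$ (or a smooth form) lying in $\Dom\dbar_X$, then $K\psi\in\Dom\dbar_X$,} the lemma follows by induction on the number of operators: smooth forms lie in $\Dom\dbar_X$ by Example~\ref{plastpase} and $K(\text{smooth})\in\Dom\dbar_X$ by Theorem~\ref{klas0}, and in the inductive step $\phi=\sum\xi\w K(\eta)$ with each $\eta\in\Dom\dbar_X$ by the inductive hypothesis, hence each $K(\eta)\in\Dom\dbar_X$ by \textbf{(K)}, hence $\phi\in\Dom\dbar_X$; finally $K\phi\in\Dom\dbar_X$ by applying \textbf{(K)} to $\phi$ itself.

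To prove \textbf{(K)}, let $\psi\in\A_X\cap\Dom\dbar_X$. Since $\A_X$ coincides with $\E^{0,*}_X$ on $X_{reg}$ (Theorem~\ref{klas00}), $\psi$ is a current in $\W^{0,*}(X)$ that is smooth on $X_{reg}$; likewise $K\psi\in\W^{0,*}(X')$ by \eqref{batman} and is smooth on $X'_{reg}$ by Theorem~\ref{klas00}, hence lies in $\Dom\dbar_X$ over $X'_{reg}$ by Example~\ref{plastpase}. By Lemma~\ref{dbarchi}, together with the characterisation used in the proof of Theorem~\ref{klas0} that a section $v$ of $\W^{0,*}_X$ lies in $\Dom\dbar_X$ if and only if $-\nabla_f(v\w\omega)\in\W^{n,*}_X$ for every structure form $\omega$, it therefore suffices to show that $-\nabla_f(K\psi\w\omega)\in\W^{n,*}_X$ for each such $\omega$. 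I shall obtain this from the identity
\begin{equation*}
\psi\w\omega=-\nabla_f(K\psi\w\omega)+K(\dbar\psi)\w\omega+P\psi\w\omega ,
\end{equation*}
where the products of $\W^{0,*}_X$-currents with $\omega$ are those of Remark~\ref{potta} and Corollary~\ref{cor:extendW0}: granting it, the terms $\psi\w\omega$ (as $\psi\in\W^{0,*}_X$), $K(\dbar\psi)\w\omega$ (since $\dbar\psi\in\W^{0,*}_X$, so $K(\dbar\psi)\in\W^{0,*}(X')$), and $P\psi\w\omega$ (with $P\psi$ smooth by Proposition~\ref{pglatt}) all lie in $\W^{n,*}_X$, whence so does $-\nabla_f(K\psi\w\omega)$.

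The identity is obtained by regularisation, as in the proof of Lemma~\ref{neptun}. Let $\chi_\delta$ cut away $X_{sing}$. Since $\psi$ is smooth on $X_{reg}$ and $\chi_\delta$ vanishes near $X_{sing}$, the form $\chi_\delta\psi$ is smooth on $X$, so \eqref{koppelman-proof} gives
\begin{equation*}
\chi_\delta\psi\w\omega=-\nabla_f\big(K(\chi_\delta\psi)\w\omega\big)+K\big(\dbar(\chi_\delta\psi)\big)\w\omega+P(\chi_\delta\psi)\w\omega .
\end{equation*}
Let $\delta\to0$: the left side tends to $\psi\w\omega$ by the SEP of $\psi\w\omega\in\W^{n,*}_X$; by Lemma~\ref{KPlemma}, $K(\chi_\delta\psi)\to K\psi$ and $P(\chi_\delta\psi)\to P\psi$, so the first and third terms on the right converge to $-\nabla_f(K\psi\w\omega)$ and $P\psi\w\omega$; and, splitting $\dbar(\chi_\delta\psi)=\chi_\delta\dbar\psi+\dbar\chi_\delta\w\psi$, the piece $K(\chi_\delta\dbar\psi)\w\omega$ tends to $K(\dbar\psi)\w\omega$ (again Lemma~\ref{KPlemma}), while the boundary term $K(\dbar\chi_\delta\w\psi)\w\omega$ tends to $0$: away from the diagonal the Bochner--Martinelli kernel entering $K$ is smooth, so this reduces to $\dbar\chi_\delta\w\psi\w\omega\to0$, which holds by \eqref{bcond} because $\psi\in\Dom\dbar_X$. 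Passing to the limit yields the displayed identity.

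The genuinely delicate step is this last limit — the vanishing of $K(\dbar\chi_\delta\w\psi)\w\omega$ and, with it, the passage of the identity from $X'_{reg}$ to all of $X'$ — since multiplication by the almost semi-meromorphic Bochner--Martinelli kernel is not weakly continuous. This is handled exactly as in the proof of Lemma~\ref{neptun}: one first works over $X'_{reg}$, where on the (shrinking) support of $\dbar\chi_\delta$ the kernel is smooth so that ordinary weak continuity applies, and then propagates the conclusion to all of $X'$ via the SEP of $\W^{0,*}_X$ and $\W^{n,*}_X$ — using also that sections of $\Ba^n_X$ are $\dbar$-closed with the SEP, so that $\dbar\chi_\delta\w h\to0$ for $h\in\Ba^n_X$.
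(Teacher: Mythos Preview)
Your inductive strategy is appealing, but there is a genuine gap at the step you yourself flag as ``delicate'': the propagation of the Koppelman identity from $X'_{reg}$ to all of $X'$ is circular.

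Concretely, from \eqref{koppelman-proof} applied to $\chi_\delta\psi$ you correctly obtain an identity of currents on all of $X'$, and letting $\delta\to 0$ you can indeed show that $\chi_\delta\psi\w\omega\to\psi\w\omega$, that $K(\chi_\delta\psi)\w\omega\to K\psi\w\omega$ (using \eqref{korp} to commute $\chi_\delta$ past the almost semi-meromorphic kernel, then SEP in $\W^{Z\times Z'}$), and similarly for the $P$ and $K(\chi_\delta\dbar\psi)$ terms. Hence the limit $S:=\lim_\delta K(\dbar\chi_\delta\w\psi)\w\omega$ exists on $X'$ and satisfies
\[
S=\psi\w\omega+\nabla_f(K\psi\w\omega)-K(\dbar\psi)\w\omega-P\psi\w\omega.
\]
Your argument on $X'_{reg}$ shows $S=0$ there. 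But to conclude $S=0$ on $X'$ you would need $\1_{X'_{sing}}\dbar(K\psi\w\omega)=0$, since the remaining terms in $S$ are already in $\W^{n,*}_X$ and thus have the SEP. That vanishing is exactly the statement that $K\psi\in\Dom\dbar_X$, which is what you are trying to prove. The appeal to the method of Lemma~\ref{neptun} does not help: that lemma passes from $X'_{reg}$ to $X'$ precisely by \emph{assuming} $K\phi\in\Dom\dbar_X$ (or $\Dom\dbar$) in order to kill $\dbar\chi_\delta\w K\phi\w h$, so invoking it here is circular. Likewise, knowing that each $K(\dbar\chi_\delta\w\psi)\w\omega$ lies in $\W^{n,*}_X$ does not let you conclude the limit does, since $\W^{n,*}_X$ is not closed under weak limits.

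The paper bypasses this circularity entirely. It does not attempt to bootstrap from the Koppelman identity; instead it verifies condition~\eqref{bcond} directly for \emph{every} $\phi\in\A_X$ (which includes $K\phi$, since $\A_X$ is closed under $K$). The substance is \cite[Lemma~6.4]{AS}, whose proof carries over verbatim: it uses the iterated-kernel structure of $\phi\in\A_X$ together with the dimension principle and \eqref{stare2} for pseudomeromorphic $(n,*)$-currents to show that $\1_{X_{sing}}$ of the relevant current vanishes, whence $\dbar\chi_\delta\w\phi\w\omega\to 0$. This dimension-counting argument is the missing ingredient; your induction cannot manufacture it from \eqref{bcond} for $\psi$ alone, because multiplication by the almost semi-meromorphic Bochner--Martinelli kernel does not preserve weak convergence across $X'_{sing}$.
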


\begin{proof} Notice that \cite[Lemma~6.4]{AS} holds in our case by verbatim
the same proof, since we have access to the dimension principle for
(tensor products of) \pmm $(n,*)$-currents, and the computation  rule \eqref{stare2},
cf.\ the comment after Definition~\ref{polestar}.
Since $\A_X^* = \E^{0,*}_X$ on $X_{\rm reg}$ it is enough by Lemma~\ref{dbarchi} to check that
$
\dbar\chi_\delta\w\omega\w\phi\to 0,
$
and this precisely follows from \cite[Lemma~6.4]{AS}.
\end{proof}

In view of Lemmas \ref{neptun} and \ref{neptun2} we have

\begin{prop} Let $K,P$ be integral operators as in Theorem~\ref{klas0}. Then
$$
K\colon\A^{k+1}(X)\to\A^k(X'),\quad  P\colon\A^k(X)\to \E^{0,k}(X'),
$$
 and
the Koppelman formula \eqref{koppelman-statement} holds.
\end{prop}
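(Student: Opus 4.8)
The plan is to verify the three assertions in turn, drawing almost entirely on the machinery already set up. \emph{First}, that $K$ maps $\A^{k+1}(X)$ into $\A^k(X')$: this is essentially the definition of $\A_X^*$. Indeed, if $\phi$ is a germ in $\A_X^{k+1}$ at a point, it is a finite sum of terms $\xi_\nu\w K_\nu(\cdots\xi_1\w K_1(\xi_0))$; applying $K$ (with a cut-off $\xi$ that is $\equiv 1$ near the point and supported where the kernel of $K$ is defined) to each such term produces again a term of the prescribed shape, so $K\phi\in\A^k(X')$. One should note here that the composition $K\circ(\text{mult.\ by }\xi)$ makes sense since $K\colon\W^{0,*+1}(X)\to\W^{0,*}(X')$ by \eqref{batman} and $\A_X^*\subset\W_X^{0,*}$, and that $\xi\w K\phi$ lies in $\A$ by construction; no analytic work is needed.

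\emph{Second}, that $P$ maps $\A^k(X)$ into $\E^{0,k}(X')$: since $\A_X^k\subset\W^{0,k}(X)$, this is immediate from Proposition~\ref{pglatt}, which asserts precisely that $P\phi\in\E^{0,k}(X')$ for any $\phi\in\W^{0,k}(X)$. \emph{Third}, and this is the only point with genuine content, that the Koppelman formula \eqref{koppelman-statement} holds for $\phi\in\A^k(X)$. The idea is to invoke Lemma~\ref{neptun}, whose hypotheses are: $\phi\in\W^{0,k}(X)\cap\E_X^{0,k}(X_{reg})\cap\Dom\dbar_X$, and $K\phi\in\Dom\dbar_X$ (or just $\Dom\dbar$). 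Each of these is available for $\phi\in\A^k(X)$: membership in $\W^{0,k}(X)$ is noted after the definition of $\A_X^*$; coincidence with $\E_X^{0,k}$ on $X_{reg}$ is the content of Theorem~\ref{klas00} together with the displayed remark that $\A_X^k=\E_X^{0,*}$ on $X_{reg}$; and both $\phi\in\Dom\dbar_X$ and $K\phi\in\Dom\dbar_X$ are exactly Lemma~\ref{neptun2}. So I would simply write: ``Let $\phi$ be in $\A^k(X)$. By the discussion following the definition of $\A_X^*$, $\phi$ is in $\W^{0,k}(X)$, and by Theorem~\ref{klas00} it coincides with a smooth form on $X_{reg}$. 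By Lemma~\ref{neptun2}, $\phi$ and $K\phi$ are in $\Dom\dbar_X$. Hence the hypotheses of Lemma~\ref{neptun} are satisfied and \eqref{koppelman-statement} holds on $X'$.''

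The only thing that requires a sentence of care is the bookkeeping for $P$: one must check $P$ lands in $\E^{0,k}$ and not merely in $\W^{0,k}$, but that is exactly Proposition~\ref{pglatt}, so there is nothing to prove. Likewise one should make explicit that ``$K\colon\A^{k+1}(X)\to\A^k(X')$'' uses the stability of $\A_X^*$ under multiplication by smooth forms and under the operators $K$, both of which are recorded right after the definition of $\A_X^*$. \emph{The main (indeed only) obstacle} is purely organizational: making sure that the hypotheses of Lemma~\ref{neptun} are each matched to the right previously-established fact (Theorem~\ref{klas00} for smoothness on $X_{reg}$, Lemma~\ref{neptun2} for the two $\Dom\dbar_X$ conditions), since the analytic substance has all been discharged earlier. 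Concretely, the proof I would write is:

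\begin{proof}
Since $\A^{*}_X$ is closed under multiplication by sections of $\E^{0,*}_X$ and under the operators $K$, it is clear that $K$ maps $\A^{k+1}(X)$ into $\A^k(X')$. Moreover, $\A^k_X\subset\W^{0,k}_X$, so $P$ maps $\A^k(X)$ into $\E^{0,k}(X')$ by Proposition~\ref{pglatt}.

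It remains to prove that \eqref{koppelman-statement} holds. Let $\phi$ be in $\A^k(X)$. Then $\phi$ is in $\W^{0,k}(X)$, and $\phi$ is in $\E^{0,k}_X(X_{reg})$ by Theorem~\ref{klas00} and the fact that $\A^k_X=\E^{0,k}_X$ on $X_{reg}$. By Lemma~\ref{neptun2}, $\phi$ and $K\phi$ are in $\Dom\dbar_X$. Hence the hypotheses of Lemma~\ref{neptun} are satisfied, and therefore \eqref{koppelman-statement} holds on $X'$.
\end{proof}
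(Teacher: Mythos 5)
Your proof is correct and follows the same route as the paper, which simply deduces the proposition from Lemmas~\ref{neptun} and \ref{neptun2} (together with the remarks after the definition of $\A_X^q$ and Proposition~\ref{pglatt} for the mapping properties of $K$ and $P$). Your version just makes the bookkeeping of which hypothesis of Lemma~\ref{neptun} is supplied by which earlier result explicit.
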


\begin{proof}[Proof of Theorem~\ref{main}]
    By definition, it is clear that $\A_X^k$ are modules over $\E^{0,k}_X$,
    and by Theorem~\ref{klas00}, $\A_X^k$ coincides with $\E^{0,k}_X$ on $X_{\rm reg}$.
    Since we have access to Koppelman formulas, precisely as in the proof of \cite[Theorem~1.2]{AS}
    we can verify that $\dbar\colon \A_X^k\to\A_X^{k+1}$.

    It remains to prove that \eqref{dolb1} is exact. We choose locally a weight $g$ that is holomorphic in $z$,
    so the term $P\phi$ vanishes if $\phi$ is in $\A_X^k$, $k\ge 1$, and is holomorphic in $z$ when $k=0$.
    Assume that $\phi$ is in $\A_X^k$ and $\dbar\phi=0$. If $k\ge 1$, then
    $\dbar K\phi=\phi$, and if $k=0$, then $\phi=P\phi$.
\end{proof}

\subsection{Global solvability}
Assume that $E\to X$ is a holomorphic vector bundle; this means that
the transition matrices have entries in $\Ok_X$.
For instance if we have a global embedding $i\colon X\to \Omega$ and a holomorphic vector bundle $F\to \Omega$, then $F$ defines a  vector bundle $i^*F\to X$.
The sheaves $\A_X^{*}(E)$ give rise to a fine
resolution of the sheaf $\Ok_X(E)$, and by standard homological algebra
we have the isomorphisms
$$
H^q(X,\Ok(E))=\frac{\Ker(\A^q(X,E)\stackrel{\dbar}{\to}\A^{q+1}(X,E))}
{\Image(\A^{q-1}(X,E)\stackrel{\dbar}{\to}\A^{q}(X,E))},   \quad q\ge 1.
$$
Thus, if $\phi\in\A^{q+1}(X,E)$, $\dbar\phi=0$,  and its canonical cohomology class vanishes, then
the equation $\dbar\psi=\phi$ has a global solution in $\A^q(X,E)$. In particular, the equation is
always solvable if $X$ is Stein.
If for instance $X$ is a pure-dimensional projective variety $i\colon X\to \Pk^N$, then
the $\dbar$-equation is
solvable, e.g.,  if $E$ is a sufficiently ample line bundle.

\section{Locally complete  intersections}\label{fullst}

Let us consider the special case when $X$ locally is a complete intersection, i.e., given
a local embedding $i\colon X\to \Omega\subset\C^N$ there  are
global sections  $f_j$ of $\Ok(d_j)\to \Pk^N$
such that
$\J=(f_1,\ldots,f_p)$, where  $p=N-n$.  In particular, $Z=\{f_1=\cdots=f_p=0\}$.
In this case $\Homs(\Ok_\Omega/\J,\CH_\Omega)$ is generated by the single current
$$
\mu=\dbar\frac{1}{f_p}\w\cdots\w\dbar\frac{1}{f_1}\w dz_1\w\cdots\w dz_N,
$$
see, e.g., \cite{Aext}.  Each
smooth $(0,q)$-form  $\phi$ in $\E_X^{0,q}$ is thus represented by a current
$\Phi\w\mu$, where $\Phi$ is smooth in a \nbh of $Z$ and $i^*\Phi=\phi$.
Moreover, $X$ is Cohen-Macaulay so  $X_{reg}$ coincides with the part of $X$ where
$Z$ is regular, and $\dbar\phi=\psi$ if and only if
$\dbar(\phi\w\mu)=\psi\w\mu$.

\smallskip
Henkin and Polyakov introduced, see \cite[Definition~1.3]{HePo2},
the notion of {\it residual currents
$\phi$ of bidegree $(0,q)$} on a locally complete intersection $X\subset\Pk^N$, and the $\dbar$-equation
$\dbar\psi=\phi$. Their currents $\phi$ correspond to our $\phi$ in $\E_X^{0,q}$ and
their $\dbar$-operator on such currents coincides with ours.

\begin{remark} In \cite{HePo3} Henkin and Polyakov consider a global reduced complete intersection
 $X\subset\Pk^N$. They prove, by a global explicit formula, that if $\phi$ is a global
$\dbar$-closed smooth $(0,q)$-form with values in $\Ok(\ell)$,
$\ell= d_1+\cdots d_p  -N-1$,  then there is
a smooth solution to $\dbar\psi=\phi$ at least on $X_{reg}$, if
$1\le q\le n-1$. When $q=n$ a necessary obstruction term occurs.
However, their meaning of
``$\dbar$-closed'' is that locally there is a representative $\Phi$ of $\phi$ and  smooth
$g_j$ such that $\dbar\Phi=g_1f_1+\cdots +g_pf_p$.  If this holds, then clearly $\dbar\phi=0$.
The converse implication is {\it not} true, see Example~\ref{spott} below. It is not clear
to us whether their formula gives a solution under the weaker assumption that $\dbar\phi=0$,
neither do we know whether their solution admits some intrinsic
extension across $X_{sing}$ as a current on $X$.
\end{remark}

\begin{ex}\label{spott}
Let $X=\{f=0\}\subset\Omega\subset\C^{n+1}$ be a reduced hypersurface, and assume that
$df\neq 0$ on $X_{reg}$, so that $\J=(f)$. Let $\phi$ be a smooth $(0,q)$-form in a \nbh of some
point  $x$ on $X$ such that $\dbar\phi=0$.  We claim that $\dbar u=\phi$ has a smooth solution
$u$ if and only if $\phi$ has a smooth representative $\Phi$ in ambient space such that $\dbar\Phi=fg$
for some smooth form $g$.
In fact, if such a $\Phi$ exists then  $0=f\dbar g$ and thus $\dbar  g=0$.
Therefore,
$g=\dbar \gamma$ for some smooth $\gamma$ (in a Stein \nbh of $x$ in ambient space)
and hence $\dbar(\Phi-f\gamma)=0$.
Thus there is a smooth $U$
such that $\dbar U=\Phi-f\gamma$; this means that $u=i^*U$ is a smooth solution to $\dbar u=\phi$.
Conversely, if  $u$ is a smooth solution, then $u=i^* U$ for some smooth $U$ in ambient space, and
thus $\Phi=\dbar U$ is a representative of $\phi$ in ambient space. Thus $\dbar\Phi=fg$ (with $g=0$).

There are examples of
hypersurfaces $X$ where there exist
smooth $\phi$  with $\dbar\phi=0$ that do not admit smooth solutions to $\dbar u=\phi$,
see, e.g., \cite[Example~1.1]{AS}. It follows that such a $\phi$ cannot have a representative
$\Phi$ in ambient space as above.
\end{ex}

\end{document}